\title[Asymptotically cylindrical Calabi--Yau 3--folds]{Asymptotically
cylindrical Calabi--Yau 3--folds from weak Fano 3--folds}
\author[A~Corti]{Alessio~Corti}
\address{Department of Mathematics\\
Imperial College London\\\newline
London SW7 2AZ\\
UK}
\email{a.corti@imperial.ac.uk}
\urladdr{}
\author[M~Haskins]{Mark~Haskins}
\email{m.haskins@imperial.ac.uk}
\urladdr{}
\author[J~Nordström]{Johannes~Nordstr\"om}
\email{j.nordstrom@imperial.ac.uk}
\urladdr{}
\author[T~Pacini]{Tommaso~Pacini}
\address{Scuola Normale Superiore\\\newline
Piazza dei Cavalieri 7\\
56126 Pisa\\
Italy}
\email{tommaso.pacini@sns.it}
\urladdr{}
\let\xysavmatrix\xymatrix
\def\xymatrix{\disablesubscriptcorrection\xysavmatrix}
\numberwithin{equation}{section}
\numberwithin{table}{section}
\newtheorem{theorem}{Theorem}[section]
\newtheorem{lemma}{Lemma}[section]
\newtheorem{prop}{Proposition}[section]
\newtheorem{corollary}{Corollary}[section]
\theoremstyle{definition}
\newtheorem{definition}{Definition}[section]
\newtheorem{example}{Example}[section]
\theoremstyle{remark}
\newtheorem{remark}{Remark}[section]
\newtheorem*{remark*}{Remark}
\newtheorem*{assumption}{Assumption}
\newtheorem{convention}{Convention}[section]
\let\c@lemma\c@theorem
\let\c@prop\c@theorem
\let\c@corollary\c@theorem
\let\c@definition\c@theorem
\let\c@example\c@theorem
\let\c@remark\c@theorem
\let\c@convention\c@theorem
\let\c@equation\c@theorem \makeatother
\newcommand{\abs}[1]{\lvert#1\rvert}
\newcommand{\sunitary}[1]{\textup{SU$(#1)$}}
\newcommand{\sunitaryn}{\textup{SU$(n)$}}
\newcommand{\Proj}{\operatorname{Proj}}
\newcommand{\Pic}{\operatorname{Pic}}
\newcommand{\Hol}{\operatorname{Hol}}
\newcommand{\rank}{\operatorname{rk}}
\newcommand{\weil}{\operatorname{Cl}}
\newcommand{\ex}{\operatorname{Ex}}
\newcommand{\gdiv}{\operatorname{div}}
\newcommand{\N}{\mathbb{N}}
\newcommand{\PP}{\mathbb{P}}
\newcommand{\CP}{\mathbb{P}}
\newcommand{\Sph}{\mathbb{S}}
\newcommand{\ra}{\rightarrow}
\newcommand{\Hom}{\operatorname{Hom}}
\newcommand{\Imag}{\operatorname{Im}}
\newcommand{\hk}{hyper-Kähler\xspace}
\newcommand{\gtwo}{\ensuremath{\textup{G}_2}\xspace}
\newcommand{\nef}{\mathit{NE}}
\newcommand{\nefb}{\overline{\nef}}
\newcommand{\cont}{\operatorname{cont}}
\DeclareMathOperator{\codim}{codim}
\DeclareMathOperator{\Aut}{Aut}
\DeclareMathOperator{\Cl}{Cl}
\DeclareMathOperator{\uPic}{\underline{Pic}}
\DeclareMathOperator{\Amp}{Amp}
\newcommand{\oo}{\mathcal{O}}
\newcommand{\bfa}{\mathbf{a}}
\newcommand{\QQ}{\mathbb{Q}}
\newcommand{\RR}{\mathbb{R}}
\newcommand{\CC}{\mathbb{C}}
\newcommand{\TT}{\mathbb{T}}
\newcommand{\bbS}{\mathbb{S}}
\newcommand{\ZZ}{\mathbb{Z}}
\newcommand{\FF}{\mathbb{F}}
\newcommand{\tsfrac}[2]{\textstyle\frac{#1}{#2}}
\newcommand{\bbz}{\mathbb{Z}}
\newcommand{\bbr}{\mathbb{R}}
\newcommand{\bbc}{\mathbb{C}}
\newcommand{\bbp}{\mathbb{P}}
\newcommand{\bbrp}{\mathbb{R}^{+}}
\newcommand{\into}{\hookrightarrow}
\newcommand{\gtmfd}{\gtwo--manifold}
\DeclareMathAlphabet{\df}{U}{eus}{m}{n}
\newcommand{\hdg}{h}
\newcommand{\acls}[1]{\abs{-K_{#1}}}
\newcommand{\anglen}{\vartheta}
\newcommand{\blow}{\pi}
\newcommand{\cyl}{\infty}
\renewcommand{\mod}{\textup{ mod }}
\newcommand{\gen}[1]{\langle#1\rangle}
\begin{document}

\begin{asciiabstract}
We prove the existence of asymptotically cylindrical (ACyl) Calabi-Yau
3-folds starting with (almost) any deformation family of smooth weak
Fano 3-folds.  This allow us to exhibit hundreds of thousands of new
ACyl Calabi-Yau 3-folds; previously only a few hundred ACyl Calabi-Yau
3-folds were known.  We pay particular attention to a subclass of weak
Fano 3-folds that we call semi-Fano 3-folds.  Semi-Fano 3-folds
satisfy stronger cohomology vanishing theorems and enjoy certain
topological properties not satisfied by general weak Fano 3-folds, but
are far more numerous than genuine Fano 3-folds.  Also, unlike Fanos
they often contain P^1s with normal bundle O(-1) + O(-1), giving rise
to compact rigid holomorphic curves in the associated ACyl Calabi-Yau
3-folds.
We introduce some general methods to compute the basic topological
invariants of ACyl Calabi-Yau 3-folds constructed from semi-Fano
3-folds, and study a small number of representative examples in
detail.  Similar methods allow the computation of the topology in many
other examples.
All the features of the ACyl Calabi-Yau 3-folds studied here find
application in [arXiv:1207.4470] where we construct many new compact
G_2-manifolds using Kovalev's twisted connected sum construction.
ACyl Calabi-Yau 3-folds constructed from semi-Fano 3-folds are
particularly well-adapted for this purpose.
\end{asciiabstract}

\begin{webabstract}
  We prove the existence of asymptotically cylindrical (ACyl)
  Calabi--Yau 3--folds starting with (almost) any deformation family
  of smooth \emph{weak Fano} 3--folds.  This allow us to exhibit
  hundreds of thousands of new ACyl
  Calabi--Yau 3--folds; previously only a few hundred ACyl
  Calabi--Yau 3--folds were known.  We pay particular attention to a
  subclass of weak Fano 3--folds that we call \emph{semi-Fano}
  3--folds.  Semi-Fano 3--folds satisfy stronger cohomology
  vanishing theorems and enjoy certain topological properties not
  satisfied by general weak Fano 3--folds, but are far more numerous
  than genuine Fano 3--folds.  Also, unlike Fanos they often contain
  $\mathbb{P}^1$s with normal bundle $\mathcal{O}(-1) \oplus
  \mathcal{O}(-1)$, giving rise to compact rigid holomorphic curves in
  the associated ACyl Calabi--Yau 3--folds.

  We introduce some general methods to compute the basic topological
  invariants of ACyl Calabi--Yau 3--folds constructed from semi-Fano
  3--folds, and study a small number of representative examples in
  detail.  Similar methods allow the computation of the topology in
  many other examples.

  All the features of the ACyl Calabi--Yau 3--folds studied here find
  application in [arXiv:1207.4470]
  where we construct many new compact $\mathrm{G}_2$--manifolds using
  Kovalev's twisted connected sum construction.  ACyl Calabi--Yau 3--folds
  constructed from semi-Fano 3--folds are particularly well-adapted for
  this purpose.
\end{webabstract}

\begin{abstract}
  We prove the existence of asymptotically cylindrical (ACyl)
  Calabi--Yau 3--folds starting with (almost) any deformation family
  of smooth \emph{weak Fano} 3--folds.  This allow us to exhibit
  hundreds of thousands of new ACyl
  Calabi--Yau 3--folds; previously only a few hundred ACyl
  Calabi--Yau 3--folds were known.  We pay particular attention to a
  subclass of weak Fano 3--folds that we call \emph{semi-Fano}
  3--folds.  Semi-Fano 3--folds satisfy stronger cohomology
  vanishing theorems and enjoy certain topological properties not
  satisfied by general weak Fano 3--folds, but are far more numerous
  than genuine Fano 3--folds.  Also, unlike Fanos they often contain
  $\mathbb{P}^1$s with normal bundle $\mathcal{O}(-1) \oplus
  \mathcal{O}(-1)$, giving rise to compact rigid holomorphic curves in
  the associated ACyl Calabi--Yau 3--folds.

  We introduce some general methods to compute the basic topological
  invariants of ACyl Calabi--Yau 3--folds constructed from semi-Fano
  3--folds, and study a small number of representative examples in
  detail.  Similar methods allow the computation of the topology in
  many other examples.

  All the features of the ACyl Calabi--Yau 3--folds studied here find
  application in~\cite{chnp2} where we construct many new compact
  $\mathrm{G}_2$--manifolds using Kovalev's twisted connected sum
  construction.  ACyl Calabi--Yau 3--folds constructed from semi-Fano
  3--folds are particularly well-adapted for this purpose.
\end{abstract}

\maketitle

\section{Introduction}

Compact Calabi--Yau manifolds have been studied intensively ever since Yau's resolution of the 
Calabi conjecture~\cite{yau:CY} allowed algebraic geometers to produce them in abundance. 
Nevertheless, some fundamental questions about compact Calabi--Yau manifolds even in dimension 
three remain open. For example, are there finitely many or infinitely many topological types 
of nonsingular Calabi--Yau 3--fold?

There has also been important work on complete noncompact Kähler Ricci-flat (KRF) metrics by many authors:
Calabi, Yau, Eguchi--Hansen, Gibbons--Hawking, Hitchin, Kronheimer, Anderson--Kronheimer--LeBrun, 
Atiyah--Hitchin, Tian--Yau, Joyce, Nakajima,  Biquard and Carron to name only a small selection.
Nevertheless, compared to the compact nonsingular case, current understanding of noncompact KRF metrics is 
much less complete and demands further study; several open questions in this area go back as far as Yau's 1978 ICM address. 

The simplest classes of noncompact KRF metrics are:
\begin{enumerate}
\item[(a)]
those of maximal volume growth, that is, Euclidean volume growth;
\item[(b)]
those of minimal volume growth, that is, linear volume growth.
\end{enumerate}
The maximal volume growth case -- especially the class of so-called ALE
metrics -- has already attracted considerable attention,
for example,  Kronheimer's classification results for ALE \hk 4--manifolds~\cite{kron:ale:classify} and Joyce's higher dimensional 
existence results~\cite[Section~8]{joyce:holonomybook};
part of the reason for the focus on the ALE case has been the intimate link to
the theory of (noncollapsed) metric degenerations of compact Einstein manifolds with bounded diameter. 
Another obvious model for noncollapsed metric degenerations of compact Einstein manifolds is provided by the development of 
long ``almost cylindrical necks''. For this reason it is important to understand asymptotically cylindrical (ACyl) Einstein metrics. 
The simplest class of such ACyl Einstein metrics are the asymptotically cylindrical 
Calabi--Yau metrics studied in the present paper; see also Haskins--Hein--Nordstr\"om~\cite{hhn}.

ACyl Calabi--Yau \emph{3--folds} play a distinguished role because they can also be used as 
building blocks in Kovalev's twisted connected sum construction of compact 
manifolds with holonomy \gtwo: see Kovalev~\cite{kovalev:connectsums}, Kovalev--Lee~\cite{kovalev:lee} 
and the more recent developments in Corti--Haskins--Nordstr\"om--Pacini~\cite{chnp2}.
The twisted connected sum construction -- first developed in~\cite{kovalev:connectsums} -- constituted 
a major advance in the understanding of compact \mbox{$\gtwo$--manifolds}; 
along with Joyce's original  orbifold resolution
construction~\cite[Sections~11 and~12]{joyce:holonomybook}
it remains one of only two methods available to produce compact \gtwo--manifolds.

Given a pair of ACyl Calabi--Yau 3--folds $V_{\pm}$ the twisted connected sum construction 
gives a way to combine the pair of noncompact ACyl $7$--manifolds $\Sph^{1} \times V_{\pm}$ -- both 
of which have holonomy $\sunitary{3} \subset \gtwo$ -- to construct a compact $7$--manifold 
with holonomy the full group \gtwo. The twisted connected sum construction is possible only when 
a certain compatibility between the cylindrical ends of $V_{\pm}$ can be arranged; studying this 
``matching'' problem for pairs of ACyl Calabi--Yau 3--folds is therefore very important 
for our applications to \gtwo--geometry in~\cite{chnp2}.

While we know the existence of huge numbers of deformation classes of compact Calabi--Yau 3--folds, until the 
present paper only a couple of hundred families of ACyl Calabi--Yau 3--folds were known. 
In the present paper we prove that it is possible to construct deformation families of ACyl Calabi--Yau 3--folds from 
(almost) any deformation family of smooth \emph{weak Fano} 3--folds. 
As a consequence we prove that there are at least several hundred thousand deformation classes of ACyl Calabi--Yau 3--folds.

A \emph{Fano} 3--fold $Y$ is a smooth projective variety for which $-K_{Y}$ is ample or positive: 
complex projective space $\CP^{3}$, smooth quadrics, cubics and quartics in $\CP^{4}$ 
being the simplest examples.  
Fano 3--folds have been important objects in algebraic geometry since Fano's work in the 1930s 
and are still very much an active research area in contemporary algebraic geometry.  
A \emph{weak Fano} 3--fold\footnote{some authors call this an \emph{almost Fano} 3--fold.}
is a smooth projective 3--fold for which $-K_{Y}$ is big and nef (but not ample). 
Differential geometers are encouraged to think of a line bundle being big and nef as the algebraic--geometric 
formulation of admitting a hermitian metric whose curvature is  sufficiently semi-positive. 
All weak Fano 3--folds can be obtained by choosing suitable resolutions of mildly singular Fano 3--folds.

A number of properties of Fano manifolds generalise without too much difficulty to weak Fanos; 
we replace applications of the Kodaira vanishing theorem with its generalisation the Kawamata--Viehweg 
vanishing theorem.
Kovalev~\cite{kovalev:connectsums} used Fano 3--folds to construct ACyl
Calabi--Yau 3--folds with ends asymptotic to $\C^{*} \times S$ where $S$ is a
smooth K3 surface and suggested that other constructions of suitable ACyl Calabi--Yau 3--folds 
might be possible~\cite[page~148]{kovalev:connectsums};  
we prove that starting only with a \emph{weak Fano} 3--fold 
(satisfying one further very mild restriction which is also needed even in the Fano case)
we can still construct ACyl Calabi--Yau 3--folds with ends asymptotic
to $\C^{*} \times S$. 
However, in order to solve the ``matching'' problem for pairs of ACyl
Calabi--Yau 3--folds constructed from weak Fano 3--folds it turns out
to be important to distinguish the subclass of \emph{semi-Fano}
3--folds, that is, weak Fano 3--folds whose anticanonical morphism is a
semi-small map.\footnote{There seems to be no established terminology
  for this particular subclass of weak Fano 3--folds, so the term
  \emph{semi-Fano} is our invention; it is intended to suggest that a
  \emph{semi}-Fano 3--fold has \emph{semi}-small anticanonical
  morphism.  Warning: Chan et al~\cite{chan} used the term semi-Fano
  manifold to mean something even weaker than weak Fano, that is, a complex
  manifold for which $-K_{Y}$ is nef (but not necessarily big).}

There are two principal advantages in generalising from Fano to weak Fano or semi-Fano 3--folds. 
It is well-known that there are exactly
$105$ deformation families of smooth Fano \mbox{3--folds}
(see Iskovskih~\cite{isk:fano1,isk:fano2},
Mori--Mukai~\cite{MoMuk,MoMua,MoMub}, Mukai--Umemura~\cite{MuUm} and
Takeuchi~\cite{Take}): in the paper, we will
refer to this result as the ``Iskovskih--Mori--Mukai classification''.
On the other hand, there are at least hundreds of thousands of
deformation families of smooth weak Fano or semi-Fano 3--folds and
their topology is less restrictive than for Fano 3--folds;
unlike the Fano case there is at present no classification theory for weak Fano or semi-Fano 3--folds except 
under very special geometric assumptions. Thus generalising from Fano to weak Fano or semi-Fano 3--folds 
allows us to construct a significantly larger number of ACyl Calabi--Yau 3--folds. 

For applications to the twisted connected sum construction of compact \mbox{\gtwo--manifolds}
the following feature is also important; 
whereas on any Fano 3--fold %
the anticanonical class satisfies $-K_{Y}\cdot C> 0$ for any complex curve $C$, 
weak Fano 3--folds can contain special complex curves $C$ for which $K_{Y}\cdot C =0$ 
(the weakening of $-K_{Y}$ being positive to sufficiently semi-positive is crucial here).  
Moreover, in many cases $C$ is a smooth rational curve with normal bundle 
$\mathcal{O}(-1)\oplus \mathcal{O}(-1)$ 
(where $\mathcal{O}(d)$ denotes $\mathcal{O}_{\CP^{1}}(d)$).
 In particular, $C$ is rigid, 
that is, it  has no infinitesimal (holomorphic) deformations.
These special $K$--trivial curves $C$ in weak Fanos allow us to construct compact rigid curves in 
the associated (noncompact) ACyl Calabi--Yau 3--folds. 
The fact that we can construct compact holomorphic curves 
in our ACyl Calabi--Yau 3--folds and that these curves have no infinitesimal deformations will be key to our 
construction of rigid associative 3--folds in compact $\gtwo$--manifolds~\cite{chnp2}.

We also discuss the following topics in some detail 
(keeping in mind applications of ACyl Calabi--Yau 3--folds to the twisted connected sum construction of
\gtwo--manifolds):
\begin{enumerate}
\item
the topology of ACyl Calabi--Yau 3--folds: see \fullref{sec:blocks};
\item
which \hk K3 surfaces can appear as the ACyl limits of our ACyl Calabi--Yau 3--folds: see \fullref{sec:div};
\item
some representative ACyl Calabi--Yau 3--folds obtained from semi-Fano \mbox{3--folds} -- including 
computations of the topology of these examples and the number of rigid holomorphic curves they contain: see 
\fullref{sec:examples};
\item
some general methods available for constructing (and in some cases classifying) 
weak Fano  and semi-Fano 3--folds and some indication how the methods used in (iii) can be deployed 
in this more general context: see \fullref{S:wk:fano:examples}.
\end{enumerate}

We now describe the structure of the rest of the paper.

\fullref{sec:ac_CY} introduces (exponentially) ACyl Calabi--Yau manifolds and explains how to construct 
ACyl Calabi--Yau structures on certain types of quasiprojective manifold: see \fullref{thm:acyl_calabi}.
Underpinning \fullref{thm:acyl_calabi} is an analytic existence theorem for ACyl Calabi--Yau manifolds 
recently proven by Haskins--Hein--Nordstr\"om~\cite[Theorem~D]{hhn}; this result is related to previous work 
of Tian--Yau~\cite{tian:yau} and Kovalev~\cite{kovalev:connectsums}. Building on the previous work of Tian--Yau,
Kovalev claimed to prove the existence of exponentially asymptotically cylindrical Calabi--Yau manifolds, 
improving substantially the asymptotics previously established by Tian--Yau.
Unfortunately Kovalev's proof of the improved asymptotics contains an error 
(see the discussion following the statement of \fullref{thm:acyl_calabi}
and also~\cite{hhn} for further details). Other errors in Kovalev~\cite{kovalev:connectsums} occur in the construction of 
\hk rotations (especially Lemma 6.47 which is used in the proof of the main Theorem 6.44) 
while several other points are unclear. For this reason, in both
this paper and in~\cite{chnp2} we chose not to rely on arguments
from~\cite{kovalev:connectsums}, and to give proofs or alternative
references for
the main results we need.
To this end~\cite{hhn} gives a short self-contained proof of the existence of exponentially
asymptotically cylindrical Calabi--Yau metrics that also bypasses the difficult
existence theory of Tian--Yau~\cite{tian:yau}.

A significant fraction of this paper then concerns trying to find a large number of quasiprojective 3--folds satisfying the 
hypotheses of \fullref{thm:acyl_calabi}. 
In \fullref{prop:onestage} we show that if we can find a closed Kähler
3--fold $Y$ with an anticanonical pencil that has some smooth member and whose
base locus is a smooth
curve, then blowing up that curve gives a 3--fold satisfying the hypotheses of
\fullref{thm:acyl_calabi}, and hence an ACyl Calabi--Yau 3--fold.
In turn, almost any weak Fano 3--fold satisfies the hypotheses of \fullref{prop:onestage}.
To prove this and to show the relative abundance of weak Fano 3--folds
requires a certain amount of algebro-geometric background; this background is
developed in Sections~\ref{sec:alg-geom} and~\ref{sec:weak-fano-3}.

\fullref{sec:alg-geom} contains some material from algebraic
geometry needed for our discussion of weak Fano 3--folds.  We have
included this algebro--geometric material in an attempt to make the
paper accessible to a wide readership.  The first part of the section
deals with various notions of weak positivity for line bundles on
projective manifolds and related vanishing theorems; these vanishing
theorems generalise the classical Kodaira vanishing theorem (and its
extension due to Akizuki--Nakano) for ample line bundles.  The key
results from this section are the Kawamata--Viehweg vanishing theorem
for big and nef line bundles and the Sommese--Esnault--Viehweg
vanishing result for $l$--ample line bundles. Also important for us is
the Lefschetz theorem for semi-small morphisms; this is a special case
of Goresky--MacPherson's vast generalization of the classical
Lefschetz hyperplane theorem allowing a weaker positivity assumption
on the line bundle than ampleness.

The second part of \fullref{sec:alg-geom} contains material on
mildly singular 3--folds and their crepant and small resolutions.  We
are interested in Gorenstein terminal and canonical 3--fold
singularities; the anticanonical model of a smooth weak Fano 3--fold
is a Fano \mbox{3--fold} with Gorenstein canonical singularities: see
\fullref{R:weak:fano:ac:model}.  The simplest terminal 3--fold
singularity, the ordinary double point (ODP for short), or ordinary
node, plays a particularly important role throughout the paper.
Conversely, given a mildly singular Fano 3--fold we can often
construct smooth weak Fano 3--folds by finding appropriate
resolutions. In the terminal singularities case any crepant resolution
is a so-called small resolution, that is, the exceptional set contains no
divisors. The existence of a small resolution of a singular variety
$X$ forces it to be non--$\Q$--factorial, that is, there are Weil divisors on
$X$ no multiple of which are Cartier.  We explain the intimate link
between small birational morphisms with target $X$ and such Weil
divisors on $X$.  An important role is played by the defect of a
Gorenstein canonical 3--fold $X$; the defect quantifies the failure of
$X$ to be $\Q$--factorial.  We also recall some basic properties of
flops in dimension three; for many weak Fano 3--folds we can use flops
to produce many non-isomorphic weak Fano 3--folds from a single weak
Fano 3--fold.  The final part of the section recalls some basic
terminology and facts from Mori theory for 3--folds; this is used only
in \fullref{S:wk:fano:examples} in our discussion of the
classification scheme for weak Fano 3--folds with Picard rank
$\rho=2$.

\fullref{sec:weak-fano-3} defines weak Fano 3--folds and recalls a number of their basic properties. 
Foremost among these properties is \fullref{thm:reid} (due to Reid and Paoletti): 
a general anticanonical divisor in a nonsingular 
weak Fano 3--fold is a nonsingular K3 surface; this is the fundamental property that allows us to 
construct ACyl Calabi--Yau 3--folds out of weak Fano 3--folds.
Propositions~\ref{prop:onestage} and~\ref{prop:sequence} show how one can obtain 
quasiprojective 3--folds on which we can construct ACyl Calabi--Yau structures by blowing up suitable 
curves in suitable K\"ahler 3--folds; the earlier material shows that suitable 3--folds include almost any weak Fano 3--fold. 
These results are central to the paper.

As mentioned above, we also introduce an important subclass of weak Fano
3--folds which we call \emph{semi-Fano} 3--folds:
the anticanonical morphism of a semi-Fano 3--fold is a \emph{semi-small}
birational morphism, that is, it contracts no divisor to a point.
Although weak Fano 3--folds suffice to construct ACyl Calabi--Yau 3--folds, for 
applications to the construction of compact \gtwo--manifolds using the twisted connected sum construction,  
we will often need to restrict to ACyl Calabi--Yau 3--folds obtained from semi-Fano 3--folds. 
The basic advantage is the stronger  cohomology vanishing theorems available for semi-Fano 3--folds.

\fullref{sec:blocks} is concerned with computing the topology of ACyl Calabi--Yau 3--folds 
and in particular the topology of the ACyl Calabi--Yau 3--folds we construct out of semi-Fano 3--folds. 
We compute the full integral cohomology groups of our ACyl Calabi--Yau 3--folds
and note in particular that the only potential source of torsion comes from
$H^3$ of the semi-Fano. 
We do not know any semi-Fano 3--folds for which $H^{3}$ has torsion but we have no general 
proof of its absence. We also establish simply-connectedness of our ACyl 
Calabi--Yau 3--folds and study the second Chern class $c_{2}$, particularly properties related to its divisibility.
These results on the primary topological invariants of ACyl Calabi--Yau 3--folds play an important role in~\cite{chnp2}; 
there they are used to identify for the first time the diffeomorphism type of many compact 
$\gtwo$--manifolds.

\fullref{sec:div} studies anticanonical divisors in semi-Fano 3--folds  in detail. 
By \fullref{thm:reid} any general anticanonical divisor in a weak Fano 3--fold is a smooth K3 surface. 
A natural geometric question about ACyl Calabi--Yau 3--folds constructed from a weak Fano 3--fold 
is the following:  which K3 surfaces  can appear as asymptotic limits of our ACyl Calabi--Yau 3--folds 
as we vary both the weak Fano 3--fold in its deformation 
class and the chosen smooth anticanonical divisor?
Addressing this question turns out to be crucial to the construction of so-called \hk rotations between pairs of 
ACyl Calabi--Yau 3--folds and therefore to the construction of 
compact \gtwo--manifolds via the twisted connected sum construction.

To answer this question we need to develop some appropriate
moduli/deformation theory.  On the K3 side this requires recalling
basic facts about lattice polarised K3 surfaces and versions of the
Torelli theorem in this setting.  We also need to extend Beauville's
results~\cite{beauville:fano} about the moduli stack parameterising pairs
$(Y,S)$ where $Y$ belongs to a given deformation family of smooth Fano
3--folds and $S \in \abs{-K_{Y}}$ is a smooth K3 section.
The key observation -- see \fullref{thm:fano_stack_smooth} -- is
that the appropriate moduli
stack is still smooth when $Y$ is a semi-Fano 3--fold; here we use
the stronger cohomology vanishing theorems available for semi-Fano
3--folds.  The immediate payoff is \fullref{thm:fanok3} which
gives us a good understanding of which K3 surfaces appear as smooth
anti\-canonical divisors in a deformation class of semi-Fano
3--folds. It is likely that most of these facts hold, with
appropriate modification, for more general weak Fano \mbox{3--folds}
but we do not pursue this here; however see for instance the recent paper
by Sano~\cite{sano13}.

\fullref{sec:examples} constructs a handful of ACyl Calabi--Yau 3--folds from a carefully chosen selection 
of Fano and semi-Fano 3--folds and computes the topology of these ACyl Calabi--Yau 3--folds in detail using 
the results from \fullref{sec:blocks}. In this section we only construct a very small number of typical examples 
making no attempt to be systematic. Similar methods can be used to produce many more ACyl Calabi--Yau 3--folds 
and to compute their topology.

\fullref{S:wk:fano:examples} gives many further examples of semi-Fano 3--folds from which one can 
construct many more ACyl Calabi--Yau 3--folds. Our basic aim is to back up our assertion that there are 
\emph{many} more weak Fano or semi-Fano 3--folds than Fano 3--folds. Unlike smooth Fano 3--folds, 
smooth weak Fano 3--folds are far from being classified and even in the longer-term such a classification 
may in practice be out of reach.
Various classes of weak Fano 3--folds with special geometric or topological properties are much closer to being 
classified. We consider in some detail several such special classes: (a) weak Fano 3--folds with Picard rank $\rho=2$, 
(b) toric weak Fano 3--folds and (c) weak Fano 3--folds obtained by small resolutions of nodal cubics.

Thanks to recent work of various authors -- including Arap--Cutrone--Marshburn~\cite{arap:c:m}, 
Blanc--Lamy~\cite{blanc:lamy},
Cutrone--Marshburn~\cite{cutrone:marshburn},  Jahnke--Peternell--Radloff~\cite{peternell1,peternell2}, Kalo\-ghiros \cite{kaloghiros:thesis} and 
Takeuchi~\cite{takeuchi} -- class (a) is known to 
consist of  over 150 distinct deform\-ation classes of semi-Fano 3--folds; 
many of these can be obtained by blowing up an appropriate smooth irreducible curve in an appropriate 
smooth rank one Fano \mbox{3--fold}. This makes it relatively straightforward to determine many of the 
basic topological properties of such weak Fano 3--folds.

Class (b) gives rise to hundreds of thousands of distinct deformation
classes of semi-Fano 3--folds (discussed in a forthcoming paper by Coates,
Haskins, Kasprzyk and Nordstr\"om~\cite{toric:g2}).
Toric semi-Fano 3--folds can be understood completely in terms of the geometry of so-called reflexive polytopes 
of dimension three; such reflexive polytopes were completely classified by Kreuzer--Skarke~\cite{Skarke} and there are 
over four thousand such reflexive polytopes. 
Moreover, the topology of toric semi-Fano 3--folds is relatively simple and easily computed in terms of the reflexive polytope. 
This makes toric semi-Fano 3--folds a very convenient class for producing large numbers of ACyl Calabi--Yau 3--folds 
and computing their topology. 

Class~(c) all consist of so-called weak del Pezzo\footnote{some authors
use almost del Pezzo} 3--folds, that is, weak Fano 3--folds 
for which $-K_{Y}\in H^{2}(Y;\Z)$ is divisible by $2$.
There are very few smooth del Pezzo \mbox{3--folds}, 
of which smooth cubics in $\CP^{4}$ form one deformation family. 
Degenerating a smooth cubic 3--fold to a cubic 3--fold with only ordinary nodes and seeking projective small resolutions 
of these singular del Pezzo 3--folds yields a method to produce numerous weak del Pezzo 3--folds -- all of 
the same anticanonical degree but with increasing Picard rank -- from a single deformation family of smooth del Pezzo 3--folds. 
This particular family of examples -- studied in detail by Finkelnberg~\cite{finkelnberg1987small},  
Finkelnberg--Werner~\cite{finkelnberg1989small} and 
Werner~\cite{werner:thesis} -- illustrates a general principle that a single deformation family of smooth Fano 3--folds can spawn 
many different deformation families of smooth weak Fano 3--folds; this helps to explain why weak Fano 3--folds 
can be expected to be so numerous.

\subsection*{Acknowledgements}

The authors would like to thank  Kevin Buzzard, Paolo Cascini, Tom Coates, Igor  Dolgachev, Simon Donaldson, 
 Bert van Geemen,  Anne-Sophie Kaloghiros, Al Kasprzyk and Vyacheslav Nikulin.
Computations related to toric semi-Fanos were performed in collaboration with Tom Coates and Al Kasprzyk
and were carried out on the Imperial College mathematics cluster and the Imperial College High Performance
Computing Service;  we thank Simon Burbidge, Matt Harvey, and Andy Thomas for technical assistance. 
Part of these computations were performed on hardware supported by AC's EPSRC grant EP/I008128/1.
MH would like to thank the EPSRC for their continuing support of his research under Leadership Fellowship 
EP/G007241/1, which also provided postdoctoral support for JN.
JN also thanks the ERC for postdoctoral support provided by Grant 247331. 
TP gratefully acknowledges the financial support provided by a Marie Curie European Reintegration Grant.
The authors would like to thank the referee for their careful reading of the paper and their detailed comments.

\section{Asymptotically cylindrical Calabi--Yau 3--folds}
\label{sec:ac_CY}

By a \emph{Calabi--Yau manifold} we mean a Kähler manifold
$(M^{2n}, I, g, \omega)$ with a parallel complex $n$--form $\Omega$.
Then the Riemannian holonomy of $(M,g)$ is contained in $\sunitaryn$.
(At this stage we do not insist that $\Hol(g) = \sunitaryn$; however, 
this will be the case for all the noncompact Calabi--Yau 3--folds constructed
later in this paper.)
We further impose a normalisation condition that
\begin{equation}
\label{eq:cynormal}
\frac{\omega^n}{n!} = i^{n^2}2^{-n}\Omega \wedge \overline \Omega
\end{equation}
(equivalently $\Omega$ has constant norm $2^n$). The complex structure and
metric can be recovered from the pair $(\omega, \Omega)$, and we refer to this
as a \emph{Calabi--Yau structure}.
$\Omega$~is holomorphic, so the canonical bundle of $(M,I)$ is trivial.
The well-known relation between the curvature of the canonical bundle and the
Ricci curvature of a Kähler metric implies that $\omega$ is Ricci-flat.

This relation implies also that if $(M,I,\omega)$ is a Ricci-flat Kähler
manifold, then the restricted holonomy (that is, the group generated by parallel
transport around contractible closed curves in~$M$, or equivalently the
identity component of $\Hol(M)$) is contained in \sunitaryn, but if $M$ is not
simply connected then there need not be any global holomorphic section of $K_M$.
In other words, the canonical bundle need not be trivial, though the real
first Chern class $c_1(M) \in H^2(M;\bbr)$ must vanish.
Conversely, Yau's proof of the Calabi conjecture~\cite{yau:CY} shows that
any compact Kähler manifold $M$ with $c_1(M) = 0 \in H^2(M; \bbr)$ admits
Ricci-flat Kähler metrics.
More precisely, every K\"ahler class on $M$ contains a unique K\"ahler Ricci-flat metric.

We now turn our attention to a special type of non-compact complete manifold
called asymptotically cylindrical.

\begin{definition}
We say that $V^{2n}_\cyl$ is a \emph{Calabi--Yau (half)cylinder} if
$V_\cyl \cong \bbrp \times X^{2n-1}$ is equipped with an $\bbrp$--translation
invariant Calabi--Yau structure $(I_\cyl, g_\cyl, \omega_\cyl, \Omega_\cyl)$,
such that $g_\cyl$ is a product metric $dt^2 + g_X^2$ and $X$ is a smooth
closed manifold called the \emph{cross-section} of $V_\cyl$.
\end{definition}

The only Calabi--Yau cylinders that will play any significant role in this
paper have cross-section $X = \Sph^1 \times S$ for a Calabi--Yau
$(n{-}1)$--fold $(S^{2n-2},I_S,g_S,\omega_S,\Omega_S)$, and 
$V_\cyl := \R^+ \times \Sph^1 \times S$ (biholomorphic to $\Delta^*\times S$ where $\Delta^* \subset \C$ denotes 
the unit disc in $\C$ with the origin removed)
has product structure
\begin{equation}
\label{eq:cycyl}
\begin{aligned}
I_\cyl &:= I_{\C} + I_S, &
g_\cyl &:= dt^2 + d\anglen^2 + g_S, \\ 
\omega_\cyl &:= dt \wedge d\anglen + \omega_S,\quad &
\Omega_\cyl &:= (d\anglen - i dt) \wedge \Omega_S ,
\end{aligned}
\end{equation}
where $t$ and $\anglen$ denote the standard variables on $\R^+$ and $\Sph^1$.
(The choice of phase for the $d\anglen - i dt$ factor makes no material
difference, but helps some equations in~\cite{chnp2} take a more pleasant form.)

\begin{definition} \label{def:ACCY}
Let $(V,g,I,\omega,\Omega)$ be a complete Calabi--Yau manifold. We say that $V$
is an \textit{asymptotically cylindrical} (or \emph{ACyl} for short)
Calabi--Yau manifold if there exist (i) a compact set $K\subset V$, (ii) a
Calabi--Yau cylinder $V_\cyl$ and (iii) a diffeomorphism
$\eta: V_\cyl \rightarrow V {\setminus} K$ such that for
all $k\geq 0$, for some $\lambda>0$ and as $t\rightarrow \cyl$,
$$\eta^*\omega-\omega_\cyl=d\varrho,\ \mbox{ for some $\varrho$ such that }
|\nabla^k\varrho|=O(e^{-\lambda t})$$
$$\eta^*\Omega-\Omega_\cyl=d\varsigma,\mbox{ for some $\varsigma$ such that }
|\nabla^k\varsigma|=O(e^{-\lambda t})$$
where $\nabla$ and $|\cdot|$ are defined using the metric $g_\cyl$ on
$V_\cyl$. We will refer to $V_\cyl$ as the \textit{asymptotic end} of~$V$.
\end{definition}

\begin{remark}
Our definition demands that $\eta^*\omega$ be cohomologous to $\omega_\cyl$ on
the end of $V$. However, as long as
$|\eta^*\omega-\omega_\cyl| \rightarrow 0$, this is automatic. The main point
of the definition is thus to impose the existence of specific $\varrho$ and
$\varsigma$ with the stated rate of decay.
\end{remark}

Since the complex structures on both $\R^+\times\Sph^1\times S$ and $V$ are
determined by the corresponding complex volume forms, similar estimates
also hold for $|\nabla^k(\eta^*I-I_\cyl)|$. The same is true for the
metrics.

For the examples of this paper, we will be concerned with the case of complex
dimension $n = 3$. Let us remark briefly on the relation between the holonomy
of an ACyl Calabi--Yau manifold $V$ and its topology in this case.
\begin{itemize}
\item
$\Hol(V)$ is exactly $\sunitary3$ if and only if $\pi_1(V)$ is finite.
\item
If $\Hol(V) = \sunitary3$ and the asymptotic end is a product
$\R^+\times\Sph^1\times S$, then $S$ is a projective K3 surface.
\item If the asymptotic end is a product $\R^+\times\Sph^1\times S$ and
$S$ is a K3 surface, then $\Hol(V) = \sunitary3$ unless $V$ is a quotient
of $\R \times\Sph^1\times S$ by an involution; up to deformation there is
a unique $V$ of the latter kind.
\end{itemize}
For the proofs of these claims, and more general considerations of holonomy
of ACyl Calabi--Yau manifolds, see Haskins--Hein--Nordstr\"om~\cite[Section~2]{hhn}.

We now want to review a method for constructing ACyl Calabi--Yau manifolds.
It is based on the following ACyl version of the Calabi--Yau theorem.
Note that if $S$ is a smooth anticanonical divisor in a closed K\"ahler
manifold $Z$, then the canonical bundle $K_S$ is trivial, so each Kähler class
on $S$ contains a Ricci-flat metric by Yau's proof of the Calabi conjecture.
\begin{theorem}
\label{thm:acyl_calabi}
Let $Z$ be a closed Kähler manifold with a morphism $f \co Z \to \bbp^1$,
with a smooth connected reduced fibre $S \in \acls{Z}$, and
let $V = Z \setminus S$. If $\Omega_S$ is a non-vanishing holomorphic
$(n{-}1)$--form on $S$, $\omega_S$ a Ricci-flat Kähler metric on $S$ 
satisfying the normalisation condition \eqref{eq:cynormal}, and
$[\omega_S] \in H^{1,1}(S)$ is the restriction of a Kähler class on~$Z$,
then there is an ACyl Calabi--Yau structure $(\omega,\Omega)$ on $V$ whose
asymptotic limit has the complex product form \eqref{eq:cycyl}.
\end{theorem}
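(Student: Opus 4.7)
The plan is to apply the ACyl Calabi--Yau existence theorem of Haskins--Hein--Nordstr\"om~\cite{hhn} to initial data $(\omega_0, \Omega)$ obtained by combining the pencil structure $f$ with the triviality of the normal bundle of $S$. First I would establish the key algebraic consequence of $S \in \acls{Z}$: adjunction gives $K_S = (K_Z+S)|_S \sim 0$, so $S$ is Calabi--Yau, and $N_{S/Z} = \mathcal{O}_Z(S)|_S = K_Z^{-1}|_S$ is holomorphically trivial. Writing $f = s_0/s_\infty$ with $s_0, s_\infty \in H^0(Z,-K_Z)$ and $S = \{s_0 = 0\}$, the rational expression $\Omega := s_\infty/s_0$ is a meromorphic section of $K_Z$ with a simple pole along $S$; on $V$ it is a nowhere-vanishing holomorphic $n$-form whose Poincar\'e residue along $S$ is a holomorphic trivialisation of $K_S$, which can be arranged to equal $\Omega_S$ by rescaling $s_\infty$.

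Second, I would identify the end of $V$ with the cylindrical model. Near $S$ the holomorphic function $f$ is a submersion onto $\CC$, and combining this with a trivialisation of $N_{S/Z}$, Ehresmann's theorem yields a diffeomorphism of a punctured tubular neighbourhood of $S$ with $\Delta^* \times S$ whose deviation from being holomorphic is of order $|f|$. Under $f = e^{-(t+i\anglen)}$ this becomes a diffeomorphism $\eta$ of the end of $V$ with $\bbr^+\times\Sph^1\times S$ such that $\eta^*\Omega - \Omega_\cyl$ decays like $e^{-t}$ in all $C^k$ norms. Now choose a K\"ahler form $\omega_Z$ on $Z$ with $[\omega_Z|_S] = [\omega_S]$, which exists by the hypothesis on $[\omega_S]$; the $\partial\bar\partial$-lemma on $S$ yields $u \in C^\infty(S)$ with $\omega_Z|_S - \omega_S = i\partial\bar\partial u$. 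Via a cutoff $\chi$ supported in a small tubular neighbourhood of $S$ and equal to $1$ closer in, together with the product identification above, modify $\omega_Z$ to a K\"ahler form $\omega_0$ on $V$ which under $\eta$ matches the cylindrical form $\omega_\cyl = dt\wedge d\anglen + \omega_S$ to exponential order. Positivity of $\omega_0$ is preserved by choosing $\chi$ with sufficiently small derivatives and using that $\omega_\cyl$ is K\"ahler in the product coordinates.

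Finally, apply the ACyl Calabi--Yau theorem~\cite{hhn} to $(\omega_0, \Omega)$: this produces an exponentially decaying $\phi \in C^\infty(V)$ such that $\omega := \omega_0 + i\partial\bar\partial\phi$ is K\"ahler and satisfies the Monge--Amp\`ere equation $\omega^n/n! = i^{n^2}2^{-n}\Omega \wedge \bar\Omega$, yielding the desired ACyl Calabi--Yau structure with the prescribed asymptotic model \eqref{eq:cycyl}. The principal technical obstacle is securing \emph{exponential} rather than merely polynomial convergence of the initial data to the cylindrical model: this hinges on using the holomorphic coordinate $f$ itself as the cylindrical coordinate and on the triviality of $N_{S/Z}$, which together eliminate the slower-decay error terms that would otherwise place the problem outside the scope of the exponentially weighted existence theorem. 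It is precisely at this step that the earlier argument in~\cite{kovalev:connectsums} was found to be deficient, and where~\cite{hhn} provides the rigorous replacement alluded to in the introduction.
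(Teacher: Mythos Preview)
Your approach is essentially the same as the paper's: construct the meromorphic $n$--form $\Omega$ with simple pole along $S$ and residue $\Omega_S$, identify the end of $V$ with a cylinder via the fibration $f$, and invoke the existence theorem of \cite{hhn}. The paper's proof is shorter only because it cites \cite[Theorem~D]{hhn} in the form that takes as input a K\"ahler class on $Z$; the construction of an asymptotically cylindrical reference metric is absorbed into that citation. You instead outline that construction by hand, which is fine, but one step is incomplete: your modification of $\omega_Z$ via $i\partial\bar\partial(\chi u)$ with $u\in C^\infty(S)$ only adjusts the $S$--component, and does nothing to produce the $dt\wedge d\anglen$ factor of $\omega_\cyl$. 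The restriction of a closed K\"ahler form $\omega_Z$ to a neighbourhood of $S$ has \emph{bounded} normal part, whereas $dt\wedge d\anglen = \tfrac{i}{2}\,df\wedge d\bar f/|f|^2$ blows up as $f\to 0$. You need an additional correction such as $i\partial\bar\partial(\chi\,t^2)$, using that $t=-\log|f|$ is pluriharmonic so $i\partial\bar\partial\,t^2 = dt\wedge d\anglen$; equivalently, interpolate $\omega_Z$ with the pull-back of $\omega_\cyl$ under $\eta$.

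A second, minor point: your final paragraph slightly mislocates the gap in \cite{kovalev:connectsums}. The issue repaired by \cite{hhn} is not in constructing exponentially asymptotic initial data, but in proving that the \emph{solution} of the Monge--Amp\`ere equation decays exponentially to the cylindrical model; Kovalev's argument for the latter invoked a Sobolev inequality valid only under Euclidean volume growth.
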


Closely related statements were made first by Tian--Yau
in~\cite[Theorem~5.2]{tian:yau} and later by Kovalev
in~\cite[Theorem~2.4]{kovalev:connectsums}.  Tian--Yau establish the
existence of a
Calabi--Yau structure on $V$ by solving a complex Monge--Amp\`ere
equation, but not that this structure is asymptotically cylindrical in
the sense defined in \fullref{def:ACCY}, that is, they do not prove that the
metric they construct decays exponentially to the complex product form
\eqref{eq:cycyl}.  The exponential decay is crucial for the gluing
argument used to construct compact \gtwo--manifolds from a pair of
ACyl Calabi--Yau 3--folds via the twisted connected sum construction.
Kovalev used Tian--Yau's work as a starting point and then attempted
to prove the exponential decay as a separate step.  Unfortunately,
Kovalev's exponential decay argument~\cite[page~132]{kovalev:connectsums} relies on an estimate
established by Tian--Yau in their work on complete K\"ahler-Ricci-flat
metrics with maximal volume growth~\cite[page~52]{tian:yau2} -- but the
estimate from~\cite{tian:yau2} crucially relies on a Euclidean type
Sobolev inequality that definitely fails for any volume growth rate
less than the maximal one.  Thus until very recently no complete proof
of the existence of ACyl Calabi--Yau manifolds existed in the
literature.  Haskins--Hein--Nordstr\"om~\cite{hhn} recently filled
this gap by giving a short, direct self-contained proof of an ACyl
version of the Calabi conjecture~\cite[Theorem~4.1]{hhn}; this
proof avoids appealing to the more general (but technically more
formidable and less precise) existence theory of Tian--Yau~\cite{tian:yau}.
Proving the existence of ACyl Calabi--Yau metrics is relatively straightforward given the ACyl Calabi conjecture: 
see~\cite[Theorem~D]{hhn}. Below we explain how to deduce
\fullref{thm:acyl_calabi} from~\cite[Theorem~D]{hhn}.

\begin{proof}
By assumption there is a meromorphic $n$--form $\Omega$ on $Z$ with a simple
pole along~$S$. Its residue is a non-vanishing holomorphic $(n{-}1)$--form
on $S$. Since this is unique up to multiplication by a complex scalar, we can
choose $\Omega$ so that its residue is~$\Omega_S$. 

The restriction of $\Omega$ to $V$ is a holomorphic volume form.
Together with the exponential map $\R^+ \times \Sph^1 \cong \Delta^*$, a smooth
local trivialisation $\Delta \times S \into Z$ for $f$ yields a smooth map
$\eta \co \R^+\times\Sph^1\times S \to V$ that is a diffeomorphism onto the
complement of a compact subset, and $\eta^*\Omega$ has the asymptotic behaviour
required in \fullref{def:ACCY}.

Now~\cite[Theorem~D]{hhn} shows that, in the restriction to $V$ of any Kähler
class on $Z$, there is a unique Ricci-flat Kähler metric $\omega$ such that
\eqref{eq:cynormal} holds (implying that $(\omega,\Omega)$ is a Calabi--Yau
structure), and that $\omega$ is ACyl with respect to $\eta$.
The asymptotic limit of $\omega$ has the form
$\mu dt \wedge d\anglen + \omega_S$, where $\omega_S$ is necessarily the
unique Ricci-flat Kähler metric in the restriction of the Kähler class from $Z$
to $S$. Because $(\omega_S, \Omega_S)$ satisfies the normalisation condition
for Calabi--Yau structures we must have $\mu = 1$. Thus the asymptotic limit of
$(\omega,\Omega)$ is precisely the product \eqref{eq:cycyl}.
\end{proof}

\begin{remark*}
Haskins--Hein--Nordstr\"om~\cite{hhn} also shows that the above construction is reversible in the
following sense: If, as assumed in \eqref{eq:cycyl}, $V$ is an ACyl Calabi--Yau manifold whose cross-section $X$ 
splits as a Riemannian product $\Sph^{1} \times S$ 
for some smooth compact Calabi--Yau \mbox{$(n{-}1)$--fold} $S$, then 
if $V$ is simply-connected one can prove that there is a
smooth closed Kähler (in fact projective) manifold $Z$ with an anticanonical fibration over $\bbp^1$
such that applying \fullref{thm:acyl_calabi} recovers $V$.
It is not always the case that the asymptotic end of an ACyl Calabi--Yau
manifold splits in this way: \linebreak see \mbox{\cite[Example~1.5]{hhn}}
for such a manifold.
Provided that a simply-connected ACyl Calabi--Yau $V$ has irreducible holonomy and $\dim_\bbc V > 2$, 
then one can prove that a projective compactification $Z$ still exists 
even when the asymptotic end is not such a Calabi--Yau product; 
in this case $Z$ may have orbifold singularities,
but $V$ can still be recovered from $Z$ by a generalisation of
\fullref{thm:acyl_calabi}: see~\cite[Theorems~B and~C]{hhn}.
\end{remark*}

Kovalev~\cite{kovalev:connectsums} applies \fullref{thm:acyl_calabi} to
certain blow-ups $Z$ of Fano 3--folds. Since there are $105$ deformation classes
of smooth Fano 3--folds this yields a similar number of deformation classes of
ACyl Calabi--Yau 3--folds. 
(Some Fano 3--folds $Y$ can be blown up in several different ways to give
different admissible $Z$, see, for example, Examples~\ref{exa:P3_deg} and~\ref{ex:2conics}. 
This has not studied systematically, so
it is difficult to be more precise with the enumeration here.)
Kovalev--Lee~\cite{kovalev:lee} have also applied \fullref{thm:acyl_calabi} to 3--folds $Z$ of a different kind, obtained from
K3 surfaces with non-symplectic involution. There are 75 deformation classes 
of K3 surfaces with non-symplectic involution to which their result applies; 
this gives another 75 deformation families of ACyl Calabi--Yau 3--folds.
Together these existing constructions yield at most a few hundred ACyl 
Calabi--Yau 3--folds.

In \fullref{sec:weak-fano-3} (for example, see \fullref{prop:onestage} and the paragraph preceding it)
we show that the same procedure used by 
Kovalev in the case of Fano 3--folds can be applied to the much larger class of weak Fano  3--folds: 
see \fullref{dfn:weak_fano}.
Since, as we will explain in detail later, 
there are hundreds of thousands of deformation classes of weak Fano 3--folds this expands the number of 
known ACyl Calabi--Yau 3--folds from a few hundred to at least several hundred thousand.
The topology of these ACyl manifolds is discussed
in \fullref{sec:blocks}. In particular we find that they are simply connected,
so their holonomy is exactly $\sunitary3$.

\section{Algebro--geometric preliminaries}
\label{sec:alg-geom}
We review briefly some definitions and results from algebraic geometry
needed for our later discussion of weak Fano 3--folds; although these
notions are well known to algebraic geometers they seem to be
unfamiliar to many differential geometers interested in manifolds with
special or exceptional holonomy.  The reader should feel free to
proceed to the section on weak Fano 3--folds, returning to this section
as needed.

\begin{convention}
We always assume our varieties to be complex projective varieties and morphisms to be
projective unless specifically stated otherwise.
\end{convention}

\subsection*{Line bundles, weak positivity and vanishing theorems}
We will need generalisations of the Kodaira--Akizuki--Nakano vanishing theorem 
and the Lefschetz theorem for sections of ample line bundles; the generalisations we need 
replace the ampleness/positivity of the line bundle with some condition of sufficient 
semi-positivity of the line bundle. Depending on what semi-positivity assumption we make on $L$ 
we recover more or less of the cohomology vanishing results implied by the Kodaira--Akizuki--Nakano vanishing theorem.
We refer the reader to Lazarsfeld's book~\cite{lazarsfeld:positivity} for a comprehensive treatment of positivity 
for line bundles. 

\begin{definition}
\label{def:line:positivity}
 Let $L$ be a line bundle on a projective algebraic variety $Y$; we say that:
 \begin{enumerate}
\item $L$ is \emph{very ample} if the sections in 
   $H^0(Y,L)$ define an embedding into projective space;
 \item $L$ is \emph{ample} if for some integer $m>0$ $L^{\otimes m}$
   is very ample;
   \item $L$ is \emph{semi-ample}, or \emph{eventually free}, if for
  some integer $m>0$ the sections in $H^0(Y,L^{\otimes m})$ define a
  morphism to projective space; equivalently, the linear system
  $|L^{\otimes m}|$ is base point free; 
 \item $L$ is \emph{nef} if for every compact algebraic curve $C\subset
   Y$,  $\deg L_{|C}=c_1(L)\cap C\geq 0$;
 \item $L$ is \emph{big} if for some integer $m>0$ the sections in 
   $H^0(Y, L^{\otimes m})$ define a rational map to projective space which is
   birational on its image. 
 \end{enumerate}
\end{definition}
By replacing ample in the definition of a Fano manifold with the weaker condition big and nef we will obtain 
the definition of a weak Fano manifold: see \fullref{dfn:weak_fano}.

See also \fullref{D:l-ample} for the notion of an \emph{$l$--ample} line bundle; 
this is intermediate between semi-ample and ample.
\begin{remark*}
  It is well known that, if $L$ is nef, then $L$ is big if and only if 
\[
L^{\dim Y}:= \int_Y c_1(L)^{\dim Y} >0.
\]
\end{remark*}

Suppose that $L$ is a semi-ample line bundle on a normal projective
variety $Y$.  We denote by $M(Y,L)$ the sub-semigroup
$M(Y,L) = \{m \in \N \,|\, L^{\otimes m} \ \text{is base point free} \}$.
We write $e$ for the ``exponent'' of $M(Y, L)$, that is, the largest
natural number dividing every element of $M(Y, L)$;  
in particular $L^{\otimes ke}$ is free for $k\gg 0$.   
Given $m \in M(Y, L)$, write $X_m = \varphi_{m}(Y)$ for the image of the morphism
$\varphi_m = \varphi_{L^{\otimes m}}\co Y \to \PP H^0(Y,L^{\otimes m})^\vee$. 

The following is a well-known result of Zariski: see Lazarsfeld~\cite[Theorem~2.1.27]{lazarsfeld:positivity}.

\begin{theorem}[Semi-ample fibrations] 
\label{t:semiample}
Let $L$ be a semi-ample bundle on a normal projective variety $Y$.
Then there is an algebraic fibre space $\varphi \co Y \longrightarrow
X$ having the property that for any sufficiently large integer $k\in M(Y,L)$:
\[
X_k = X \quad \text{and} \quad \varphi_k = \varphi.
\]
Furthermore there is an ample line bundle $A$ on $X$ such that
$\varphi^*A = L^{\otimes e}$, where $e$ is the exponent of $M(Y,L)$.
\end{theorem}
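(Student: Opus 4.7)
The plan is to construct $\varphi \co Y \to X$ as the Stein factorisation of $\varphi_{m_0}$ for some sufficiently large $m_0 \in M(Y,L)$, show that every sufficiently large $\varphi_k$ in fact coincides with $\varphi$, and finally descend an ample line bundle from the various $X_m$ to a single ample $A$ on $X$ satisfying $\varphi^* A = L^{\otimes e}$.

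First I fix $m_0 \in M(Y,L)$ large enough that $\dim X_{m_0}$ equals the Iitaka dimension $\kappa(Y,L)$, and take the Stein factorisation of $\varphi_{m_0}$ in the form $Y \xrightarrow{\varphi} X \xrightarrow{g_{m_0}} X_{m_0}$, so that $\varphi_* \mathcal{O}_Y = \mathcal{O}_X$ (whence $\varphi$ is surjective with connected fibres and $X$ is normal projective) and $g_{m_0}$ is finite. This $\varphi$ is the candidate algebraic fibre space.

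Next, for any $k \in M(Y,L)$ I factor $\varphi_k$ through $\varphi$. If $F$ is a fibre of $\varphi$, then every curve $C \subset F$ is contracted by $\varphi_{m_0}$, so $L \cdot C = 0$, meaning $L|_F$ is numerically trivial. Since $L^{\otimes k}$ is globally generated, $L^{\otimes k}|_F$ admits a nowhere-vanishing section, and a numerically trivial line bundle with such a section must be trivial. Therefore $H^0(F, L^{\otimes k}|_F) \cong \bbc$, so $\varphi_k$ is constant on $F$; the universal property of the Stein factorisation (via $\varphi_*\mathcal{O}_Y = \mathcal{O}_X$) produces the desired morphism $g_k \co X \to X_k$ with $\varphi_k = g_k \circ \varphi$, and $g_k$ is finite once $k$ is chosen so that $\dim X_k = \kappa(Y,L)$. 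To upgrade $g_k$ to an isomorphism for sufficiently large $k$, I appeal to finite generation of the section ring $R(Y,L) = \bigoplus_m H^0(Y, L^{\otimes m})$, a standard consequence of semi-ampleness. Finite generation forces the $\Proj$ of the Veronese-type subring $\bigoplus_j H^0(Y, L^{\otimes kj})$ to stabilise; identifying this $\Proj$ with $X_k$ then shows $X_k \cong X$ and $\varphi_k = \varphi$ for all sufficiently large $k \in M(Y,L)$.

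Finally, for each $m \in M(Y,L)$ set $A_m := g_m^* \mathcal{O}_{X_m}(1)$ on $X$, so that $\varphi^* A_m = L^{\otimes m}$. By Bézout, choose $m_1, m_2 \in M(Y,L)$ and integers $\alpha,\beta$ with $\alpha m_1 + \beta m_2 = e$, and put $A := A_{m_1}^{\otimes \alpha} \otimes A_{m_2}^{\otimes \beta}$; then $\varphi^* A = L^{\otimes e}$. Since every $m \in M(Y,L)$ is divisible by $e$, we have $A_m = A^{\otimes m/e}$, and $A_m$ is ample on $X$ as the pullback of the very ample $\mathcal{O}_{X_m}(1)$ under the finite morphism $g_m$. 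Hence a positive power of $A$ is ample, so $A$ itself is ample. The main obstacle is the stabilisation claim in the third paragraph --- that $X_k$ actually equals the Stein factorisation target $X$ rather than merely a finite cover of it --- which rests on finite generation of the section ring and a careful comparison between the normal Stein factorisation and the a priori possibly non-normal image $X_k \subset \bbp H^0(Y,L^{\otimes k})^{\vee}$.
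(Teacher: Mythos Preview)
The paper does not give its own proof of this theorem; it simply attributes the result to Zariski and cites Lazarsfeld~\cite[Theorem~2.1.27]{lazarsfeld:positivity}. So there is no in-paper argument to compare against, but your proposal can still be assessed on its own terms and against the paper's logical structure.

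Your outline has one genuine gap: the circular appeal to finite generation of $R(Y,L)$. In the paper (see Remark~\ref{r:semiample:fg}, immediately following the theorem), finite generation of $R(Y,L)$ is stated as a \emph{corollary} of Theorem~\ref{t:semiample}, and this is also how Lazarsfeld organises things (2.1.27 first, then 2.1.30). So when you write ``I appeal to finite generation of the section ring $R(Y,L)$ \ldots\ a standard consequence of semi-ampleness'' to establish that $g_k \co X \to X_k$ is an isomorphism, you are invoking the very thing the theorem is meant to yield. You flag this as ``the main obstacle'' yourself, but you do not actually close it.

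The rest of your sketch is sound: the Stein factorisation set-up, the argument that $L|_F$ is numerically trivial hence trivial on fibres so $\varphi_k$ factors through $\varphi$, and the B\'ezout construction of $A$ with $\varphi^*A = L^{\otimes e}$ are all fine. What is missing is a direct proof of stabilisation. The standard route (as in Lazarsfeld) compares $X_m$ for different $m \in M(Y,L)$ directly: one shows that for $m, m'$ both in $M(Y,L)$ the natural multiplication map induces a finite birational morphism between (the normalisations of) $X_{mm'}$ and $X_m$, and then Zariski's Main Theorem forces this to be an isomorphism once the targets are normal. This gives stabilisation without assuming finite generation, and finite generation then drops out afterwards. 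Replace your third paragraph with an argument of this shape and the proof will stand.
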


In other words, for $m \gg 0$ the mappings $\varphi_m$ stabilise to
define a fibre space structure on $Y$ (essentially characterised by
the fact that $L^{\otimes e}$ is trivial on the fibres).
\begin{remark}
\label{r:semiample:fg}
A corollary of the previous theorem is the following fact:
if $L$ is a semiample line bundle then  $L$ is finitely generated, that
is, $R(Y,L) := \bigoplus_{m \ge 0}H^{0}(Y,mL)$ is 
a finitely generated $\C$--algebra: see~\cite[2.1.30]{lazarsfeld:positivity}.
\end{remark}

If $L$ is ample (or positive) we have the famous cohomology vanishing theorem due to 
Kodaira~\cite{kodaira} and extended by Akizuki--Nakano~\cite{akizuki:nakano}. 
If $L$ is sufficiently semi-positive then we 
can also obtain similar cohomology vanishing theorems as we now describe.

We begin with the Kawamata--Viehweg vanishing theorem; this requires the weakest positivity assumption:

\begin{theorem}[Kawamata--Viehweg vanishing] 
\label{T:KV:vanish}
Let $L$ be a nef and big line
  bundle on a non-singular projective variety $Y$.
Then $H^i(Y, K_Y\otimes L)=(0)$ for $i>0$.
  Equivalently, by Serre duality, $H^i(Y, L^\vee)=(0)$ for $0\leq
  i<\dim Y$. 
\end{theorem}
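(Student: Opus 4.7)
The plan is to reduce \fullref{T:KV:vanish} to Kodaira's classical vanishing theorem (which we may take as known here, since it applies to the ample case) via a cyclic covering construction. The starting point is \emph{Kodaira's lemma}: since $L$ is nef and big, for any ample line bundle $A$ on $Y$ one has $h^0(Y, L^{\otimes m} \otimes A^\vee) \sim c\, m^{\dim Y}$ for $m \gg 0$, so $L^{\otimes m} \cong A \otimes \oo(N)$ for some effective divisor $N$ and some $m \geq 1$. Thus a large power of $L$ can be written as ``ample plus effective'', and the whole strategy is to remove the effective error at the cost of passing to a branched cover.

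First I would replace $Y$ by a log resolution $\mu\co \widetilde{Y} \to Y$ chosen so that $\mu^*N$ (together with the exceptional locus of $\mu$) has simple normal crossings support; after a small perturbation by an exceptional $\Q$--divisor, we can still write $\mu^*L^{\otimes m}$ as the sum of an ample divisor $\widetilde{A}$ and an effective SNC divisor $\widetilde{N}$. Next, taking the $m$--th root of (a section cutting out) $\widetilde{N}$ yields a cyclic cover $\pi\co Y' \to \widetilde{Y}$ of degree $m$ ramified along $\widetilde{N}$, with the key property that
\[
\pi_* \omega_{Y'} \;\cong\; \bigoplus_{i=0}^{m-1} \omega_{\widetilde{Y}} \otimes \mu^*L^{\otimes i} \otimes \oo\!\left(-\left\lfloor \tfrac{i\widetilde{N}}{m}\right\rfloor\right),
\]
and such that $\pi^*\mu^*L$ becomes ample on $Y'$ (because $m \pi^*\mu^*L$ differs from the ample $\pi^*\widetilde{A}$ by the pullback of $\widetilde{N}$, which is divisible by $m$ after pulling back through the branched cover).

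Applying Kodaira vanishing on the smooth projective variety $Y'$ to the ample bundle $\pi^*\mu^*L$ gives $H^i(Y', \omega_{Y'} \otimes \pi^*\mu^*L) = 0$ for $i > 0$. Pushing forward via $\pi$ (which is finite, hence has no higher direct images of coherent sheaves), and using the decomposition of $\pi_*\omega_{Y'}$ above, the summand corresponding to $i = 1$ (together with the ceiling/floor adjustments being trivial for a general choice of perturbation) gives $H^i(\widetilde{Y}, \omega_{\widetilde{Y}} \otimes \mu^* L) = 0$ for $i > 0$. Finally, to descend to $Y$ itself I would invoke the Grauert--Riemenschneider vanishing $R^j\mu_*\omega_{\widetilde{Y}} = 0$ for $j > 0$ (proved in the same circle of ideas) together with the projection formula and the Leray spectral sequence, yielding $H^i(Y, K_Y \otimes L) = H^i(\widetilde{Y}, \omega_{\widetilde{Y}} \otimes \mu^*L) = 0$ for $i > 0$. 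The Serre duality reformulation is then immediate.

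The main obstacle is the combinatorial/arithmetic bookkeeping in the cyclic cover step: one must choose $m$, the log resolution, and the perturbation carefully so that the floor terms $\lfloor i\widetilde{N}/m \rfloor$ vanish for the relevant summand, and so that $\pi^*\mu^*L$ is genuinely ample (not merely nef and big) on $Y'$. Everything else is formal manipulation with the Leray spectral sequence and projection formula, but this covering/ceiling calculation is the technical heart of the argument and the reason the statement is genuinely stronger than Kodaira's original theorem.
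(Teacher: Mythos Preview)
The paper does not prove \fullref{T:KV:vanish}; it states the result without proof and, in the remark immediately following, refers the reader to Koll\'ar--Mori~\cite[Theorem~2.64]{KM} for the general statement, noting only that ``the proof of even the simplified form require[s] the use of fractional divisors.'' So there is no paper proof to compare your proposal against.

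Your sketch is the standard cyclic-cover reduction to Kodaira vanishing and is broadly correct as an outline. One point you gloss over: the cyclic cover $Y'$ branched along a simple normal crossings divisor is in general \emph{singular} (along the pairwise intersections of the branch components), so you cannot invoke Kodaira vanishing on $Y'$ directly. The usual fixes are either to resolve $Y'$ further and track the discrepancies, or to observe that $Y'$ has at worst toric quotient singularities (hence rational singularities) and that Kodaira vanishing extends to that setting. This is exactly the kind of ``bookkeeping'' you flag at the end, but it is worth naming the issue explicitly since it is where the fractional/round-down machinery the paper alludes to actually enters.
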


\begin{remark*}
  We have stated a simplified form of the vanishing theorem of Kawamata
  and Viehweg that suffices for our purpose. The general statement -- for
example, see Koll\'ar--Mori~\cite[Theorem~2.64]{KM} -- and the proof of even the simplified form, require the use of
  fractional divisors.  
\end{remark*}

In general the Akizuki--Nakano generalisation of Kodaira vanishing fails 
for big and nef line bundles: see Lazarsfeld~\cite[Example~4.3.4]{lazarsfeld:positivity} 
for a big and nef line bundle $L$ on $Y$, the one point blowup of $\CP^{3}$, 
for which $H^{1}(Y, \, \Omega^{1}_Y \otimes L^{\vee}) \neq 0$.
However, we do have the following generalisation of the Akizuki--Nakano
vanishing theorem, due to Sommese and improved by Esnault--Viehweg~\cite[6.6]{ev:vanishing}.
\begin{definition}
\label{D:l-ample}
  A semi-ample line bundle $L$ on a non-singular projective variety $Y$ is
  \emph{$l$--ample} for some integer $l\ge 0$ if the maximum dimension of any
  fibre of the semi-ample fibration $\varphi\co Y \to X$ 
  is $\leq l$.
\end{definition}
An ample line bundle is $0$--ample. 
For $l$--ample line bundles we get Akizuki--Nakano-type vanishing results 
but for a restricted range of cohomology groups that depends on~$l$. 
\begin{theorem}[Sommese--Esnault--Viehweg vanishing]
\label{thm:aknava}
  Let $L$ be an $l$--ample line bundle on a non-singular projective
  variety $Y$ with semi-ample fibration $\varphi\co Y \to X$. Then
\[
H^p(Y, \, \Omega_Y^q \otimes L^{\vee}) = 0, \quad \text{for} \ p+q < 
\min{\{\dim X, \, \dim{Y}-l+1\}}.
\]
In particular, if $L$ is also big  then $\dim Y=\dim X$ and so if $l \ge 1$
vanishing holds when $p+q <\dim{Y}-l+1$. 
\end{theorem}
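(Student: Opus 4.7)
The plan is to reduce the desired vanishing to the classical Akizuki--Nakano vanishing theorem on $X$ applied to the ample line bundle $A$ furnished by \fullref{t:semiample}, using the semi-ample fibration $\varphi \co Y \to X$ together with the fact that every fibre has dimension $\leq l$. Since $\varphi^* A = L^{\otimes e}$, I would first pass via a cyclic covering trick (the degree--$e$ branched cover associated to a sufficiently general smooth divisor in $|L^{\otimes me}|$ for $m \gg 0$) to a situation in which $L$ itself becomes the pullback of an ample line bundle from the base. Under the induced cyclic group action the cohomology $H^p(Y, \Omega_Y^q \otimes L^\vee)$ appears as one of the eigensummands of the corresponding cohomology on the cover, so it suffices to prove vanishing after this base change.

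Next I would use the relative cotangent sequence $0 \to \varphi^* \Omega_X^1 \to \Omega_Y^1 \to \Omega_{Y/X}^1 \to 0$ and its wedge powers to filter $\Omega_Y^q$ with graded pieces $\varphi^* \Omega_X^i \otimes \Omega_{Y/X}^{q-i}$ for $0 \leq i \leq q$. Because $\varphi$ has relative dimension $\leq l$ by $l$-ampleness, $\Omega_{Y/X}^{q-i} = 0$ whenever $q - i > l$, so only the indices $i \geq q - l$ can contribute. The associated long exact sequences reduce the problem to vanishing of $H^p(Y, \varphi^* \Omega_X^i \otimes \Omega_{Y/X}^{q-i} \otimes L^\vee)$ for each such $i$.

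For each graded piece I would then invoke the Leray spectral sequence $E_2^{a,b} = H^a(X, \Omega_X^i \otimes A^\vee \otimes R^b \varphi_* \Omega_{Y/X}^{q-i}) \Rightarrow H^{a+b}(Y, \varphi^* \Omega_X^i \otimes \Omega_{Y/X}^{q-i} \otimes L^\vee)$, valid by the projection formula once the covering trick has ensured that the twist is pulled back from $X$. The fibre dimension bound gives $R^b \varphi_* = 0$ for $b > l$, while Kodaira--Akizuki--Nakano on $X$ applied to the ample bundle $A$ gives the vanishing of $E_2^{a,b}$ whenever $a + i < \dim X$. Combining the constraints $a + b = p$, $b \leq l$, $i \geq q - l$ and $a + i < \dim X$ yields exactly the stated bound $p + q < \min\{\dim X,\, \dim Y - l + 1\}$.

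The hardest part will be twofold: making the covering trick rigorous enough that the total space remains smooth and the eigenspace isolating $H^p(Y, \Omega_Y^q \otimes L^\vee)$ is correctly identified, and controlling the higher direct images $R^b \varphi_* \Omega_{Y/X}^{q-i}$, which in general are only torsion-free coherent sheaves rather than vector bundles. The latter step traditionally requires either Koll\'ar-type torsion-freeness and vanishing theorems or the relative Hodge-theoretic degeneration arguments of Esnault--Viehweg, and it is precisely the delicate interplay between the filtration index $i$, the Leray index $b$, and the fibre dimension bound that produces the two separate bounds $\dim X$ and $\dim Y - l + 1$ in the conclusion. For the complete technical execution I would refer to the proof in Esnault--Viehweg~\cite{ev:vanishing}.
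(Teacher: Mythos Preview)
The paper does not give its own proof of this theorem: it is stated as a cited result from Esnault--Viehweg~\cite[6.6]{ev:vanishing}, so there is nothing to compare your argument against beyond noting that you ultimately defer to the same source.

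That said, your sketch has several genuine gaps that would prevent it from being turned into a proof along the lines you describe. First, the base $X$ of the semi-ample fibration is in general only a normal projective variety, not smooth, so you cannot invoke Akizuki--Nakano vanishing on $X$ for the ample bundle $A$. Second, the sequence $0 \to \varphi^*\Omega_X^1 \to \Omega_Y^1 \to \Omega_{Y/X}^1 \to 0$ is only right-exact when $\varphi$ is not smooth (and here it almost never is), so the induced filtration on $\Omega_Y^q$ does not have the clean graded pieces you claim. Third, the assertion that $\Omega_{Y/X}^{q-i} = 0$ for $q-i > l$ is false: the sheaf $\Omega_{Y/X}^1$ is a coherent sheaf whose rank can jump over the discriminant locus, and its exterior powers need not vanish beyond the maximal fibre dimension. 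The actual Esnault--Viehweg argument proceeds instead via logarithmic de~Rham complexes and $E_1$--degeneration of Hodge--to--de~Rham type spectral sequences, which is how the singularity of $X$ and the non-smoothness of $\varphi$ are circumvented; your filtration/Leray outline captures the combinatorial shape of the bound but not the mechanism that makes it work. Since the paper treats this as a black-box citation, simply citing \cite[6.6]{ev:vanishing} is the correct move here.
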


For ample line bundles $L$ we  have the Lefschetz hyperplane theorem
that relates the topology of sections of $L$ to the topology of $Y$.
For general big and nef line bundles the Lefschetz hyperplane theorem is false. 
However, there is a good generalisation of the Lefschetz hyperplane
theorem to the case of a line bundle that defines a semi-small
morphism, due -- in its strongest and most general form -- to Goresky and MacPherson.

We begin with the following, which we take from Goresky--MacPherson~\cite[page~151]{goresky:macpherson}.

\begin{definition}
\label{D:semismall:small}
Let  $f\co Y\to X$ be a projective morphism of projective varieties (not necessarily of the same dimension) 
and for any non-negative integer $k$ write 
\[X^{k} = \{ x\in X \mid \dim f^{-1}(x)=k\}.
\]
We say that $f$ is \emph{semi-small} if 
\[\dim{Y} - \dim{X^{k}}  \ge 2k  \quad \text{for every }k \ge 0.\]
Equivalently, $f$ is semi-small if and only if there is
    no irreducible subvariety $E\subset Y$ such that $2\dim E -\dim
    f (E)>\dim Y$.
\end{definition}

\begin{remark}
\label{r:lef:1ample}
If $L$ is a semi-ample line bundle on a non-singular projective 3--fold $Y$ and the semi-ample fibration 
$\varphi \co Y \to X$ is birational then $L$ is semi-small if and only
if $L$ is $1$--ample. 
\end{remark}

\noindent

The following Lefschetz theorem for semi-small morphisms is a
more-or-less immediate consequence of Goresky--MacPherson's ``relative
Lefschetz hyperplane theorem with large fibers''~\cite[Theorem~1.1, page~150]{goresky:macpherson}.

\begin{prop}
\label{prop:lef}
Let $Y$ be a non-singular projective variety of complex dimension
$n=\dim_\CC Y$, $f\co Y \to \PP^N$ a semi-small morphism, and $S\in
|f^\star \mathcal{O}_{\PP^N}(1)|$ a non-singular member. Then, the restriction map
\[
H^m(Y;\ZZ) \to H^m(S;\ZZ)
\]
 is an isomorphism for $m<n-1$ and is primitive injective for $m=n-1$.  
\end{prop}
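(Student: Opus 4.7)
The plan is to deduce the proposition directly from Goresky--MacPherson's relative Lefschetz hyperplane theorem with large fibres \cite[Theorem~1.1, page~150]{goresky:macpherson}, as the sentence preceding the proposition already indicates. Their result measures a ``defect'' of the form $\phi = \max_{k \ge 1}(\dim X^k + 2k - \dim Y)$ on the stratification of $Y$ by the fibre dimensions of $f$, and gives vanishing of the relative cohomology $H^m(Y, f^{-1}(H); \ZZ)$ for $m \le n - 1 - \phi$. By \fullref{D:semismall:small} the semi-small hypothesis is exactly $\phi \le 0$, so their inequality becomes the familiar Lefschetz range $m \le n - 1$.

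First I would reduce to the case $S = f^{-1}(H)$ for a generic hyperplane $H \subset \PP^N$. Pulling back the complete linear system of hyperplanes via $f$ gives a linear subsystem of $|f^\star \mathcal{O}(1)|$, a generic member of which is smooth by a Bertini-type argument applied on the image of $f$; conversely, any two smooth members of $|f^\star \mathcal{O}(1)|$ are connected by a path in the smooth locus of the linear system (which has complex codimension at least one complement), hence embed into $Y$ as diffeomorphic submanifolds. Consequently the restriction map on integral cohomology is independent, up to isomorphism of source and target, of the choice of smooth representative $S$.

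Next I would invoke Goresky--MacPherson to obtain $H^m(Y, S; \ZZ) = 0$ for all $m \le n - 1$. The long exact sequence of the pair $(Y, S)$ then yields at once the isomorphism $H^m(Y;\ZZ) \xrightarrow{\sim} H^m(S;\ZZ)$ for $m < n-1$ and injectivity for $m = n - 1$. For the primitivity claim in degree $n - 1$, interpreted in the natural integral sense as torsion-freeness of the cokernel, I would note that the underlying Morse-theoretic proof of Goresky--MacPherson in fact produces $H_m(Y, S; \ZZ) = 0$ for $m \le n - 1$; by the universal coefficient theorem this forces $H^n(Y, S; \ZZ) \cong \Hom(H_n(Y, S), \ZZ)$ to be torsion-free, and the cokernel in question is the image of the coboundary $H^{n-1}(S) \to H^n(Y, S)$, hence a torsion-free subgroup.

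The main obstacle, and the reason the deduction is only ``more-or-less'' immediate, is translating Goresky--MacPherson's framework -- phrased in terms of a Whitney stratification and a rectified-homotopical-depth defect on a complex-analytic morphism to projective space -- into the concrete language of \fullref{D:semismall:small}. Once the strata are identified with the loci $X^k$ and the Morse-theoretic defect is checked to coincide with $\max_k(\dim X^k + 2k - \dim Y)$, the entire proposition follows by routine bookkeeping on the long exact sequence of the pair.
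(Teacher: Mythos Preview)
Your proof is correct and follows essentially the same approach as the paper: both invoke Goresky--MacPherson to obtain $H_m(Y,S;\ZZ)=0$ for $m\le n-1$, then read off the isomorphism and injectivity from the long exact sequence and deduce primitivity in degree $n-1$ from the universal coefficient theorem (torsion in $H^n(Y,S)$ comes from torsion in $H_{n-1}(Y,S)=0$). The only minor difference is that you insert an explicit reduction to $S=f^{-1}(H)$ for generic $H$ via a path-connectedness argument in the smooth locus of the linear system, whereas the paper bypasses this by citing Remark~(1) on page~151 of Goresky--MacPherson, which already allows replacing the generic $H_\delta$ by the given~$H$.
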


\begin{proof}
  All statements follow from the fact that $H_m(Y,S;\ZZ)=(0)$ for
  $m\leq n-1$. This fact is an immediate consequence of~\cite[Theorem~1.1,
  page~150]{goresky:macpherson}. Here we are applying
  the statement with their $(X,H)$ being our $(Y,S)$. The assumptions
  are satisfied because the morphism $Y\to X$ is semi-small, see
  \emph{loc.\ cit.}\ Remark~(2), page 151. Note that by \emph{loc.\ cit.}\ 
  Remark~(1), page 151, we are allowed to replace $H_\delta$ with~$H$.
  In summary the conclusion is that the usual statement of the
  Lefschetz theorem holds in our case for the pair $(Y,S)$.

We discuss in some further detail the statement of primitivity of the inclusion.
Consider the long exact sequence of cohomology of the pair $(Y,S)$:
\[
\cdots \to H^{n-1}(Y;\ZZ) \stackrel{\rho}{\rightarrow} H^{n-1}(S;\ZZ)
\stackrel{\delta}{\rightarrow} H^{n}(Y,S;\ZZ)\to H^{n}(Y;\ZZ) \to \cdots \,.
\]
Notice that $\Imag(\rho)$ is primitive iff $\mbox{Coker}(\rho)$ is
torsion-free, which is equivalent to $\Imag(\delta)$ being
torsion-free.  It is thus enough to show that $H^n(Y,S)$ is
torsion-free. By the universal coefficient theorem, the torsion of
this group is isomorphic to the torsion of $H_{n-1}(Y,S)$, which is
trivial by what we said. 
\end{proof}

\begin{remark*}
We will use \fullref{prop:lef} in the proof of \fullref{prop:block_from_weak}(iii) 
(see also \fullref{lem:lef}) 
to show that anticanonical sections of a semi-Fano 3--fold $Y$ are
$\Pic{Y}$--polarised K3 surfaces.
\end{remark*}

\subsection*{Weak Fano 3--folds via resolutions of singularities}
We will see shortly that every smooth weak Fano 3--fold $Y$ -- one of the main objects 
of interest in this paper -- can be obtained as a special type of resolution 
of a mildly singular Fano 3--fold: see \fullref{R:weak:fano:ac:model}.
For this reason even though 
we are interested in constructing smooth weak Fano 3--folds we will 
need to deal with certain mildly singular 3--folds. 
This forces us to address several issues that arise only on singular varieties, 
for example, the fact that on a singular complex variety not every Weil divisor need be Cartier 
plays an important role in this paper. 

Moreover, while resolutions of singularities exist very generally, the 
special sort of resolutions required to produce smooth weak Fanos from 
singular Fanos impose severe restrictions on the type of singularities we should consider. 
This leads us to consider in detail Gorenstein canonical and terminal singularities 
and special types of resolution of such singularities: so-called crepant and small resolutions. 
The existence of crepant and small resolutions is a delicate issue in general, 
as we will try to explain, but it is central to the construction 
of smooth weak Fano 3--folds from singular Fano 3--folds.

\subsection*{Divisors on singular varieties}
We begin with some generalities about divisors on singular varieties; 
this issue comes up because we are forced to work with singular varieties.

We denote by $\Cl X$, the \emph{class group} of Weil divisors on $X$
modulo linear equivalence, and by $\Pic X$ the Picard group of Cartier
divisors on $X$ modulo linear equivalence.  A variety is
\emph{factorial} if every Weil divisor is Cartier or
\emph{$\Q$--factorial} if some integer multiple of every Weil divisor
is Cartier.  Being $\Q$--factorial is a local property in the Zariski
topology of $X$, not the analytic topology. On any normal complex
variety we can define the \emph{canonical divisor} $K_{X}$ (by
extension from the regular part using the normality assumption) as a
Weil divisor (unique up to linear equivalence).  In general $K_{X}$ is
not a Cartier divisor; we say that $X$ is \emph{Gorenstein}
  (respectively $\Q$--Gorenstein) if $K_{X}$ is
Cartier (respectively there exists some $j\in \N$ so that $jK_{X}$ is
Cartier) and $X$ is Cohen--Macaulay.

\begin{convention}
In the rest of the paper we assume all our varieties to be normal  and Gorenstein, 
but many of the varieties we encounter will be neither factorial nor $\Q$--factorial. 
\end{convention}

\subsection*{Small projective birational morphisms and resolution of singularities}
Let $X$ and $Y$ be normal complex algebraic varieties both of
dimension $n$.  Given a projective birational morphism $f\co Y \ra X$, define
the $f$--exceptional set $E:=\ex(f)$ to be the closed subset where $f$
is not a local isomorphism. $f$ is surjective, $E=f^{-1}(f(E))$ and
$\codim_{X}f(E)\ge 2$.

\begin{definition}
\label{D:small}
  We call a projective birational morphism $f\co Y \ra X$ $\emph{small}$ if
  the exceptional set $E=\ex(f)$ is of (complex) codimension at least
  $2$.
\end{definition}
Small projective birational morphisms and particularly projective small resolutions 
(that is, when $Y$ is non-singular: see \fullref{d:crep:small:res}) 
play important roles in this paper. \fullref{E:ODP} gives the simplest -- and for this paper the most important -- example 
of a small resolution.
\begin{remark}
\label{R:q:fact:not:small}
If $X$ is $\Q$--factorial every irreducible component of $E$ has
codimension $1$ \cite[Section~1.40]{debarre:book}; in particular, if
$X$ is $\Q$--factorial then a projective birational morphism $f\co Y \ra X$ is
never small.
In other words, if there exists \emph{any} projective birational morphism
$f \co Y \to X$ which is small 
(but not an isomorphism) then $X$ cannot be $\Q$--factorial; 
this forces us to deal with singular varieties that are not $\Q$--factorial.
\end{remark}

By \fullref{R:q:fact:not:small} if we are interested in small projective
birational morphisms $f  \co Y \to X$ 
then $X$ is forced to be non $\Q$--factorial. We now want to explain in
detail the intimate link between non--$\Q$--Cartier divisors 
$D' \in \Cl(X)$ and small projective birational morphisms to $X$. 

The following elementary lemma makes this connection precise:
see, for example, Kawamata~\cite[Lemma~3.1]{kawamata1988crepant} or Koll\'ar's survey~\cite[Proposition~6.1.2]{kollar:flips:jdg}.

\begin{lemma}
\label{l:small:morphism}
Let $f \co Y \to X$ be a small projective birational morphism which is not an isomorphism 
and let $D$ be an $f$--ample (see \fullref{r:f:ample}) Cartier divisor on $Y$. 
Then the following hold:
\begin{enumerate}
\item $mf_{*}D$ is not Cartier if $m>0$;
\item $f_{*}\mathcal{O}_{Y}(mD) = \mathcal{O}_{X}(mf_{*}D)$ for $m\ge 0$, and 
\item
$R(X,f_{*}D):= \bigoplus_{m\ge 0}{\oo}_{X}(mf_{*}D)$ is a finitely generated
$\mathcal{O}_{X}$--algebra 
and $Y$ is recovered from $R(X,f_{*}D)$ by taking $\Proj R(X,f_{*}D)$.
\end{enumerate}
Conversely, let $D'$ be a Weil divisor on $X$ which is not $\Q$--Cartier and for which 
\[R(X,D'):=\bigoplus_{m\ge 0}{\mathcal{O}_{X}(mD')}\]
is a finitely generated $\mathcal{O}_{X}$--algebra. 
Then $Y := \Proj R(X,D')$ is a normal projective variety, 
the projection map $f \co Y \to X$ is a small projective birational
morphism and $f_{*}^{-1}D'$ is $f$--ample (and $\Q$--Cartier).
\end{lemma}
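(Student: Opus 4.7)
The plan is to prove both directions separately, using Hartogs-type extension of reflexive sheaves on normal varieties, the defining positivity property of $f$--ample divisors on contracted curves, and the universal property of relative $\Proj$.

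For the forward direction, assume $f\co Y\to X$ is small projective birational (not an isomorphism) and $D$ is an $f$--ample Cartier divisor on $Y$.  For~(i), if $mf_{*}D$ were Cartier for some $m>0$, its pullback $f^{*}(mf_{*}D)$ would be a Cartier divisor on $Y$ agreeing with $mD$ on the complement of $\ex(f)$; since $\ex(f)$ has codimension $\ge 2$ in the normal variety $Y$, Hartogs would force $f^{*}(mf_{*}D)=mD$.  But any $f$--contracted curve $C$ (which exists because $\ex(f)\neq\emptyset$ and $f$ is projective) satisfies $f^{*}(\text{anything})\cdot C=0$ yet $D\cdot C>0$ by $f$--ampleness, a contradiction.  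For~(ii), both $f_{*}\oo_{Y}(mD)$ and $\oo_{X}(mf_{*}D)$ are reflexive rank-one sheaves on the normal variety~$X$ (the former because $\oo_{Y}(mD)$ is locally free and $f$ is small, the latter by the standard construction of reflexive sheaves from Weil divisors) which agree over the complement of the codimension $\ge 2$ subset $f(\ex(f))$, hence they coincide.  For~(iii), (ii) gives $R(X,f_{*}D)=\bigoplus_{m}f_{*}\oo_{Y}(mD)=f_{*}R(Y,D)$; $f$--ampleness identifies $Y$ with $\Proj_{X}f_{*}R(Y,D)$, and finite generation over $\oo_{X}$ is local on $X$ and is immediate from relative ampleness.

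For the converse, set $Y:=\Proj_{X}R(X,D')$; finite generation of $R(X,D')$ makes $f\co Y\to X$ projective.  Let $U\subset X$ be the open subset over which the reflexive sheaf $\oo_{X}(D')$ is locally free; its complement has codimension $\ge 2$ in $X$ by reflexivity.  Over $U$, $R(X,D')|_{U}=\mathrm{Sym}^{\bullet}\oo_{U}(D')$ is the symmetric algebra on a line bundle, so its $\Proj$ is $U$ itself; thus $f$ restricts to an isomorphism over $U$, and in particular $f$ is birational.  After passing to a sufficiently divisible Veronese subalgebra (which does not change $Y$), the graded algebra $R(X,D')$ is generated in degree one and the tautological sheaf $\oo_{Y}(1)$ corresponds to an $f$--very-ample Cartier divisor $H$ on $Y$ satisfying $H|_{f^{-1}(U)}=D'|_{U}$.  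Therefore $H=f_{*}^{-1}D'$ as Weil divisors, giving the desired Cartier and $f$--ample structure on $f_{*}^{-1}D'$.  Normality of $Y$ follows because each graded piece $\oo_{X}(mD')$ is reflexive on the normal variety $X$, so every affine chart of $Y$ is the degree-zero part of a localisation of a normal graded ring.

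The main obstacle is proving that $f$ is small in the converse direction: since the fibres of $f$ can in principle jump in dimension over $X\setminus U$, it is not formal that $\ex(f)$ has codimension $\ge 2$ in $Y$.  Suppose for contradiction that $E\subset Y$ were a prime $f$--exceptional divisor; then $\oo_{Y}(mH-E)$ and $\oo_{Y}(mH)$ agree on $f^{-1}(U)$ for every $m\ge 0$.  Pushing the inclusion $\oo_{Y}(mH-E)\into\oo_{Y}(mH)$ forward to $X$ yields an inclusion of reflexive rank-one sheaves that is an isomorphism over $U$, hence globally.  Consequently every local section of $\oo_{Y}(mH)$ vanishes along $E$.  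However, since $mH$ is $f$--very-ample for $m$ large, the surjection $f^{*}f_{*}\oo_{Y}(mH)\to\oo_{Y}(mH)$ implies that at the generic point of $E$ some local section does not vanish, a contradiction.  This establishes smallness and completes the proof.
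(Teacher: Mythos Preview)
The paper does not supply its own proof of this lemma; it simply cites Kawamata and Koll\'ar. Your proposal is therefore the only argument on the table, and most of it is sound: the forward direction is correct, and in the converse direction your treatment of projectivity, birationality, normality, and $f$--ampleness of $f_{*}^{-1}D'$ is essentially right (with the minor caveat that after passing to the $d$\textsuperscript{th} Veronese the tautological $\mathcal{O}_{Y}(1)$ corresponds to $dD'$ rather than $D'$, so you only get that $f_{*}^{-1}D'$ is $\QQ$--Cartier---which is exactly what the statement claims).

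There is, however, a genuine gap in your smallness argument. You assert that $f_{*}\mathcal{O}_{Y}(mH-E)$ is reflexive, so that the inclusion $f_{*}\mathcal{O}_{Y}(mH-E)\hookrightarrow f_{*}\mathcal{O}_{Y}(mH)$, being an isomorphism over $U$, is an isomorphism globally. But the pushforward of a reflexive sheaf under a proper birational morphism need not be reflexive: already for the blow-up of a smooth point on a surface with exceptional divisor $E$, one has $f_{*}\mathcal{O}_{Y}(-E)=\mathfrak{m}_{p}$, which is not reflexive. Since you do not yet know $f$ is small, you cannot invoke the codimension-two argument on $Y$ that makes this work in the forward direction. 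In fact the conclusion you draw---that every local section of $\mathcal{O}_{Y}(mH)$ vanishes along $E$---is simply false in general.

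The fix is to replace $mH-E$ by $mH+E$. Then one has the chain
\[
f_{*}\mathcal{O}_{Y}(mH)\ \subset\ f_{*}\mathcal{O}_{Y}(mH+E)\ \subset\ \mathcal{O}_{X}\bigl(f_{*}(mH+E)\bigr)\ =\ \mathcal{O}_{X}(mdD')\ =\ f_{*}\mathcal{O}_{Y}(mH),
\]
where the second inclusion is the general fact $f_{*}\mathcal{O}_{Y}(D)\subset\mathcal{O}_{X}(f_{*}D)$ and the last equality holds for $m\gg 0$ by the standard identification $f_{*}\mathcal{O}_{Y}(m)=R_{m}$. Hence $f_{*}\mathcal{O}_{Y}(mH+E)=f_{*}\mathcal{O}_{Y}(mH)$. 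For $m\gg 0$ the sheaf $\mathcal{O}_{Y}(mH+E)$ is $f$--globally generated (relative Serre), but its pushforward coincides with that of $\mathcal{O}_{Y}(mH)$, so the evaluation map $f^{*}f_{*}\mathcal{O}_{Y}(mH+E)\to\mathcal{O}_{Y}(mH+E)$ factors through the proper subsheaf $\mathcal{O}_{Y}(mH)$ and cannot be surjective---a contradiction. This is the argument in the cited references.
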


\begin{remark}
\label{r:f:ample}
Given a projective morphism $f\co Y \ra X$ of varieties there is a general
notion of $f$--ampleness or ampleness  of a divisor relative to the
morphism $f$: see, for example, Lazarsfeld~\cite[Section~1.7]{lazarsfeld:positivity}.
Rather than give the general definition we recall the relative version of
the Nakai criterion: a divisor $D$ is \emph{$f$--ample} if and only if
$D^{\dim{V}}\cdot V >0 $ for every irreducible subvariety $V \subset Y$
of positive dimension which maps to a point under~$f$.
\end{remark}

\begin{remark*}
  Suppose that $-D'=Z\geq 0$ is effective; then
  $\oo_X(D^\prime)=I_Z\subset \oo_X$ is the ideal sheaf of $Z\subset
  X$, and then the $m$\textsuperscript{th} symbolic power of $I_Z$ is
  $I_Z^{(m)} \cong \oo(mD')$.  (For this reason the algebra $R(X,D')$ is
  called the symbolic power algebra of $D'$.)  If, in addition, we
  assume that the sheaf of algebras $\bigoplus_{m \ge 0}\oo_{X}(mD')$ is
  generated by $\oo_{X}(D') = I_{Z} \subset \oo_{X}$, that is,
  generated in degree $1$, then $Y=Bl_{I_{Z}}X$ is the
  blow up of the ideal of $Z$.  In other words, in this case we can
  get $Y$ by blowing up the non--$\Q$--Cartier divisor $Z\subset X$. In
  fact, this will be the case in all the examples considered in this
  paper: see \fullref{sec:examples}.

  To motivate the construction, recall first the universal property of
  blowups.  The blowup of $X$ in a subvariety (or, more generally,
  closed subscheme) $S\subset X$ with ideal $I_S\subset \oo_X$ is a
  morphism $\pi \co X' \to X$ such that the ideal
  $\pi^{-1}(I_S)\cdot \oo_{X'}\subset \oo_{X'}$ is a Cartier divisor
  on $X'$ and such that for any morphism $f \co Y \to X$ with
  $f^{-1}(I_S)\cdot \oo_Y$ a Cartier divisor on $Y$ there exists a
  unique morphism $\rho \co Y \to X'$ such that $f = \pi \circ
  \rho$. Informally: the blowup of $S \subset X$ is the ``smallest'' morphism to $X$ that
  turns $S$ into a Cartier divisor. In particular blowing up a Cartier
  divisor $D \subset X$ can only induce an isomorphism of~$X$.
  However, if $X$ is not $\Q$--factorial then blowing up a Weil divisor
  $Z$ in $X$ that is not $\Q$--Cartier must induce a non-trivial
  birational morphism to $X$ -- because it converts the Weil divisor
  $Z$ into a Cartier divisor in the blowup.  Since $Z$ is of
  codimension 1 in $X$ we expect that this birational morphism will
  not alter $X$ too much; in fact, when the symbolic algebra of $I_Z$
  is generated by $I_Z$, \fullref{l:small:morphism} states that the
  induced birational morphism is \emph{small} in the sense of
  \fullref{D:small}.
\end{remark*}

\begin{remark*}
To obtain a small projective birational morphism from a non--$\Q$--Cartier divisor $D' \in \Cl(X)$ as above we need to 
know that the symbolic power algebra $R(X,D')$ is a finitely generated
$\oo_{X}$--algebra. This is not true in general. 
However, for 3--folds with mild singularities Kawamata has shown that this is always true; 
we discuss this in more detail below in \fullref{t:kawamata:fg}, after we have introduced 
appropriate classes of mildly singular 3--folds.
\end{remark*}

\begin{remark}
In general, blowing up different non--$\Q$--Cartier divisors $D' \in \Cl(X)$ as in \fullref{l:small:morphism}
may give rise to the same small projective birational morphism $f \co Y \to X$. 
\end{remark}

In this paper given some mildly singular variety $X$ we will be particularly interested in constructing (special kinds of) 
projective birational morphisms $f \co Y \to X$ where $Y$ is \emph{non-singular}.

\begin{definition}
  A \emph{resolution of singularities} or \emph{desingularisation} of
  $X$ is a projective birational morphism $f\co Y \ra X$ where $Y$ is non-singular.
\end{definition}
The so-called \emph{ramification formula} for a resolution of
singularities compares $K_{Y}$ to the pullback $f^{*}K_{X}$ and states
that there exist unique integers $a_{i}\in \Z$ (recall that we always
assume $K_X$ Cartier) so that
\begin{equation}
\label{E:ramify}
K_{Y}-f^{*}K_{X}= \sum_{i}a_{i}E_{i},
\end{equation}
where $E_{i}$ are the \emph{exceptional divisors} of $f$, that is, the
irreducible components of the exceptional set $E$ of codimension
$1$. The divisor $\sum_{i}{a_{i}E_{i}}$ is sometimes called the
\emph{discrepancy} of $f$.
\noindent
We say:
\begin{definition}
\label{d:crep:small:res}
A resolution $f\co Y \ra X$ is \emph{crepant} if $K_{Y}=f^{*}K_{X}$, that
is, if all
the coefficients $a_{i}\in \Z$ in \eqref{E:ramify} vanish. 

A \emph{small resolution} $f\co Y \to X$ is a resolution of singularities in which 
the projective birational morphism $f$ is small in the sense of \fullref{D:small}.
\end{definition}
\begin{remark*}\hfill
\begin{enumerate}
\item
  If $f\co Y \ra X$ is a small resolution then the exceptional set
  $E$ contains no divisors and hence $K_{Y}=f^{*}K_{X}$; so any
  small resolution is crepant. 
\item
In general a crepant resolution  need not be small, however, see \fullref{R:terminal:crepant}(ii).
\item
A resolution of singularities always exists (at least in characteristic $0$) by Hironaka, 
whereas crepant or small resolutions exist only in very special circumstances. 
We will see below that the existence of a crepant or small resolution of $X$ imposes 
strong constraints on the singularities $X$ may have (for example, see \fullref{R:terminal:crepant}). 
Moreover, even when $X$ satisfies these constraints 
determining whether a given (mildly) singular variety $X$ admits 
a projective crepant or small resolution can be very delicate.
\end{enumerate}
\end{remark*}

\subsection*{Terminal and canonical 3--fold Gorenstein singularities}
One of the standard ways to define various classes of singularities is by assumptions on the coefficients $a_{i}$ 
that appear in the ramification formula above. In this spirit we say:  
\begin{definition}
\label{D:terminal}
A normal Gorenstein variety $X$ has \emph{terminal} (respectively
\emph{canonical}) singularities if for a given resolution of
singularities $f\co Y \ra X$ all the coefficients $a_{i} \in \Z$ in
\eqref{E:ramify} are positive (respectively non-negative).
\end{definition}
One can show that this definition does not depend on the resolution $f$ we chose.
Numerous other equivalent definitions of terminal and canonical
singularities can be found in Reid~\cite{YPG}, which we recommend for an
introduction to 3--fold terminal or canonical singularities.
\begin{remark}\hfill{}\nolinebreak
  \label{R:terminal:crepant}
\begin{enumerate}
\item If $X$ admits a crepant resolution $f$ then \eqref{E:ramify}
  holds with all coefficients $a_{i}=0$ and hence $X$ has canonical
  singularities.  If $X$ admits a small resolution $f$ then since the
  exceptional set contains no divisors, $f$ vacuously satisfies the
  condition in \fullref{D:terminal} and so $X$ has terminal
  singularities.
\item If $X$ has terminal singularities then it follows immediately
  from \eqref{E:ramify} and the definitions that any crepant
  resolution must be small. In other words, if $X$ has terminal 
  singularities then a resolution of $X$ is crepant if and only if it is small.
\item If $X$ is $\Q$--factorial with terminal singularities then $X$
  admits no crepant (necessarily small by the previous remark)
  resolutions by \fullref{R:q:fact:not:small}.
\item In dimension two terminal points are non-singular.  Any canonical
  singularity of a normal surface is (locally analytically) equivalent
  to a Du Val singularity, that is, to a hypersurface singularity in
  $\C^{3}$ of type $A_{n}$, $D_{n}$ ($n\ge 4$), $E_{6}, E_{7}$ or
  $E_{8}$ (see Reid~\cite[Table~0.2]{reid:canonical} for a list of defining
  polynomials); Du Val singularities are the same as rational double
  points.
\item A terminal normal 3--fold (respectively $k$--fold with $k\ge 3$)
  has only isolated singularities (respectively at worst codimension
  $3$ singularities); canonical 3--folds have in general
  $1$--dimensional singular loci.
\item One can prove that the notions of terminal and canonical
  singularities are (algebraically or analytically) local (see
Matsuki~\cite[4.1.2(iii)]{matsuki}); see also \fullref{P:cdv:terminal} for a concrete local characterisation of
  Gorenstein terminal 3--fold singularities.

\end{enumerate}
\end{remark}
The simplest example of a 3--fold Gorenstein terminal singularity is the
ordinary double point or ordinary node; this singularity will play a
crucial role in the paper.
\begin{example}[The 3--fold ordinary double point]\
\label{E:ODP}
Define a hypersurface $X \subset \C^{4}$ by 
\[
X:= \big\{ (z_{1},z_{2},z_{3},z_{4}) \in \C^{4} \mid z_{1} z_{2} = z_{3} z_{4}\,\big\}.
\]
$X$ is the affine cone over the quadric $Q \simeq \CP^{1} \times
\CP^{1} \subset \CP^{3}$.  $X$ is non-singular away from the origin $0$
where it has an isolated singular point, called the \emph{ordinary
  double point} (ODP for short) or \emph{ordinary node}. Blowing up the origin yields
a non-singular variety $\wtilde{X}$ and a resolution of
singularities \mbox{$\pi\co \wtilde{X} \ra X$} whose exceptional
set $E$ is isomorphic to $\CP^{1}\times \CP^{1}$, but $\pi$ is not a
crepant resolution. $\CP^{1}\times \CP^{1}$ has two rulings and we can
contract the fibres of either ruling; this yields two other
resolutions \mbox{$\pi_{\pm}\co \hat{X}^{\pm} \ra X$} whose exceptional
set $E^{\pm}$ is isomorphic to $\CP^{1}$ with normal bundle
\mbox{$\mathcal{O}(-1)\oplus \mathcal{O}(-1)$}.  These two small
resolutions of $X$ can be realised concretely as complete
intersections in $\C^{4}\times \CP^{1}$ as follows
\begin{subequations}
\begin{align}
  \hat{X}^{+} &=  \big\{(z,[x_{1},x_{2}]) \in \C^{4}\times \CP^{1}\mid z_{1}x_{2}= z_{4}x_{1}, z_{3}x_{2} = z_{2}x_{1}\big\},\\
  \hat{X}^{-} &= \big\{(z,[y_{1},y_{2}]) \in \C^{4}\times
  \CP^{1}\mid z_{1}y_{2}= z_{3}y_{1},  z_{4}y_{2} =
  z_{2}y_{1}\big\},
\end{align}
\end{subequations}
where $\pi^{\pm}\co \hat{X}^{\pm} \ra X$  are the restrictions of the obvious projection 
$\C^{4}\times \CP^{1} \ra \C^{4}$.
Since $X$ admits small resolutions the origin is a (Gorenstein) terminal singularity.
\end{example}
\begin{remark*}
Affine cones over other del Pezzo surfaces give rise to canonical (non-terminal) 3--fold singularities.
For example, take a non-singular del Pezzo surface $S$ in $\CP^{8}$ isomorphic to the $1$--point blowup of $\CP^{2}$. 
Then the vertex of the affine cone over $S$ in $\C^{9}$ is an isolated canonical 3--fold singularity. 
This example illustrates that, 
unlike the ordinary double point (and Gorenstein terminal 3--fold singularities more generally see below),  
isolated Gorenstein canonical 3--fold singularities need not be of hypersurface type.
\end{remark*}

\begin{definition}
\label{D:nodal:3fold}
We call a (normal Gorenstein) terminal projective 3--fold $X$ a \emph{nodal 3--fold} 
if each of its singular points $P\in X$ is (locally analytically) equivalent to the ordinary double point of \fullref{E:ODP}.
\end{definition}
The Gorenstein terminal 3--fold singularities were classified by Reid~\cite{reid:canonical}.
To state Reid's classification we recall the notion of a cDV singularity.
\begin{definition}
\label{def:cDV}
 A \mbox{3--fold} singularity $P\in X$ is \emph{cDV} -- \emph{compound
Du Val} -- if a general  local analytic surface section $P\in S \subset X$ has Du Val
singularities. Equivalently, $P\in X$ is cDV if it is (locally analytically) equivalent to a hypersurface singularity given by 
\[
f + tg =0,
\]
where $f \in \C[x,y,z]$ defines a Du Val singularity and $g \in
\C[x,y,z,t]$ is arbitrary. Note that a general cDV singularity need
not be isolated.

A cDV singularity $P\in X$ is said to be of type $cA_{n}$, $cD_{n}$,
$cE_{n}$ according to the Du Val singularity type of a sufficiently
general surface section $S$ through $P$.
\end{definition}
\begin{prop}
\label{P:cdv:terminal}
Every cDV singularity of a Gorenstein 3--fold is canonical and the
Gorenstein terminal 3--fold singularities are precisely the isolated
cDV singularities; in particular Gorenstein terminal 3--fold
singularities are all hypersurface double point singularities.
\end{prop}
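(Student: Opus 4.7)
The plan is to prove the three assertions via adjunction arguments, leveraging the classical fact that Du Val surface singularities are precisely the normal Gorenstein canonical surface singularities (i.e.\ the rational double points).

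First I would show cDV implies canonical. Given $P \in X$ cDV, take a general local surface section $S \subset X$ through $P$, which by definition has Du Val singularities. Choose a resolution $f \co Y \to X$ on which the proper transform $T$ of $S$ is smooth and meets the exceptional divisors $E_i$ transversally, so that $g := f|_T \co T \to S$ is a resolution of singularities. By a standard adjunction comparison (using that $S$ is Cartier, so the two adjunction identities $K_S = (K_X+S)|_S$ and $K_T = (K_Y+T)|_T$ combine to relate the discrepancies on $Y$ to those on $T$), the discrepancy of each exceptional divisor $E_i$ of $f$ lying over $P$ equals the discrepancy of its restriction $E_i \cap T$ as an exceptional curve of $g$. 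Since $(S, P)$ is Du Val hence canonical, the latter discrepancies are all non-negative; as $S$ is a general member of a large family of surface sections, every exceptional divisor over $P$ meets some such $T$ non-trivially, so each coefficient $a_i$ in $K_Y = f^*K_X + \sum_i a_i E_i$ must be $\ge 0$, proving $(X, P)$ is canonical.

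Next I would handle the equivalence between Gorenstein terminal 3-fold singularities and isolated cDV singularities. Terminal 3-folds have only isolated singularities by \fullref{R:terminal:crepant}(v). For terminal $\Rightarrow$ cDV I take a general surface section $S \ni P$; it is normal by Bertini and Gorenstein because it is Cartier in the Gorenstein variety $X$. The same adjunction comparison (now with $a_i > 0$) gives that the discrepancies of $g \co T \to S$ are non-negative, so $(S, P)$ is a normal Gorenstein canonical surface singularity, hence Du Val, and $P$ is therefore cDV. Conversely, for isolated cDV $\Rightarrow$ terminal, canonicity has already been established; to upgrade to terminal I argue by contradiction. If $a_{i_0} = 0$ for some $E_{i_0}$ mapping to $P$, then the corresponding exceptional curve $F := E_{i_0} \cap T$ of $g$ has discrepancy $0$, hence lies on the minimal resolution of the Du Val surface $S$. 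Varying $S$ among general surface sections through $P$ makes the $F$'s sweep out a positive-dimensional subvariety of $E_{i_0}$, whereas the minimal resolution of each fixed Du Val surface contains only finitely many exceptional curves; a dimension count using $f(E_{i_0}) = \{P\}$ then yields a contradiction, so all $a_i > 0$.

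Finally, that Gorenstein terminal 3-fold singularities are all hypersurface double points is immediate from the cDV description: locally $P \in X$ is cut out in $\C^4$ by $f + tg$ with $f \in \C[x,y,z]$ defining a Du Val singularity, so $f$ has multiplicity $2$ at the origin; hence $f + tg$ has multiplicity exactly $2$ at $P$. The main obstacle I anticipate is the strict-positivity step isolated cDV $\Rightarrow$ terminal, where one must carefully control how zero-discrepancy exceptional divisors over $P$ intersect the moving family of general surface sections; more polished treatments in the literature (e.g.\ Reid~\cite{reid:canonical}) sidestep this delicate bookkeeping by appealing to the explicit classification of Gorenstein terminal 3-fold singularities in normal form.
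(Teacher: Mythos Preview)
The paper does not prove \fullref{P:cdv:terminal}; it is stated without proof as Reid's classification result, with a reference to~\cite{reid:canonical}. So there is no paper proof to compare against directly. Your approach via adjunction is essentially the classical one that underlies Reid's result, so in spirit you are on the right track.

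That said, there is a concrete error in your adjunction step that propagates through the argument. Writing $K_Y = f^*K_X + \sum a_i E_i$ and $f^*S = T + \sum m_i E_i$, adjunction gives
\[
K_T = (K_Y + T)|_T = g^*K_S + \sum_i (a_i - m_i)\,(E_i|_T),
\]
so the discrepancy of $E_i|_T$ for $g$ is $a_i - m_i$, not $a_i$. For $E_i$ lying over $P \in S$ one has $m_i \ge 1$, so the two numbers genuinely differ. This correction actually \emph{simplifies} your hardest step: if $P$ is isolated cDV and $S$ is a general section, then every exceptional divisor over $P$ meets $T$, the Du Val hypothesis gives $a_i - m_i \ge 0$, hence $a_i \ge m_i \ge 1$ and $X$ is terminal immediately --- no contradiction argument needed. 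Conversely, for terminal $\Rightarrow$ cDV you now need $a_i - m_i \ge 0$; since $a_i \ge 1$ (integral discrepancies in the Gorenstein case), this requires knowing that $m_i = 1$ for a general $S$, i.e.\ that the pullback of a general section has multiplicity exactly one along each $E_i$. This is where the genericity of $S$ does real work and needs to be argued carefully (it is not automatic), which is why Reid's treatment is the standard reference.
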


\begin{remark}
\label{R:terminal:small:resn}
Let $(X,0)$ (or just $X$) be (the analytic germ of) an isolated Gorenstein
3--fold singularity, and $\pi\co Y \ra X$ a small resolution. Write the
exceptional set as $E= \pi^{-1}(0) = C = \bigcup_{i} C_{i}$ where each
$C_{i}$ is irreducible. It is known that the $C_{i}$ are non-singular
rational curves meeting transversally and the normal bundle of $C_{i}$
in $Y$ is of type $(-1,-1)$ or $(0,-2)$ or $(1,-3)$:
see Laufer~\cite[Theorem~4.1]{laufer:exceptional}.  By the assumption that $X$
admits a small resolution $X$ has terminal singularities and therefore
by \fullref{P:cdv:terminal} has isolated cDV
singularities. The converse question of which isolated cDV
singularities admit small resolutions is addressed by 
Katz--Morrison~\cite{katz:morrison}, Kawamata~\cite{kawamata:small} and Pinkham~\cite{pinkham}.  
For example, the $cA_{2}$ singularity defined by
$\big\{x^{2}+y^{2}+z^{2}+t^{2n+1}=0\big\}$ is
factorial and hence admits no small resolutions, but this is not the
case for the $cA_{2}$ singularity
$\big\{x^{2}+y^{2}+z^{2}+t^{2n}=0\big\}$; see Friedman
\mbox{\cite[Remark~1.7]{friedman:double:pts}}.
\end{remark}

\subsection*{3--fold flops}
Flops will play an important role later in the paper; they allow one to produce new (possibly many) weak Fano 3--folds 
from an existing one. This is a phenomenon which does not occur for smooth Fano 3--folds.
We give only the basic definitions and state some of the foundational results about 3--fold flops; 
we refer the reader to Koll\'ar's survey article~\cite{kollar:flips:jdg} or to the book by Koll\'ar--Mori~\cite{KM} 
for much more comprehensive treatments.

\begin{definition}
\label{D:flopping}
  Let $Y$ be a normal variety. A \emph{flopping contraction} is a
  projective birational morphism $f\co Y \ra X$ to a normal variety $X$ that is small
and such that $K_{Y}$  is numerically $f$--trivial, 
that is, $K_{Y} \cdot C=0$ for any curve $C$ contracted by $f$.

  Let $f\co Y \ra X$ be a flopping contraction, and $D$ be a Cartier
  divisor on $Y$ that is \mbox{$f$--antiample}, that is, $-D$ is $f$--ample
(recall \fullref{r:f:ample}).
  A projective birational small morphism 
  $f^+\co Y^+ \ra X$ is called the \emph{$D$--flop} of $f$ if $D^+$, the proper
  transform of $D$ on $Y^+$, is $f^+$--ample.
\end{definition}

\begin{remark}
  The $D$--flop of $f$ is unique if it exists.
\end{remark}

\begin{theorem}[Koll\'ar--Mori~{{\cite[Theorems 6.14--15]{KM}}}]
\label{T:flops}
  Let $D$ be a Cartier divisor on a projective threefold $Y$ with
  terminal singularities.  Then $D$--flops exist and preserve the
  analytic singularity type of $Y$. In particular, if $Y$ is non-singular then so
  is any flop~$Y^{+}$.
\end{theorem}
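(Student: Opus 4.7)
The plan is to construct $Y^{+}$ as a relative $\Proj$ over $X$, invoking the converse direction of \fullref{l:small:morphism}. Let $D' := f_{*}D$. Since $D$ is $f$--antiample, $-D$ is $f$--ample, and the lemma already identifies $Y \cong \Proj_{X} R(X, -D')$. To build the flop one instead forms the symbolic power algebra
\[
R(X, D') \;:=\; \bigoplus_{m \ge 0} \oo_{X}(mD')
\]
and sets $Y^{+} := \Proj_{X} R(X, D')$. The converse direction of \fullref{l:small:morphism} then gives, provided this algebra is finitely generated, that $f^{+}\co Y^{+} \to X$ is a small projective birational morphism and that the proper transform $D^{+} = (f^{+})^{-1}_{*}D'$ is $f^{+}$--ample. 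This is exactly the datum required by \fullref{D:flopping}, so $f^{+}$ will be the $D$--flop. Uniqueness then follows from \fullref{l:small:morphism}(iii) applied on the $Y^{+}$ side, since any other candidate would recover the same $\Proj$.

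The main obstacle is finite generation of $R(X, D')$. Since $f$ is small and $Y$ has terminal singularities, the image $X$ is Gorenstein with terminal singularities as well (no divisor is contracted, so the ramification identity $K_{Y} = f^{*}K_{X}$ holds and terminality descends). This places us exactly in the regime of Kawamata's finite generation result (\fullref{t:kawamata:fg} as referenced in the text above): symbolic algebras of Weil divisors on Gorenstein terminal $3$--folds are finitely generated $\oo_{X}$--algebras. Apply this to $D'$ and the construction of $Y^{+}$ goes through.

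For preservation of analytic singularity type the plan is to reduce to a local statement near the flopping locus. Both $f$ and $f^{+}$ are isomorphisms over $X \setminus f(\ex f)$, so the singularities of $Y^{+}$ away from $\ex f^{+}$ agree analytically with those of $Y$ away from $\ex f$. It remains to analyse an analytic neighbourhood of each connected component of $\ex f$. By \fullref{R:terminal:small:resn} (Laufer), the flopping locus of $f$ is a tree of smooth rational curves with normal bundle of type $(-1,-1)$, $(0,-2)$ or $(1,-3)$, and $X$ is locally an isolated cDV singularity (\fullref{P:cdv:terminal}). One then works case by case through the list of local analytic models: the Atiyah flop for $(-1,-1)$--curves (as in \fullref{E:ODP}), Reid's pagoda and its relatives for the remaining $cA_{n}$, $cD_{n}$, $cE_{n}$ types. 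In each case the flop is realised by an explicit birational modification of a hypersurface singularity that manifestly returns the same analytic germ on the $Y^{+}$ side; this is the hardest part of the argument and is where one really uses Kollár--Mori's local classification of three-dimensional flops.

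The final clause about smoothness is then immediate: if $Y$ is non-singular in a neighbourhood of a flopping curve $C$, then the normal bundle of $C$ must be $(-1,-1)$ and we are in the Atiyah flop case, whose explicit local model produces a smooth $Y^{+}$. Hence $Y^{+}$ is smooth along $\ex f^{+}$, and by the previous paragraph smooth everywhere.
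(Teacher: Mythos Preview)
The paper does not give its own proof of this theorem; it simply cites Koll\'ar--Mori~\cite[Theorems~6.14--15]{KM} and moves on. So there is nothing to compare your argument against on the paper's side.

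Your existence argument is essentially correct and is in fact the standard one: push $D$ down to $X$, apply Kawamata's finite generation (\fullref{t:kawamata:fg}), take $\Proj$, and read off the required properties from \fullref{l:small:morphism}. Note that this gives $Y^{+}$ normal with $f^{+}$ small, hence $K_{Y^{+}} = (f^{+})^{*}K_{X}$ automatically; you do not need to argue separately that $Y^{+}$ is terminal for the definition of $D$--flop to be satisfied.

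However, your treatment of the final clause contains a genuine error. You claim that if $Y$ is non-singular near a flopping curve $C$ then the normal bundle of $C$ must be $\mathcal{O}(-1)\oplus\mathcal{O}(-1)$, reducing everything to the Atiyah flop. This is false: \fullref{R:terminal:small:resn} (Laufer) explicitly states that in a \emph{smooth} small resolution the exceptional curves can have normal bundle $(-1,-1)$, $(0,-2)$, or $(1,-3)$, and all three types genuinely occur. Reid's pagodas, which you mention yourself, are precisely examples of flops of $(0,-2)$--curves in a smooth $Y$ that are not Atiyah flops. So smoothness of $Y^{+}$ cannot be deduced by your shortcut; it really is a consequence of the full ``preservation of analytic singularity type'' statement, whose proof in Koll\'ar--Mori is considerably more delicate than a case-by-case enumeration of local models and is the substantive content of the theorem.
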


There are always many similiarites between the 3--folds $Y$ and $Y^+$,
for example, see \cite[Theorem~3.2.2]{kollar:flips:jdg}, 
but typically are not isomorphic or even diffeomorphic varieties.

\begin{remark}
\label{R:flop:rk2}
  If $Y$ is non-singular and has Picard rank $\rho(Y)=2$, then the $D$--flop of $Y$ does
  not depend on the choice of divisor $D$.
\end{remark}

\begin{remark}
We explain very briefly how flops occur in the context of weak Fano 3--folds; 
we will return to this point after we have developed the basic properties of weak Fano 3--folds
in \fullref{sec:weak-fano-3}. 

For many weak Fano 3--folds $Y$ the anticanonical morphism 
$\varphi_{AC} \co Y \to X$ (see \fullref{d:ac:morphism}) 
will be a flopping contraction in the sense of \fullref{D:flopping}. 
By choosing any \mbox{$\varphi_{AC}$--antiample} divisor $D$ on $Y$ we can
perform the $D$--flop of $\varphi_{AC}$. 
This yields another weak Fano 3--fold $Y^+$ and another projective small birational morphism 
$\varphi^+ \co Y^+ \to X$; $X$ is again the anticanonical model of $Y^+$ and 
$Y^+$ is also smooth by \fullref{T:flops}.
In general $Y$ and $Y^+$ are not isomorphic because the ring structure on cohomology is usually changed.
Thus $Y$ and $Y^+$ are usually different projective small resolutions of the same singular variety $X$; 
$X$ itself will turn out to be a mildly singular but non--$\Q$--factorial
Fano variety: see \fullref{R:weak:fano:ac:model}.

In general $Y^+$ depends on the choice of the $\varphi_{AC}$--antiample divisor $D$. 
By \fullref{R:flop:rk2} the $D$--flop of $Y$ does not depend on $D$ when $\rho(Y)=2$; 
that is, in this case there will be a unique flop of the rank two weak Fano 3--fold $Y$.
However when $\rho(Y)\ge 3$ then $Y$ may admit many different flops depending on the choice of $D$
and all of them are smooth weak Fano 3--folds sharing many properties of $Y$.
\fullref{R:picard:rank:toric:small}(iii) exhibits a smooth weak Fano 3--fold with $\rho(Y)=10$ 
which admits over 80 non-isomorphic projective small resolutions.
\end{remark}

\subsection*{Defect, small $\Q$--factorialisations and small resolutions}
Recall from \fullref{R:q:fact:not:small} that if a singular variety $X$ admits a small birational morphism 
$f \co Y \to X$ then $X$ cannot be $\Q$--factorial. We now introduce a non-negative integer $\sigma(X)$, 
the defect of $X$,  
which quantifies the failure of a singular 3--fold $X$ to be $\Q$--factorial; 
the defect also measures the failure of Poincar\'e duality on the singular variety~$X$.
\begin{definition}
\label{D:defect}
The \emph{defect} of a Gorenstein canonical projective 3--fold is
\[
\sigma(X) = \rank{ \weil(X)/\Pic(X)} = \rank H_{4}(X) - \rank H^{2}(X)\]
where $\weil(X)$ denotes the class group of Weil divisors of $X$ and $\Pic(X)$ the Picard group of $X$.
\end{definition}
\begin{remark}
\label{R:defect:factorial}\hfill
\begin{enumerate}
\item
By definition the defect of $X$ is zero if and only if $X$ is $\Q$--factorial. 
Hence any Gorenstein terminal 3--fold that admits a crepant (and hence small) resolution must have positive defect.
\item
In fact the divisor class group $\weil(X)$ of a terminal Gorenstein 3--fold
$X$ is torsion-free -- see Kawamata~\cite[Lemma~5.1]{kawamata1988crepant} 
who attributes the proof to Reid and Ue -- so that $X$ is $\Q$--factorial if and only if it is factorial. 
In particular, 
the defect $\sigma(X)$ of a terminal Gorenstein 3--fold is zero if and only if $X$ is factorial, 
that is, every Weil divisor is Cartier.
\end{enumerate}
\end{remark}

If $X$ admits a projective small resolution $f \co Y \to X$ then by the previous remark $X$ must have 
defect $\sigma(X)>0$. We can therefore attempt to use the blowup construction from 
\fullref{l:small:morphism} to construct some small projective birational morphism to $X$ by 
choosing a non--$\Q$--Cartier divisor $D' \in \Cl(X)$ and considering $\Proj R(X,D')$. 
However, we must verify that $D'$ satisfies 
the condition that $R(X,D')$ is a finitely generated $\oo_X$--algebra. 
This is not true in general; however for 3--folds Kawamata~\cite[Theorem~6.1]{kawamata1988crepant}
proved the following result.
\begin{theorem}
\label{t:kawamata:fg}
Let $X$ be a Gorenstein canonical 3--fold and $D' \in \Cl(X)$. Then $R(X,D')$ is finitely generated.
\end{theorem}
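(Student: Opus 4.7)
The plan is to deduce finite generation from the minimal model program in dimension three. Finite generation of $R(X, D')$ is a local property on $X$, so I would first replace $X$ by an affine open (or an analytic neighbourhood) and, by tensoring $D'$ with a sufficiently positive Cartier divisor, reduce to the case where $Z := -D' \ge 0$ is effective. Then $\oo_X(mD') = I_Z^{(m)}$ is the $m$th symbolic power of the ideal sheaf of $Z$, and $R(X,D')$ is the symbolic Rees algebra of $I_Z$.

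Next I would take a log resolution $\pi \co W \to X$ of the pair $(X, Z)$, arranged so that the total transform of $Z$ together with $\ex(\pi)$ has simple normal crossing support. The target is to produce, via the relative MMP over $X$ starting from $W$, a small projective birational model $\mu \co Y \to X$ on which the proper transform $D'_Y := \mu^{-1}_*D'$ is $\Q$--Cartier and $\mu$--ample. Once such a $Y$ is in hand, \fullref{l:small:morphism} identifies $R(X, D')$ with $\bigoplus_{m \ge 0} \mu_* \oo_Y(mD'_Y)$, and the relative base point free theorem applied to the $\mu$--ample divisor $D'_Y$ presents this algebra as a finitely generated $\oo_X$--algebra.

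To construct $Y$, I would run a relative MMP over $X$ on $W$ for a boundary $\Q$--divisor of the form $K_W + \epsilon\, \pi^{-1}_* Z$ with $\epsilon > 0$ small (or a closely related boundary chosen so that the proper transform of $D'$ is what becomes relatively nef). Because $X$ has canonical singularities and $\pi$ is a log resolution, the pair is Kawamata log terminal, so Kawamata--Viehweg vanishing (\fullref{T:KV:vanish}) and the relative cone theorem are available. Each step is either a divisorial contraction over $X$ -- which necessarily contracts a $\pi$--exceptional divisor and strictly decreases the relative Picard number -- or a $3$--fold flip. After finitely many steps every $\pi$--exceptional divisor has been contracted, leaving a small modification $\mu \co Y \to X$ on which the transform of $D'$ is relatively semi-ample; a further appeal to the relative base point free theorem promotes this to relative ampleness after replacing $D'$ by a suitable multiple.

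The main obstacle is the deep $3$--fold MMP input this plan requires: Mori's existence of $3$--fold flips and their termination in the relative setting, together with the relative base point free theorem, all applied in a klt (not canonical) pair on $W$. This is precisely where the hypothesis $\dim X = 3$ enters essentially: for $3$--folds these inputs were available to Kawamata at the time of~\cite{kawamata1988crepant}, whereas the analogous higher-dimensional statement had to wait for later finite generation results of BCHM type. Once those ingredients are granted, the construction of $Y$ and the identification of $R(X,D')$ as a finitely generated $\oo_X$--algebra go through in a standard way.
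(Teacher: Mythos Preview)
The paper does not give a proof of this theorem at all: it is stated as a citation of Kawamata~\cite[Theorem~6.1]{kawamata1988crepant}, with the single remark that ``Kawamata's proof uses the classification of Gorenstein terminal 3--fold singularities described earlier in a fundamental way.'' So there is no proof in the paper to compare against, only that hint about the method.

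Your approach is the modern BCHM-style one: resolve, run a relative MMP, land on a small model where the transform of $D'$ is relatively ample, and read off finite generation from \fullref{l:small:morphism}. The paper itself notes (in the remark following the Corollary on $\Q$--factorialisations) that this is exactly how one now proves the analogous statement in all dimensions. So the strategy is sound in spirit, and genuinely different from Kawamata's original argument, which instead exploits the explicit local structure of isolated cDV points (\fullref{P:cdv:terminal}) rather than running an MMP with klt flips.

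That said, your sketch has real gaps. First, the specific MMP you propose---for $K_W + \epsilon\,\pi^{-1}_*Z$ over $X$---terminates with that log canonical divisor relatively nef, not with $D'_Y$ relatively ample; you have not explained why the output model makes the transform of $D'$ itself relatively ample, and the base point free theorem does not apply to an arbitrary nef divisor without an adjoint structure. The cleaner route is to first produce a small $\Q$--factorialisation $Y_0 \to X$ (by running the $K_W$--MMP over $X$, which needs only terminal flips), observe that $K_{Y_0}$ is relatively trivial, and then run a $(K_{Y_0}+\epsilon D'_{Y_0})$--MMP over $X$: the required steps are $K$--trivial flops, and the output has $D'$ relatively nef, hence semi-ample by the relative base point free theorem applied to the klt pair. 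Second, there is a historical point worth flagging: Kawamata's paper predates the existence of klt $3$--fold flips (Shokurov, 1992), so his argument could not have proceeded this way; the classification of Gorenstein terminal singularities really is doing the work in the original proof.
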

Kawamata's proof uses the classification of Gorenstein terminal 3--fold singularities described earlier 
in a fundamental way. An easy corollary of \fullref{t:kawamata:fg}, 
also due to Kawamata~\cite[Corollary~4.5]{kawamata1988crepant} is the following.
\begin{corollary}
For any projective 3--fold $X$ with canonical (respectively terminal) singularities 
there exists a small projective birational morphism \mbox{$f\co Y
  \ra X$} such that $Y$ is $\Q$--factorial with at most canonical
(respectively terminal) singularities.  The morphism $f\co Y \ra X$
is said to be a (small) \emph{\mbox{$\Q$--factorialisation}} of $X$.
\end{corollary}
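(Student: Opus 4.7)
The plan is to iterate a one-step ``partial $\QQ$--factorialisation'' that strictly decreases the defect $\sigma$, starting from $X_0 := X$. If $\sigma(X) = 0$ then $X$ is already $\QQ$--factorial by \fullref{R:defect:factorial} and we take $f$ to be the identity; otherwise pick a Weil divisor $D_0$ on $X_0$ whose class is not $\QQ$--Cartier.

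For the single step, apply \fullref{t:kawamata:fg} to conclude that $R(X_0, D_0) = \bigoplus_{m \ge 0} \oo_{X_0}(mD_0)$ is a finitely generated $\oo_{X_0}$--algebra. The converse half of \fullref{l:small:morphism} then produces $X_1 := \Proj R(X_0, D_0)$ together with a small projective birational morphism $f_1 \co X_1 \to X_0$, and asserts that the proper transform $(f_1)_*^{-1} D_0$ is $f_1$--ample, in particular $\QQ$--Cartier on $X_1$. Two facts then need checking. First, $X_1$ inherits canonical (resp.\ terminal) singularities: because $f_1$ has no exceptional divisors, for any resolution $h \co Z \to X_1$ the composition $f_1 \circ h \co Z \to X_0$ is a resolution of $X_0$ with exactly the same exceptional divisors and the same discrepancies, so the hypothesis on $X_0$ transfers to $X_1$. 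Second, $\sigma(X_1) < \sigma(X_0)$: pushforward gives an isomorphism $f_{1\,*} \co \weil(X_1) \to \weil(X_0)$ because $f_1$ contracts nothing in codimension one, while $f_1^\ast$ embeds $\Pic(X_0) \otimes \QQ$ into $\Pic(X_1) \otimes \QQ$; under this identification $(f_1)_*^{-1} D_0$ lies in $\Pic(X_1) \otimes \QQ$ but its pushforward $D_0$ was chosen not to lie in $\Pic(X_0) \otimes \QQ$, so the Picard rank strictly increases.

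Since $\sigma$ is a non-negative integer that drops by at least one at each step, iterating yields a finite chain of small projective birational morphisms $X = X_0 \leftarrow X_1 \leftarrow \cdots \leftarrow X_N =: Y$ with $N \le \sigma(X)$ and $Y$ a $\QQ$--factorial variety satisfying the required singularity condition. The composition $f \co Y \to X$ is projective and small because a prime divisor on $Y$ stays a prime divisor through each $f_i$ (none of the $f_i$ contracts any divisor), so no prime divisor of $Y$ is contracted by $f$. I expect the main technical point to be the identification $\weil(X_1) \cong \weil(X_0)$ and the resulting strict drop in defect, which drives the iteration to termination; the rest is bookkeeping on top of \fullref{t:kawamata:fg} and \fullref{l:small:morphism}.
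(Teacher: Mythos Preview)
Your proposal is correct and follows essentially the same approach as the paper: iterate the construction $X_{i+1} := \Proj R(X_i, D_i)$ from \fullref{l:small:morphism} using \fullref{t:kawamata:fg}, observing that the defect strictly decreases so the process terminates in at most $\sigma(X)$ steps. You supply more detail than the paper does (the paper simply asserts $\sigma(X_1) < \sigma(X_0)$ and that singularities are preserved), but the argument is the same.
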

\begin{proof}
The proof is simple given \fullref{t:kawamata:fg}. 
Set $X=X_0$ and choose an arbitrary non--$\Q$--Cartier divisor $D_0 \in \Cl(X_0)$. 
Then by \fullref{t:kawamata:fg} and \fullref{l:small:morphism} \mbox{$X_1:= \Proj R(X_0,D_0)$}
is a normal projective variety and the natural projection to $X$ 
gives a small projective birational morphism $f_1 \co X_1 \to X_0$.
Clearly $\sigma(X_1)< \sigma(X_0)<\infty$. If $X_1$ fails to be $\Q$--factorial we repeat the process 
setting $X_2 = \Proj R(X_1,D_1)$ etc. This process terminates after at most $\sigma(X_0)$ steps 
and yields $Y=X_k$ a $\Q$--factorial variety with canonical (respectively terminal) singularities 
and a small projective birational morphism $f\co Y \to X$. 
\end{proof}
\begin{remark*}
If $Y$ is any $\Q$--factorialisation of $X$ then the defect $\sigma(X)$ can also be calculated as 
\[
\sigma(X) = \rank \Pic(Y) - \rank \Pic(X).
\]
\end{remark*}

\begin{remark*}
The existence of small $\Q$--factorialisations  is now known in all dimensions as a consequence of the 
work of Birkar--Cascini--Hacon--McKernan~\cite{bchm}. One chooses an initial resolution of singularities 
and then runs an appropriate well-directed relative minimal model program
which contracts the exceptional divisors of the resolution and whose
output is the desired small $\Q$--factorialisation. However, Kawamata's result for canonical 
3--folds suffices for our purposes. 
\end{remark*}
\begin{remark}\hfill
\label{r:q:fact:flop}
\begin{enumerate}
\item
Small $\Q$--factorialisations are not unique but one can prove that any two
differ by a sequence of finitely many flops (see
Koll\'ar~\cite[6.38]{kollar:flips:jdg}). 
\item
Suppose that $X$ has only terminal
singularities; if one small $\Q$--factorialisation of $X$ is singular,
then because terminal flops preserve singularities (recall \fullref{T:flops}) 
all small $\QQ$--factorialisations are singular, and then $X$ has no crepant resolutions. 
\end{enumerate}
\end{remark}

A natural question is whether there are only finitely many distinct small
$\Q$--fac\-tor\-i\-al\-i\-sa\-tions 
of a given Gorenstein terminal 3--fold $X$. This follows from the following more general finiteness 
result of Kawamata--Matsuki~\cite[Main~Theorem]{kawamata:matsuki}.
\begin{theorem}
\label{t:crep:finite}
Let $X$ be a projective 3--fold with canonical singularities. 
Then there exist only finitely many projective birational crepant
morphisms $f \co Y\to X$
such that $Y$ is a 3--fold with only canonical singularities. 
\end{theorem}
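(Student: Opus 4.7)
The plan is to decompose the problem into two finiteness statements: first, the set of possible exceptional divisors of a crepant $f\co Y\to X$ with $Y$ canonical is finite; second, for each fixed such set there are only finitely many $Y$ realising it. Combined these give the theorem.

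For the first step, the exceptional divisors of a crepant birational $f\co Y\to X$ with $Y$ canonical are prime divisors over $X$ having discrepancy zero with respect to $K_X$ -- the \emph{crepant divisors} of $X$. I would invoke the standard fact that for canonical 3--fold singularities the set of crepant divisors is finite: this can be seen by taking a terminal modification (a crepant morphism $\widetilde{X}\to X$ with $\widetilde{X}$ $\QQ$--factorial terminal, whose existence for canonical 3--folds is classical), noting that $\widetilde{X}$ admits no further crepant exceptional divisor in any birational model (its singularities are already terminal, so every further blow-up has strictly positive discrepancy), and that $\widetilde{X}\to X$ itself contracts only finitely many exceptional divisors. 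Consequently only finitely many subsets $S$ of crepant divisors can occur as the exceptional set of some crepant $Y\to X$.

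For the second step, suppose $f_1\co Y_1\to X$ and $f_2\co Y_2\to X$ are two crepant morphisms with $Y_1,Y_2$ canonical and the same set of exceptional divisors. The induced birational map $Y_1\dashrightarrow Y_2$ is then an isomorphism in codimension one and is crepant; after passing to small $\QQ$--factorialisations via \fullref{t:kawamata:fg} and its corollary, the two models become related by a finite sequence of flops (\fullref{D:flopping}, together with the remark following \fullref{r:q:fact:flop}). The desired finiteness therefore reduces to the assertion that only finitely many distinct projective models can be reached by flops from a fixed $\QQ$--factorial crepant model of $X$. This is the technical core of \cite{kawamata:matsuki}; it is proved by analysing the chamber decomposition of the relative movable cone of a fixed common $\QQ$--factorial model $W\to X$, each chamber corresponding to one birational crepant model of $X$, and showing via the cone and contraction theorems of the 3--fold MMP that this decomposition has only finitely many chambers.

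The main obstacle is precisely this chamber-finiteness (equivalently, flop-finiteness) step. Individual flops exist by \fullref{T:flops}, but a priori one might fear an infinite sequence of flops producing infinitely many non-isomorphic models over $X$. Ruling this out requires the full strength of the relative MMP for canonical 3--folds, together with a careful convex-geometric analysis of the movable cone; neither ingredient has been developed in the present section, which is why one must appeal to \cite{kawamata:matsuki} for the proof.
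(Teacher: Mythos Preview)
The paper does not prove this theorem at all; it simply states it and cites Kawamata--Matsuki~\cite[Main~Theorem]{kawamata:matsuki} as the source. Your sketch is therefore already more than the paper offers, and it correctly identifies the architecture of the Kawamata--Matsuki argument: finiteness of the set of crepant exceptional divisors via a terminalisation, followed by finiteness of models with a fixed exceptional support, which in turn reduces to the chamber decomposition of the relative movable cone and the cone/contraction machinery of the 3--fold MMP.

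One small comment on presentation: your second step, as written, comes close to circularity, since ``only finitely many models reachable by flops'' is essentially a special case of the theorem you are trying to prove. You do acknowledge this by deferring to~\cite{kawamata:matsuki} for the chamber-finiteness, so there is no logical gap; but it would be cleaner to state directly that the Kawamata--Matsuki argument establishes rational polyhedrality of the relevant relative cone and its decomposition into finitely many Mori chambers, from which both steps follow at once, rather than to frame the reduction as ``reduce to flop-finiteness and then cite the same paper for that''.
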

\begin{remark*}
Generalising \fullref{d:crep:small:res}, 
we say that a projective birational morphism $f \co Y \to X$ is crepant if $K_{Y}=f^{*}K_{X}$.
\end{remark*}
In particular as an immediate corollary of \fullref{t:crep:finite} there
are only finitely many different small $\Q$--factorialisations 
of a given terminal 3--fold $X$. 

We summarise our discussion above.
Given any terminal 3--fold $X$ we can always find small $\Q$--factorialisations $Y$ of $X$, 
but there are only finitely many of them and any two of them are related by a sequence of flops.
For a general $X$,  all $\Q$--factorialisations of $X$ will still be singular; in this case $X$ admits no projective small resolutions. 
In other words, for a general terminal 3--fold $X$ it is quite rare that $X$ admits a projective small resolution.
For many purposes in algebraic geometry the existence of a small $\Q$--factorialisation  of $X$ often suffices;  
however for our later purposes in constructing smooth weak Fano 3--folds as projective small resolutions
of terminal Fano 3--folds, it is crucial that the terminal Fano 3--fold
admit a smooth small $\Q$--factorialisation, that is, 
a projective small resolution.

It can therefore be very subtle to determine whether a 
3--fold $X$ with Gorenstein terminal (respectively canonical) singularities
admits a projective small (respectively crepant) resolution.
Even if we suppose $X$ has only terminal and therefore isolated cDV
singularities and that locally (the analytic germ of) each singularity
admits a small resolution then there are global reasons why $X$
may admit no projective small resolutions. This occurs even in the simplest case where $X$ is nodal, 
that is, has only ordinary double points. 

\subsection*{Projective small resolutions of nodal 3--folds}

We now consider in more detail the projective small
resolution problem for the special case of \emph{nodal} 3--folds: recall \fullref{D:nodal:3fold}; projective small resolutions of nodal Fano 3--folds 
will give rise to smooth weak Fano 3--folds containing special rigid holomorphic curves. 
These special rigid curves will play a crucial role in~\cite{chnp2} because 
they give rise to rigid associative 3--folds in twisted connected sum $\gtwo$--manifolds.
  
As we now explain, it is not a problem to find small resolutions of
$X$ if we are prepared to leave the projective world and work in the
\emph{complex analytic category}; the difficulty is to find
\emph{projective} (or K\"ahler) small resolutions of $X$.  Suppose the
\mbox{3--fold} $X$ has $k$ ordinary nodes $P_{1}, \ldots ,P_{k}$ as its only
singular points.  Let $\wtilde{X}$ denote the blowup of $X$ in all
its singular points; $\pi\co \wtilde{X} \ra X$ is a non-singular
projective 3--fold with $k$ exceptional divisors $E_{1}, \ldots ,
E_{k}$ isomorphic to $\CP^{1}\times \CP^{1}$. There are two natural
projections $\pi_{i}^{\pm}\co E_{i} \ra \CP^{1}$, (rulings of
$\CP^{1} \times \CP^{1}$) corresponding to a choice of $\CP^{1}$
factor.  For each exceptional divisor we make a choice of one of these
two rulings; by Nakano~\cite{nakano1,nakano2} the fibres of every
$\pi_{i}^{\pm}$ can be blown down to yield a non-singular Moishezon
\mbox{3--fold} $\what{X}$, that is, a compact complex 3--fold with three
algebraically independent meromorphic functions.  Thus we obtain
$2^{k}$ Moishezon small resolutions $\what{X}$ of the nodal
3--fold $X$ in which each singular point $P_{i}$ has been replaced by
a non-singular rational curve $C_{i}$ with normal bundle
$\mathcal{O}(-1)\oplus\mathcal{O}(-1)$.  In general some of these
$2^{k}$ small resolutions may be isomorphic.  This happens when the
nodal 3--fold $X$ admits automorphisms permuting the nodes; such
an automorphism will lift to an action on the set of all small
resolutions of $X$ and thereby give rise to isomorphic small
resolutions.

Since all the small resolutions are Moishezon,  a small resolution
$\what{X}$ of $X$ is projective if and only if it is K\"ahler. 
(Recall that Moishezon~\cite{moishezon} proved that a Moishezon manifold is projective if and only if it is K\"ahler.)
A natural but delicate question is therefore:
given a nodal projective 3--fold $X$ with $k$ nodes how many of its $2^{k}$ Moishezon small resolutions are 
projective (K\"ahler)?

In general, even though our initial nodal 3--fold $X$ is projective \emph{none} of its
$2^{k}$ small resolutions need be projective. 
In fact, from our previous results we have the following: 
all $2^{k}$ Moishezon small resolutions of $X$ are non-projective if and only if 
any (and therefore all) projective small $\Q$--factorialisation of $X$ is singular. 
Thus answering the question above about how many of the small resolutions are projective is rather subtle and
it is equivalent to the following two questions
\begin{enumerate}
\item
Does $X$ admit a \emph{smooth} projective small $\Q$--factorialisation? 
\item
If so, how many different projective small $\Q$--factorialisations does $X$ admit?
\end{enumerate}

The existence of projective small resolutions of nodal projective 3--folds 
has been considered by various authors. 
To illustrate some of the issues in concrete cases -- of interest later in this paper -- we 
consider nodal cubics $X\subset \CP^{4}$ with a small number of nodes;   
for a systematic study of projective small resolutions of nodal cubics in $\CP^{4}$ 
see Finkelnberg~\cite{finkelnberg1987small},
Finkelnberg--Werner~\cite{finkelnberg1989small} and Werner~\cite{werner:thesis}.

\subsubsection*{Small resolutions of nodal cubics and weak Fano 3--folds}
Finkelnberg--Werner~\cite{finkelnberg1989small} proved that if a
nodal cubic $X\subset \CP^{4}$ has fewer than $4$ nodes then $X$
itself is already $\Q$--factorial irrespective of the position of its
nodes; therefore $X$ admits no projective small resolutions.  However,
they showed that whether a nodal cubic $X \subset \CP^{4}$ with $4$
nodes is $\Q$--factorial or not depends on the position of the $4$
nodes; $X$ is $\Q$--factorial if and only if the $4$ nodes are not
contained in some projective plane $\Pi^{2} \subset \CP^{4}$. If the
$4$ nodes do lie in some plane then this special surface $\Pi$ gives
us a Weil divisor $D'$ on $X$ which is not $\Q$--Cartier and one
projective small resolution $Y$ is then obtained by blowing up this
plane, that is, by taking $Y=\Proj R(X,D')$ as in \fullref{l:small:morphism}.

In fact, since the nodal cubic $X$ is a mildly singular (Gorenstein
terminal) Fano 3--fold it will turn out that the projective small
resolution $Y$ is a smooth weak Fano 3--fold containing $4$ rigid
rational curves with normal bundle
$\mathcal{O}(-1)\oplus\mathcal{O}(-1)$: one from each of the $4$ nodes
in~$X$.  We will return to nodal cubics in $\CP^{4}$ in \fullref{S:wk:fano:examples} where we will explain how to obtain numerous
smooth weak Fano 3--folds from such nodal cubics, generalising this
example.

This example demonstrates clearly that the existence of projective small resolutions is
a global question which depends on the location of
singularities and not just the local analytic singularity type or
number of singularities.
It also illustrates that if $X$ does not contain some relatively special surfaces 
(in this case the projective plane $\Pi$) then we have no candidate Weil 
non--$\Q$--Cartier divisors $D'$ which we can ``blowup'' to obtain a nontrivial small birational morphism 
as in \fullref{l:small:morphism}.

\subsubsection*{The number of small projective resolutions}
We highlight another aspect of the subtlety of the projectivity of
small resolutions. A cubic $X$ with $4$ nodes containing a plane $\Pi$
as above has defect $\sigma(X)=1$
(see Finkelnberg--Werner~\cite[pages~190--191]{finkelnberg1989small}) and hence the projective small
resolution $\varphi \co Y \to X$ obtained by blowing up the plane
$\Pi$ has Picard rank $\rho(Y)=2$.\footnote{See also
  \fullref{exa:quartic_w_plane} where we prove similar statements
  for a \emph{quartic} 3--fold that contains a plane.}  Therefore, by
\fullref{R:flop:rk2}, $\varphi \co Y \to X$ has a unique flop,
$\varphi^{+}\co Y^{+} \to X$.  Hence, by \fullref{r:q:fact:flop}, $Y^{+}$ is the only other projective small
resolution of $X$ (moreover by~\cite[page~191]{finkelnberg1989small}
these two projective small resolutions are not isomorphic).  In other
words, only $2$ of the $2^{4}=16$ Moishezon small resolutions of $X$
are projective.

More generally, we will see that nodal Fano 3--folds arising as the
anticanonical models of non-singular weak Fano 3--folds of Picard rank
$2$ have exactly \emph{two} projective small resolutions (again
because of the uniqueness of flops when the Picard rank $\rho=2$).
However, in \fullref{S:wk:fano:examples} we will see that such
rank $2$ weak Fano 3--folds can have up to $46$ nodes in their
anticanonical models and therefore admit up to $2^{46}$ Moishezon
small resolutions!

\subsubsection*{Small and crepant resolutions of toric Fano 3--folds}
From the point of view of understanding small (respectively crepant)
projective resolutions one very nice class of Gorenstein terminal
(respectively canonical) 3--folds are the \emph{toric} Gorenstein Fano
3--folds. Some particularly pleasant features of the toric Gorenstein
Fano world are:
\begin{enumerate}
\item
All singularities of toric terminal Gorenstein Fano 3--folds are ordinary double points.
\item 
Toric terminal (respectively canonical) Gorenstein Fano 3--folds are completely classified. 
\item
Every toric terminal Gorenstein Fano 3--fold has at least one projective small resolution, 
and moreover one can enumerate all possible projective small resolutions combinatorially.
\end{enumerate}
We will give a more detailed description of the class of toric Gorenstein terminal (and more generally canonical) 
Fano 3--folds and small (respectively crepant) resolutions thereof later in \fullref{S:wk:fano:examples}; 
this will show that there is a very plentiful supply of toric weak Fano 3--folds.

\subsection*{Rudiments of Mori theory}
We recall some basic terminology from Mori theory: see Debarre's book~\cite{debarre:book} 
for a more detailed introduction and Koll\'ar--Mori's book~\cite{KM} for a complete treatment.
Mori theory will be needed only in \fullref{S:wk:fano:examples} 
when we discuss the classification scheme for weak Fano 3--folds with Picard rank $\rho=2$ and 
so the rest of this section may be safely ignored until then.

\begin{definition}
A $1$--cycle on a projective variety $Y$ is a formal linear combination of irreducible, reduced curves 
$C = \sum_{i}{a_i C_i}$. $C$ is \emph{effective} if $a_i \ge 0$ for every $i$. 
Two $1$--cycles $C$ and $C'$ are \emph{numerically equivalent} if they have the same intersection 
number with every Cartier divisor; we write $C \sim C'$.
$1$--cycles with real coefficients modulo numerical equivalence form a real vector space denoted $N_1(Y)$; 
the class of a $1$--cycle $C$ is denoted $[C]$.
\end{definition}

Inside $N_1(Y)$ sits the (convex) cone of curves $\nef(Y)$, the set of classes of effective $1$--cycles.
\begin{definition} The \emph{cone of curves} $\nef(Y)$ is defined by
$$\nef(Y) := \Big\{ \sum{a_i [C_i]} ~\Big|~ C_i \subset X, 0\le a_i \in \R
\Big\} \subset N_1(Y),$$
where $C_i$ are irreducible curves on $Y$. 
$\nefb(Y)$ is defined to the closure of $\nef(Y)$ in~$N_1(Y)$.
\end{definition}

Let $X$ and $Y$ be projective varieties. Define the \emph{relative
  cone of a morphism} $\pi\co Y \ra X$ as the convex subcone $\nef(\pi)
\subset \nef(Y)$ generated by the classes of curves contracted by
$\pi$.  Since $X$ is projective, an irreducible curve $C$ is
contracted by $\pi$ if and only if $\pi_*[C]=0$; in other words, being
contracted is a numerical property. $\nef(\pi)$ has the additional
property that it is extremal.

\begin{definition}
  Let $V$ be a convex cone in $\R^n$. A subcone $W \subset V$ is
  \emph{extremal} if it is closed and convex and if any two elements
  of $V$ whose sum lies in $W$ are both in~$W$.
  Geometrically, this
  means that the cone $V$ lies on one side of some hyperplane
  containing the extremal subcone~$W$.
  An extremal cone of
  dimension $1$ is called an \emph{extremal ray}.  \end{definition}

\begin{lemma}[Debarre~{{\cite[Propositions~1.14 and~1.43--1.45]{debarre:book}}}]
\label{L:rel:nefcone}
Let $\pi\co Y \ra X$ be a morphism of projective varieties.
\begin{enumerate}
\item The subcone $\nef(\pi) \subset \nef(Y)$ is a closed convex subcone which is extremal.
\item If additionally we assume $\pi_* \mathcal{O}_Y \simeq
  \mathcal{O}_X$ then the morphism $\pi$ is determined by $\nef(\pi)$
  up to isomorphism.
\end{enumerate}
\end{lemma}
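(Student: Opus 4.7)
The plan is to prove (i) by identifying $\nef(\pi)$ with $\nef(Y) \cap \ker(\pi_*)$, where $\pi_*\co N_1(Y) \to N_1(X)$ is the push-forward of $1$-cycles. An irreducible curve $C \subset Y$ is contracted by $\pi$ if and only if $\pi_*[C]=0 \in N_1(X)$, since $X$ is projective (so contracted is equivalent to having image of dimension zero, equivalent to being numerically trivial against pullbacks of all Cartier divisors). Convexity and closedness of $\nef(\pi)$ then follow at once since $\ker \pi_*$ is a linear subspace of the finite-dimensional space $N_1(Y)$.

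For extremality, suppose $v, w \in \nefb(Y)$ with $v+w \in \nef(\pi)$, so $\pi_* v + \pi_* w = 0$ with $\pi_* v,\, \pi_* w \in \nefb(X)$. Since $X$ is projective, any ample class $H$ on $X$ satisfies $H \cdot z > 0$ for every nonzero $z \in \nefb(X)$; applied to $\pi_* v = -\pi_* w$ this forces $H \cdot \pi_*v = H \cdot \pi_* w = 0$, hence $\pi_* v = \pi_* w = 0$, so both $v, w$ lie in $\nef(\pi)$. This is the standard use of projectivity of the target to make the effective cone pointed.

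For (ii), let $\pi_1\co Y \to X_1$ and $\pi_2\co Y \to X_2$ both satisfy $\pi_{i*}\oo_Y \simeq \oo_{X_i}$ and $\nef(\pi_1) = \nef(\pi_2)$. The first step is to show $\pi_2$ is constant on the fibres of $\pi_1$. The hypothesis $\pi_{1*}\oo_Y \simeq \oo_{X_1}$ forces every fibre $F$ of $\pi_1$ to be connected. Any irreducible curve $C \subset F$ is contracted by $\pi_1$, so $[C] \in \nef(\pi_1) = \nef(\pi_2)$, and thus $C$ is contracted by $\pi_2$ as well. Since $F$ is a connected projective scheme, any two closed points of $F$ can be joined by a finite chain of irreducible curves, so $\pi_2$ is constant on $F$.

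The second step is to upgrade the resulting set-theoretic map $\phi\co X_1 \to X_2$ to a morphism and show it is an isomorphism. The condition $\pi_{1*}\oo_Y \simeq \oo_{X_1}$ is exactly the universal property needed: any morphism from $Y$ to a separated scheme that is constant on fibres of $\pi_1$ factors uniquely through $\pi_1$, yielding the morphism $\phi$ with $\pi_2 = \phi \circ \pi_1$. Exchanging the roles of $\pi_1$ and $\pi_2$ produces $\psi\co X_2 \to X_1$ with $\pi_1 = \psi \circ \pi_2$, and then the identities $\psi \circ \phi \circ \pi_1 = \pi_1$ and $\phi \circ \psi \circ \pi_2 = \pi_2$ combined with the surjectivity of $\pi_1, \pi_2$ give $\psi \circ \phi = \text{id}_{X_1}$ and $\phi \circ \psi = \text{id}_{X_2}$, so $\phi$ is the desired isomorphism. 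The main obstacle is precisely this descent step: without the hypothesis $\pi_{i*}\oo_Y \simeq \oo_{X_i}$, the Stein factorisation would insert a nontrivial finite cover and the conclusion would fail, so it is essential to trace carefully where this assumption enters.
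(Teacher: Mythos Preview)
The paper does not give its own proof of this lemma; it is stated with a citation to Debarre's book and used as a black box. Your argument is essentially the standard one found there, and part~(ii) is correctly handled: you use connectedness of the fibres (from $\pi_{1*}\oo_Y \simeq \oo_{X_1}$) together with the curve criterion to see that $\pi_2$ is constant on fibres of $\pi_1$, then invoke the rigidity/factorisation property to produce mutually inverse morphisms between $X_1$ and $X_2$.

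There is one genuine sloppiness in part~(i). You assert that closedness of $\nef(\pi)$ follows ``at once'' from the identification $\nef(\pi) = \nef(Y) \cap \ker(\pi_*)$ because $\ker(\pi_*)$ is a linear subspace. But $\nef(Y)$ itself need not be closed in $N_1(Y)$, so intersecting it with a closed subspace does not automatically give something closed. What you actually need is that $\nef(\pi)$ coincides with $\nefb(Y) \cap \ker(\pi_*)$ (equivalently, with the face $\nefb(Y) \cap (\pi^*H)^\perp$ for $H$ ample on $X$). The inclusion $\nef(\pi) \subseteq \nefb(Y) \cap \ker(\pi_*)$ is clear; the reverse inclusion is exactly your extremality argument applied to a limit $z = \lim z_n$ of effective classes---but you have to say this, not just appeal to $\ker(\pi_*)$ being closed. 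Also, your claimed identification $\nef(\pi) = \nef(Y) \cap \ker(\pi_*)$ already uses the extremality argument (to conclude that if $\sum a_i \pi_*[C_i] = 0$ with $a_i>0$ then each $C_i$ is contracted), so the logical order of your presentation is slightly tangled: prove extremality first, then deduce the identification with the closed face.
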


\fullref{L:rel:nefcone} says that a morphism determines an extremal subcone of $\nef(Y)$ 
which, under the additional condition given in (ii), characterises that morphism. 
This motivates the following:

\begin{definition}
\label{D:contraction}
Let $Y$ be a projective variety and $F \subset \nefb(Y)$ an extremal face. A morphism 
$\cont_{F}\co Y \ra X$ to a projective variety $X$ is called the \emph{contraction of $F$} if 
\begin{enumerate}
\item $\cont_{F}(C) = pt$, for an irreducible curve $C$ if and only if $[C] \in F$; and
\item $(\cont_{F})_* \mathcal{O}_Y = \mathcal{O}_X$.
\end{enumerate}
\end{definition}

In general not every extremal face can be contracted. A central point
in Mori theory is to find conditions guaranteeing the existence of
$\cont_{F}$. The main result in this direction is the following deep
theorem often called the Contraction Theorem.

\begin{theorem}[Contraction Theorem]
Let $Y$ be a projective variety with at worst canonical singularities and let $F \subset \nefb(Y) $ be an extremal face 
on which $K_{Y}$ is negative; 
then the contraction $\cont_{F}$ (as in \fullref{D:contraction}) exists.
\end{theorem}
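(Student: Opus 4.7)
The plan is to follow the standard Mori-theoretic pipeline: Cone Theorem $\Rightarrow$ Rationality Theorem $\Rightarrow$ supporting divisor $\Rightarrow$ Base-point Free Theorem $\Rightarrow$ Stein factorisation. Throughout, the hypothesis that $Y$ has at worst canonical singularities is needed in order to invoke Kawamata--Viehweg vanishing (\fullref{T:KV:vanish}) in the appropriate (log) form.

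\textbf{Step 1: locate the face rationally.} First I would invoke the Cone Theorem, which gives a locally rational polyhedral description of $\nefb(Y)$ in the halfspace $\{K_Y < 0\}$: there exist countably many $K_Y$--negative extremal rays $R_i = \R_{\ge 0}[C_i]$, accumulating only on $\{K_Y = 0\}$, such that
\[
\nefb(Y) = \nefb(Y)_{K_Y \ge 0} + \sum_i R_i.
\]
Since $K_Y \cdot F < 0$, our extremal face $F$ is a finite sum of finitely many of the $R_i$ and, in particular, is rational polyhedral.

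\textbf{Step 2: produce a nef supporting divisor.} Next I would construct a nef Cartier divisor $H$ on $Y$ whose orthogonal hyperplane in $\nefb(Y)$ is exactly $F$, \ie
\[
F = H^\perp \cap \nefb(Y), \qquad H \cdot C > 0 \text{ for all } [C] \in \nefb(Y) \setminus F.
\]
Such an $H$ is obtained by perturbing any ample class inside $N^1(Y)_\R$ in the hyperplane dual to $F$, using the local rational polyhedrality of Step~1 together with the Rationality Theorem to ensure $H$ can be chosen with rational (hence, after clearing denominators, integer) coefficients. The key numerical estimate is that, because $F$ is $K_Y$--negative and we control the nef threshold rationally, the divisor $aH - K_Y$ is nef and big for $a \gg 0$.

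\textbf{Step 3: base-point freeness.} Now comes the main technical step: apply the Base-point Free Theorem to conclude that $|mH|$ is base-point free for all $m \gg 0$. This is the piece I expect to be the principal obstacle. Its proof proceeds by induction on dimension, using Kawamata--Viehweg vanishing (\fullref{T:KV:vanish}) applied to divisors of the form $mH - K_Y$ (which is nef and big by Step~2) to lift sections from the base locus, combined with Shokurov's non-vanishing theorem to produce an initial section. The assumption that $Y$ has canonical (in fact the argument handles klt) singularities is precisely what allows Kawamata--Viehweg to be invoked in the requisite generality.

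\textbf{Step 4: Stein factorise.} Fixing $m \gg 0$ so that $|mH|$ is base-point free, the induced morphism $\varphi_{|mH|} \co Y \to \PP^N$ has image $Y' \subset \PP^N$. Taking the Stein factorisation
\[
\varphi_{|mH|} \co Y \xrightarrow{\cont_F} X \longrightarrow Y' \hookrightarrow \PP^N,
\]
the first factor $\cont_F$ has connected fibres and satisfies $(\cont_F)_* \mathcal{O}_Y = \mathcal{O}_X$. By construction of $H$ as a supporting divisor for $F$ (Step~2), an irreducible curve $C \subset Y$ is contracted by $\cont_F$ if and only if $H \cdot C = 0$, which is equivalent to $[C] \in F$. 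This verifies both conditions~(i) and~(ii) of \fullref{D:contraction}, so $\cont_F$ is the desired contraction; its uniqueness then follows from \fullref{L:rel:nefcone}(ii).
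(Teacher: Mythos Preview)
The paper does not actually prove this theorem: it is stated as a known deep result of the minimal model programme, with only a remark pointing to Koll\'ar--Mori~\cite[Theorem~3.7.3]{KM} for the more general klt version. So there is no ``paper's own proof'' to compare against.

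Your outline is the standard one and is essentially correct as a sketch: Cone Theorem to get rational polyhedrality on the $K_Y$--negative side, Rationality Theorem to produce a rational nef supporting divisor $H$ for~$F$, Base-point Free Theorem (relying on Kawamata--Viehweg vanishing, which is where the canonical/klt hypothesis enters) to make $|mH|$ free for $m\gg 0$, and Stein factorisation to extract $\cont_F$ with connected fibres. One small refinement: in Step~4, to verify that an irreducible curve $C$ is contracted precisely when $[C]\in F$, one normally uses that for two sufficiently large and coprime $m_1,m_2$ the morphisms $\varphi_{|m_1H|}$ and $\varphi_{|m_2H|}$ agree (semiample fibrations stabilise, \fullref{t:semiample}), so that $H$ itself, not just $mH$, is a pullback of an ample class from~$X$; this is what guarantees $H\cdot C=0\Leftrightarrow C$ is contracted. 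With that caveat the argument is fine, but of course each of the named theorems (Cone, Rationality, Non-vanishing, Base-point Free) is itself a substantial result, which is why the paper simply cites the literature rather than reproducing the proof.
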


\begin{remark*}
Koll\'ar-Mori~\cite[Theorem~3.7.3]{KM} in fact proves a more 
general version of the Contraction Theorem (for klt pairs). 
\end{remark*}

From now on we focus on the case of contractions associated with extremal rays. 
The following result says that a contraction associated with an
extremal ray comes in three basic flavours: see \cite[Proposition~2.5]{KM}.

\begin{prop}
  \label{P:ext:contract:types}
  Let $Y$ be a normal projective variety that is $\Q$--factorial. Let
  $\cont_{R}\co Y \ra X$ be the contraction of an extremal ray $R \subset
  \nefb(Y)$. Then one of the following holds:
  \begin{enumerate}
\item (fibre type contraction) $\dim{Y} > \dim{X}$;
\item (divisorial contraction) $f$ is birational and $\ex(f)$ is an
  irreducible divisor;
\item \label{it:small_contr} (small contraction)
$f$ is birational and $\ex(f)$ has codimension $\ge 2$.
\end{enumerate}
\end{prop}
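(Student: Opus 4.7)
The plan is to case-split on whether $\cont_R$ is of fibre type or birational, and then, in the birational case, analyse $\ex(\cont_R)$ component by component, using the one-dimensionality of $R$ together with the $\Q$--factoriality of $Y$.

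First I would observe that the condition $(\cont_{R})_{*}\oo_Y = \oo_X$ combined with Zariski's connectedness theorem forces every fibre of $f := \cont_{R}$ to be connected. If $\dim X < \dim Y$ we are in case~(i). Otherwise $\dim X = \dim Y$, and the generic fibre, being finite and connected, must be a single reduced point, so $f$ is birational. Assume from now on we are in this birational case, and set $E := \ex(f)$. If every irreducible component of $E$ has codimension $\ge 2$ in $Y$, then we are in case~\eqref{it:small_contr}. Otherwise there is at least one prime divisor $E_1 \subseteq E$, and the remaining task is to show that $\ex(f) = E_1$ set-theoretically and that $E_1$ is the unique such component.

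The key observation is that $R$ is one-dimensional, so for any two irreducible curves $C,C'\subset Y$ both contracted by $f$, one has $[C'] = \lambda[C]$ for some $\lambda > 0$. Since $f(E_1)$ has codimension $\ge 2$ in $X$, the fibres of $f|_{E_1}$ are positive-dimensional, so $E_1$ is covered by contracted curves; pick such a moving curve $C_1 \subset E_1$. By $\Q$--factoriality of $Y$, the divisor $E_1$ is $\Q$--Cartier, so the intersection number $E_1 \cdot C_1$ is defined; by the Negativity Lemma for birational contractions (see Koll\'ar--Mori~\cite[Lemma~3.39]{KM}) applied to the effective $f$--exceptional $\Q$--Cartier divisor $E_1$, one obtains the strict inequality $E_1 \cdot C_1 < 0$. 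Now, if there were a second irreducible component $E_2 \neq E_1$ of $\ex(f)$ (divisorial or of higher codimension), I would choose a moving contracted curve $C_2 \subset E_2$ not contained in $E_1$. Then $E_1 \cdot C_2 \geq 0$, whereas $[C_2] = \lambda[C_1]$ with $\lambda > 0$ forces $E_1 \cdot C_2 = \lambda (E_1 \cdot C_1) < 0$, a contradiction. Hence $\ex(f) = E_1$, and we are in case~(ii).

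The main obstacle is justifying the strict negativity $E_1 \cdot C_1 < 0$ for a moving contracted curve $C_1 \subset E_1$: this uses both the $\Q$--factoriality of $Y$ (so that $E_1$ is $\Q$--Cartier and the intersection pairing makes sense) and the Negativity Lemma (which rules out that $-E_1$ could be $f$--nef for a nonzero effective $f$--exceptional divisor). Without $\Q$--factoriality the entire dichotomy could fail, as the example of small birational contractions from $\Q$--factorialisations earlier in the section illustrates; with it, the one-dimensional extremality of $R$ translates the local negativity information into the desired global uniqueness of the exceptional divisor.
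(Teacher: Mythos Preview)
The paper does not supply its own proof of this proposition; it merely states the result and cites \cite[Proposition~2.5]{KM}. Your argument is correct and is essentially the standard proof found in that reference. One small tightening: once you have $E_1\cdot C_1<0$, extremality of $R$ gives $E_1\cdot C<0$ for \emph{every} contracted curve $C$, forcing $C\subset E_1$; since every point of $\ex(f)$ lies on a positive-dimensional fibre (by Zariski's Main Theorem), $\ex(f)$ is swept out by such curves and hence equals $E_1$. This avoids having to locate the auxiliary curve $C_2$ inside $E_2$.
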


Mori~\cite{mori:1982} gave a description of all 
contractions of extremal rays on a \emph{non-singular} projective 3--fold.
\begin{theorem}
\label{T:Mori:3:contract}
Let $Y$ be a non-singular projective 3--fold and $\cont_{R}\co Y \ra X$
be the contraction of a $K_{Y}$--negative extremal ray 
$R \subset \nefb(Y)$. The following is a complete list of possibilities for $\cont_{R}$:
\begin{enumerate}
\item[E] (exceptional) $\dim{X}=3$, $\cont_{R}$ is birational and there are five types of local behaviour near the contracted 
surface
\begin{enumerate}
\item[E1]
$\cont_{R}$ is the inverse of the blowup of a non-singular curve in the non-singular threefold $X$
\item[E2]
$\cont_{R}$ is the inverse of the blowup of a non-singular point of the non-singular threefold $X$
\item[E3]
$\cont_{R}$ is the inverse of the blowup of an ordinary double point of $X$
\item[E4]
$\cont_{R}$ is the inverse of the blowup of an isolated cDV point of $X$ which is locally analytically given by the 
equation $x^{2}+y^{2}+z^{2}+w^{3}=0$.
\item[E5]
$\cont_{R}$ contracts a non-singular $\CP^{2}$ with normal bundle $\mathcal{O}(-2)$ to a point on $X$ which is 
locally analytically the quotient of $\C^{3}$ by the involution $(x,y,z) \mapsto -(x,y,z)$.
\end{enumerate}
\item[C] (conic bundle) $\dim{X}=2$, $\cont_{R}$ is a fibration whose fibres are plane conics.
\item[D] (del Pezzo) $\dim{X}=1$ and the general fibre of $\cont_{R}$ is a del Pezzo surface of degree $d\neq 7$.
\item[F] (Fano) $\dim{X}=0$, $-K_{X}$ is ample and hence $X$ is a Fano variety.
\end{enumerate}
\end{theorem}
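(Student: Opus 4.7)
The plan is to start from the contraction $\cont_{R}\co Y \to X$, whose existence is given by the Contraction Theorem, and organise the proof according to $\dim X \in \{0,1,2,3\}$; these four cases correspond to F, D, C, E respectively. The fiber-type cases ($\dim X < 3$) are relatively straightforward given the cone theorem, while the birational case ($\dim X = 3$) requires the heavy machinery of Mori theory.

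\textbf{Fiber-type contractions.} If $\dim X = 0$, the whole of $\nefb(Y)$ is contracted, so $\nefb(Y) = R$ and $\rho(Y) = 1$; since $R$ is $K_{Y}$-negative, $-K_{Y}$ is ample and $Y$ is Fano (case F). If $\dim X = 1$, then $(\cont_R)_{\ast}\oo_{Y} = \oo_{X}$ forces $X$ to be a smooth curve, and generic smoothness makes a general fiber $F$ a smooth projective surface. Adjunction gives $K_{F} = K_{Y}|_{F}$, and $\cont_{R}$-ampleness of $-K_{Y}$ translates to ampleness of $-K_{F}$, so $F$ is a smooth del Pezzo surface (case D). If $\dim X = 2$, a general fiber is an irreducible curve with $-K_{Y}$ ample on it, forced to be a smooth $\PP^{1}$ of $-K_{Y}$-degree $2$ (via the length bound for extremal rays); a base-change analysis of $\omega_{Y/X}$ then shows that every scheme-theoretic fiber is a plane conic (case C).

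\textbf{Birational case, overall structure.} If $\dim X = 3$, then $\cont_{R}$ is birational. Since $Y$ is smooth it is factorial, so by \fullref{R:q:fact:not:small} the contraction cannot be small; hence $\ex(\cont_{R})$ is a divisor, irreducible because $R$ is a single extremal ray. The crucial input is Mori's bend-and-break together with the cone theorem: the ray $R$ is generated by a rational curve $\ell \subset Y$ with $0 < -K_{Y}\cdot \ell \le \dim Y + 1 = 4$. Deformation theory of $\ell$, controlled by $H^{0}(N_{\ell/Y})$, shows that curves numerically equivalent to $\ell$ sweep out the exceptional divisor $E$, endowing $E$ with a natural ruling.

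\textbf{Fine local classification in case E.} One then splits by $\dim \cont_{R}(E)$ and computes the normal bundle $N_{E/Y}$. If $\dim \cont_{R}(E) = 1$, the target curve is shown to be smooth and $E \to \cont_{R}(E)$ is a $\PP^{1}$-bundle, identifying $\cont_{R}$ with the inverse of a blow-up (E1). If $\dim \cont_{R}(E) = 0$, the possibilities for the pair $(E, N_{E/Y})$ are $(\PP^{2}, \oo(-1))$, a smooth quadric $(\PP^{1}\times\PP^{1}, \oo(-1,-1))$, a singular quadric cone (with the analogous normal bundle), and the sporadic $(\PP^{2}, \oo(-2))$; matching these with the analytic type of the image singularity uses the classification of Gorenstein terminal $3$-fold singularities from \fullref{P:cdv:terminal} together with the $\CC^{3}/\{\pm 1\}$ quotient that occurs in E5, yielding cases E2, E3, E4, E5.

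\textbf{Main obstacle.} The hard part is the birational case E. One must deploy Mori's full deformation-theoretic machinery: bend-and-break to produce the rational curve $\ell$, Hilbert scheme estimates to show $\ell$ sweeps out $E$, and delicate normal-bundle computations to identify both $E$ and the precise local analytic structure of $X$ at $\cont_{R}(E)$. Ruling out \emph{a priori} possible normal bundles and matching the survivors with exactly five singularity types of $X$ is the technically most demanding step; it is here that the hypothesis of smoothness of $Y$ (rather than merely $\Q$-factoriality with terminal singularities) is essential.
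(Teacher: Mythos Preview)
The paper does not prove this theorem at all: it is stated as Mori's result and attributed to \cite{mori:1982} without any argument. So there is no ``paper's own proof'' to compare against; your proposal is an outline of the standard proof from the literature.

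As an outline of Mori's argument your sketch is broadly accurate, but a few points are loose. First, you never address the exclusion $d\neq 7$ in case~D; this is part of the statement and needs the observation that a degree~$7$ del Pezzo has a distinguished $(-1)$--curve whose class would give a second independent curve class in the fibres, contradicting $\rho(Y/X)=1$. Second, in case~C your appeal to the ``length bound'' is not the cleanest route: adjunction on a general fibre $F\cong\PP^1$ already gives $-K_Y\cdot F=2$ directly. Third, invoking \fullref{P:cdv:terminal} to pin down the singularity type in E5 is not quite right: the image point in E5 is $\CC^3/\{\pm 1\}$, which is $\QQ$--Gorenstein of index~$2$, not Gorenstein, so it lies outside the scope of that proposition; one identifies it instead via Grauert's contraction criterion and the normal bundle $\oo_{\PP^2}(-2)$. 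Finally, your claim that the exceptional divisor is irreducible ``because $R$ is a single extremal ray'' needs the fact that $\rho(Y/X)=1$ together with negativity of the contracted divisor, not merely that $R$ is one-dimensional.
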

\begin{remark}
\label{R:no:small:contract}
Note that for non-singular projective threefolds there are no small extremal contractions -- type (iii) in \fullref{P:ext:contract:types}.
\end{remark}

\normalsize

\section{Weak Fano \mbox{3--folds}: basic theory}
\label{sec:weak-fano-3}

\subsection{Weak Fano 3--folds and semi-Fano 3--folds}
\label{subsec:weak_fanos}

In this section, we review the definition and elementary properties of
weak Fano \mbox{3--folds}. We postpone any in-depth discussion of examples of
weak Fano 3--folds until Sections~\ref{sec:examples} and~\ref{S:wk:fano:examples}  giving only
two of the simplest weak Fano 3--folds as Examples~\ref{E:nodal:quadric} and~\ref{E:wk:fano:ruled}.

\begin{definition}
\label{dfn:weak_fano}
A \emph{weak Fano \mbox{3--fold}} is a non-singular projective complex
\mbox{3--fold} $Y$ such that the
anticanonical sheaf $-K_Y$ is a nef and big line bundle 
(recall \fullref{def:line:positivity} for the definitions of big and nef).
The \emph{index} of a weak Fano 3--fold $Y$ is the integer $r=\gdiv{c_1(Y)}$, 
that is, the greatest divisor of $c_{1}(Y) \in H^{2}(Y;\Z)$.
\end{definition}

\begin{remark}
\label{r:index}
The index $r(Y)$ of a weak Fano 3--fold belongs to $\{1,2,3,4\}$. 
The only weak Fano 3--fold with index $4$ is $\CP^3$ (which of course is Fano).
Weak Fano 3--folds with index $3$ are classified; besides the quadric in $\CP^{4}$ (which is Fano) 
there are only two further weak Fano 3--folds of index $3$, namely Examples~\ref{E:nodal:quadric} and~\ref{E:wk:fano:ruled}: 
see Casagrande--Jahnke--Radloff~\cite[Proposition~3.3]{cjr} and Shin~\cite[Theorem~3.9]{shin}.
Weak Fano 3--folds with index $2$ are called \emph{weak del Pezzo} (or sometimes \emph{almost del Pezzo}) 
3--folds. 
There are relatively few weak del Pezzo 3--folds: see Jahnke--Peternell~\cite{jp:almost:dp} for a partial classification. 
However, note that a single deformation class of smooth del Pezzo 3--folds may give rise 
to a fairly large number of different deformation classes of weak del Pezzo 3--folds: 
see the discussion of nodal cubics in \fullref{S:wk:fano:examples} for a more concrete demonstration
of this phenomenon.
Nevertheless, the vast majority of weak Fano 3--folds have index $1$.
\end{remark}

We construct a handful of examples of weak Fano 3--folds in \fullref{sec:examples} and discuss partial classification results and show the
existence of many more examples in \fullref{S:wk:fano:examples}; 
the crucial point is that there are \emph{many} more deformation families of weak Fano 3--folds
than Fano 3--folds, though still only finitely many: see \fullref{t:wk:fano:finite}.

In the next several paragraphs, we summarise a few standard 
facts on weak Fano \mbox{3--folds} which play an important role in this paper.
These are properties of Fano
\mbox{3--folds} which extend without much difficulty to the case when
the anticanonical bundle is only nef and big. Most of these follow by applying 
Kawamata--Viehweg vanishing (recall \fullref{T:KV:vanish})
wherever we would have used Kodaira vanishing in the Fano case.
For an introduction to some of the basic properties of weak Fano
\mbox{3--folds}, see Reid \mbox{\cite[Section~4]{reid:kaw}}.

\begin{corollary}
\label{C:weak:fano:vanish}
For any  weak Fano \mbox{3--fold} $Y$ we have
  \begin{enumerate} 
  \item All Hodge numbers $h^{i,0}=h^{0,i}=0$ for $i>0$.
  \item  The natural homomorphism $\Pic Y\to H^2(Y; \Z)$ is an isomorphism.
  \item \label{it:c1c2} $K_Y \cdot c_2(Y) = -24$,
     where $c_2(Y) \in H^4(Y;\ZZ)$ denotes the second Chern class of~$Y$.
  \item  The dimension of the space of holomorphic sections of $-K_{Y}$ is given by 
    \begin{equation}
  \label{eq:1}
  h^0(Y,-K_Y)=g+2
  \quad
  \text{where}
  \quad
  -K_Y^3=2g-2. 
    \end{equation}
  \end{enumerate} 
\end{corollary}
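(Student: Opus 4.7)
The four parts follow from standard Kawamata--Viehweg vanishing together with Hirzebruch--Riemann--Roch, so the plan is essentially bookkeeping: first get $\chi(\mathcal{O}_Y)=1$, then feed this into Riemann--Roch twice.

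For part (i), I apply the Kawamata--Viehweg vanishing theorem (\fullref{T:KV:vanish}) with $L = -K_Y$, which is nef and big by assumption. Since $K_Y \otimes L = \mathcal{O}_Y$, the vanishing theorem gives $H^i(Y,\mathcal{O}_Y) = 0$ for $i > 0$, that is $h^{0,i}=0$ for $i>0$. Hodge symmetry then yields $h^{i,0}=h^{0,i}=0$. Part (ii) is an immediate consequence: the long exact sequence of the exponential sheaf sequence contains
\[
H^1(Y,\mathcal{O}_Y) \longrightarrow \Pic Y \longrightarrow H^2(Y;\ZZ) \longrightarrow H^2(Y,\mathcal{O}_Y),
\]
and both outer groups vanish by (i), so the middle arrow is an isomorphism.

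For part (iii) I use Hirzebruch--Riemann--Roch on $\mathcal{O}_Y$:
\[
\chi(\mathcal{O}_Y) = \int_Y \operatorname{td}(Y) = \tfrac{1}{24}\,c_1(Y)\cdot c_2(Y) = -\tfrac{1}{24}\,K_Y\cdot c_2(Y).
\]
By (i), $\chi(\mathcal{O}_Y) = 1 - h^1(\mathcal{O}_Y) + h^2(\mathcal{O}_Y) - h^3(\mathcal{O}_Y) = 1$, so $K_Y \cdot c_2(Y) = -24$.

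For part (iv) I first apply Kawamata--Viehweg again, this time with $L=-2K_Y$ (still nef and big since sums of nef-and-big classes on a $3$--fold are nef and big), to conclude $H^i(Y,-K_Y) = H^i(Y, K_Y \otimes (-2K_Y)) = 0$ for $i>0$. Thus $h^0(Y,-K_Y) = \chi(Y,-K_Y)$. Computing the latter via Hirzebruch--Riemann--Roch for a line bundle $L$ on a threefold,
\[
\chi(L) = \tfrac{L^3}{6} - \tfrac{L^2\cdot K_Y}{4} + \tfrac{L\cdot(K_Y^2 + c_2)}{12} - \tfrac{K_Y\cdot c_2}{24},
\]
and substituting $L = -K_Y$, the three $K_Y^3$-type contributions combine to $-\tfrac12 K_Y^3 = \tfrac12(-K_Y^3)$, while the two $K_Y\cdot c_2$ contributions combine to $-\tfrac18 K_Y\cdot c_2 = 3$ using (iii). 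Hence $\chi(-K_Y) = \tfrac12(-K_Y^3) + 3 = (g-1)+3 = g+2$, where $-K_Y^3 = 2g-2$ by definition. The only non-mechanical point is the arithmetic of combining the Todd class contributions, and one must remember to verify that Kawamata--Viehweg still applies to the twist $-2K_Y$ so that $\chi(-K_Y)$ really equals $h^0(-K_Y)$; these are the only places any care is required.
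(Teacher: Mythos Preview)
Your proof is correct and follows essentially the same route as the paper: Kawamata--Viehweg vanishing for parts (i) and (ii), Riemann--Roch for $\mathcal{O}_Y$ to get (iii), and Kawamata--Viehweg applied to $-2K_Y$ together with Riemann--Roch for $-K_Y$ to get (iv). The only cosmetic difference is that the paper phrases (iii) as Riemann--Roch with $L=0$ rather than explicitly invoking the Todd class, and simply notes $-2K_Y$ is big and nef without justification (your remark that positive multiples of big and nef divisors remain big and nef would have sufficed).
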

\begin{proof}
  Part~(i) follows immediately from Kawamata--Viehweg vanishing
  (\fullref{T:KV:vanish}) and Hodge theory.
  Part~(ii) follows from the exponential short exact sequence
  ${0 \ra \Z \ra \mathcal{O} \ra \mathcal{O}^{*} \ra 0}$ and
  $h^{1}(\mathcal{O}_{Y})=h^{2}(\mathcal{O}_{Y})=0$. For part~(iii)
  recall that Riemann--Roch in the case of a line bundle
  $L$ on a non-singular 3--fold $Y$ gives
\begin{equation}
\label{eq:rr}
\chi(Y,L) := \sum_{i=0}^{3}{(-1)^{i}h^{i}(L)} = \tfrac{1}{6}L^{3} -
\tfrac{1}{4}L^{2}K_{Y} + \tfrac{1}{12}L(K_{Y}^{2}+c_{2}) - \tfrac{1}{24}
K_{Y}\cdot c_{2}(Y) .
\end{equation}
Using part~(i) and setting $L=0$ we obtain $K_{Y} \cdot c_{2}(Y) =-24$.
For part~(iv) we now apply Kawamata--Viehweg vanishing with $L=-2K_Y$ to yield
\[
h^{0}(Y,-K_{Y}) = \chi(Y, -K_Y) = - \tfrac{1}{2}K_{Y}^{3} + 3 = g+2.\proved
\]
\end{proof} 
 
\begin{definition}
  The invariant $g$ in \eqref{eq:1} is called the  \emph{genus} of $Y$; 
  the even integer $(-K_{Y})^{3} = 2g-2$ is called the \emph{anticanonical degree} of $Y$.
\end{definition}

The following facts about anticanonical sections of weak Fano \mbox{3--folds} 
are well known to algebraic geometers; they are central to the current paper.

It follows from the vanishing results in \fullref{C:weak:fano:vanish}
together with adjunction that if a member $S\in \acls{Y}$ is
non-singular, then it is a K3 surface.  For smooth Fano 3--folds the existence of a non-singular
member $S \in \acls{Y}$ is due to Shokurov~\cite{shokurov:k3}.
For a weak Fano 3--fold $Y$ with at worst canonical Gorenstein singularities 
Reid~\cite[Theorem~0.5]{reid:kaw} proved that a general $S \in \acls{Y}$ is an irreducible K3 with at worst rational double 
point singularities. Using Reid's result Paoletti~\cite[Proposition~2.1]{paoletti} deduced the following:

\begin{theorem}
\label{thm:reid}
If $Y$ is a non-singular weak Fano 3--fold then 
a general anticanonical member $S\in  \acls{Y}$ is a non-singular K3 surface.
\end{theorem}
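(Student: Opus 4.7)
The plan is to bootstrap from Reid's theorem (quoted immediately before the statement) to the sharper smoothness conclusion under the additional hypothesis that $Y$ itself is non-singular. Reid's theorem already supplies that a general member $S \in \acls{Y}$ is an irreducible K3 surface with at worst rational double (Du Val) point singularities, so the remaining task is to rule out these Du Val points in the smooth ambient setting.

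First I would invoke the classical Bertini theorem on the smooth 3--fold $Y$: combined with the irreducibility given by Reid, this forces $\acls{Y}$ to have no fixed components, and any singular point of a general member must lie in the base locus $B := \operatorname{Bs}\acls{Y}$ (outside $B$, Bertini automatically guarantees smoothness of a general member). Since Reid's theorem implies that the candidate singularities of a general member are isolated Du Val points, the problem reduces to showing that a general $S \in \acls{Y}$ is smooth at the finitely many points of $S \cap B$.

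Next I would combine the standard short exact sequence
\[
0 \longrightarrow \oo_Y \longrightarrow \oo_Y(-K_Y) \longrightarrow \oo_{S}(-K_Y|_{S}) \longrightarrow 0
\]
for a fixed member $S \in \acls{Y}$ with \fullref{C:weak:fano:vanish}(i) (which gives $h^1(\oo_Y) = 0$) to deduce that the restriction map $H^0(Y, -K_Y) \twoheadrightarrow H^0(S, -K_Y|_{S}) = H^0(S, N_{S/Y})$ is surjective. Equivalently, every first-order deformation of $S$ in $Y$ is realised by a nearby member of $\acls{Y}$; in particular, for every $p \in S \cap B$ one can find members of $\acls{Y}$ arbitrarily close to $S$ whose defining equation has non-vanishing normal component at $p$.

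The main obstacle I expect lies in converting this first-order freedom into an actual smoothing of any Du Val singularity of a generic member at a base point. This is precisely the step where the smoothness of $Y$ (as opposed to Reid's singular ambient setting) is essential: in a smooth analytic chart on $Y$ centred at $p$, the member $S$ is cut out by a single holomorphic function $f$, and the surjectivity just established provides enough freedom inside the $\acls{Y}$--family to perturb the first-order Taylor data of $f$ at $p$. A general-position argument on the coefficients then forces $df(p) \neq 0$ for a generic member, killing the potential singularity at $p$. Applying this simultaneously at each of the (finitely many) candidate points of $S \cap B$, which depend algebraically on $S$, yields a general member that is everywhere smooth, completing the proof.
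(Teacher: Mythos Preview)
The paper does not supply its own proof here; it merely attributes the result to Paoletti, who deduced it from Reid's theorem. So the comparison is with Paoletti's argument.

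Your proposal has a genuine gap in the final step. The assertion that at a base point $p\in B$ one can find members of $\acls{Y}$ ``whose defining equation has non-vanishing normal component at $p$'' is simply false: by definition of a base point every section of $-K_Y$ vanishes at $p$, hence so does every section of $N_{S/Y}=-K_Y|_S$. More importantly, the surjectivity $H^0(Y,-K_Y)\twoheadrightarrow H^0(S,-K_Y|_S)$ does \emph{not} by itself provide ``enough freedom to perturb the first-order Taylor data of $f$ at $p$''. What surjectivity gives you is that $B\cap S$ coincides (even scheme-theoretically) with the base locus of the restricted system $|{-K_Y}|_S|$ on $S$; at a singular point $p$ of $S$ (where the Zariski tangent spaces of $S$ and $Y$ coincide, since the local equation of $S$ lies in $\mathfrak m_p^2$) this reduces the question ``is the general member of $\acls{Y}$ smooth at $p$?'' to ``does some section of $-K_Y|_S$ have non-vanishing first jet at $p$?''. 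But the latter is not a tautology and cannot be dispatched by a ``general-position argument on the coefficients'': it is precisely the statement that the base \emph{scheme} of $|{-K_Y}|_S|$ is reduced at $p$.

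This is where Paoletti's actual input lies. One has to analyse the big and nef linear system $|{-K_Y}|_S|$ on the (possibly Du Val) K3 surface $S$---equivalently, its pullback to the minimal resolution---and invoke the Saint-Donat/Mayer theory of linear systems on K3 surfaces to see that the base locus is either empty or a reduced curve, never a fat point. Without that K3-specific step your argument is incomplete; the smoothness of $Y$ and the surjectivity of restriction are necessary but not sufficient.
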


 To define the anticanonical morphism and the anticanonical model associated with 
 any weak Fano 3--fold we need the following:

\begin{theorem}
  If $Y$ is a weak Fano \mbox{3--fold}, then $-K_Y$ is  semi-ample.
\end{theorem}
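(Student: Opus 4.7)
The plan is to deduce this directly from the Kawamata--Shokurov base-point-free theorem, which is the natural generalisation to big and nef (rather than ample) line bundles of the standard observation that sufficiently positive line bundles become very ample. In the form most convenient here, the base-point-free theorem states: if $X$ is a non-singular projective variety (more generally a klt pair) and $D$ is a nef Cartier divisor such that $aD - K_X$ is nef and big for some integer $a > 0$, then the linear system $|mD|$ is base point free for all sufficiently large $m$. A reference is Koll\'ar--Mori~\cite{KM}.

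To apply this to $Y$ a weak Fano 3--fold, I would set $D = -K_Y$. By the definition of weak Fano (\fullref{dfn:weak_fano}) the divisor $-K_Y$ is nef, verifying the first hypothesis. Taking $a = 1$, we have
\[
aD - K_Y \;=\; -K_Y - K_Y \;=\; -2K_Y,
\]
which is a positive integer multiple of the big and nef divisor $-K_Y$ and is therefore itself big and nef. Hence the hypotheses of the base-point-free theorem are satisfied.

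The conclusion is that $|-mK_Y|$ is base point free for all $m$ sufficiently large, which by \fullref{def:line:positivity}(iii) means exactly that $-K_Y$ is semi-ample.

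The only ``hard'' ingredient in this plan is the base-point-free theorem itself, which is a deep result relying on Kawamata--Viehweg vanishing and the non-vanishing theorem; everything else is a formal verification of hypotheses. There is no real obstacle at the level of the proposition statement, since $-K_Y$ is tailor-made to satisfy the assumption ``$aD - K_Y$ is nef and big'' — one just takes any $a \geq 1$ and the required bigness and nefness follow immediately from that of $-K_Y$ itself.
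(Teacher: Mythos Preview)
Your proof is correct and essentially identical to the paper's: both apply the base-point-free theorem with $D = -K_Y$ and $a = 1$, noting that $aD - K_Y = -2K_Y$ is big and nef. The only difference is cosmetic --- the paper cites Reid's version \cite[Theorem~0.0]{reid:kaw} rather than Koll\'ar--Mori, but the argument is the same.
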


\begin{proof}
The anticanonical divisor of $Y$ is big and nef and hence by the 
Basepoint-free Theorem (apply  Reid~\cite[Theorem~0.0]{reid:kaw} with $D=-K_Y$ and $a=1$)
the linear system $\abs{-nK_Y}$ is basepoint-free for $n$ sufficiently large.
\end{proof}

Since $-K_{Y}$ is semi-ample, by \fullref{r:semiample:fg} $-K_{Y}$ is finitely generated 
and the birational morphism $\varphi \co Y \to \Proj R(Y,-K_{Y})$ 
coincides with the algebraic fibre space $\varphi \co Y \to X$ given by \fullref{t:semiample}.

\begin{definition}
\label{d:ac:morphism}
  If $Y$ is a weak Fano \mbox{3--fold}, we call the finitely generated
  ring 
\[
R(Y,-K_Y)=\bigoplus_{n\geq 0} H^0(Y;-nK_Y)
\] 
the \emph{anticanonical ring} of $Y$, %
the birational morphism $\varphi\co Y \to X=\Proj R(Y,-K_Y)$ attached
to $\acls{Y}$ the \emph{anticanonical morphism} and $X$ the
\emph{anticanonical model} of $Y$.
\end{definition}

We will sometimes abbreviate anticanonical as AC and therefore refer to the AC morphism or AC model of a weak 
Fano \mbox{3--fold} $Y$.

\begin{remark}\quad
\label{R:weak:fano:ac:model}
\begin{enumerate}
\item It is clear that $Y$ is a resolution of singularities of the anticanonical model
  $X$. It is more-or-less a tautology that
\[
K_Y=\varphi^\ast K_X, \quad
\text{and}
\quad
R(Y,-K_Y)=R(X,-K_X) .
\pagebreak[1]
\]
In particular, $Y$ is a \emph{crepant} resolution of $X$. It follows
immediately that $X$ has Gorenstein canonical singularities and that
$-K_X$ is an ample line bundle; thus $X$ is in its own right a
singular Fano variety with at worst Gorenstein canonical
singularities.
\item Conversely, given a Fano 3--fold $X$ with Gorenstein canonical
  singularities one can ask whether $X$ admits a non-singular projective
  crepant resolution $Y$; any such $Y$ will be a non-singular weak Fano
  3--fold with $X$ as its anticanonical model.
\item If $\acls{Y}$ has two non-singular members $S_1,S_2$ intersecting
  \emph{transversally} then the intersection $S_{1}\cap S_{2}$ is a
  canonically polarized curve $C$ (that is, $-K_{Y|C}=K_C$) of
  genus~$g$.
\item In all examples we consider, the anticanonical ring
  $R(Y,-K_Y)$ is generated in degree $1$; 
equivalently, $-K_X$ is very ample. In this case non-singular members $S_1$, $S_2$ always exist.
The few Gorenstein canonical Fano 3--folds $X$ for which $-K_{X}$ 
fails to be very ample are classified by Jahnke--Radloff~\cite[Theorem~1.1]{jr:basepoints}.
\end{enumerate}
\end{remark}

Now we define a subclass of weak Fano 3--folds that will play an important role throughout 
the rest of the paper and in our paper~\cite{chnp2}.
First recall the definitions of small and semi-small projective birational
morphisms from \fullref{D:semismall:small}.

\begin{definition}
\label{d:semifano}
Let $Y$ be a weak Fano 3--fold and $\varphi\co Y \ra
  X$ its anticanonical morphism. If $\varphi$ is \emph{semi-small}, we
  call $Y$ a \emph{semi-Fano} 3--fold.
\end{definition}

\begin{remark}\quad
\label{R:ac:model}
\begin{enumerate}
\item The anticanonical morphism $\varphi \co Y \to X$ of a semi-Fano 3--fold may
  contract divisors to curves, or curves to points, but not divisors
  to points.
\item If the anticanonical morphism $\varphi \co Y \to X$ of a
  semi-Fano 3--fold is \emph{small} and not just semi-small then it
  contracts only a finite number of curves to points.  $X$ is then a
  Fano 3--fold with Gorenstein terminal and therefore isolated cDV
  singularities (recall \fullref{def:cDV}); the curves $C \subset Y$
  contracted by $\varphi$ give rise to the isolated cDV points in
  $X$.  In this case $\varphi$ is a flopping contraction in the sense
  of \fullref{D:flopping}.  Hence if $D$ is any
  $\varphi$--antiample (recall \fullref{r:f:ample}) Cartier divisor on $Y$
  by \fullref{T:flops} we may perform the $D$--flop of
  $\varphi$. This yields another semi-Fano 3--fold $Y^{+}$ whose
  anticanonical model $\varphi^{+} \co Y^{+} \to X$ is another
  small projective birational morphism, and where $D^+$ is
  $\varphi^+$--ample. Thus each semi-Fano
  3--fold $Y$ with small anticanonical morphism gives rise to at least
  one other semi-Fano 3--fold with small
  anticanonical morphism and the same anticanonical model $X$. (If
  $\rho(Y/X)>1$, there can be several other semi-Fano 3--folds with
  the same anticanonical model. For all Cartier divisors $D$ on~$Y$,
  there is a sequence of flops $Y\dasharrow Y^\prime$, with
  anticanonical model $\varphi^\prime \co Y^\prime \to X$, such
  that $D^\prime$ is $\varphi^\prime$--nef.)
\item In our construction of twisted connected sum \gtwo--manifolds
in~\cite{chnp2} we will be particularly interested in semi-Fano 3--folds
  $Y$ with \emph{nodal} anticanonical model~$X$, that is, the only singular
  points of $X$ are ordinary double points.  In this special case of
  (ii) the curves contracted by $\varphi$ are finitely many `rigid'
  rational curves with normal bundle
  $\mathcal{O}(-1)\oplus\mathcal{O}(-1)$.  These curves are the
  exceptional curves over the nodes of the anticanonical model $X$.
  As in the previous part we can flop $\varphi \co Y \to X$ to
  obtain finitely many other semi-Fano 3--folds with the same nodal
  anticanonical model $X$.  Each of these rigid rational curves $C$ in
  $Y$ will give rise to a compact rigid holomorphic curve in any ACyl
  Calabi--Yau 3--fold $V$ constructed from $Y$ using \fullref{prop:onestage}.  These compact rigid holomorphic curves in our
  ACyl Calabi--Yau 3--folds will in turn be the source of compact
  rigid associative 3--folds in the twisted connected sum
  $\gtwo$--manifolds we construct in~\cite{chnp2} out of pairs of ACyl
  Calabi--Yau 3--folds.
\end{enumerate}
\end{remark}

For smooth Fano 3--folds we know there are precisely $105$ deformation families. For weak Fano 3--folds we still have:
\begin{theorem}
\label{t:wk:fano:finite}
There are only finitely many deformation families of smooth weak Fano 3--folds.
\end{theorem}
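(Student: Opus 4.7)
The plan is to reduce the finiteness of deformation families of smooth weak Fano 3--folds to two separate finiteness statements, one for the anticanonical models and one for their crepant resolutions. By \fullref{R:weak:fano:ac:model}(i), any smooth weak Fano 3--fold $Y$ is a projective crepant resolution of its anticanonical model $X=\Proj R(Y,-K_Y)$, which is a Fano 3--fold with at worst Gorenstein canonical singularities and with $(-K_X)^3=(-K_Y)^3$. So it suffices to show: (a) there are finitely many deformation families of Gorenstein canonical Fano 3--folds, and (b) each such $X$ admits only finitely many smooth projective crepant resolutions, in a way that behaves well in families.

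For step (b) I would appeal directly to \fullref{t:crep:finite} (Kawamata--Matsuki): for any fixed canonical Fano 3--fold $X$ there are only finitely many projective birational crepant morphisms $Y\to X$ with $Y$ canonical, in particular finitely many smooth crepant resolutions. Once a bounded family of anticanonical models is fixed, the relative Hilbert/Chow scheme of crepant birational morphisms into the family is of finite type, and the fibrewise finiteness from \fullref{t:crep:finite} forces it to break into only finitely many irreducible components; each component gives at most one deformation family of $Y$.

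For step (a), which is the main obstacle, the strategy is to show that Gorenstein canonical Fano 3--folds are bounded. The classical route is to first bound $(-K_X)^3=2g-2$ from above; for smooth Fanos this is the Iskovskih--Mori--Mukai classification, and in the Gorenstein canonical case it follows from the dimension--3 instance of the Borisov--Alexeev--Borisov boundedness conjecture (known well before Birkar's general proof, and accessible via Kawamata's work on terminal Fano 3--folds combined with the existence of a small $\Q$--factorialisation, recalled after \fullref{t:kawamata:fg}). Once $(-K_X)^3$ is bounded, Matsusaka's Big Theorem produces a uniform $m$ for which $|{-mK_X}|$ is very ample and embeds every such $X$ into a fixed projective space; the resulting locus in the Hilbert scheme has finitely many components, and each component provides a single deformation family of $X$.

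The genuinely deep input is the boundedness in step (a); step (b) and the reduction to the anticanonical model are essentially formal given the results collected in Section~\ref{sec:alg-geom}. I would emphasise in a detailed write--up that the boundedness of the Gorenstein canonical Fano 3--folds (rather than of their smooth crepant resolutions directly) is what sidesteps the unbounded behaviour of the Picard rank $\rho(Y)$ under flops: $\rho(Y)$ is bounded in terms of the defect $\sigma(X)$ and the Picard rank of $X$, both of which are controlled once $X$ lies in a bounded family.
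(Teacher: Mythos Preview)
Your approach is essentially identical to the paper's: reduce to the anticanonical model $X$, invoke boundedness of Gorenstein canonical Fano 3--folds, and then use \fullref{t:crep:finite} to control the finitely many crepant resolutions over each $X$. The paper dispatches step~(a) in one line by citing Koll\'ar--Miyaoka--Mori--Takagi~\cite[Corollary~1.3]{kmmt} for boundedness of canonical $\Q$--Fano 3--folds, and leaves the passage from fibrewise finiteness to finitely many deformation families implicit.

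One point of caution in your step~(a): the classical Matsusaka Big Theorem is stated for smooth projective varieties, so invoking it directly for Gorenstein canonical $X$ is not quite legitimate. The uniform very-ampleness of $|-mK_X|$ for canonical Fano 3--folds is exactly the nontrivial content of results such as KMMT (or Koll\'ar's effective base-point-freeness), and this is why the paper simply cites that reference rather than attempting to reconstruct the argument. Your remark that the hard input is boundedness of the anticanonical models is correct; just be aware that ``bound $(-K_X)^3$ then apply Matsusaka'' hides real work in the singular setting.
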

\begin{proof}
The anticanonical model $X$ of a weak Fano 3--fold is a Gorenstein canonical Fano 3--fold. 
Gorenstein canonical Fano 3--folds form a bounded family: see 
Koll\'ar--Miyaoka--Mori--Takagi~\cite[Corollary~1.3]{kmmt} (which 
proves the same holds for all canonical $\Q$--Fano 3--folds).
Applying \fullref{t:crep:finite} we see that each deformation family of Gorenstein canonical Fano 3--folds 
gives rise to only finitely many deformation families of weak Fano 3--folds.
\end{proof}
We are not aware of another reference for \fullref{t:wk:fano:finite} but it was surely known to various experts.
\begin{remark}\hfill
\begin{enumerate}
\item
The previous theorem does not yield any estimate on the number of deformation families of weak Fano 3--folds. 
For this we would need an improvement of \fullref{t:crep:finite} that gives 
a quantitative bound on the number of projective crepant resolutions of  a given Gorenstein canonical Fano 3--fold.
\item
Even if we restrict to the toric world we will see in \fullref{S:wk:fano:examples} 
that there are over 4000 deformation families of toric Gorenstein canonical Fano 3--folds $X$.
Each such $X$ has at least one and often a large number of projective crepant (toric) resolutions and therefore 
gives rise to potentially many deformation families of toric weak Fano 3--folds with the same AC model $X$. 
Moreover, almost 900 of these toric Gorenstein canonical Fano \mbox{3--folds} $X$ give rise to semi-Fano 
\mbox{3--folds} in the sense of \fullref{d:semifano}. So even toric semi-Fano 3--folds are very plentiful.
\end{enumerate}
\end{remark}

To make the discussion more concrete we give two of the simplest
examples of \mbox{semi-Fano} 3--folds of different flavours; we will give
many more examples of weak Fanos in Sections~\ref{sec:examples} and~\ref{S:wk:fano:examples}.
Examples~\ref{E:nodal:quadric} and~\ref{E:wk:fano:ruled} are the only two weak Fano 3--folds 
of index three (recall \fullref{r:index}) besides the quadric $Q^{3}$ which of course is Fano.

\begin{example}
\label{E:nodal:quadric}
Let $X \subset \CP^{4}$ be the projective cone over a smooth %
quadric surface $Q \simeq \CP^{1}\times \CP^{1} \subset \CP^{3}$; $X$ is a
Gorenstein terminal Fano 3--fold with Picard rank $1$, defect $1$,
anticanonical degree $54$, index $3$ and $1$ ODP at the apex of the cone.
$X$~has
two small resolutions $Y$ and $Y^{+}$ both of which are projective and
isomorphic to ${\CP(\mathcal{O}\oplus\mathcal{O}(-1)\oplus
\mathcal{O}(-1))}$ (where $\mathcal{O}(d)$ denotes
$\mathcal{O}_{\CP^{1}}(d)$) which is a $\CP^{2}$--bundle over~$\CP^{1}$.
The anticanonical morphism $\varphi\co Y \ra X$ contracts
the unique section $C_{0}$ with normal bundle $\mathcal{O}(-1)\oplus
\mathcal{O}(-1)$. $Y$ is the unique non-singular toric weak Fano 3--fold with nodal
anticanonical model and Picard rank $\rho=2$: see \fullref{R:picard:rank:toric:small}.
\end{example}

Next we give a simple example of a semi-Fano 3--fold $Y$ with
$\rho=2$ which is \mbox{semi-small} but not small, that is, for which the anticanonical morphism
contracts a divisor to a curve; as in the previous example $Y$ is a
$\CP^{2}$--bundle over $\CP^{1}$ and $Y$ is toric.

\begin{example}
\label{E:wk:fano:ruled}
$Y = \CP(\mathcal{O}(-2)\oplus\mathcal{O}\oplus \mathcal{O})$ (where as
above $\mathcal{O}(d)$ denotes $\mathcal{O}_{\CP^{1}}(d)$) is a non-singular
rank $2$ toric weak Fano 3--fold. 
As in the previous example $Y$ is a weak Fano 3--fold of index $3$ and anticanonical degree $54$. 
However, in this case one can verify that the anticanonical morphism
$\varphi  \co Y \to X$
contracts the divisor $D=\CP(\mathcal{O}\oplus \mathcal{O})$
to a curve along which $X$ has $A_{1}$ singularities. 
\end{example}

\begin{remark*}
If $Y$ is a weak Fano 3--fold with index three then its 
anticanonical model $X$ is a Gorenstein Fano 3--fold of index three with at worst canonical singularities. 
Hence by Shin~\cite[Theorem~3.9]{shin} $X$ is isomorphic to some hyperquadric in $\CP^{4}$.
\end{remark*}

\subsection*{Smoothing terminal Fano 3--folds and semi-Fano 3--folds}

A very useful result which yields some modest control over terminal
Gorenstein Fano 3--folds and hence over non-singular weak Fano 3--folds with
\emph{small} anticanonical morphism is Namikawa's smoothing theorem for
terminal Fano 3--folds: see Namikawa
\mbox{\cite[Theorems~11 and~13]{namikawa:fano:smooth}}
and also Namikawa--Steenbrink~\cite[Lemma~3.4]{namikawa:steenbrink}.
\begin{theorem}
  Let $X$ be a Fano 3--fold $X$ with Gorenstein terminal
  singularities.
\label{T:fano:smooth}
\begin{enumerate}
\item $X$ is smoothable by a flat deformation, and hence is a
  degeneration of a non-singular Fano 3--fold $X_t$ from the
  Iskovskih--Mori--Mukai classification.  In particular, the
  anticanonical degrees, the Picard ranks and the Fano indices of $X$
  and $X_t$ are equal.
\item Suppose that a non-singular Fano 3--fold $X_t$ degenerates to
  $X$ by a flat deformation. Then we have
\begin{equation}
\label{E:bound:nodes}
e(X) + \sum_{p \in Sing(X)}\mu(X,p)  \le 21 - \tfrac{1}{2}\chi(X_t)  =  h^{2,1}(X_t)+ 20 - \rho(X_t),
\end{equation}
where $\chi(X_t)$ is the topological Euler characteristic of $X_t$,
$e(X)$ is the number of ordinary double points of $X$ and $\mu$ is the
non-negative integer invariant of an isolated rational singularity
defined in~\cite[Section~2]{namikawa:fano:smooth} ($\mu$ vanishes for an
ODP and is positive for other Gorenstein terminal singularities).
\item In the case considered in \textup{(ii)} we have
\[
H^{i}(X,\Z) \cong H^{i}(X_t,\Z) \qquad \text{for } i\neq 3,4,
\]
and the defect of $X$ satisfies
\begin{equation*}
  \sigma(X) = b^{3}(X) - b^{3}(X_t) + \sum_{P \in Sing(X)}{m_{P}},
\end{equation*}
where $m_{P}$ denotes the Milnor number of the isolated hypersurface
singularity $P \in X$.  In particular, the defect of a nodal Fano
3--fold $X$ with $e$ nodes and Fano smoothing $X_t$ satisfies
\begin{equation}
\label{E:b3:smooth}
\sigma(X) = b^{3}(X) - b^{3}(X_t) + e.
\end{equation}
\end{enumerate}
\end{theorem}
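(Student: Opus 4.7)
The plan is to follow Namikawa's approach, which combines deformation theory of mildly singular Fano 3--folds with an analysis of the topology of Milnor fibres of isolated Gorenstein terminal 3--fold singularities. I would prove (i)--(iii) in order, with (i) establishing existence of a smoothing, (ii) translating the inequality into a comparison of Euler characteristics, and (iii) comparing integral cohomology of $X$ and $X_t$.

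For (i), first I would analyse the local-to-global sequence for deformations
\[
H^{1}(X, T_{X}) \to T^{1}_{X} \to H^{0}(X, \mathcal{T}^{1}_{X}) \to H^{2}(X, T_{X}),
\]
where $\mathcal{T}^{1}_{X}$ is the sheaf of local deformations, supported on $\mathrm{Sing}(X)$. By \fullref{P:cdv:terminal} every $P\in\mathrm{Sing}(X)$ is an isolated hypersurface (cDV) singularity, so each stalk $(\mathcal{T}^{1}_{X})_{P}$ contains a smoothing direction. The crucial input is Namikawa's vanishing $H^{2}(X, T_{X}) = 0$, which one establishes by pushing $T_{X}$ forward along a partial crepant resolution and appealing to Kawamata--Viehweg vanishing (\fullref{T:KV:vanish}). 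This gives surjectivity onto $H^{0}(X, \mathcal{T}^{1}_{X})$, so the local smoothing directions glue to a global first-order smoothing; unobstructedness (again from $H^{2}(X, T_{X}) = 0$) yields a flat smoothing $\mathcal{X} \to \Delta$ over a disc. Since flat deformations preserve $(-K)^{3}$, the Fano index, and the Picard rank ($H^{1}(X, \oo) = 0$ on each fibre together with upper-semicontinuity), $X_t$ has the same numerical invariants as $X$ and therefore belongs to one of the 105 Iskovskih--Mori--Mukai families.

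For (ii), the bound comes from computing Euler characteristics via Milnor fibres. For each isolated hypersurface singularity $P\in X$ the Milnor fibre $F_{P}$ is simply-connected with reduced cohomology concentrated in degree $3$ of rank equal to the Milnor number $m_{P}$, so a standard nearby-cycle argument yields
\[
\chi(X) - \chi(X_t) = \sum_{P\in\mathrm{Sing}(X)} m_{P}.
\]
Combining with the Hodge identity $\chi(X_t) = 2(1 + \rho(X_t) - h^{2,1}(X_t))$, which holds by \fullref{C:weak:fano:vanish}, produces the equality $21 - \tfrac{1}{2}\chi(X_t) = h^{2,1}(X_t) + 20 - \rho(X_t)$. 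The inequality then follows by identifying $\mu(X,P)$ as the correction that vanishes precisely at ordinary nodes: for an ODP $m_{P} = 1$ and $\mu(X,P) = 0$, so its contribution to the left-hand side is just the $1$ counted by $e(X)$, while non-ODP singularities pay the additional cost $\mu(X,P)$.

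For (iii), I would use the same Milnor-fibre data: the concentration of $H^{\ast}(F_{P}; \ZZ)$ in degrees $0$ and $3$ immediately gives $H^{i}(X; \ZZ) \cong H^{i}(X_t; \ZZ)$ for $i \neq 3, 4$ via the nearby-cycle exact sequence. The defect formula then drops out of the characterisation $\sigma(X) = \rank H_{4}(X) - \rank H^{2}(X)$ (\fullref{D:defect}) combined with Poincar\'e duality on the smooth $X_t$: the surplus of $H^{4}(X)$ classes over $H^{2}(X)$ accounts for Weil divisors dual to vanishing cycles, contributing $\sum_{P} m_{P}$, corrected by $b^{3}(X) - b^{3}(X_t)$ which records how many vanishing cycles persist to the smoothing. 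The nodal specialisation is immediate since $m_{P} = 1$ for each ODP. I expect the main obstacle to be the vanishing $H^{2}(X, T_{X}) = 0$ in (i), which requires carefully controlling $T_{X}$ near each cDV singularity; the remainder of the argument reduces to standard Milnor fibre bookkeeping and Hodge theory.
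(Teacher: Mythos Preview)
The paper does not give its own proof of this theorem: it is stated as a quotation of results of Namikawa \cite[Theorems~11 and~13]{namikawa:fano:smooth} and Namikawa--Steenbrink \cite[Lemma~3.4]{namikawa:steenbrink}, and the surrounding remarks only supply context. Your proposal correctly recognises this and sketches what is essentially Namikawa's strategy; the outlines for (i) and (iii) are accurate in broad shape (local-to-global deformation sequence plus $H^{2}(X,T_{X})=0$ for smoothability; the vanishing-cycle long exact sequence and the identity $\sigma(X)=\rank H_{4}(X)-\rank H^{2}(X)$ for the defect formula).

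There is, however, a genuine gap in your treatment of (ii). From the Milnor fibre description you correctly obtain the \emph{equality} $\chi(X)-\chi(X_t)=\sum_{P}m_{P}$, and you correctly rewrite $21-\tfrac{1}{2}\chi(X_t)$ in Hodge numbers. But the passage from these equalities to the \emph{inequality} \eqref{E:bound:nodes} is not explained: the sentence ``the inequality then follows by identifying $\mu(X,P)$ as the correction that vanishes precisely at ordinary nodes'' is not a proof. Namikawa's invariant $\mu(X,p)$ is not $m_{P}-1$; it is a mixed-Hodge-theoretic quantity attached to the link of the singularity (roughly the rank of a graded piece of the cohomology of the link), and the inequality arises because a certain mixed Hodge number of the singular variety $X$ -- essentially a version of $h^{2,1}(X)$ -- is non-negative. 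No amount of Euler-characteristic bookkeeping alone will produce an inequality: you must invoke some positivity, and that positivity is the Hodge-theoretic input that your sketch omits. This is the substantive content of Namikawa's Theorem~13 and the step you would need to supply.
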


\begin{remark*}
  Namikawa proves a slightly more general smoothing result than we
  have stated. His result generalises earlier work of Friedman~\cite[Corollary~4.2]{friedman:double:pts}.  The anticanonical degree of a
  Gorenstein canonical Fano 3--fold can be as large as $72$; in
  particular, since the maximal anticanonical degree of a non-singular Fano
  3--fold is $64$ (attained only by $\CP^{3}$), there can be no general
  smoothing result for canonical Fano 3--folds analogous to \fullref{T:fano:smooth}. A more fundamental reason is that smoothings of
  canonical singularities are much more subtle than smoothings of
  terminal singularities. 
\end{remark*}

\begin{remark}
\hfill{}
\label{R:b3:bound}
\begin{enumerate}
\item Since we have only finitely many topological types of non-singular
  Fano \mbox{3--fold}, $\chi(X_t)$ is bounded over all non-singular Fano \mbox{3--folds}
  $X_t$ and hence we get a bound on the maximum number of singular points
  (in particular ODPs) that any terminal Gorenstein Fano 3--fold $X$
  can have.  Consulting the Iskovskih--Mori--Mukai classification we
  find that
\[
10 \le 21-\tfrac{1}{2}\chi(X_t) = h^{2,1}(X_{t}) + 20 - \rho(X_{t})\le 71.
\]
We can also consult the classification to compute $\chi(X_t)$ in any given case. 
\item The term $h^{2,1}(X_{t})$ on the RHS of \eqref{E:bound:nodes}
  varies between $0$ and $52$. Only $11$ Fano 3--folds have 
  $h^{2,1}\ge 5$ and all such examples have relatively small
  anticanonical degree, for example, smooth quartics have $h^{2,1}=30$ and anticanonical degree~$4$.
    On the other hand, $\rho(X_{t})$ varies only
  between $1$ and $10$, and exceeds $5$ only when the Fano 3--fold is
  the product of $\CP^{1}$ with a del Pezzo surface.  So the main
  contribution to the variation in the bound on the RHS of
  \eqref{E:bound:nodes} comes from the variation of $h^{2,1}$. In
  particular only terminal Fano 3--folds which smooth to Fano
  3--folds with large $h^{2,1}$ (which from the classification have
  small anticanonical degree) can have a large number of nodes.
  In~\cite{chnp2} we construct compact \mbox{\gtwo--manifolds} from a pair of 
  ACyl Calabi--Yau 3--folds via the twisted connected sum construction. 
  When both ACyl Calabi--Yau 3--folds arise from blowing up a generic AC pencil 
  on a semi-Fano 3--fold with nodal AC model $X$ we can produce one rigid 
  associative 3--fold with topology $S^1 \times \CP^1$ for each node of $X$.
  Therefore bounds on the number of nodes of nodal Fano 3--folds imply 
  bounds on the number of rigid associative 3--folds we can exhibit in our $\gtwo$--manifolds.
\item The maximum for $21 -\tfrac{1}{2}\chi(X_t)$ of $71$ is achieved
  only for sextic double solids.  The next highest value is $49$ which
  is achieved only for quartics in $\CP^{4}$; 
  by de~Jong--Shepherd-Barron--Van~de~Ven~\cite{dejong} a nodal quartic has at
  most $45$ nodes  (see \fullref{exa:burkhardt_quartic} for such a quartic), 
  so the bound from
  \eqref{E:bound:nodes} is not sharp (but not so far from sharp
  either).
\item If the singularity at $P\in X$ is given
by \mbox{$\{f(x,y,z,t)=0\}$} in local analytic coordinates
(recall \fullref{def:cDV}), for $f$ a polynomial with an
  isolated critical point at the origin, then the Milnor fibre is
  \mbox{$\{f(x,y,z,t)=1\}$}.  It is homotopic to a bouquet of
  $3$--spheres and hence its cohomology is supported in degrees $0$
  and~$3$; the Milnor number $m_{P}$ is equal to the number of spheres in
  the bouquet. In particular, $m_P=1$ if and only if $P$ is an ODP.
\end{enumerate}
\end{remark}

Let $Y$ be a non-singular semi-Fano 3--fold with nodal anticanonical model
$X$.  We can compute the third Betti number of $Y$ in terms of the
defect $\sigma$ of $X$, the number of nodes $e$ and the third Betti
number $b$ of a Fano smoothing $X_{t}$ of $X$ as follows.

\begin{lemma}
\label{L:weak:fano:b3}
Let $Y$ be a non-singular semi-Fano 3--fold with nodal anticanonical model
$X$, and containing $e$ exceptional $(-1,-1)$ curves. Let $\sigma$ be
the defect of $X$ and $b=b^{3}(X_{t})$ the third Betti number of a
Fano smoothing $X_{t}$ of the nodal Fano 3--fold $X$.  Then
\begin{equation}
\label{E:weak:fano:b3}
b^{3}(Y) = b - 2e + 2\sigma.
\end{equation}
\end{lemma}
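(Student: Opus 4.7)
The plan is to compute $b^3(Y) - b^3(X)$ via a Mayer--Vietoris comparison and combine with Namikawa's formula \eqref{E:b3:smooth} from \fullref{T:fano:smooth}(iii), which rearranges to $b^3(X) = b + \sigma - e$ for the nodal $X$.

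First I would decompose $X = U_X \cup V_X$, where $U_X$ is a disjoint union of $e$ small contractible analytic neighbourhoods of the nodes and $V_X = X \setminus \mathrm{Sing}(X)$, and set $U_Y = \varphi^{-1}(U_X)$, $V_Y = Y \setminus \varphi^{-1}(\mathrm{Sing}(X))$. Since $\varphi$ is a small resolution whose exceptional curves have normal bundle $\mathcal{O}(-1) \oplus \mathcal{O}(-1)$, one has $V_Y \cong V_X$ (call it $V$), and $U_Y \cap V_Y \cong U_X \cap V_X$ deformation retracts to $\sqcup_e(S^2 \times S^3)$, the link of an ordinary double point. Moreover $U_X$ is contractible while $U_Y$ deformation retracts to $\sqcup_e \PP^1$, so $H^0(U_Y;\QQ) = H^2(U_Y;\QQ) = \QQ^e$ and $H^k(U_Y;\QQ) = 0$ otherwise. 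The key topological observation is that restriction $H^2(U_Y;\QQ) \to H^2(U_Y \cap V_Y;\QQ)$ is an isomorphism: the class Poincar\'e dual to each exceptional $\PP^1$ restricts to the generator of $H^2$ of its link $S^2 \times S^3$.

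Writing out Mayer--Vietoris in rational cohomology and using $H^1(S^2 \times S^3) = 0$, one obtains for each $\bullet \in \{X, Y\}$ a short exact sequence
\[
0 \to \mathrm{coker}(\alpha_\bullet) \to H^3(\bullet;\QQ) \to \ker(\beta) \to 0,
\]
where $\beta \co H^3(V;\QQ) \to \QQ^e$ is restriction to the links (the same map in both cases) and $\alpha_\bullet \co H^2(U_\bullet;\QQ) \oplus H^2(V;\QQ) \to \QQ^e$ are the Mayer--Vietoris difference maps. By the key observation $\alpha_Y$ is already surjective on the $H^2(U_Y)$ summand, so $\mathrm{coker}(\alpha_Y) = 0$, while $\dim \mathrm{coker}(\alpha_X) = e - r$, where $r := \rank \alpha_X$. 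An analogous comparison in degree two yields $b^2(Y) - b^2(X) = r$; but this difference also equals $\rank \Pic(Y) - \rank \Pic(X) = \sigma$ since $\varphi$ is a small $\QQ$-factorialisation (see the remark after \fullref{t:kawamata:fg}). Hence $r = \sigma$, and subtracting the two short exact sequences gives $b^3(Y) - b^3(X) = \sigma - e$. Combining with $b^3(X) = b + \sigma - e$ yields the desired $b^3(Y) = b - 2e + 2\sigma$.

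The main obstacle is the careful bookkeeping in the Mayer--Vietoris comparison; in particular the identification of the rank $r$ of the purely topological restriction map $H^2(V;\QQ) \to \QQ^e$ with the algebraic defect $\sigma$ is the subtle step, and rests on matching the topological difference $b^2(Y) - b^2(X)$ with $\rank \Pic(Y) - \rank \Pic(X)$.
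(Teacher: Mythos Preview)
Your proof is correct and follows essentially the same strategy as the paper: compare $b^3(Y)$ with $b^3(X)$ via the small resolution to obtain $b^3(Y)=b^3(X)+\sigma-e$, and then invoke Namikawa's formula \eqref{E:b3:smooth}. The only difference is packaging: the paper obtains the comparison in one stroke from the exact triangle
\[
\ZZ_X \to R\varphi_\star \ZZ_Y \to \bigoplus_{i=1}^e\ZZ_{P_i}[-2]\overset{+1}{\longrightarrow},
\]
which immediately yields the long exact sequence $0\to H^2(X)\to H^2(Y)\to\ZZ^e\to H^3(X)\to H^3(Y)\to 0$, whereas you unpack the same information by running Mayer--Vietoris separately on $X$ and $Y$ over a common open piece $V$ and then subtracting. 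Your route is more elementary and makes the local topology of the ODP visible; the paper's is shorter and avoids the bookkeeping. One minor quibble: the phrase ``class Poincar\'e dual to each exceptional $\PP^1$'' is not quite the right description of the generator of $H^2(U_Y)$ (that dual would live in compactly supported $H^4$); what you actually use, and what is true, is that the pullback of the hyperplane class on $\PP^1$ under the retraction $U_Y\to\PP^1$ restricts isomorphically to $H^2$ of the link $S^2\times S^3$.
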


\begin{proof}
  We will compare $Y$ to $X$ via the small resolution $\varphi\co Y \ra
  X$ and also $X=X_{0}$ to a $1$--parameter Fano smoothing $X_{t}$; the
  existence of the Fano smoothing of $X$ follows from \fullref{T:fano:smooth}.

In the computation of the cohomology of the small resolution, and elsewhere in
this paper, we work in the  derived category of sheaves with (Zariski)
constructible cohomology. If $X$ is an algebraic variety then $\ZZ_X$ denotes
the sheaf with constant fibre $\ZZ$, that is, if $U\subset X$ is open and
connected, then $\ZZ_X(U)=\ZZ$. The sheaf cohomology groups of
$\ZZ_X$ -- calculated by taking an injective resolution $\ZZ_X\to
\mathcal{I}^\bullet$ -- are isomorphic to the singular cohomology groups of $X$
(with integer coefficients).  If $\varphi \co Y \to X$ is a morphism then
$R\varphi_\star \ZZ_Y$ denotes the derived direct image: it is a complex of
sheaves on $X$ with the property that $H^m(X,R\varphi_\star \ZZ_Y)=H^m(Y;\ZZ)$.

We use the (nonsplit) exact triangle:
\[
\ZZ_X \to R\varphi_\star \ZZ_Y \to \bigoplus_{i=1}^e\ZZ_{P_i}[-2]\overset{+1}{\longrightarrow}
\]
where $P_i\in X$ are the $e$ nodes, and $\ZZ_{P_i}$ the skyscraper
sheaf at $P_i$. This %
gives rise to the long exact sequence:
\[
(0)\to H^2(X) \to H^2(Y) \to \ZZ^e \to
H^3(X)\to H^3(Y)\to (0)
\]
and $H^4(X)\simeq H^4(Y)$. 
The exact sequence shows that
\begin{equation}
 \label{eq:small_res}
b^3(Y)=b^3(X)-e+b^{2}(Y)-b^{2}(X)=b^3(X)-e + \sigma.
\end{equation}
From \eqref{E:b3:smooth} we have $b^{3}(X) = b-e+\sigma$.
\eqref{E:weak:fano:b3} follows immediately by substituting for
$b^{3}(X)$ into \eqref{eq:small_res}.
\end{proof}
We will use \fullref{L:weak:fano:b3} repeatedly to compute 
$b^3$ of the weak Fano 3--folds that arise in \fullref{sec:examples}.

\subsection{ACyl Calabi--Yau 3--folds from weak Fano 3--folds}

We now explain how one can obtain  a compact 3--fold
$Z$, to which \fullref{thm:acyl_calabi} can be applied to construct ACyl
Calabi--Yau manifolds, from almost any weak Fano 3--fold $Y$. 
 Recall that $Z$ needs to fibre over $\CP^1$ with fibres in the
anticanonical linear system, and some smooth fibres. Since by \fullref{thm:reid} a generic
anticanonical divisor on a weak Fano 3--fold $Y$ is a smooth K3 surface, it is natural to consider the 3--fold $Z$ obtained by
resolving the indeterminacies of a pencil in $\acls{Y}$.
We will mostly consider pencils with smooth base loci, so that we
can perform the resolution by blowing it up. As explained in \fullref{R:weak:fano:ac:model}(iv), this is the case for a generic anticanonical
pencil on almost any weak Fano; we assume this from now on.

\begin{assumption}
The linear system $\acls{Y}$ of the weak Fano 3--fold $Y$ contains 
two non-singular members $S_0, S_\infty$ intersecting transversally.
\end{assumption}

Under this additional (mild) assumption on the weak Fano $Y$ we can
apply the following Proposition to obtain a compact projective 3--fold
$Z$ that satisfies the hypotheses of the ACyl Calabi--Yau \fullref{thm:acyl_calabi}.
\begin{prop}%
\label{prop:onestage}
Let $Y$ be a closed Kähler (respectively projective) 3--fold, and
suppose that $|S_0,S_\infty| \subset \acls{Y}$ is a pencil with smooth
(reduced) base locus~$C$, and that $S \in |S_0,S_\infty|$ is a smooth
divisor.  Then the blow-up $\blow \co Z \to Y$ at $C$ is a closed
Kähler (respectively projective) 3--fold with a fibration $f \co Z
\to \bbp^1$ with anticanonical fibres. The proper transform of $S$ in
$Z$ is isomorphic to $S$, and the image in $H^{1,1}(S)$ of the Kähler
cone of $Z$ contains the image of the Kähler cone of $Y$.
\end{prop}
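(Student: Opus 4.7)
I would start by establishing the structural claims. Since $C$ is smooth, the blow-up $\blow \co Z \to Y$ along $C$ is smooth, and it is closed Kähler (respectively projective) since blow-up along a smooth subvariety preserves both properties. The pencil $|S_0, S_\infty|$ determines a rational map $Y \dashrightarrow \bbp^1$ whose indeterminacy locus is exactly $C$, and this extends to a morphism $f \co Z \to \bbp^1$ whose fibres are the proper transforms $\tilde S_t$ of the divisors $S_t$ in the pencil. Combining the blow-up formula $\blow^* S_t = \tilde S_t + E$ with $K_Z = \blow^* K_Y + E$ and $S_t \sim -K_Y$ yields $\tilde S_t \sim -K_Z$, so the fibres of $f$ are indeed anticanonical. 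For the isomorphism $\tilde S \cong S$: by the universal property of blow-ups, the proper transform of $S$ equals the blow-up of $S$ along the scheme-theoretic intersection $C \cap S = C$, which is trivial because $C$ is a Cartier divisor in the smooth surface $S$.

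For the Kähler cone assertion, my plan is to exhibit, for each Kähler class $\alpha$ on $Y$, an explicit Kähler class on $Z$ whose restriction to $\tilde S$ is $\alpha|_S$ under the identification $\tilde S \cong S$. I would take
\[
\beta := \blow^* \alpha + t(-K_Z)
\]
for sufficiently small $t > 0$. The restriction computation gives $\beta|_{\tilde S} = \alpha|_S$ at once, since $K_Z|_{\tilde S} = 0$: by adjunction $K_{\tilde S} = (K_Z + \tilde S)|_{\tilde S}$, while $K_{\tilde S} = 0$ because $\tilde S \cong S$ is a K3 surface, and $\tilde S|_{\tilde S} = 0$ because $\tilde S$ is a fibre of $f$.

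To show $\beta$ is Kähler I would apply the Demailly--Paun theorem (or the Nakai--Moishezon--Kleiman criterion in the projective case). Nefness is immediate: $\blow^* \alpha$ is nef as the pullback of a Kähler class, and $-K_Z$ is nef (in fact semiample) since the anticanonical pencil $|\tilde S_t|$ is basepoint-free on $Z$. It then suffices to check $\int_V \beta^{\dim V} > 0$ on every irreducible analytic subvariety $V \subset Z$. For $V$ not contained in the exceptional divisor $E$, the integral $\int_V (\blow^* \alpha)^{\dim V}$ is strictly positive and dominates for small $t$. The remaining cases $V = E$ and $V$ a fibre $F$ of $E \to C$ are handled by direct computation: using $E \cdot F = -1$ one finds $\beta \cdot F = t > 0$, while $(\blow^* \alpha)^2|_E = 0$ and $(-K_Z)|_E = \tilde S|_E$ is a section of $E \to C$ mapping isomorphically to $C$, so $\int_E \beta^2 = 2t \int_C \alpha|_C + O(t^2) > 0$ for small $t > 0$.

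The main technical obstacle is this positivity check inside the exceptional divisor together with justifying the application of Demailly--Paun; the structural claims and the restriction computation are routine consequences of the blow-up formulas and adjunction.
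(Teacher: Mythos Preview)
Your argument is correct and takes a genuinely different route from the paper for the K\"ahler cone assertion. The paper uses the standard fact that $\blow^*[\omega] - \lambda[E]$ is K\"ahler for small $\lambda$, but since $(\blow^*[\omega] - \lambda[E])|_{\tilde S} = [\omega]|_S - \lambda[C]$, it must then invoke openness of the restriction map from the K\"ahler cone of $Y$ to $H^{1,1}(S)$ to adjust $[\omega]$ so that $[\omega]|_S = [\omega_0]|_S + \lambda[C]$. Your choice $\beta = \blow^*\alpha + t(-K_Z)$ is cleaner in that the restriction is automatically $\alpha|_S$, with no adjustment needed; the price is that you must verify K\"ahlerness by hand via Demailly--Paun, which is heavier machinery. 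Two small remarks. First, you should also treat irreducible curves $V \subset E$ that are not fibres: these surject onto $C$, so $\blow^*\alpha \cdot V > 0$ and nefness of $-K_Z$ finishes it; together with your cases this shows a single small $t>0$ works uniformly. Second, you can bypass Demailly--Paun entirely by observing that $\beta = \blow^*(\alpha - tK_Y) - tE$; since $\alpha - tK_Y$ is K\"ahler for small $t$ (the K\"ahler cone is open), the standard blow-up fact gives K\"ahlerness of $\beta$ directly, yielding an argument strictly simpler than the paper's.
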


\begin{proof}
  The proper transform of each element of $|S_0,S_\infty|$ is an
  anticanonical divisor on $Z$, and together they form a
  base-point-free pencil in $\acls{Z}$, thus defining the required
  fibration. If $[\omega_0] \in H^{1,1}(Y)$ is a Kähler class, then
  there is $\lambda_0 > 0$ such that $\blow^*[\omega] - \lambda[E]$ is
  a Kähler class on $Z$ for any Kähler class $[\omega]$ in a
  neighbourhood $U$ of $[\omega_0]$ and $\lambda \leq \lambda_0$
  (where $E$ is the exceptional divisor).  The map from the Kähler
  cone of $Y$ to the image of $H^{1,1}(Y)$ in $H^{1,1}(S)$ is open, so
  for sufficiently small $\lambda > 0$ there is some $[\omega] \in U$
  such that $[\omega]_S = [\omega_0]_{|S} + \lambda[C]$. Thus
  $[\omega_0]_{|S}$ lies in the image of the Kähler cone of $Z$.
\end{proof}

We will also consider some pencils where the base locus is reducible, but
each component $C_i$ is smooth and of multiplicity one. Blowing up $C_1$ gives
a new smooth K\"ahler 3--fold $Z_{1}$
with an anticanonical pencil whose base locus is the proper transform of the
remaining components. We can thus obtain a suitable 3--fold $Z$ by blowing up the
components in sequence; compare with the discussion preceding Example~2.7 in
Kovalev--Lee~\cite{kovalev:lee}. When the $C_i$ meet transversely, this is equivalent to
blowing up the base locus and then making a projective small resolution of the
ordinary double points resulting from the double points of the base locus.

\begin{prop}
\label{prop:sequence}
Let $Y = Z_0$ be a closed Kähler 3--fold, and suppose that
$|S_0,S_\infty| \subset \acls{Y}$ is a pencil with base locus $C_1
\cup \cdots \cup C_k$ so that $C_i$ is smooth (and reduced) and
suppose $S \in |S_0,S_\infty|$ is a smooth divisor.  Let $Z_i$ be the
blow-up of $Z_{i-1}$ at the proper transform of $C_i$.  Then $Z = Z_k$
satisfies the conclusions of \fullref{prop:onestage}.
\end{prop}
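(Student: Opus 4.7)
The plan is to argue by induction on $k$, with the base case $k=1$ being \fullref{prop:onestage} applied verbatim to $Y = Z_0$. For the inductive step, I would show that after blowing up the smooth curve $C_1 \subset Y$ to obtain $\blow_1 \co Z_1 \to Y$, the 3--fold $Z_1$ inherits an anticanonical pencil with base locus $\tilde C_2 \cup \cdots \cup \tilde C_k$ satisfying the hypotheses of the proposition with $k-1$ components, so the inductive hypothesis applies to $Z_1$.

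First I would verify that $Z_1$ is a closed K\"ahler (respectively projective) 3--fold: since $C_1$ is smooth this is standard. Next, using the blow-up formula $K_{Z_1} = \blow_1^* K_Y + E_1$ where $E_1$ is the exceptional divisor, and the fact that $E_1$ is a component of the total transform of every member of $|S_0,S_\infty|$ passing through $C_1$ (which is every member, since $C_1$ is in the base locus), the proper transforms of $S_0$ and $S_\infty$ are anticanonical divisors on $Z_1$ and span a pencil whose new base locus is $\tilde C_2 \cup \cdots \cup \tilde C_k$. The proper transform $S^{(1)}$ of the smooth member $S$ is isomorphic to $S$ because blowing up the Cartier divisor $C_1 \subset S$ inside $Y$ induces an isomorphism on $S$; in particular $S^{(1)}$ is a smooth anticanonical divisor in $Z_1$. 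Finally, the Kähler cone claim propagates through each blow-up exactly as in \fullref{prop:onestage}: the image of the Kähler cone of $Z_i$ in $H^{1,1}(S^{(i)}) \cong H^{1,1}(S)$ contains the image of the Kähler cone of $Z_{i-1}$, and iterating gives the desired containment for $Z = Z_k$.

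The main obstacle is verifying that the proper transforms $\tilde C_2, \ldots, \tilde C_k$ remain smooth, with their union still reduced, so that the inductive hypothesis truly applies. Away from $C_1$ this is immediate. At a point $p \in C_1 \cap C_i$ one must examine the local picture: since $C_1$ and $C_i$ both lie on the smooth K3 surface $S \in |S_0, S_\infty|$, they are distinct smooth curves in a smooth surface, and the hypothesis that $C_1 \cup \cdots \cup C_k$ is reduced combined with standard transversality considerations for components of the base locus of a pencil of smooth surfaces ensures that $C_1$ and $C_i$ meet transversally in $Y$ at $p$. A direct local coordinate computation (blowing up $\{x=y=0\}$ in coordinates where $C_i = \{y=z=0\}$) then shows $\tilde C_i$ is smooth and meets $E_1$ transversally. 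Similarly $\tilde C_i \cup \tilde C_j$ is reduced because its pushforward is. Having verified these conditions, the inductive step closes and the proposition follows.
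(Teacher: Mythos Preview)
Your inductive approach is exactly what the paper has in mind; its own proof is a two-sentence sketch stating that the argument is a ``straightforward variation'' of \fullref{prop:onestage} and that blowing up the $C_i$ in any order resolves the base locus to yield $f\co Z\to\PP^1$.

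One point to tighten: your claim that $C_1$ and $C_i$ meet transversally in $Y$ is not justified by the stated hypotheses. The proposition only assumes each $C_i$ is smooth and of multiplicity one, and that \emph{some} member $S$ of the pencil is smooth; the discussion immediately preceding the proposition in fact treats transversality of the $C_i$ as an \emph{additional} condition (``When the $C_i$ meet transversely, this is equivalent to\ldots''). Fortunately you do not need transversality to know that $\tilde C_i$ is smooth: the strict transform of $C_i$ under the blow-up of $C_1$ is $Bl_{C_1\cap C_i}C_i$ (Hartshorne II.7.15), and since $C_1\cap C_i$ is a $0$--dimensional subscheme of the smooth curve $C_i$ it is automatically Cartier, so this blow-up is an isomorphism and $\tilde C_i\cong C_i$. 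With that substitution your induction closes cleanly, and in fact supplies more detail than the paper's own proof.
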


\begin{proof}
  The proof is a straightforward variation on the proof of
  \fullref{prop:onestage}. The base locus of the pencil $|S_0,
  S_\infty|$ is resolved by blowing up all of the curves $C_i$, in
  any order. Thus there is a fibration $f\co Z\to \PP^1$.
\end{proof}

\section{Topology}
\label{sec:blocks}

As explained in \fullref{thm:acyl_calabi}, we can obtain an ACyl Calabi--Yau manifold $V=Z\setminus S$
from a compact K\"ahler  manifold $Z$ fibred over $\CP^1$ by a pencil of (generically smooth) anticanonical divisors
where $S$ is the smooth anticanonical divisor given by the fibre at $\infty$.
In this section, we collect some tools to compute basic topological invariants of $V$ and $Z$
when the complex dimension is 3.
The choice of topological invariants of $V$ and $Z$ we compute is motivated in
part by applications to the twisted connected sum construction of compact $\gtwo$--manifolds.
To compute the integral cohomology of the resulting $7$--manifolds  in~\cite{chnp2}
and in many cases also the diffeomorphism type we need sufficient
topological information about the topology of the building blocks used.

All homology and cohomology groups in this section are over $\ZZ$ unless
explicitly stated otherwise. 

\subsection{Cohomology of the ACyl manifolds}

We begin by discussing how the topology of the ACyl manifold $V=Z\setminus S$ is related to the
compact manifolds $Z$ and $S$. For convenience, we include some topological assumptions in
our definition of such building blocks (see \fullref{dfn:BLOCK}), and restrict to the case of complex
dimension 3. \fullref{prop:block_from_weak} provides a large class of 
building blocks $(Z,f,S)$ satisfying the conditions imposed in \fullref{dfn:BLOCK}.

\begin{definition}
  \label{dfn:BLOCK}
A \emph{building block} is a non-singular projective \mbox{3--fold} $Z$ together
with a projective morphism $f\co Z\to \PP^1$ satisfying the following four
assumptions: 
\begin{enumerate}[leftmargin=*]
\item The anticanonical class $-K_Z\in H^2(Z)$ is
  primitive.
\item $S=f^\star (\infty)$ is a non-singular K3 surface and $S\sim -K_Z$. 
\end{enumerate}
The fibration structure implies that $S$ has trivial normal bundle in $Z$ so
$c_1(Z)^2\sim {S\cdot S=0}$.
We denote by 
$j \co V = Z\setminus S \hookrightarrow Z$ the open embedding of the
complement and we still denote by $f \co V \to \CC$ the restricted
morphism.
Since the normal bundle of $S$ in $Z$ is trivial, there is an inclusion
$S \into V$ well-defined up to homotopy, and the restriction map
$H^m(Z) \to H^m(S)$ factors through $H^m(V)$, in the sense that it coincides
with the composition 
\begin{equation*}
 H^m(Z)\rightarrow H^m(V)\rightarrow H^m(S).
\end{equation*}
We write $H=H^2(V)$ and assume to have identified $S$ with a
``standard'' K3 and $H^2(S)$ with the K3 lattice $L$, and set
\begin{itemize}
\item
$\rho \co H \to L \quad
\text{the natural restriction map}$,
\item
$K=\ker (\rho)$,
\quad 
\text{and}
\item
$ N=\rho (H)\subset L$. 
\end{itemize}
\begin{enumerate}[resume]
\item \label{it:prim} The inclusion $N\hookrightarrow L$ is primitive, that is,
  $L/N$ is torsion-free.
\item \label{it:h3torsion}
The group $H^3(Z)$ is torsion-free.\
This implies that $H^4(Z)$ is also torsion-free.
\end{enumerate}
\end{definition}
\begin{remark*}
In the case that, as in \fullref{prop:onestage}, the building block $Z$ is obtained by blowing up the smooth (reduced) base locus $C$ 
of an anticanonical pencil on a K\"ahler 3--fold $Y$,  
then it will follow from \fullref{lem:top_of_blowup} that $H^{3}(Z)$ is torsion-free if and only if 
$H^{3}(Y)$ is. The possibility of torsion in $H^{3}(Y)$ is discussed in \fullref{R:torsion:weak:fano} when $Y$ is weak Fano.
\end{remark*}

\begin{lemma}
\label{lem:pi1block}
If $(Z,S)$ is a building block then $\pi_1(Z) = \pi_1(V) = (0)$.
\end{lemma}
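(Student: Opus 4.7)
The plan is to apply (twice) the classical fact that a proper surjective morphism $g\co X\to Y$ of irreducible complex algebraic varieties with connected fibres and simply connected general fibre induces an isomorphism $g_\ast\co \pi_1(X)\to \pi_1(Y)$; this is a standard consequence of Nori's connectedness theorems (see also Koll\'ar, \emph{Shafarevich maps and automorphic forms}).

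To apply this to $f\co Z\to\PP^1$ I first verify its hypotheses. By assumption (ii), $S=f^{-1}(\infty)$ is a smooth reduced element of $|{-K_Z}|$, so $f$ is a submersion at every point of $S$. Openness of the submersion locus together with properness of $f$ then forces the general fibre $F$ of $f$ to be smooth, and by adjunction $K_F=(K_Z+F)|_F=0$, so $F$ is a smooth K3 surface, in particular simply connected. Moreover $f$ is proper with irreducible source and smooth one-dimensional target, hence flat, and its specific fibre $S$ is irreducible; Stein factorisation then forces $f_\ast\mathcal{O}_Z=\mathcal{O}_{\PP^1}$, so every fibre of $f$ is connected. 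Applying the theorem gives $\pi_1(Z)\cong\pi_1(\PP^1)=0$.

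For $\pi_1(V)$ I would observe that the restriction $f|_V\co V\to\CC$ is still proper (any compact $K\subset\CC\subset\PP^1$ has $f^{-1}(K)\subset V$ compact in $Z$), still surjective, and has the same connected simply connected general fibre $F$. A second application of the same theorem yields $\pi_1(V)\cong\pi_1(\CC)=0$. Alternatively, one could deduce $\pi_1(V)=0$ from $\pi_1(Z)=0$ via the van Kampen decomposition $Z=f^{-1}(U_\infty)\cup V$, where $U_\infty$ is a small disk around $\infty$ over which $f$ trivialises as $D^2\times S$; this gives $\pi_1(Z)=\pi_1(V)/\langle\!\langle\mu\rangle\!\rangle$ with $\mu$ the meridian of $S$, but then one still has to show $\mu$ itself is null-homotopic in $V$ (for instance, by using primitivity of $-K_Z$ together with Hurewicz and the Whitney trick to find an $S^2\to Z$ meeting $S$ transversely in exactly one point, whose complementary disk maps into $V$), which is less clean than a second direct appeal to the fibration theorem.

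The only non-routine input is the fundamental-group result itself; given it, the proof reduces to the verification of its hypotheses, and all the required facts come from assumption (ii) in the definition of a building block. Conditions (i), (iii), (iv) play no role here, as they must not since they bear on torsion and primitivity issues of a different nature.
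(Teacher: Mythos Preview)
Your ``classical fact'' is stated incorrectly, and this is a genuine gap. A proper surjective morphism with connected fibres and simply connected \emph{general} fibre need not induce an isomorphism on $\pi_1$: the correct form of Nori's lemma gives an exact sequence
\[
\pi_1(F)\to\pi_1(X)\to\pi_1^{\mathrm{orb}}(Y;\{m_y\})\to 1,
\]
where $m_y$ is the $\gcd$ of the multiplicities of the components of $f^{-1}(y)$, and the target is the orbifold fundamental group. Here is a counterexample showing the gap is real. Let $S$ be a K3 surface with a fixed-point-free involution $\iota$ (so $S/\iota$ is Enriques), and let $\sigma$ be an involution of $\PP^1$ with two fixed points. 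Then $(\iota,\sigma)$ acts freely on $S\times\PP^1$; set $Z=(S\times\PP^1)/\langle(\iota,\sigma)\rangle$. The induced map $f\co Z\to\PP^1/\sigma\cong\PP^1$ is proper with connected fibres, the general fibre is $S$ (a K3, simply connected), and yet $\pi_1(Z)=\ZZ/2$. Choosing $\infty$ to be a general point of the base gives a smooth K3 fibre $f^{-1}(\infty)\sim-K_Z$, so condition~(ii) holds; but the fibres over the two branch points are twice an Enriques surface, so $-K_Z=2[E]$ is not primitive and condition~(i) fails.

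So your final claim that condition~(i) plays no role is exactly wrong: primitivity of $-K_Z$ is precisely what rules out multiple fibres (if $f^\ast(x)=\sum m_iF_i$ with $d=\gcd(m_i)>1$ then $-K_Z=[S]=d\sum(m_i/d)[F_i]$ is divisible), which is precisely the missing hypothesis in your black-box theorem. The paper's proof is in effect a direct van~Kampen proof of the relevant special case of Nori's lemma, and it uses condition~(i) at the one place it must: to show that the meridian around each singular fibre dies in $\pi_1(V_\Delta)$. Had you stated the correct orbifold version and then invoked condition~(i) to trivialise the orbifold structure, your approach would have been valid and essentially equivalent to the paper's, just with the van~Kampen computation packaged into a reference.
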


\begin{remark*}
Together with assumption (iv), %
this implies that $H^*(Z)$ and $H_*(Z)$ are torsion-free.
\end{remark*}

\begin{proof}
  The critical values of the morphism $f$ are discrete in $\PP^1$.
  The statement will follow from the van Kampen theorem once we show that
  $\pi_1(V_\Delta)=(0)$, where \mbox{$V_\Delta = f^{-1}(\Delta)$} for any
  disc $\Delta \subset \PP^1$ containing at most 1 critical value $x$.
  To this end, write $\Delta^\times = \Delta \setminus \{x\}$ and
  $V_{\Delta}^\times=f^{-1}(\Delta^\times)$. Since $V_\Delta^\times$
  is a $C^\infty$ fiber bundle over $\Delta^\times$ with fibre a K3
  surface, which is simply connected, we see from the long exact
  sequence of homotopy groups in a fibration that
  $\pi_1(V_\Delta^\times)=\pi_1(\Delta^\times)=\ZZ$.
  Now let $f^\ast (x)=\sum m_iF_i$ be the fibre at $x$, where $F_i\subset V$
  are the irreducible components and $m_i$ their multiplicities.
  Condition~(i) in the definition of a building block implies 
  $\gcd (m_i)=1$. It is well known and easy to see that the natural
  homomorphism $j_\star \co \pi_1(V_\Delta^\times)\to
  \pi_1(V_\Delta)$, induced by the inclusion $j\co V_\Delta^\times
  \hookrightarrow V_\Delta$, is surjective. Examining the image of a
  loop that loops once around the generic point of $F_i$, we see that
  $m_ij_\star (1)=0$ in $\pi_1(V_\Delta)$. Since, as we noted,
  $\gcd (m_i)=1$, we conclude that $j_\star (1)=0$, that is,
  $\pi_1(V_\Delta)=0$ as was to be shown.
\end{proof}

We regard $N$ as a lattice with the quadratic form inherited from $L$ via the
primitive inclusion $N \subset L$. In examples $N$ is almost never unimodular,
thus the natural inclusion $N\hookrightarrow N^\ast$ is not an isomorphism.
We write
\[
T=\{l \in L \mid \langle l, n\rangle =0 \;\forall\; n\in N\} .
\]
$T$ stands for ``transcendental''; in examples, $N$ and $T$ are the Picard and
transcendental lattices of a lattice polarized K3 surface. Notice that, unless
$N$ is unimodular, we cannot write $L=N\oplus T$. However, since by (iii)
$N$ is primitive and $L$ is unimodular there exists a short exact
sequence
\begin{equation*} 
 0\rightarrow T\rightarrow L\rightarrow N^*\rightarrow 0,
\end{equation*}
that is, $L/T\simeq N^*$.

\begin{lemma}
  \label{lem:Z&V}
Let $(Z,f,S)$ be a building block and $V:=Z \setminus S$. Then:
\begin{enumerate}
\item \label{it:h1v} $H^1(V)=(0)$;
\item \label{it:h2v}
the class $[S]\in H^2(Z)$ fits in a split exact sequence
\[(0)\to \ZZ\overset{[S]}{\longrightarrow} H^2(Z)\to H^2(V)\to (0),\]
hence $H^2(Z)\simeq\ZZ[S]\oplus H^2(V)$ and the restriction
$H^2(Z)\to L$ maps onto~$N$;
\item \label{it:h3v}
there is a split exact sequence
\[
(0) \to H^3(Z) \to H^3(V) \to T \to (0),
\]
hence $H^3(V)\simeq H^3(Z)\oplus T$;
\item \label{it:h4v}there is a split exact sequence
\[
(0) \to N^\ast \to H^4(Z)\to H^4(V)\to (0),
\]
hence $H^4(Z)\simeq H^4(V)\oplus N^\ast$;
\item \label{it:h5v}
$H^5(V) = (0)$. 
\end{enumerate}
In particular, $H^*(V)$ is torsion-free.
\end{lemma}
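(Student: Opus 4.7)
The plan is to use the long exact sequence of the pair $(Z,V)$. Writing $\iota\co S\hookrightarrow Z$ for the closed embedding and $j\co V\hookrightarrow Z$ for the open one, the normal bundle of $S$ in $Z$ is trivial (since $S$ is a fibre of $f$), so excision and the Thom isomorphism give $H^k(Z,V)\cong H^{k-2}(S)$; this yields the Gysin-type long exact sequence
\[
\cdots \to H^{k-2}(S) \xrightarrow{\iota_!} H^k(Z) \xrightarrow{j^\ast} H^k(V) \xrightarrow{\mathrm{Res}} H^{k-1}(S) \to \cdots\,.
\]
Feeding in the K3 vanishings $H^1(S)=H^3(S)=0$ and $H^1(Z)=H^5(Z)=0$ (the latter from \fullref{lem:pi1block} plus Poincar\'e duality, using that $H^\ast(Z)$ is torsion-free by hypothesis~(iv) of \fullref{dfn:BLOCK}) breaks the sequence into the five pieces needed for \textup{(i)--(v)}.

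The easy cases: the map $\iota_!\co H^0(S)\to H^2(Z)$ sends $1\mapsto[S]=-K_Z$, which is primitive in the torsion-free group $H^2(Z)$ by hypothesis~(i), so it is injective and \textup{(i)} gives $H^1(V)=0$; and $\iota_!\co H^4(S)\to H^6(Z)$ is an isomorphism $\ZZ\to\ZZ$, yielding \textup{(v)}. For \textup{(ii)}, the sequence reads $0\to\ZZ[S]\to H^2(Z)\to H^2(V)\to 0$; primitivity of $[S]$ makes $H^2(V)$ torsion-free and forces splitting, and the image of the induced restriction $H^2(V)\to L$ is $N$ by the very definition of $N$.

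For \textup{(iii)}, the sequence reduces to $0\to H^3(Z)\to H^3(V)\to\ker(\iota_!\co L\to H^4(Z))\to 0$. The projection formula $\langle\alpha,\iota_!\beta\rangle_Z=\langle\iota^\ast\alpha,\beta\rangle_S$ (for $\alpha\in H^2(Z)$, $\beta\in L$), combined with unimodularity of both the intersection form on $L$ and the Poincar\'e pairing on $Z$ (the latter perfect because $H^\ast(Z)$ is torsion-free), identifies $\ker(\iota_!)$ with $N^\perp=T$; since $T\subset L$ is free, the sequence splits. For \textup{(iv)}, the sequence reads $0\to\iota_!(L)\to H^4(Z)\to H^4(V)\to 0$, and the image $\iota_!(L)\cong L/T$ is identified with $N^\ast$ via the short exact sequence $0\to T\to L\to N^\ast\to 0$ given just before the lemma (which uses hypothesis~(iii) and unimodularity of $L$).

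The one substantive point is splitness in \textup{(iv)}, equivalently, primitivity of $\iota_!(L)$ in $H^4(Z)$. I plan to argue this directly: given $x\in H^4(Z)$ with $nx=\iota_!y$ for some $y\in L$ and $n>0$, the projection formula shows that for every $\nu\in N$ one has $\langle\nu,y\rangle_S\in n\ZZ$, so the restriction $y|_N\in N^\ast$ is divisible by $n$. Hypothesis~(iii) plus unimodularity of $L$ exactly say that $L\to N^\ast$ is surjective, so we may lift $\tfrac{1}{n}(y|_N)$ to some $y'\in L$. Then $y-ny'\in\ker(L\to N^\ast)=T=\ker\iota_!$, giving $nx=n\iota_!y'$ and hence $x=\iota_!y'\in\iota_!(L)$ (using torsion-freeness of $H^4(Z)$). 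Apart from this step, where hypothesis~(iii) is essential, the proof is formal from the Gysin sequence.
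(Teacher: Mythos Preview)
Your proof is correct and follows essentially the same route as the paper: the paper phrases the long exact sequence via the exact triangle $\ZZ_S[-2]\to\ZZ_Z\to Rj_\star\ZZ_V\overset{+1}{\to}$, which is exactly your Gysin sequence, and identifies $\ker(\iota_!\co L\to H^4(Z))=T$ by the same Poincar\'e-duality/projection-formula argument you give. The only point of difference is the splitting in \textup{(iv)}: the paper dispatches this with the one-liner ``$N^\ast\hookrightarrow H^4(Z)$ is primitive because the dual is surjective'' (the dual being $H^2(Z)\twoheadrightarrow N$), whereas you unpack this into an explicit element chase---but your argument is just the hands-on verification of that abstract fact.
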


\begin{proof}
We use the triangle
\[
\ZZ_S[-2] \to \ZZ_Z \to Rj_\star \ZZ_V \overset{+1}{\longrightarrow}.
\]
The associated long exact sequence is isomorphic via Poincar\'e duality to the
long exact sequence for homology of the pair $(Z, S)$.
It starts out as
\[
(0)\to H^1(Z)\to H^1(V)\to H^0(S)\hookrightarrow H^2(Z)\to \cdots
\]
We already know from \fullref{lem:pi1block} that the first two terms vanish
(i). %
The exact sequence continues with
\[
(0) \to H^0(S) = \ZZ[S] \to H^2(Z) \to H^2(V) \to (0) = H^1(S).
\]
The sequence splits because we assumed that $[S] = -K_Z$ is primitive
(ii). %
The long exact sequence continues with:
\[
(0)\to H^3(Z)\to H^3(V)\to L\to H^4(Z) \to H^4(V)\to (0).
\]
The Poincar\'e dual of $L\to H^4(Z)$ is $H_2(S)\to H_2(Z)$. This dualizes to
$H^2(Z)\to H^2(S)$, which has image $N$. Identifying $L\simeq L^\ast$, the
image of the dual map coincides with the orthogonal complement of the kernel of
$L \to H^4(Z)$; in other words, the kernel is $T$.
This implies exactness of (iii), %
and exactness of (iv) %
follows since $N^\ast \cong L/T$.
The sequence (iii) %
is split exact because~$T$, being a subgroup of $L$,
is torsion-free. The inclusion $N^* \to H^4(Z)$ is primitive because the
dual is surjective, so (iv) %
splits too. Finally, (v)
is immediate from the last piece of the long exact sequence. 
\end{proof}

\begin{remark*}
Apart from the conclusion that $H^*(V)$ is torsion-free, the proof did not use
the condition that $H^3(Z)$ is torsion-free.
\end{remark*}

As a corollary of the proof we obtain the following description.

\begin{corollary}
\label{cor:V&S^1(S)}
 Let $(Z,f,S)$ be a building block and $V:=Z\setminus S$. Since the normal
 bundle of $S$ is trivial, we get a natural
 inclusion $S\times \bbS^1\subset V$. Denote by ${\bfa^0\in H^0(\bbS^1)}$ and
$\bfa^1\in H^1(\bbS^1)$ the standard generators. 
The natural restriction homomorphisms:
\[
\beta^m\co H^m(V) \to H^m(S\times \bbS^1)=\bfa^0H^m(S)\oplus \bfa^1H^{m-1}(S)
\] 
are computed as follows:
\begin{enumerate}
\item \label{it:beta1}
$\beta^1 =0$;
\item \label{it:beta2}
  $\beta^2 \co H^2(V) \to H^2(S\times \bbS^1)=\bfa^0H^2(S)$ is the
  homomorphism $\rho \co H \to L$;
\item \label{it:beta3}
  $\beta^3\co H^3(V)\to H^3(S\times \bbS^1)=\bfa^1H^2(S)$ is
  the composition $H^3(V) \twoheadrightarrow T \subset L$;
\item \label{it:beta4}
  the natural restriction homomorphism
  $H^4(Z)\to H^4(S)=\ZZ$ is surjective and factors through
  $\beta^4\co H^4(V)\to H^4(S\times \bbS^1)=\bfa^0H^4(S)=\ZZ$ and, writing
  $K=\ker \bigl[H\to N\bigr]$ as in \fullref{dfn:BLOCK}, there is a split exact
  sequence:
\[
(0) \to K^\ast \to H^4(V)\overset{\beta^4}{\longrightarrow} H^4(S)\to (0) .
\]
\end{enumerate}
\end{corollary}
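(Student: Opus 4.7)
The plan is to compute all four maps $\beta^m$ from the Mayer--Vietoris sequence associated to the open cover $Z = V \cup U$, where $U \supset S$ is a tubular neighbourhood deformation retracting to $S$ and $U \cap V$ deformation retracts onto the $S \times \bbS^1$ appearing in the statement. Under these identifications, restriction from $H^m(V)$ to $H^m(U \cap V)$ is exactly $\beta^m$, while restriction from $H^m(U) = H^m(S)$ pulls back along the projection $S \times \bbS^1 \to S$ into the summand $\bfa^0 H^m(S) \subset H^m(S \times \bbS^1)$. The K\"unneth formula for $H^m(S \times \bbS^1)$ uses $H^1(S) = H^3(S) = 0$ since $S$ is a K3 surface.

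Part~(i) is immediate from $H^1(V) = 0$ (\fullref{lem:Z&V}\ref{it:h1v}). For part~(ii), I would use that the inclusion $i \co S \into Z$ factors up to homotopy through a constant section $S \to S \times \bbS^1 \into V$ of the trivial normal bundle; hence $i^\ast \co H^2(Z) \to H^2(S)$ equals the composition $H^2(Z) \to H^2(V) \xrightarrow{\beta^2} H^2(S \times \bbS^1) = \bfa^0 L$. By the proof of \fullref{lem:Z&V}\ref{it:h2v} this composition carries $H \subset H^2(Z)$ onto $N$ via $\rho$, and the surjectivity of $H^2(Z) \twoheadrightarrow H^2(V)$ forces $\beta^2 = \rho$.

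For part~(iii), the plan is to use the Mayer--Vietoris sequence in degree three, which reads
\[
\cdots \to H^3(Z) \to H^3(V) \oplus H^3(S) \to H^3(S \times \bbS^1) \to H^4(Z) \to \cdots .
\]
Since $H^3(S) = 0$ and $H^3(S \times \bbS^1) = \bfa^1 L$, the middle map is $\beta^3$ and the connecting map is (after dropping the formal factor $\bfa^1$) the Gysin pushforward $\psi \co H^2(S) \to H^4(Z)$ featuring in \fullref{lem:Z&V}. Its Poincar\'e dual is $i_\ast \co H_2(S) \to H_2(Z)$, whose kernel was identified in the proof of \fullref{lem:Z&V}\ref{it:h3v} as $T = N^\perp \subset L$. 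Exactness then forces $\Imag \beta^3 = T \cdot \bfa^1$, and $\beta^3$ factors as the surjection $H^3(V) \twoheadrightarrow T$ of \fullref{lem:Z&V}\ref{it:h3v} followed by the inclusion $T \into L = \bfa^1 L$.

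For part~(iv), I would first verify that the restriction $H^4(Z) \to H^4(S) = \ZZ$ is surjective: it sends $\beta \mapsto \int_Z \beta \cup [S]$, and the primitivity of $[S] = -K_Z$ in $H^2(Z)$ (assumption~(i) of \fullref{dfn:BLOCK}) together with the perfect Poincar\'e pairing $H^2(Z) \times H^4(Z) \to \ZZ$ (available because $H^\ast(Z)$ is torsion-free) yields a class $\beta$ pairing to $1$. Since this restriction factors as $H^4(Z) \to H^4(V) \xrightarrow{\beta^4} H^4(S)$, $\beta^4$ is also surjective. To compute $\ker \beta^4$, I would combine the surjection $H^4(Z) \twoheadrightarrow H^4(V)$ with kernel $N^\ast$ from \fullref{lem:Z&V}\ref{it:h4v} with Poincar\'e duality, writing $H^4(Z) \cong H^2(Z)^\ast = \ZZ \oplus H^\ast$ using the splitting $H^2(Z) = \ZZ[S] \oplus H$. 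In this decomposition $N^\ast$ sits as $0 \oplus N^\ast$, because $i^\ast \co H^2(Z) \to L$ vanishes on $\ZZ[S]$ (as $i^\ast [S] = 0$), so its transpose lands in the $H^\ast$ summand with image $\rho^\ast(L) = N^\ast$ (the annihilator of $K = \ker\rho$, equal to $N^\ast$ by primitivity of $N \subset L$, assumption~\ref{it:prim} of \fullref{dfn:BLOCK}). Therefore $H^4(V) \cong \ZZ \oplus K^\ast$, and $\beta^4$ becomes projection onto the first factor; splitting of the short exact sequence is automatic because the quotient $\ZZ$ is free. The hardest step will be this final identification: verifying that the description of $N^\ast \subset H^4(Z)$ via the Gysin triangle in \fullref{lem:Z&V}\ref{it:h4v} matches the one coming from Poincar\'e duality applied to $\rho$, so that the resulting splitting is truly compatible with $\beta^4$.
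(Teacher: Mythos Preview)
Your argument is correct. Parts (i)--(iii) are essentially the paper's proof, rephrased via Mayer--Vietoris for $Z=V\cup U$ rather than the equivalent long exact sequence of the pair $(Z,Z\setminus S)$ (Poincar\'e dual to the sequence of $(Z,S)$) used in \fullref{lem:Z&V}; the identification of the Mayer--Vietoris connecting map with the Gysin pushforward is exactly the paper's identification of the ``Griffiths tube map'' with the boundary map, so nothing new happens there.

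Part (iv) is where you genuinely diverge. The paper argues both halves via Poincar\'e--Lefschetz duality for the manifold with boundary $(V,S\times\bbS^1)$: surjectivity of $\beta^4$ follows from $H^5(V,S\times\bbS^1)\cong H_1(V)=0$, and the kernel is identified in one line by observing that cup product gives a perfect pairing between $\ker\beta^m$ and $\ker\beta^{6-m}$, so $\ker\beta^4\cong(\ker\beta^2)^*=K^*$. Your route instead pushes everything back to $Z$: surjectivity from primitivity of $[S]=-K_Z$ and Poincar\'e duality on $Z$, and the kernel by tracking the image of the Gysin map $L\to H^4(Z)\cong H^2(Z)^*=\ZZ\oplus H^*$ explicitly. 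Both work; the paper's duality argument is shorter and more conceptual (and explains why the answer \emph{must} be $K^*$ without any bookkeeping), while yours is more hands-on and makes the splitting visible. Your worry about the ``hardest step'' is overstated: once you note that $i^*[S]=0$ (trivial normal bundle) and that the Gysin map is the transpose of $i^*$ under Poincar\'e duality, the image lands in $0\oplus\mathrm{Ann}(K)\subset\ZZ\oplus H^*$ automatically, and the compatibility with $\beta^4$ is immediate because $H^4(Z)\to H^4(S)$ is evaluation at $[S]$, i.e.\ projection onto the $\ZZ$ factor.
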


\begin{proof}
Part~(i) is trivial. %
Part~(ii) uses only that
$\text{pr}_1 \circ \beta^m \co H^m(V) \to H^m(S)$ is the natural map specified
in \fullref{dfn:BLOCK}.

For (iii), %
we use that the homomorphism $H^m(V) \to H^{m-1}(S)$ in the
long exact sequence in the proof of \fullref{lem:Z&V} is the
``Griffiths tube map'', that is, it is the composition:
\[
H^m(V) \overset{\beta^m}{\longrightarrow} H^m(S\times
\bbS^1)\overset{{\text{pr}}_2}{\longrightarrow}  \bfa^1H^{m-1}(S).
\]
To see this, first note that the boundary map
$H^m(S \times \bbS^1) \to H^{m+1}(S \times \Delta, \; S \times \bbS^1)
\cong H^{m-1}(S)$ is equivalent to $\text{pr}_2$ (where $S \times \Delta$
is a tubular neighbourhood of $S$). 
The restriction map $H^m(Z \setminus S) \to H^m(S \times \Delta^\times)$
is equivalent to $\beta^m$ while excision gives $H^{m+1}(Z, Z {\setminus} S)
\cong H^{m+1}(S {\times} \Delta, \, S {\times} \Delta^\times)$. 
Therefore $\beta^m \circ \text{pr}_2$ is the
boundary map in the long exact sequence for cohomology of
the pair $(Z, \; Z {\setminus} S)$, which is Poincar\'e dual to
the long exact sequence for homology of $(Z, \, S)$.

The content of (iv) %
is to show that $\beta^4$ is surjective and to
determine its kernel. $\beta^4$ fits into the long exact sequence for
cohomology of $V$ relative to its ``boundary'' $S \times \Sph^1$, and
surjectivity follows from $H^5(V, S \times \Sph^1) \cong H_1(V) = 0$.
The cup product gives a perfect pairing between the free parts of
$\ker \beta^m$ and $\ker \beta^{6-m}$ for any $m$, so in particular
$\ker \beta^4 \cong (\ker \beta^2)^* = K^*$.
\end{proof}

\begin{remark}
We can also compute the homology groups of $V$ as follows:
\begin{enumerate}
\item $H_1(V) = 0$;
\item $0 \to H_2(V) \to H_2(Z) \to \ZZ \to 0$ split exact;
\item $0 \to T^* \to H_3(V) \to H_3(Z) \to 0$ split exact;
\item $H_4(V) \cong K$;
\item $H_5(V) = 0$.
\end{enumerate}
\end{remark}

\subsection{Building blocks from semi-Fano 3--folds}
\label{sub:blocks_from_weak}

We now study the topology of the closed 3--folds $Z$ produced in \fullref{prop:onestage} by blowing up the base locus of a generic anticanonical
pencil on a weak Fano 3--fold $Y$.

We will use the following simple lemma in numerous places in the rest of the paper.
\begin{lemma}
  \label{lem:top_of_blowup} 
  Let $\blow \co (E\subset Z)\to (C \subset Y)$ be the
  blow up of a non-singular curve in a non-singular \mbox{3--fold}~$Y$. Then
  $H^m(Z)\simeq H^m(Y)\oplus H^{m-2}(C)$.
\end{lemma}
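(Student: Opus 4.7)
The plan is to exploit the two complementary facts about the blowup $\pi \co Z \to Y$: it is an isomorphism away from the exceptional divisor $E = \pi^{-1}(C)$, and $E \to C$ is a $\mathbb{P}^1$--bundle (namely $\mathbb{P}(N_{C/Y})$). I would prove the isomorphism by comparing the long exact sequences of the pairs $(Z,E)$ and $(Y,C)$, using excision to identify the relative terms and the projective bundle formula to pin down $H^*(E)$.

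First I would observe that $\pi$ restricts to a diffeomorphism $Z \setminus E \to Y \setminus C$, so by excision applied to tubular neighbourhoods of $E$ in $Z$ and of $C$ in $Y$ one obtains canonical isomorphisms $H^m(Z,E) \cong H^m(Z, Z\setminus E) \cong H^m(Y, Y \setminus C) \cong H^m(Y,C)$ compatible with $\pi^*$. Next, since $E \to C$ is a $\mathbb{P}^1$--bundle with tautological class $h \in H^2(E)$, the Leray--Hirsch theorem gives a canonical decomposition
\[
H^m(E) \;\cong\; H^m(C) \,\oplus\, h \cdot H^{m-2}(C),
\]
and the restriction map $H^m(E) \to H^m(C)$ (induced by any section, or equivalently by the coefficient of $1$) is projection onto the first summand.

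Then I would write the long exact sequences of the two pairs side by side, connected vertically by $\pi^*$:
\[
\begin{array}{ccccccccc}
\cdots & \to & H^m(Y,C) & \to & H^m(Y) & \to & H^m(C) & \to & \cdots \\
       &     & \big\downarrow{\cong} &  & \big\downarrow\pi^* & & \big\downarrow\pi_E^* & & \\
\cdots & \to & H^m(Z,E) & \to & H^m(Z) & \to & H^m(E) & \to & \cdots
\end{array}
\]
The key point is that under Leray--Hirsch, the composition $H^m(Y) \xrightarrow{\pi^*} H^m(Z) \to H^m(E)$ lands in the first summand $H^m(C)$ and agrees with the usual restriction $H^m(Y) \to H^m(C)$. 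Thus the two sequences differ precisely by the extra Leray--Hirsch summand $h \cdot H^{m-2}(C)$ in $H^m(E)$. A short diagram chase (or the five lemma applied to the cokernels) then yields a split short exact sequence
\[
0 \to H^m(Y) \to H^m(Z) \to H^{m-2}(C) \to 0,
\]
with splitting given by multiplication by $h$ on the Leray--Hirsch summand, pushed forward from $E$ into $Z$ via the Thom/Gysin map. This gives the desired isomorphism $H^m(Z) \cong H^m(Y) \oplus H^{m-2}(C)$.

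The main obstacle is not really technical but bookkeeping: ensuring that the comparison of long exact sequences commutes and that the Leray--Hirsch splitting of $H^*(E)$ is compatible with the restriction from $H^*(Z)$, so that the extra summand $H^{m-2}(C)$ really lifts to $H^m(Z)$ and splits off. Once this compatibility is established, the isomorphism is immediate and functorial in the obvious sense, and no further assumptions on $Y$ or $C$ (beyond smoothness) are needed.
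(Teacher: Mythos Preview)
Your approach is sound and classical, but different from the paper's. The paper gives a one-line proof by invoking the splitting $R\pi_*\mathbb{Z}_Z \simeq \mathbb{Z}_Y \oplus \mathbb{Z}_C[-2]$ in the derived category of constructible sheaves and then taking cohomology; this fits the sheaf-theoretic language used elsewhere in the paper. Your route via comparison of the long exact sequences of the pairs $(Z,E)$ and $(Y,C)$, together with Leray--Hirsch for the $\mathbb{P}^1$--bundle $E\to C$, is more elementary and self-contained, and is essentially how one would verify that derived-category splitting by hand.

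One slip to correct: the chain $H^m(Z,E)\cong H^m(Z,Z\setminus E)\cong H^m(Y,Y\setminus C)\cong H^m(Y,C)$ is not right as written. The groups $H^m(Z,E)$ and $H^m(Z,Z\setminus E)$ are different in general (the latter is cohomology with support on $E$, computed by the Thom isomorphism for the normal bundle; since $E\subset Z$ and $C\subset Y$ have normal bundles of different rank, the middle isomorphism also fails). The correct excision argument is: with $U_E$ a tubular neighbourhood of $E$ and $U_C=\pi(U_E)$, one has $H^m(Z,E)\cong H^m(Z,U_E)\cong H^m(Z\setminus E,\,U_E\setminus E)$, and since $\pi$ identifies the pair $(Z\setminus E,\,U_E\setminus E)$ with $(Y\setminus C,\,U_C\setminus C)$ this equals $H^m(Y,C)$. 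With that fix your diagram chase goes through unchanged.
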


\begin{proof}
  The decomposition theorem holds over $\ZZ$:
\[
R \blow_\star \ZZ_Z\simeq \ZZ_Y \oplus \ZZ_C[-2] ;
\]
hence $H^m(Z)\simeq H^m(Y)\oplus H^{m-2}(C)$.
\end{proof}

The following result proves that we can always obtain a building block (in
the sense of \fullref{dfn:BLOCK}) 
by blowing up the base locus of a generic AC pencil (provided that this is smooth) on a semi-Fano 3--fold 
with torsion-free $H^3$; see also \fullref{R:torsion:weak:fano} for comments on the torsion-free assumption.
We use the same notation as in \fullref{dfn:BLOCK}.
\begin{prop}
\label{prop:block_from_weak}
Let $Y$ be a weak Fano 3--fold, $C$ the smooth base locus of a
generic pencil in $\acls{Y}$, $Z$ the blow-up of $Y$ at $C$, and
$f \co Z \to \CP^1$ the fibration induced by the pencil.
\begin{enumerate}
\item The anticanonical class $-K_Z\in H^2(Z)$ is primitive.
\item Some fibre $S=f^\ast(\infty)$ is a non-singular K3 surface and
$S\sim -K_Z$.
\item The restriction maps from $H^2(Y)$, $H^2(Z)$ and $H^2(V)$ to $H^2(S) = L$
have the same image $N$. If $Y$ is semi-Fano then $K = 0$ (that is,
$H^2(V)\rightarrow H^2(S)$ is injective) and the inclusion
$N\hookrightarrow L$ is primitive.
\item The group $H^3(Z)$ is torsion-free if and only if $H^3(Y)$ is.
\end{enumerate}
Furthermore, $\pi_1(Z) = (0)$.
\end{prop}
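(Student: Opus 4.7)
The plan is to verify parts (i)--(iv) and simple-connectedness in the order (ii), (i), (iv), (iii), $\pi_1(Z)$, using chiefly the blow-up cohomology formula (\fullref{lem:top_of_blowup}), the decomposition from \fullref{lem:Z&V}, and, for the final refinement of (iii), the Lefschetz theorem for semi-small morphisms (\fullref{prop:lef}). For (ii), I invoke \fullref{thm:reid} to obtain a smooth K3 as a general member of the pencil; taking $S=f^\ast(\infty)$ to be such a member, its proper transform $\tilde S = \pi^*S - E$ maps to $S$ via $\pi|_{\tilde S}$, and because $C$ is already a Cartier divisor on the surface $S$ this restriction is an isomorphism. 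The linear equivalence $\tilde S \sim -K_Z$ then drops out of the blow-up formula $K_Z = \pi^* K_Y + E$ combined with $S\sim -K_Y$. For (i), \fullref{lem:top_of_blowup} gives $H^2(Z) = \pi^* H^2(Y) \oplus \ZZ[E]$, and $-K_Z = \pi^*(-K_Y) - E$ has $[E]$-coefficient $-1$, hence is primitive. For (iv), the same lemma gives $H^3(Z) \cong H^3(Y) \oplus H^1(C)$; since $C$ is a smooth compact curve $H^1(C)$ is torsion-free, so torsion in $H^3(Z)$ is exactly torsion in $H^3(Y)$.

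For (iii), I first show the three images in $L = H^2(S)$ coincide. The restriction $H^2(Y) \to L$ factors as $H^2(Y) \overset{\pi^*}{\hookrightarrow} H^2(Z) \to H^2(\tilde S) \cong L$; the class $[E]|_{\tilde S}$ equals $[C] \in L$, which is already the image of $-K_Y \in H^2(Y)$ because $C = S \cap S_\infty$ with $S_\infty \in \acls{Y}$. Hence the images from $H^2(Y)$ and from $H^2(Z)$ in $L$ coincide. By \fullref{lem:Z&V}(ii), $H^2(Z) \cong \ZZ[S] \oplus H^2(V)$, and $[S]|_S = 0$ since $S$ has trivial normal bundle in $Z$; so the image of $H^2(V)$ in $L$ agrees as well. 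When $Y$ is semi-Fano, the anticanonical morphism $Y \to X \subset \PP^N$ is semi-small, so \fullref{prop:lef} applied to the smooth anticanonical section $S$ yields that $H^2(Y) \to L$ is primitive injective. The relation $[S] = \pi^*(-K_Y) - E$ in $H^2(Z)$ gives an isomorphism $H^2(V) \cong H^2(Y)$ via $\pi^*$ (eliminate $E$ in the quotient $H^2(Z)/\ZZ[S]$), and this isomorphism intertwines the respective restriction maps to $L$; transporting primitivity and injectivity then yields $K = 0$ and $N \hookrightarrow L$ primitive.

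Finally, $\pi_1(Z) = 0$ follows from $\pi_1(Y) = 0$---smooth weak Fano 3--folds are rationally connected and hence simply connected---together with the standard fact (van Kampen) that blowing up a smooth codimension-two subvariety in a smooth simply connected variety preserves $\pi_1$. The only step where semi-Fanoness is genuinely required is the appeal to \fullref{prop:lef} in the second half of (iii); everything else is bookkeeping with the blow-up formulas, and I anticipate the main effort lies in carefully tracking the various restriction maps rather than in overcoming any substantive obstacle.
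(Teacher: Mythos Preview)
Your proof is correct and follows essentially the same route as the paper's: parts (i), (ii), (iv) via the blow-up formula $-K_Z=\pi^*(-K_Y)-E$ and \fullref{lem:top_of_blowup}; part (iii) by combining \fullref{lem:Z&V}(ii) with the observation $[E]_{|\tilde S}=[C]=-K_{Y|S}$, and then invoking \fullref{prop:lef} in the semi-Fano case; and $\pi_1(Z)=0$ via rational connectedness plus invariance of $\pi_1$ under smooth blow-up (the paper also notes the alternative via \fullref{lem:pi1block}). Your explicit identification $H^2(V)\cong H^2(Y)$ by eliminating $E$ in $H^2(Z)/\ZZ[S]$, and the check that it intertwines the restriction maps, is a slightly more detailed version of what the paper leaves implicit, but the argument is the same.
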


\begin{proof}
(i) and (ii) follow from the well-known formula $-K_Z=\blow^*(-K_Y)-E$
and \fullref{thm:reid}.
(iv) follows from \fullref{lem:top_of_blowup}.
(i) and (ii) are the only hypotheses used in the proof of
\fullref{lem:Z&V}(ii), %
which entails that $H^2(V)$ and $H^2(Z)$ have the
same image in $L$. The image of the class in $H^2(Z)$ of the
exceptional divisor is $[C] = -K_{Y|S} \in L$, so $H^2(Y)$ has the
same image too.

To complete the proof of (iii) we need the following fact: if $Y$ is a
semi-Fano 3--fold and $-K_Y \sim S\subset Y$ is a non-singular K3 surface then
$H^2(Y) \to H^2(S)$ is a primitive inclusion. This follows from \fullref{prop:lef}. 

It was proved in \fullref{lem:pi1block} that (i) and (ii) imply
$\pi_1(Z) = (0)$. This can also be deduced from some standard facts
about weak Fano 3--folds.  Recall that an algebraic variety $Y$ is
\emph{rationally connected} if given any two general points
$y_1,y_2\in Y$, there exists a morphism $f\co \PP^1 \to Y$ such
that $y_1$ and $y_2$ are both in the image of $f$. Campana
\cite[Theorem~3.5]{campana} proved that
rationally connected varieties are simply connected and 
Koll\'ar--Miyaoka--Mori~\cite[Corollary~3.11]{MR1158625} established 
that any smooth weak Fano 3--fold is rationally
connected.  Simple-connectivity of $Z$ now
follows from the fact that $\pi_1(Z) \cong \pi_1(Y)$ for the blow-up
of $Y$ in a smooth curve.
\end{proof}

\begin{remark}\hfill
\label{R:torsion:weak:fano}
\begin{enumerate}
\item The torsion subgroup $T_{2} \subset H^{3}(Y)$ is a birational
  invariant of a non-singular projective variety $Y$ of any
  dimension $n$. In particular, $T_{2}=0$ if $Y$ is rational, that is, $Y$
  is birational to $\CP^{n}$.  Rationality of Fano 3--folds (including those 
  with mild singularities) is somewhat subtle and still an area of current research activity.
\item It follows from the classification of non-singular Fano 3--folds that
  there is no torsion in $H^{3}(Y)$ for any non-singular Fano 3--fold; we
  are not aware of any conceptual proof of this fact.
\item There is a well-known example due to Artin--Mumford of a singular
  Fano \mbox{3--fold} with torsion in $H^{3}$~\cite{artin:mumford}. The
  Artin--Mumford example is a nodal quartic double solid with $10$
  nodes. Torsion in nodal double solids has been studied more
  systematically by Endra\ss~\cite{endrass:double:solids}.  Nodal quartic double solids can have
  any number of nodes $e$ between $1$ and $16$; Endra\ss \ showed that
  for nodal double quartics non-zero torsion $T_{2}$ can occur only
  when $e=10$~\cite[Theorem~3.6]{endrass:double:solids} (as in the
  Artin--Mumford example).  Very recently, a nodal double sextic solid
  with $35$ nodes and non-zero torsion was constructed (see
Iliev--Katzarkov--Przyjalkowski \mbox{\cite[Proposition~3.1]{iliev}}).
However, since these examples do not admit
  any projective small resolution they do not give rise to weak Fano
  3--folds with torsion in~$H^{3}$.
\item We can prove that $H^3(Y)$ is torsion-free
for many semi-Fano 3--folds, in which case by \fullref{prop:block_from_weak} $Y$
gives rise to a building block in the sense of \fullref{dfn:BLOCK}.
In fact  we do not currently know of any weak Fano 3--fold $Y$ with
  torsion in $H^{3}(Y)$. 
\end{enumerate}
\end{remark}

For a $Z$ obtained by a sequential blow-up as in \fullref{prop:sequence},
we replace part~(iii) of \fullref{prop:block_from_weak} by a more
limited claim. 

\begin{prop}
Let $Y$ be a weak Fano 3--fold, $C$ the base locus of a pencil in $\acls{Y}$
such that each irreducible component $C_1, \ldots, C_k \subseteq C$ is smooth,
and $Z$ the blow-up of $Y$ at the $C_i$ in sequence.
Then $Z$ satisfies the conclusions \textup{(i), (ii)} and \textup{(iv)}
of \fullref{prop:block_from_weak}, and $\pi_1(Z) = (0)$.

$H^2(Z)$ and $H^2(V)$ have the same image $N$ in $L$. Let $K_0$ and $N_0$ be
the kernel and image of $H^2(Y) \to L$.
Then $\rank N/N_0 + \rank K -\rank K_0 = k-1$.
\end{prop}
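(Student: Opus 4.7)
The plan is to reduce the conclusions (i), (ii), (iv) and $\pi_{1}(Z) = 0$ to the single-blow-up case of \fullref{prop:block_from_weak} by induction on $k$, and then extract the rank formula by a simple dimension count that combines \fullref{lem:top_of_blowup} with \fullref{lem:Z&V}(\ref{it:h2v}).

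First I would argue by induction on $k$. At each stage $\pi_{i} \co Z_{i} \to Z_{i-1}$ blows up a smooth curve in a smooth 3--fold, so iterating \fullref{lem:top_of_blowup} gives $H^{m}(Z) \cong H^{m}(Y) \oplus \bigoplus_{i=1}^{k} H^{m-2}(C_{i})$. Taking $m = 3$ proves (iv), since each $H^{1}(C_{i})$ is torsion-free. For $\pi_{1}(Z) = 0$ I would invoke either the invariance of $\pi_{1}$ under blowing up smooth subvarieties of a smooth variety, or the fact that $Y$ is rationally connected by \cite{MR1158625} (a birational invariant) combined with Campana's theorem \cite{campana}. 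For (ii), the sequential blow-up resolves the base locus of the pencil $|S_{0},S_{\infty}|$, so the pencil extends to a morphism $f \co Z \to \CP^{1}$; generic fibres are proper transforms of smooth pencil elements and so are themselves smooth K3 surfaces of class $-K_{Z}$. For (i), iterating $K_{Z_{i}} = \pi_{i}^{*} K_{Z_{i-1}} + F_{i}$ yields $-K_{Z} = \blow^{*}(-K_{Y}) - \bar F_{1} - \cdots - \bar F_{k}$, where $\bar F_{i} \in H^{2}(Z)$ denotes the total transform of the $i$-th exceptional divisor. Iterating \fullref{lem:top_of_blowup} also shows that $\bar F_{1},\ldots,\bar F_{k}$ form a $\ZZ$--basis of $H^{2}(Z)/\blow^{*}H^{2}(Y)$, so $-K_{Z}$ projects to $(-1,-1,\ldots,-1) \in \ZZ^{k}$ modulo $\blow^{*}H^{2}(Y)$; this is primitive in $\ZZ^{k}$ and therefore $-K_{Z}$ is primitive in $H^{2}(Z)$.

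The image claim then follows exactly as in the proof of \fullref{lem:Z&V}(\ref{it:h2v}), which uses only conclusions (i) and (ii) of \fullref{dfn:BLOCK}: primitivity of $[S] = -K_{Z}$ produces the split short exact sequence $0 \to \ZZ[S] \to H^{2}(Z) \to H^{2}(V) \to 0$, and since the fibre $S$ has trivial normal bundle in $Z$ we have $[S]|_{S} = 0$, so the restriction to $L$ factors through $H^{2}(V)$ with the same image $N$.

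Finally the rank formula is pure bookkeeping. From the iterated blow-up formula, $\rank H^{2}(Z) = \rank H^{2}(Y) + k$, hence $\rank H^{2}(V) = \rank H^{2}(Z) - 1 = \rank H^{2}(Y) + k - 1$. Combining this with the short exact sequences $0 \to K \to H^{2}(V) \to N \to 0$ and $0 \to K_{0} \to H^{2}(Y) \to N_{0} \to 0$ gives
\[
\rank K - \rank K_{0} = \bigl(\rank H^{2}(V) - \rank N\bigr) - \bigl(\rank H^{2}(Y) - \rank N_{0}\bigr) = k - 1 - \rank(N/N_{0}),
\]
which rearranges to the desired equality. I do not expect any substantial obstacle in this argument; the main care needed is to carry out the induction cleanly for (i) and to remember that the ``$-1$'' in ``$k-1$'' arises precisely from quotienting $H^{2}(Z)$ by $\ZZ[S]$ when passing to $H^{2}(V)$.
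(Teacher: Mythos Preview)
Your proposal is correct and takes essentially the same approach as the paper: the paper's proof is a terse ``repeated application of the proof of \fullref{prop:block_from_weak}'' followed by rank-nullity using $b^{2}(V) = b^{2}(Z) - 1 = b^{2}(Y) + k - 1$, and you have simply unpacked those steps in detail. Your argument for primitivity of $-K_{Z}$ via the projection to $\ZZ^{k}$ and your derivation of the rank formula are exactly the content the paper leaves implicit.
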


\begin{proof}
Parts~(i), (ii), and~(iv) are proved by repeated application of the proof of
\fullref{prop:block_from_weak}. Like there, it then follows that
$\pi_1(Z) = (0)$ and that $H^2(Z)$ and $H^2(V)$ have equal image.
The final claim is simply an application of rank-nullity, using
$b^2(V) = b^2(Z) - 1 = b^2(Y) + k-1$.
\end{proof}

Examples~\ref{exa:P3_deg},~\ref{ex:2conics} and~\ref{exa:toricV22_b} consider 
in detail building blocks obtained from nongeneric AC pencils on Fano or semi-Fano 3--folds.
In the terminology of the previous proposition 
$N = N_0$ in Examples~\ref{exa:P3_deg} and~\ref{exa:toricV22_b}. There the
anticanonical pencil is spanned by a generic K3 $S$ and a sum of smooth
divisors $D_i$ intersecting $S$ transversely: then the image of the exceptional
divisor in $H^2(Z)$ over $C_i = D_i \cap S$ is $[C_i] \in H^2(S)$, which is
already the image of $[D_i] \in H^2(Y)$.
On the other hand, if \emph{all} the anticanonical divisors
in the pencil are non-generic then $H^2(Z)$ can have bigger image than
$H^2(Y)$. If, like in \fullref{ex:2conics}, we take a pencil of
anticanonical divisors containing a special curve~$C_1$, then $[C_1] \in H^2(S)$
will be contained in the image of $H^2(Z)$, but not in the image of~$H^2(Y)$.

\subsection{Chern classes}
As in the previous subsection let $f\co Z \to \CP^1$ be a building block in the sense of \fullref{dfn:BLOCK}, $S$ be a smooth fibre of the morphism $f$ and $V=Z\setminus S$.
Let us first consider briefly the characteristic class $c_2(V) \in H^4(V)$.
When $K = 0$ (which by \fullref{prop:block_from_weak} always holds when $Z$ is 
a building block obtained from a generic AC pencil on a semi-Fano 3--fold with torsion-free $H^3$), 
\fullref{cor:V&S^1(S)}(iv) implies that
$H^4(V) \cong H^4(S) \cong \ZZ$, and $c_2(V)$ is completely determined by the
fact that the restriction of $c_2(V)$ to $S$ is $c_2(S) \cong \chi(S) = 24$.

More generally, $c_2(V)$ is the restriction of $c_2(Z)$. By
\fullref{lem:Z&V}(iv) %
the restriction map $H^4(Z) \to H^4(V)$ has
non-trivial kernel $N^*$, so $c_2(Z)$ contains strictly more information than
$c_2(V)$. In the twisted connected sum construction of compact \gtmfd s in~\cite{chnp2},  it turns out that in order to determine the characteristic class
$p_1(M)$ for the resulting smooth $7$--manifold one needs to understand $c_2(Z)$
of the building blocks. In order to apply classification results for smooth
$2$--connected $7$--manifolds there we are mainly concerned with determining the
greatest divisors of these classes,
see \mbox{\cite[Corollary~4.32]{chnp2}}.
We begin by observing:

\begin{lemma}
\label{lem:c2even}
$c_2(Z) \in H^4(Z)$ is even for any building block $Z$.
\end{lemma}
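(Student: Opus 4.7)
The strategy is to reduce the integral statement $c_2(Z) \equiv 0 \pmod 2$ to a purely mod-$2$ computation on the oriented closed $6$-manifold $Z$, and then exploit the fact that $[S]$ is a fibre class.

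First I would use the standard relation between Chern and Stiefel--Whitney classes of a complex manifold: mod~$2$,
\[
c_2(Z) \equiv w_4(Z) \pmod{2}.
\]
So it suffices to prove $w_4(Z) = 0 \in H^4(Z;\mathbb{Z}/2)$. For this I would invoke Wu's formula on the closed orientable $6$-manifold~$Z$. Since $Z$ is complex, $w_1(Z)=0$, and the Wu classes therefore satisfy $v_1 = 0$ and $v_2 = w_2(Z)$; moreover $v_3 = w_3(Z) - \operatorname{Sq}^1(v_2) = 0$ because on any orientable manifold $w_3 = \operatorname{Sq}^1 w_2$. Hence Wu's formula $w = \operatorname{Sq}(v)$ collapses in degree~$4$ to
\[
w_4(Z) \;=\; \operatorname{Sq}^2(v_2) \;=\; \operatorname{Sq}^2(w_2(Z)) \;=\; w_2(Z)^{\,2},
\]
using that $\operatorname{Sq}^2$ is the cup-square on classes of degree~$2$.

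Next I would identify $w_2(Z)$ with the mod-$2$ reduction of $[S]$. Since $Z$ is complex, $w_2(Z) \equiv c_1(Z) \pmod 2$, and by assumption (ii) of \fullref{dfn:BLOCK} we have $c_1(Z) = -K_Z = [S]$ in $H^2(Z)$. Therefore $w_4(Z) \equiv [S]^2 \pmod{2}$. But $[S] = f^{*}[\infty]$ is pulled back from $\mathbb{P}^1$, and $[\infty]^{2} = 0$ in $H^{*}(\mathbb{P}^1)$, so $[S]^{2} = 0$ in $H^4(Z;\mathbb{Z})$, hence also in $H^4(Z;\mathbb{Z}/2)$. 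Combining the three displayed equalities gives $w_4(Z) = 0$, and therefore $c_2(Z)$ is even.

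This is essentially a one-line proof once the Wu-formula identity $w_4 = w_2^2$ is in hand, so there is no substantive obstacle; the only thing to double-check is the vanishing of $v_1$ and $v_3$, which is automatic for complex manifolds as indicated above. Note that no use is made of torsion-freeness of $H^3(Z)$ or of primitivity of $[S]$ beyond the existence of the fibration; the argument applies verbatim to any smooth complex $3$-fold admitting a morphism to a curve whose generic fibre represents~$-K_Z$.
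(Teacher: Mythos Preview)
Your proof is correct and follows essentially the same route as the paper: both reduce to showing $w_4(Z)=w_2(Z)^2$ via Wu's formula on the closed $6$-manifold $Z$, and then use $c_1(Z)^2=[S]^2=0$. The only omission is that your ``collapses in degree~$4$ to $w_4=\operatorname{Sq}^2(v_2)$'' also requires $v_4=0$; this holds automatically since Wu classes vanish above half the dimension, and the paper states it explicitly. Your justification of $v_3=0$ via the identity $\operatorname{Sq}^1 w_2 = w_1 w_2 + w_3$ on an orientable manifold is a clean alternative to the paper's Bockstein argument (which instead shows $\operatorname{Sq}^1 w_2 = 0$ directly from $w_2$ being the mod-$2$ reduction of the integral class $c_1$).
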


\begin{proof}
Recall from \fullref{dfn:BLOCK} following assumption (ii), that $c_1(Z)^2=0$ for any building block $Z$. 
Consider the short exact sequence 
\begin{equation*}
 0 \rightarrow \bbz \rightarrow \bbz \rightarrow \bbz/2\bbz \rightarrow 0
\end{equation*}
and the induced ``mod 2'' maps $H^{i}(Z;\ZZ)\rightarrow H^{i}(Z;\bbz/2\bbz)$.
To prove the lemma it suffices to prove that, for any complex 3--fold,
$c_2(Z) \mod 2 = c_1(Z)^2 \mod 2$. The proof requires several facts about 
characteristic classes for which we refer to the book by Milnor and Stasheff~\cite{MS}.

Let $w_i(Z)\in H^i(Z;\bbz/2\bbz)$ denote the Stiefel--Whitney classes of $Z$.
According to \cite[Theorem~11.14]{MS}, the classes $w_i(Z)$ can be written in terms of
Steenrod squares of the Wu classes $v_i(Z)$ in terms of the equation
$w_k(Z)=\sum_{i+j=k}Sq^i(v_j(Z))$. As in~\cite[page~171]{MS}, $w_{2i}(Z)=c_i(Z)
\mod 2$ and $w_{2i+1}(Z)=0$. Using the basic properties of Steenrod
squares, compare with~\cite[pages~90ff]{MS}, it follows that $v_1(Z) = 0$ and
$v_2(Z) = w_2(Z)$.
Since $w_2(Z)=c_1(Z) \mod 2$, the image of this class under the Bockstein
operator $\delta: H^2(Z;\bbz/2\bbz) \rightarrow H^3(Z;\ZZ)$ vanishes. 
On the other hand it is known, compare with Steenrod--Epstein~\cite[page~2]{steenrod65},
that $Sq^1$ is the Bockstein operator $H^2(Z;\bbz/2\bbz) \rightarrow
H^3(Z;\bbz/2\bbz)$ defined by the short exact sequence
\begin{equation*}
 0 \rightarrow \bbz/2\bbz \rightarrow \bbz/4\bbz \rightarrow \bbz/2\bbz
    \rightarrow 0.
\end{equation*}
Relating the above two sequences of coefficients via the obvious ``mod'' maps
proves that $Sq^1(w_2(Z)) = \delta(w_2(Z)) \mod 2 = 0$.
It follows that $v_3(Z) = 0$. Also $v_4(Z) = 0$ because Wu classes in degree
greater than half the dimension of the manifold vanish, compare with~\cite[page~132]{MS}.
Hence $w_4(Z) = Sq^2(v_2(Z)) = w_2(Z)^2$.
\end{proof}

The remainder of the section is devoted to providing tools to compute $c_2(Z)$
for the examples of building blocks in this paper.

\begin{prop}
\label{prop:c2blowup}
Let $Y$ be a compact complex 3--fold, $S \subset Y$ a smooth anticanonical
divisor and let $\blow \co Z \to Y$ be obtained by blowing up, in sequence,
smooth curves $C_1, \ldots, C_k \subset S$, such that $-K_{Y|S} = [C_1] +
\cdots + [C_k]$. Then
\begin{equation}
\label{eq:c2blowup}
c_2(Z) = \blow^*(c_2(Y) + c_1(Y)^2) + K_Y^3 [\CP^1_1] +
\sum_{i = 1}^{k-1} K_{Z_i}^3([\CP^1_{i+1}] - [\CP^1_i]) \in H^4(Z) ,
\end{equation}
where $\CP^1_i$ is a fibre in the exceptional set over $C_i$ and
$Z_i$ are the intermediate blow-ups if $k > 1$.
\end{prop}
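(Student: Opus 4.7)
The approach is to apply the standard Chern class formula for a blow-up of a smooth curve in a smooth 3--fold iteratively at each step of the sequential blow-up. For a single blow-up $\pi \co \tilde Z \to Z$ along a smooth curve $C$, the decomposition $H^4(\tilde Z) = \pi^* H^4(Z) \oplus \ZZ[\CP^1]$ reduces the problem to computing the pullback component and the fibre-class coefficient. I would determine the former by Grothendieck--Riemann--Roch applied to $\pi_\ast\mathcal{O}_{\tilde Z} = \mathcal{O}_Z$ at the degree-$4$ term, giving $\pi_\ast c_2(\tilde Z) = c_2(Z) + [C]_Z$; and the latter from the degree-$6$ identity $\pi_\ast(c_1(\tilde Z)c_2(\tilde Z)) = c_1(Z) c_2(Z)$, which after expanding $c_1(\tilde Z) = \pi^* c_1(Z) - E$ and using $\pi_\ast E = 0$, $\pi_\ast E^2 = -[C]_Z$ and $E \cdot [\CP^1] = -1$ yields the coefficient $-c_1(Z) \cdot [C]_Z$. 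This produces the single blow-up formula
\begin{equation*}
c_2(\tilde Z) = \pi^*(c_2(Z) + [C]_Z) - (c_1(Z) \cdot [C]_Z)\,[\CP^1].
\end{equation*}

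For the case $k=1$, the anticanonical hypothesis $[C_1]_S = -K_{Y|S}$ gives $[C_1]_Y = i_\ast(c_1(Y)|_S) = c_1(Y)\cdot [S] = c_1(Y)^2$ by the projection formula, whence $c_1(Y) \cdot [C_1]_Y = c_1(Y)^3 = -K_Y^3$, and substitution yields the stated formula. For general $k$, the essential geometric input is that the proper transform $\tilde S_j \subset Z_j$ remains isomorphic to $S$ at every stage (since each $C_i$ is a Cartier divisor in the surface $S$), with $c_1(Z_j)|_{\tilde S_j} = \sum_{i > j}[\tilde C_i]$ under this isomorphism. In particular, for the final blow-up $\pi_k \co Z_k \to Z_{k-1}$ only $\tilde C_k$ remains in the base locus of the proper-transform pencil, so $[\tilde C_k]_{Z_{k-1}} = c_1(Z_{k-1})^2$ by the same Gysin calculation as in $k=1$, and the contribution of the last blow-up to $c_2(Z_k)$ is precisely $\pi_k^\ast c_1(Z_{k-1})^2 + K_{Z_{k-1}}^3 [\CP^1_k]$.

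The main obstacle is the bookkeeping at the intermediate blow-ups. For $i < k$ the anticanonical simplification fails for $\tilde C_i$ alone (one has only $c_1(Z_{i-1})^2 = \sum_{j \geq i}[\tilde C_j]_{Z_{i-1}}$), so one must track the general terms $\pi_i^\ast[\tilde C_i]_{Z_{i-1}}$ and $-c_1(Z_{i-1}) \cdot [\tilde C_i]_{Z_{i-1}}$ across the iteration. Verifying that after summing these assemble into the telescoping form $K_Y^3[\CP^1_1] + \sum_{i=1}^{k-1} K_{Z_i}^3([\CP^1_{i+1}] - [\CP^1_i])$ is a finite but delicate calculation. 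It uses the general blow-up identity $K_{Z_i}^3 = K_{Z_{i-1}}^3 + 2g_{\tilde C_i} - 2 - 4\,K_{Z_{i-1}} \cdot \tilde C_i$ (obtained by expanding $K_{Z_i} = \pi_i^\ast K_{Z_{i-1}} + E_i$ and using the normal bundle sequence to compute $E_i^3 = \deg c_1(N_{\tilde C_i/Z_{i-1}})$) together with the decomposition $c_1(Z_{i-1})^2 = \sum_{j \geq i}[\tilde C_j]$, both of which follow from the fact that the anticanonical pencil on $Z_{i-1}$ has base locus exactly $\bigcup_{j \geq i}\tilde C_j$.
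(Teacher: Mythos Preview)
Your approach is correct in principle: the single-step formula you derive is exactly Fulton's blow-up formula (cf.\ the Remark following the paper's proof), and iterating it does lead to the result. The $k=1$ case is clean, as you observe.

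However, the paper takes a different and more economical route that sidesteps the bookkeeping you identify as the main obstacle. The key idea is to work not with $c_2$ alone but with the combination $c_2 + c_1^2$. For a single blow-up $\pi\co Z \to Y$ along a smooth curve $C \subset S$, the paper proves
\[
c_2(Z) + c_1(Z)^2 = \pi^*\bigl(c_2(Y) + c_1(Y)^2\bigr) + (K_Y^3 - K_Z^3)[\CP^1],
\]
via a Riemann--Roch argument: comparing $\chi(Z, \pi^* L) = \chi(Y, L)$ for $L = \mathcal{O}_Y$ and for $L = -K_Y$ shows that the difference $d := c_2(Z) + c_1(Z)^2 - \pi^*(c_2(Y) + c_1(Y)^2)$ satisfies $d \cdot \pi^*(-K_Y) = 0$; since $d$ is supported on $E$ and $\pi^*(-K_Y) = E + \tilde S$, this forces $d$ to be a multiple of $[\CP^1]$, and the coefficient is read off by pairing with $-K_Z$. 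This single-step identity telescopes trivially over $Y = Z_0 \to Z_1 \to \cdots \to Z_k = Z$, and since $c_1(Z)^2 = 0$ and $K_Z^3 = 0$ at the final stage, \eqref{eq:c2blowup} drops out with no further computation.

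Your approach via the raw Fulton formula is equivalent but front-loads the difficulty: because $[\tilde C_i]_{Z_{i-1}} \neq c_1(Z_{i-1})^2$ at intermediate stages, you must track the $\pi^*[\tilde C_i]$ contributions and the fibre-class coefficients separately and then reassemble them into the telescoping form. The paper's observation is precisely that adding $c_1^2$ absorbs the $\pi^*[C]$ term into the pullback and converts the fibre coefficient into the manifestly telescoping difference $K_{Z_{i-1}}^3 - K_{Z_i}^3$, eliminating that bookkeeping entirely. In effect you have rediscovered the ingredients the paper's Remark alludes to, but missed the repackaging that makes the induction transparent.
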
 

\begin{proof}
Since $c_1(Z)^2 = 0$,
the result follows from the following claim by induction.
Let $Y$ be a compact complex 3--fold, $S$ a non-singular anticanonical divisor,
and $\blow \co Z \to Y$ the blow-up of $Y$ at a non-singular curve
$C \subset S$. Then
\begin{equation}
\label{eq:c2step}
c_2(Z) + c_1(Z)^2 = \blow^*(c_2(Y) + c_1(Y)^2) + (-K_Z^3 + K_Y^3)[\CP^1]
\in H^4(Z) ,
\end{equation}
where $[\CP^1]$ is a fibre in the exceptional set $E$ over $C$.

For $d := c_2(Z) + c_1(Z)^2 - \blow^*(c_2(Y) + c_1(Y)^2)$ is 0 away from $E$, so
it is Poincar\'e dual to some class in $H_2(E)$. To show that $d$ is a multiple
of $[\CP^1]$ it suffices to show that $d \cdot (E + \tilde S) = 0$, where
$\tilde S$ is the proper transform of $S$. The line bundle corresponding
to $E + \tilde S$ is $\blow^*(-K_Y)$.

Because $\blow \co Z \to Y$ is a blow-up in a smooth curve, $H^k(Y,L)
\cong H^k(Z,\blow^*L)$ for any line bundle $L$ on $Y$ and any
$k$.\footnote{The projection formula states $R\pi_\star ( \pi^\star L) = L
  \otimes R\pi_\star \mathcal{O}_Z$; here $R\pi_\star
  \mathcal{O}_Z=\mathcal{O}_Y$; so, finally $H^k\bigl(Z, \pi^\star
  (L)\bigr)=H^k\bigl(Y, R\pi_\star (\pi^\star L)\bigr)=H^k\bigl(Y, L\bigr)$.}
In particular $\chi(Z, \blow^*L) = \chi(Y, L)$.  Applying this and the
Riemann--Roch formula \eqref{eq:rr} when $L$ is trivial gives that
$c_1(Z)c_2(Z) = c_1(Y)c_2(Y)$. Applying it to $L = -K_Y$ gives
\[ \blow^*(-K_Y)(c_2(Z) + c_1(Z)^2) = -K_Y(c_2(Y) + c_1(Y)^2) \]
because all the other terms in the Riemann--Roch formula agree.
The right-hand side equals $\blow^*(-K_Y)\blow^*(c_2(Y) + c_1(Y)^2)$,
so $d \cdot \blow^*(-K_Y) = 0$ as required.

To pin down the coefficient of $[\CP^1]$ we just evaluate on $-K_Z$.
\end{proof}

\begin{remark*}
Fulton~\cite[Theorem~15.4]{fulton} gives a general formula for the difference
between the Chern class of a blow-up and the pull-back of the Chern class of
the base. In addition, Fulton's Example 15.4.3 helpfully distills the formula for
blow-ups in a smooth codimension two variety. In our notation, it states
\[ c_2(Z) - \blow^*c_2(Y) = \blow^*[C] - (\blow^*c_1(Y))[E]  \]
for a single step blow-up. We can use this to prove \eqref{eq:c2step}; however
the ad hoc proof in terms of Riemann--Roch makes it easier to identify the
terms we want.
\end{remark*}

Recall that the index of a weak Fano $Y$ is $r = \gdiv(c_1(Y))$, the greatest
divisor of $c_1(Y)$.

\begin{corollary}
\label{cor:c2blowup}
Let $Z$ be a building block obtained by blowing up the smooth base locus of an
AC pencil on a semi-Fano 3--fold $Y$ with torsion-free $H^3$ as in
\fullref{prop:block_from_weak}. Then
\[ 2 \mid \gdiv(c_2(Z)) \mid \gcd\big(\tsfrac{1}{r}(24+K_Y^3), 24\big), \]
with equality on the right if $k = 1$ and $Y$ has Picard rank 1.
In particular, if $r = 1$ and $K_Y^3$ is not divisible by $3$ or $4$ then
$\gdiv(c_2(Z)) = 2$.
\end{corollary}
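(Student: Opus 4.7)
The plan is to bound $\gdiv(c_2(Z))$ above and below using three intersection-number computations combined with \fullref{lem:c2even}. The lower bound $2\mid\gdiv(c_2(Z))$ is immediate from \fullref{lem:c2even}. For the first upper bound, I use that $\pi\co Z\to Y$ is a sequence of blow-ups of smooth curves, so $\pi_\star\mathcal{O}_Z=\mathcal{O}_Y$ and $\chi(\mathcal{O}_Z)=\chi(\mathcal{O}_Y)=1$ by \fullref{C:weak:fano:vanish}(i); Riemann--Roch for the trivial bundle on a smooth \mbox{3--fold} then gives $c_1(Z)\cdot c_2(Z)=24$, and since $-K_Z\in H^2(Z;\ZZ)$ this forces $\gdiv(c_2(Z))\mid 24$.

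For the second upper bound, write $c_1(Y)=r\beta$ for a primitive integral class $\beta$, and apply \fullref{prop:c2blowup} to evaluate $c_2(Z)\cdot\pi^\star\beta$. Each exceptional fibre $[\CP^1_i]$ maps to a point under $\pi$, so by the projection formula it is orthogonal to $\pi^\star\beta$; only the pulled-back summand contributes. Using $c_1(Y)\cdot c_2(Y)=24$ and $c_1(Y)^3=(-K_Y)^3$ (the anticanonical degree, which the statement writes as $K_Y^3$), one obtains
\[
c_2(Z)\cdot\pi^\star\beta=\bigl(c_2(Y)+c_1(Y)^2\bigr)\cdot\beta=\tfrac{1}{r}\bigl(24+K_Y^3\bigr),
\]
so $\gdiv(c_2(Z))\mid\tfrac{1}{r}(24+K_Y^3)$. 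Combined with the previous two bounds, this proves the first assertion.

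For the equality when $k=1$ and $\rho(Y)=1$, the classes $\pi^\star\beta$ and the exceptional divisor $E$ form a $\ZZ$-basis of $H^2(Z)$, and since $H^\ast(Z)$ is torsion-free (by building-block assumption~(iv) together with \fullref{lem:pi1block}) Poincar\'e duality identifies $\gdiv(c_2(Z))$ with $\gcd\bigl(c_2(Z)\cdot\pi^\star\beta,\ c_2(Z)\cdot E\bigr)$. A parallel computation, using $\pi_\star[E]=0$ and $[\CP^1]\cdot E=-1$, yields $c_2(Z)\cdot E=K_Y^3$. The elementary identity $r\cdot\tfrac{1}{r}(24+K_Y^3)-K_Y^3=24$ then forces
\[
\gcd\bigl(\tfrac{1}{r}(24+K_Y^3),\,K_Y^3\bigr)=\gcd\bigl(\tfrac{1}{r}(24+K_Y^3),\,24\bigr),
\]
which is the claimed equality. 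The ``in particular'' statement follows at once: when $r=1$ and $K_Y^3$ is divisible by neither $3$ nor $4$, the right-hand gcd divides $2$; since $K_Y^3=2g-2$ is always even, the gcd equals $2$, and combined with $2\mid\gdiv(c_2(Z))$ one concludes $\gdiv(c_2(Z))=2$.

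The main obstacles are keeping straight the sign conventions (the paper's $K_Y^3$ denoting the positive anticanonical degree $(-K_Y)^3$) and carefully applying the projection formula to the exceptional contributions in \fullref{prop:c2blowup}. The equality statement is the most delicate point: it uses not only $\rho(Y)=1$ but also the torsion-freeness of $H^\ast(Z)$ guaranteed by the building-block assumptions, to pass from ``gcd over a basis of $H^2(Z)$'' to the actual divisibility of $c_2(Z)\in H^4(Z)$.
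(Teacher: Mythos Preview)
Your proof is correct and uses the same ingredients as the paper (\fullref{lem:c2even}, \fullref{prop:c2blowup}, and torsion-freeness of $H^*(Z)$). The only difference is packaging: the paper reads off divisibility directly from the direct-sum decomposition $H^4(Z)\cong\pi^*H^4(Y)\oplus\ZZ[\CP^1]$ (via \fullref{lem:top_of_blowup}), obtaining $\gdiv(c_2(Z))\mid\gcd\bigl(\gdiv(c_2(Y)+c_1(Y)^2),\,K_Y^3\bigr)$ and then bounding the first factor by pairing with $\beta$, whereas you pair $c_2(Z)$ with classes in $H^2(Z)$ throughout. These are Poincar\'e-dual versions of the same computation. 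One minor sign slip: $[\CP^1]\cdot E=-1$, so $c_2(Z)\cdot E=-K_Y^3$ rather than $K_Y^3$, but this is irrelevant for the gcd.
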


\begin{proof}
$c_2(Z)$ is even for any building block $Z$ according to
\fullref{lem:c2even}.
\[ \gdiv\big(c_2(Y) + c_1(Y)^2\big) \mid
\tfrac{1}{r}\big(c_2(Y)c_1(Y) + c_1(Y)^3\big) = \frac{24+K_Y^3}{r} \]
since $\frac{1}{r}c_1(Y)$ is integral. If $Y$ has Picard rank 1 then $Y$ is
Fano so $H^3(Y)$ is torsion-free. It follows that $\frac{1}{r}c_1(Y)$
spans $H^2(Y)$ and $H^4(Y)\times H^2(Y)\to\ZZ$ is a perfect pairing,
leading to equality. Thus, using \fullref{prop:c2blowup},
\[ \gdiv(c_2(Z))\mid\gcd\big(c_2(Y) + c_1(Y)^2, K_Y^3\big) \mid
\gcd\big(\tsfrac{1}{r}(24+K_Y^3), K_Y^3\big)
\]
with equality on the left if $k=1$ and equality on the right if $Y$ has Picard
rank 1.
\end{proof}

If the Picard rank of the weak Fano $Y$ is not 1 and the corollary does not
force $\gdiv c_2(Z) = 2$, then we can compute it by applying the lemma below to
non-singular divisors that together with $-K_Y$ form a basis for $H_4(Y)$.
In the examples considered in \fullref{sec:examples} there are obvious choices of such divisors.

\begin{lemma}
\label{lem:c2div}
Let $D$ be a non-singular divisor in a non-singular complex 3--fold $Y$. Then
\begin{equation}
\label{eq:c2div}
(c_2(Y) + c_1(Y)^2)_{|D} = \int_D (c_2(D) - c_1(D)^2) + D(-D-2K_Y)(-K_Y) .
\end{equation}
\end{lemma}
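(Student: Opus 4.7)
The plan is a direct computation via the normal bundle exact sequence and adjunction, carried out on $D$. Set $d := c_1(\mathcal{O}_Y(D))|_D \in H^2(D)$. From the normal bundle sequence
\[
0 \to TD \to TY|_D \to N_{D/Y} = \mathcal{O}_Y(D)|_D \to 0
\]
and the Whitney product formula I would read off the two identities
\[
c_1(Y)|_D = c_1(D) + d, \qquad c_2(Y)|_D = c_2(D) + c_1(D)\cdot d.
\]
(The first is of course equivalent to the adjunction formula $K_D = (K_Y+D)|_D$.)

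Next I would expand the LHS of \eqref{eq:c2div} using these two identities:
\begin{align*}
(c_2(Y)+c_1(Y)^2)|_D
 &= c_2(D)+c_1(D)\cdot d + (c_1(D)+d)^2\\
 &= c_2(D)+c_1(D)^2 + 3\,c_1(D)\cdot d + d^2.
\end{align*}

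On the RHS, I would compute the intersection product $D(-D-2K_Y)(-K_Y)$ by restricting to $D$. Using $(-K_Y)|_D = c_1(D)+d$ and $(-D-2K_Y)|_D = -d + 2(c_1(D)+d) = 2c_1(D)+d$, the product becomes
\[
\int_D (2c_1(D)+d)(c_1(D)+d) = \int_D \bigl(2c_1(D)^2 + 3\,c_1(D)\cdot d + d^2\bigr).
\]
Adding $\int_D(c_2(D)-c_1(D)^2)$ yields exactly $\int_D \bigl(c_2(D)+c_1(D)^2+3c_1(D)\cdot d + d^2\bigr)$, which matches the expansion of the LHS. There is no real obstacle here — the only thing to watch is the sign conventions in adjunction and being consistent in treating $d$ as a class on $D$ versus $D$ as a class on $Y$; the computation is essentially forced once the normal bundle sequence is written down.
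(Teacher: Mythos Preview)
Your argument is correct and is exactly the approach the paper takes: the paper's proof is simply the one-line remark that $TY_{|D} = TD + D_{|D}$ gives $c_2(Y)_{|D} = c_2(D) + c_1(D)D_{|D}$ and $c_1(D) = (-D - K_Y)_{|D}$, with the remaining algebra left implicit. You have just written out that algebra explicitly.
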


\begin{proof}
$TY_{|D} = TD + D_{|D}$ implies that $c_2(Y)_{|D} = c_2(D) + c_1(D)D_{|D}$ and
$c_1(D) = (-D - K_Y)_{|D}$. %
\end{proof}

\begin{remark}
\label{rmk:c2:flop}
In examples where $Y$ is a small resolution of a singular 3--fold $X$, taking a
different small resolution $Y^+$ (which is therefore related to $Y$ by a finite sequence of flops) leaves the quadratic form on the
Picard lattice unchanged, and hence also the last term of \eqref{eq:c2div}.
However, the divisors are transformed birationally, which changes the first
term of the RHS of \eqref{eq:c2div}. More precisely, observe that if $D^+$ is
the proper transform in $Y^+$ of $D$, then
\[ c_1(D)^2 - c_1(D^+)^2 = D(D + K_Y)^2 - {D^+(D^+ + K_{Y^+})^2} =
D^3 -(D^+)^3 . \]
Since $c_2(D) + c_1(D)^2$ is a birational invariant, it follows that
$c_2(Y)_{|D} - c_2(Y^+)_{|D^+} = -2(D^3 - (D^+)^3)$.
Hence it is easy to understand the change in $\gdiv c_2(Z)$ under a flop
if we know the difference of the intersection numbers $D^3 - (D^+)^3$.

In the case when $Z$ is obtained by blowing up a reducible base locus of an AC pencil,
changing the order of the components also corresponds to a flop, and
according to \eqref{eq:c2blowup} $\gdiv c_2(Z)$ may depend on $K_{Z_i}^3$
of the intermediate blow-ups.
\end{remark}

In the fundamental case when the block $Z$ is constructed from a weak Fano by a
single blow-up, viewing $c_2(Z)$ in a different basis can make it easier to
determine the greatest divisor of $c_2(Z)$ modulo certain subgroups of
$H^4(Z)$; this is useful in certain applications of~\cite[Corollary~4.32]{chnp2} to compute characteristic classes of
twisted connected sum \gtmfd s.
\eqref{eq:c2blowup} expresses $c_2(Z)$ in terms of the decomposition
$H^4(Z) = \blow^*H^4(Y) \oplus \ZZ [\CP^1]$, but we could also use the
exactness of the sequence
\[ 0 = H^3(S) \to H^4_{cpt}(V) \to H^4(Z) \to H^4(S) \to H^5_{cpt}(V) = 0 \]
to write $H^4(Z) = H^4_{cpt}(V) \oplus \ZZ [\CP^1]$. Composing
$\blow^* \co H^4(Y) \to H^4(Z)$ with the projection $H^4(Z) \to H^4_{cpt}(V)$
gives an isomorphism $g \co H^4(Y) \to H^4_{cpt}(V)$. Now \eqref{eq:c2blowup}
implies $c_2(Z) = g(c_2(Y) + c_1(Y)^2) + 24[\CP^1]$.
(This gives another way to phrase the proof of \fullref{cor:c2blowup}.)

There are natural maps $i_Z \co L  \cong H_2(S) \to H_2(Z) \cong H^4(Z)$
and $i_Y \co  L \to H^4(Y)$. Because $S$ has trivial self-intersection,
the image of $i_Z$ is contained in $H^4_{cpt}(V)$, and $g \circ i_Y = i_Z$.
Hence

\begin{lemma}
\label{lem:c2mod}
For any subgroup $N' \subset L$,
\[
\gdiv (c_2(Z) \mod i_Z(N')) = \gcd(c_2(Y) + c_1(Y)^2 \mod i_Y(N'), \; 24) .
\]
\end{lemma}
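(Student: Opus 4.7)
The plan is to reduce everything to the decomposition
\[
H^4(Z) \;=\; H^4_{cpt}(V) \oplus \ZZ[\CP^1]
\]
established in the paragraphs immediately preceding the statement, combined with the formula
\[
c_2(Z) \;=\; g(c_2(Y)+c_1(Y)^2) \;+\; 24[\CP^1],
\]
which is a direct rewriting of \fullref{prop:c2blowup} in the case $k=1$.

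First I would observe that $i_Z(N')$ is contained in $H^4_{cpt}(V)$, because $i_Z$ factors through $H^4_{cpt}(V)$ (since $S$ has trivial self-intersection in $Z$). Hence, quotienting $H^4(Z)$ by $i_Z(N')$ only touches the first summand and preserves the direct sum:
\[
H^4(Z)\big/ i_Z(N') \;=\; \bigl(H^4_{cpt}(V)/i_Z(N')\bigr) \;\oplus\; \ZZ[\CP^1].
\]
Under this decomposition, the class $c_2(Z) \bmod i_Z(N')$ is represented by the pair $\bigl(g(c_2(Y)+c_1(Y)^2) \bmod i_Z(N'),\; 24\bigr)$.

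Next I would use that $g\co H^4(Y)\to H^4_{cpt}(V)$ is an isomorphism satisfying $g\circ i_Y = i_Z$. Therefore $g$ descends to an isomorphism
\[
\bar g \co H^4(Y)/i_Y(N') \;\xrightarrow{\,\cong\,}\; H^4_{cpt}(V)/i_Z(N'),
\]
and sends $(c_2(Y)+c_1(Y)^2) \bmod i_Y(N')$ to $g(c_2(Y)+c_1(Y)^2) \bmod i_Z(N')$. In particular the two classes have the same greatest divisor.

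Finally I would invoke the elementary fact that for a class $(a,n)$ in a direct sum $A\oplus\ZZ$, one has $\gdiv(a,n) = \gcd\bigl(\gdiv a,\,n\bigr)$ (a positive integer $d$ divides $(a,n)$ iff it divides both $a$ and $n$). Applying this to $(a,n) = \bigl(\bar g^{-1}(c_2(Z)\bmod i_Z(N')),\,24\bigr)$ and combining with the previous step yields
\[
\gdiv\bigl(c_2(Z)\bmod i_Z(N')\bigr) \;=\; \gcd\bigl(\gdiv(c_2(Y)+c_1(Y)^2 \bmod i_Y(N')),\; 24\bigr),
\]
which is the claim. There is no serious obstacle here; the only point that needs a moment's care is verifying that $i_Z$ really lands in $H^4_{cpt}(V)$, so that passing to the quotient respects the direct-sum splitting and the coefficient $24$ of $[\CP^1]$ is left untouched.
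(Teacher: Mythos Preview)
Your proposal is correct and follows exactly the paper's approach: the paper establishes the decomposition $H^4(Z) = H^4_{cpt}(V) \oplus \ZZ[\CP^1]$, the formula $c_2(Z) = g(c_2(Y)+c_1(Y)^2) + 24[\CP^1]$, and the facts that $i_Z$ lands in $H^4_{cpt}(V)$ with $g\circ i_Y = i_Z$, then simply writes ``Hence'' and states the lemma. You have merely spelled out the elementary step the paper leaves implicit.
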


In turn, $\gdiv(c_2(Y) + c_1(Y)^2 \mod i_Y(N'))$ can be computed by using
\fullref{lem:c2div} to evaluate $c_2(Y) + c_1(Y)^2$ on elements
$D \in H^2(Y)$ such that $i_Y^*(D)$ is perpendicular to~$N'$.

\section{Anticanonical divisors in semi-Fano 3--folds}
\label{sec:div}

Almost any non-singular weak Fano 3--fold $Y$ -- recall both the standing Assumption 
preceding \fullref{prop:onestage} and \fullref{R:weak:fano:ac:model}(iv) -- can be blown up as in
\fullref{prop:onestage} to obtain a projective 3--fold
$Z$ with an anticanonical K3 divisor $S$, such that \fullref{thm:acyl_calabi}
produces ACyl Calabi--Yau structures on $V=Z\setminus S$. The
asymptotic limit is the product of the cylinder $\bbrp \times \Sph^1$
and $S$ as in \eqref{eq:cycyl}. $S$ is equipped with a non-vanishing
holomorphic 2--form $\Omega_S$ and a Kähler form $\omega_S$, and we can regard
the pair $(\Omega_S, \omega_S)$ as a \hk structure.  We now wish to understand
better \emph{which} \hk structures on K3 occur as the asymptotic limits of
ACyl Calabi--Yau \mbox{3--folds} constructed this way from a given family of weak
Fanos. In \fullref{prop:fanok3} we show that when $Y$ is a semi-Fano 3--fold then the subset of asymptotic
limit hyper-Kähler structures on K3 is ``large'' (as characterised by the de Rham
cohomology classes of the 2--forms) in the space of adapted hyper-Kähler structures, 
that is, those satisfying the a priori necessary polarisation condition described below.

In view of \fullref{thm:acyl_calabi}, we are interested in which complex
2--forms on K3 are holomorphic with respect to some smooth embedding of K3 as
an anticanonical divisor in an element $Y$ of a family of weak Fano 3--folds,
and which real 2--forms on K3 are restrictions of Kähler forms on $Y$.
We are therefore led to study the deformation theory of pairs $(Y,S)$ where
$Y$ is a weak Fano 3--fold and $S$ is a non-singular anticanonical divisor.

If $Y$ is a weak Fano 3--fold then by \fullref{thm:reid} a generic 
$S \in \acls{Y}$ is a smooth K3 surface. If moreover $Y$ is semi-Fano then 
by \fullref{lem:lef} below, the natural restriction homomorphism
$\Pic{Y} \to \Pic{S}$ is a primitive embedding. 
This implies that the K3 surfaces appearing as anticanonical 
divisors in a given (deformation class of) semi-Fano 3--fold are very special; 
we see only those K3 surfaces $S$ that contain a primitive sublattice
$\Pic{Y} \subset \Pic{S}$. Such K3 surfaces are called
\emph{lattice polarised K3 surfaces} and the moduli theory of lattice polarised
K3 surfaces is well understood; we recall it below.

In order %
to understand when a given lattice polarised K3 surface $S$
appears as a smooth anticanonical divisor in a given deformation class of
semi-Fano 3--folds, %
we also need to construct a sufficiently
well-behaved moduli space (stack) parameterising pairs consisting of 
a deformation class of semi-Fano 3--folds $Y$ and the choice of a smooth 
anticanonical section $S \in \acls{Y}$: see
\fullref{dfn:semifano_moduli_stack}.
The semi-Fano  assumption on $Y$ is used
in our proof  that the appropriate moduli stack parameterising such pairs
is a smooth stack: see \fullref{thm:fano_stack_smooth}. The smoothness
proof we give relies on the fact that semi-Fano 3--folds satisfy slightly better
vanishing theorems (\fullref{thm:aknava}) than the standard
Kawamata--Viehweg vanishing (\fullref{T:KV:vanish}). 
(However, see also the Remark following \fullref{thm:fano_stack_smooth}
for the general weak Fano case).
Most importantly of all, we need to understand the forgetful map $(Y,S) \mapsto
S$ from such pairs $(Y,S)$ to the moduli (stack) of lattice polarised K3
surfaces: see \fullref{thm:fanok3} for the statement of such a result.

\fullref{thm:fanok3} is a crucial ingredient in arguments in our
paper~\cite[Section~6]{chnp2} that allows us in many cases to solve the
so-called ``matching problem'' for a pair of \hk K3 surfaces and therefore
construct many compact $7$--manifolds with holonomy group \gtwo using the
twisted connected sum construction.

We now describe the relevant moduli theory first for lattice polarised K3
surfaces, secondly for pairs of semi-Fano 3--folds and smooth anticanonical
sections and then study the natural map between these two moduli spaces
(stacks).

\subsection{Lattice polarised K3 surfaces and the Torelli theorem}

We recall some standard facts about moduli of lattice polarised K3
surfaces. Our purpose is to fix notation and recall just the facts
that we need, not to give an introduction to moduli of~K3. The
constructions here are described in greater detail for example in
Dolgachev~\cite[Sections~1 and~3]{dol:k3}.

We denote by $L = 2E_8(-1) \perp 3U$ an abstract copy of the K3
lattice.  Fix a triple $(N, A, j)$ of a lattice $N$ of signature
$(1,\rho)$ ($\rho = 0$ is allowed), an element $A\in N$ with
$A^2=2g-2>0$, and a primitive lattice embedding $j\co N
\hookrightarrow L$. (In general, there may be several inequivalent
such embeddings.)

Write $\Delta=\{\delta \in N \mid \delta^2=-2\}$. As we will see
shortly, to specify the moduli space of $N$--polarised K3 we need to
choose a partition $\Delta=\Delta^+ \sqcup \Delta^-$ satisfying the
properties:
    \begin{enumerate}
    \item $\Delta^-=\{-\delta \mid \delta \in \Delta^+\}$,
    \item if $\delta_1, \ldots, \delta_k \in \Delta^+$ and $\delta = \sum
    \lambda_i\delta_i\in \Delta$ with all $\lambda_i\geq 0$ then also $\delta \in \Delta^+$, and
    \item \label{it:apos} $A\geq 0$ on $\Delta^+$.
    \end{enumerate}
In what follows, we always (implicitly) assume that such a choice has been made.

\begin{remark} \label{rem:choice_k3weilchamber} In general, it is not
  easy to make explicit the choice of a partition of $\Delta=\Delta^+
  \sqcup \Delta^-$ as just discussed.  In almost all cases of interest
  to us, it will be possible to verify that for all $\delta \in \Delta$,
  $A\cdot \delta \neq 0$. When this is the case, property (iii) %
  specifies that $\Delta^+=\{\delta \in \Delta \mid A\cdot \delta >0 \}$.
\end{remark}

Let $V^+\subset N_\RR$ be the connected component of the cone
$V=\{\xi\mid \xi^2>0\} \subset N_\RR$ containing~$A$, and write
\[
C^+=\{\xi \in V^+ \mid \xi \cdot \delta > 0 \; \text{for all} \;
\delta \in \Delta^+\} .
\]

\begin{definition} \label{dfn:K3moduli} The stack $\mathfrak{K}^{N,A}$
  of $(N,A, j)$--polarised K3 surfaces (we often just say $N$--polarised
  K3 surfaces) is the category whose objects are: families
  $f\co \df{S} \to B$ of (non-singular) K3 surfaces, together with an isometry
\[
N\hookrightarrow \uPic (\df{S}/B)\subset \mathbb{L}=R^2f_\star
\ZZ_{\df{S}}
\]
(where $\uPic (\df{S}/B)$ is the relative Picard group
functor\footnote{That is, the sheaf on $B$ for the faithfully flat
  topology associated to the presheaf $(B^\prime/B)\mapsto \Pic
  (\df{S}\times_B B^\prime)$. This sheaf is representable by a group
  scheme over $B$ that we also denote by $\uPic (\df{S}/B)$.}) such that: 
\begin{enumerate}
\item for every $b\in B$, the embedding $N\subset
\mathbb{L}_b$ is equivalent to $j\co N \subset L$,
\item $C^+ \cap \Amp (\df{S}/B)\neq \emptyset$. 
\end{enumerate}
Morphisms in the category are Cartesian diagrams.    
\end{definition}

It is well known that $\mathfrak{K}^{N,A}$ is a smooth and connected
Deligne--Mumford stack with quasi-projective coarse moduli space that
we denote by $\mathcal{K}^{N,A}$. (Our only reason for working with
stacks, and not with spaces, is that, because of smoothness, we can
use certain infinitesimal arguments below. The reader who wishes to do
so can pretend that the stack is in fact a smooth space, even though
this is not, strictly speaking, true.)

Next we summarise the construction of the coarse moduli space from
Hodge theory. 

\begin{itemize}
\item
Denote by $D$ the Griffiths domain of oriented positive real $2$--dimensional
vector subspaces $\Pi\subset N_{\R}^\perp \subset L_\R$.
Recall that giving $\Pi$ is equivalent to giving a polarised Hodge
structure on~$L$:
\[
L\otimes \C = H^{2,0}\oplus H^{1,1}\oplus H^{0,2}
\]
where $\Pi\otimes \C=H^{2,0}\oplus H^{0,2}$ is the complex structure
on $\Pi$ where multiplication by $\sqrt{-1}$ is achieved by the
(positive) rotation by $90$ degrees. Giving the real $2$--plane
$\Pi\subset L \otimes \R$ is equivalent to giving the complex line
$H^{2,0}\in L\otimes \C$. The \emph{period point} of a K3 surface $S$
is the plane $\Pi(S)$ corresponding to $H^{2,0}=H^{2,0}(S)$.

\item The stack $\mathfrak{M}^{N,A}$ of \emph{marked} $(N,A, j)$--polarised
  K3 surfaces is the category whose objects are: objects
  $f\co \df{S} \to B$ of $\mathfrak{K}^{N,A}$, together with an isometry
  (marking)
\[
h \co R^2f_\star \ZZ_{\df{S}} \to L\otimes \ZZ_B
\]
such that: (i) for every $b\in B$, the composition $N\subset
\mathbb{L}_b = H^2(S_b;\ZZ)\overset{h_b}{\to} L$ is $j\co N
\subset L$, and (ii) $C^+ \cap \Amp (\df{S}/B)\neq \emptyset$. 
Morphisms in the category are Cartesian diagrams.    

If $\delta \in N^\perp \subset L$ is a class with $\delta^2=-2$, we
denote by $H_\delta\subset D$ the hypersurface consisting of
$\Pi\subset \delta^\perp$. Taking the union over all such $\delta$, we
write
\begin{equation}
\label{eq:griff}
D^0= D \setminus \bigcup H_\delta \subset D .
\end{equation}

The \emph{period map} is the morphism $\Pi \co \mathfrak{M}^{N,A}
\to D^0$ that maps $f\co \df{S}\to B$ to the polarised variation of
Hodge structure on $R^2f_\star \ZZ_{\df{S}}$ -- that is, it maps the
surface $S$ to its period point $\Pi(S)$.  A variant of the Torelli
theorem for K3 surfaces (see Dolgachev~\cite[Corollary~3.2]{dol:k3}) states that $\Pi$ is an
isomorphism.

\item It follows from the Torelli theorem just stated that
\[
\mathfrak{K}^{N,A}=[D^0/\Gamma]
\]
as stacks, and $\mathcal{K}^{N,A}=D^0/\Gamma$ as spaces, where
$\Gamma\subset O(L)$ is a discrete group acting properly and
discontinuously on $D^0$ -- see Dolgachev~\cite{dol:quadratic,dol:k3} for
details. For our purposes, we only need to know that $\Gamma$ is
commensurable with the set of isometries of $L$ which restrict to the
identity on~$N$. We do not need the precise description of~$\Gamma$.
\end{itemize}

\subsection{Semi-Fano \mbox{3--folds} and their K3 sections}
\label{sec:weak-ast-fano}

We now come to the key purpose of this section, which is to extend
some of the notions and results of Beauville~\cite{beauville:fano} to the case
of semi-Fano \mbox{3--folds}.

\begin{lemma}
  \label{lem:lef}
  Let $Y$ be a non-singular semi-Fano \mbox{3--fold}, and let
  $S\in \acls{Y}$ be a non-singular surface, necessarily a K3 surface.
  Then, the natural restriction homomorphism $\Pic Y \to \Pic S$ is
  a primitive embedding.
\end{lemma}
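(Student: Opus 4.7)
The plan is to reduce the assertion to the corresponding statement at the level of $H^{2}$ with integer coefficients, and then invoke the Lefschetz-type theorem for semi-small morphisms (\fullref{prop:lef}).

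First I would assemble the commutative square
\[
\begin{array}{ccc}
\Pic Y & \longrightarrow & \Pic S \\
\cong\,\downarrow & & \downarrow \\
H^{2}(Y;\ZZ) & \longrightarrow & H^{2}(S;\ZZ).
\end{array}
\]
The left vertical arrow is an isomorphism by \fullref{C:weak:fano:vanish}(ii). The right vertical arrow is a primitive embedding because $S$ is a K3 surface: the exponential sequence together with $H^{1}(\mathcal{O}_{S})=0$ identifies $\Pic S$ with $H^{1,1}(S) \cap H^{2}(S;\ZZ)$, and this sublattice is primitive in $H^{2}(S;\ZZ)$ since the quotient embeds into the torsion-free group $H^{2}(\mathcal{O}_{S})$.

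Next I would apply \fullref{prop:lef} to the anticanonical morphism of the semi-Fano $Y$, composed with the anticanonical embedding of its image $X$ (or with a suitable re-embedding by a multiple of $-K_{Y}$ if $-K_{X}$ is not very ample). By hypothesis this morphism is semi-small, and since $\dim Y = 3$ the proposition delivers that the bottom horizontal arrow of the diagram above is primitive injective.

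A short diagram chase then finishes the argument: injectivity of the top arrow follows from injectivity of the other three sides of the square; and for primitivity, an element $L \in \Pic Y$ whose restriction to $S$ equals $n\cdot M$ for some $M \in \Pic S$ has image in $H^{2}(S;\ZZ)$ divisible by $n$, whence by primitivity of the bottom arrow the corresponding class in $H^{2}(Y;\ZZ) \cong \Pic Y$ is itself divisible by $n$. The only potential obstacle is the minor technical point of arranging $S$ to be a hyperplane section for the chosen semi-small morphism; this is automatic when $-K_{X}$ is very ample, and handled by passing to a sufficiently high multiple of $-K_{Y}$ otherwise. The substantive content sits entirely in the semi-small Lefschetz theorem, which is precisely where the semi-Fano (rather than merely weak Fano) hypothesis is used.
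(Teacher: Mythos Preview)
Your argument is correct and follows exactly the paper's route: reduce to the statement at the level of $H^2(\,\cdot\,;\ZZ)$ via $\Pic Y \cong H^2(Y;\ZZ)$ and the injection $\Pic S \hookrightarrow H^2(S;\ZZ)$, then invoke \fullref{prop:lef}; the paper's own proof simply points back to the proof of \fullref{prop:block_from_weak}(iii), which does precisely this.

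One minor correction worth flagging: your proposed remedy for the case when $-K_X$ is not very ample --- passing to a sufficiently high multiple of $-K_Y$ --- does not work as written, because a hyperplane section for the morphism defined by $\lvert -mK_Y\rvert$ lies in $\lvert -mK_Y\rvert$ rather than in $\lvert -K_Y\rvert$, so $S$ is no longer of the required form. The paper does not address this wrinkle either; it tacitly works under the assumption (recorded in \fullref{R:weak:fano:ac:model}(iv)) that $-K_X$ is very ample, which holds in all the examples considered. If one wants a genuinely general statement, the correct fix is to go back to the Goresky--MacPherson theorem itself, which only needs $Y\setminus S$ to map semi-smally and properly to an affine variety --- and $X\setminus \bar S$ is affine for any effective ample divisor $\bar S$ on $X$, whether or not $-K_X$ is very ample.
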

\begin{proof}
  This was already shown in the proof of
  \fullref{prop:block_from_weak}. 
\end{proof}

\begin{remark*}
  By \fullref{t:wk:fano:finite} we know that the Picard rank $\rho
  < c$ for all weak Fano 3--folds, but in general we have no estimate
  of $c$.  \fullref{lem:lef} implies that $\rho(Y):=
  \rank{\Pic{Y}} \le 20$ for any non-singular semi-Fano 3--fold $Y$;
  \fullref{exa:burkhardt_quartic} gives a semi-Fano 3--fold that
  has Picard rank $\rho=16$ and a nodal AC model.
  
  For a general non-singular weak Fano
  3--fold this upper bound of $20$ on the Picard rank is false; according to
  \fullref{r:picard:rk} there exists a toric weak Fano
  3--fold with $\rho(Y)=35$. This is the maximal Picard rank that
  occurs for toric weak Fano 3--folds and is currently the largest
  Picard rank known for any non-singular weak Fano 3--fold. 
\end{remark*}

If $Y$ is a semi-Fano \mbox{3--fold}, we regard $\Pic Y\cong
H^2(Y,\Z)$ as a lattice by means of the quadratic form
$(D_1,D_2)\mapsto -D_1\cdot D_2\cdot K_Y$; this lattice has the
distinguished element $A=-K_Y$ with $A^2=2g-2$. (Note that $A$ is
\emph{not} a K\"ahler class on $Y$ when $Y$ is semi-Fano but not Fano.) 

\begin{definition}\label{dfn:semifano_moduli_stack}
Fix now a lattice $N$ of signature $(1,\rho)$, with a distinguished element $A$ with
$A^2=2g-2$. We also fix an embedding $j\co N \subset L$ in the K3
lattice. The stack $\mathfrak{F}^{N,A}$ is the category whose
objects are families $f\co (\df{S}, \df{Y})\to B$, such that:
\begin{enumerate}
\item[(i)] for every geometric point $b\in B$, the fibre $Y_b$ is a
  non-singular semi-Fano 3--fold, and the fibre $S_b\subset Y_b$ is
  a non-singular K3 surface in the linear system $\abs{-K_{Y_b}}$,
\end{enumerate}
together with an isometry $N\cong \uPic (\df{Y}/B)$
sending $A$ to $-K_{\df{Y}}$, such that:
\begin{enumerate}
\item[(ii)] for every geometric point $b\in B$, the composition $N \to \Pic (Y_b)\to
  H^2(S_b;\ZZ)$ is equivalent to~$j$.
\end{enumerate}
\end{definition}

\begin{theorem}
\label{thm:fano_stack_smooth}
  The stack $\mathfrak{F}^{N,A}$ is a smooth algebraic stack.
\end{theorem}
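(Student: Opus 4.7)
The plan is to verify smoothness by showing that the deformation theory of the pair $(Y,S)$ is unobstructed, and to obtain algebraicity from boundedness of semi-Fano $3$--folds. The lattice polarisation data $N \cong \uPic(\mathfrak{Y}/B)$ is locally constant in families of semi-Fano $3$--folds (whose Picard groups are discrete and torsion-free by \fullref{C:weak:fano:vanish}(ii)), so it imposes no extra infinitesimal conditions; the obstruction theory of $\mathfrak{F}^{N,A}$ coincides with the obstruction theory for deforming pairs $(Y,S)$ with $S \in \acls{Y}$ a smooth divisor.

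The standard deformation theory of such pairs is controlled by the logarithmic tangent sheaf $T_Y(-\log S)$: the tangent space to $\mathfrak{F}^{N,A}$ at $(Y,S)$ is $H^1(Y,T_Y(-\log S))$ and the obstruction space is $H^2(Y,T_Y(-\log S))$. I would use the residue sequence
\begin{equation*}
0 \to T_Y(-\log S) \to T_Y \to N_{S/Y} \to 0 .
\end{equation*}
Since $S \sim -K_Y$, adjunction gives $N_{S/Y} = \mathcal{O}_Y(S)|_S \cong \mathcal{O}_S(-K_Y|_S) \cong \mathcal{O}_S$ because $K_S = (K_Y+S)|_S = 0$. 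Therefore $H^1(N_{S/Y}) = H^1(\mathcal{O}_S) = 0$ (as $S$ is K3), and the long exact sequence yields an injection $H^2(T_Y(-\log S)) \hookrightarrow H^2(T_Y)$.

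The crucial step, where the semi-Fano hypothesis enters, is to prove $H^2(Y,T_Y) = 0$. By Serre duality,
\begin{equation*}
H^2(Y,T_Y) \cong H^1(Y, \Omega_Y^1 \otimes K_Y)^\ast = H^1(Y, \Omega_Y^1 \otimes (-K_Y)^\vee)^\ast.
\end{equation*}
Since $Y$ is semi-Fano, $-K_Y$ is big and $1$--ample in the sense of \fullref{D:l-ample} (by \fullref{r:lef:1ample}, since the anticanonical morphism is birational and semi-small). Applying the Sommese--Esnault--Viehweg vanishing \fullref{thm:aknava} with $L = -K_Y$, $l = 1$, $p = q = 1$ and $\dim X = \dim Y = 3$ gives $p+q = 2 < 3 = \min\{\dim X, \dim Y - l + 1\}$, so $H^1(\Omega_Y^1 \otimes K_Y) = 0$. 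Hence $H^2(T_Y) = 0$, and therefore $H^2(T_Y(-\log S)) = 0$; all obstructions vanish and $\mathfrak{F}^{N,A}$ is formally smooth.

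For algebraicity, the anticanonical model of a weak Fano $3$--fold has bounded anticanonical degree, and by \fullref{t:wk:fano:finite} semi-Fano $3$--folds form finitely many deformation families. One therefore realises $\mathfrak{F}^{N,A}$ as a quotient stack by $\mathrm{PGL}_{N+1}$ of a locally closed subscheme of a relative Hilbert scheme parameterising pairs $(Y,S) \subset Y \subset \PP^N$ pluricanonically embedded, together with the choice of isometry $N \cong \uPic(\mathfrak{Y}/B)$ (which is a disjoint union of components under a suitable étale cover); this is a standard construction and produces an algebraic stack of finite type. Combined with the infinitesimal smoothness above, this shows $\mathfrak{F}^{N,A}$ is a smooth algebraic stack. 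The main subtlety I anticipate is the algebraicity setup: one must take care that the chosen polarisation/marking functor is representable and that the $\mathrm{PGL}$--action has the expected properties on the locus where $S$ is a smooth K3 and $Y$ is a semi-Fano; the infinitesimal smoothness itself reduces cleanly to the single vanishing $H^1(\Omega_Y^1 \otimes K_Y) = 0$ furnished by Sommese--Esnault--Viehweg.
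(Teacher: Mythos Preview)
Your approach is essentially the same as the paper's: reduce smoothness to the single vanishing $H^2(Y,T_Y)=0$ and deduce this from Sommese--Esnault--Viehweg via Serre duality. The paper simply cites Beauville~\cite{beauville:fano} for the deformation-theoretic framework and the algebraicity, and observes that Beauville's argument goes through verbatim once $H^2(Y,T_Y)=0$; your write-up unpacks more of that framework explicitly.

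There is, however, a genuine error in your unpacking. You claim $N_{S/Y}\cong\mathcal{O}_S$, but this is false: $N_{S/Y}=\mathcal{O}_Y(S)|_S\cong\mathcal{O}_S(-K_Y|_S)$, and adjunction $K_S=(K_Y+S)|_S=0$ only tells you that $S|_S\sim -K_Y|_S$, not that this class is trivial. Indeed $-K_Y|_S$ is the (big and nef) polarising class on the K3, certainly nonzero. Your conclusion $H^1(S,N_{S/Y})=0$ is nevertheless correct: since $K_S=0$ and $-K_Y|_S$ is big and nef, Kawamata--Viehweg on $S$ gives $H^1(S,\mathcal{O}_S(-K_Y|_S))=0$. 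With this fix the injection $H^2(T_Y(-\log S))\hookrightarrow H^2(T_Y)$ and hence the unobstructedness go through, and the rest of your argument stands.
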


\begin{remark}
  The stack $\mathfrak{F}^{N,A}$ is often not connected: in examples, the connected
  components can often be understood in terms of flops relating
  different (partial) resolutions of singular Fano \mbox{3--folds}.
\end{remark}

\begin{proof}
  In the Fano case, Beauville~\cite{beauville:fano} shows that
  $\mathfrak{F}^{N,A}$ is a smooth algebraic stack. The proof in~\cite{beauville:fano} works word for word once we establish that
  $H^2(Y, \, T_Y)=(0)$ -- which implies that the stack $\mathfrak{F}^{N,A}$
  is smooth. But
  $H^2(Y, \, T_Y) = H^2\bigl(Y, \, \Omega^2_Y\otimes(-K_Y)\bigr)$ is
  Serre dual to $H^1(Y, \, \Omega^1_Y\otimes K_Y)$, and this group
  vanishes for any semi-Fano 3--fold (but not for a general weak Fano)
  thanks to \fullref{thm:aknava}.
\end{proof}

\begin{remark*}
$H^{2}(Y,T_{Y})$ does not always vanish for a weak Fano $Y$ (see~\cite[Example~2.7]{sano13})
and therefore Beauville's proof of smoothness of $\mathfrak{F}^{N,A}$ 
does not work in the general weak Fano setting. 
However, this does not necessarily mean that the stack $\mathfrak{F}^{N,A}$
fails to be smooth. In fact, using Paoletti's result (\fullref{thm:reid}) 
that a generic anti\-canonical member $S \in \abs{-K_{Y}}$ is a non-singular K3 surface, 
it follows from work of Ran \mbox{\cite[Corollary~3]{ran}}, using the so-called
$T^{1}$--lifting method, 
that $\mathfrak{F}^{N,A}$ is still smooth for any (smooth) weak Fano 3--fold $Y$.
(Very recently, Sano~\cite[Theorem~1.1]{sano13} considered the extension of this result 
to weak Fano $n$--folds for $n>3$ in which case it is no longer true that a general $S \in \abs{-K_{Y}}$ 
need be smooth.)
\end{remark*}

The only reason that we use moduli stacks rather than spaces here is
so we can use infinitesimal arguments in the proof of
\fullref{thm:fanok3} below: the stack is smooth even when the
space is not. As we already noted, for any semi-Fano 3--fold $Y$ the restriction homomorphism
$\Pic Y \to \Pic S\subset H^2(S;\Z)$ is a primitive embedding. Thus,
we view $S$ as an $(N,A)$--polarised K3 surface. As above, let
$\mathfrak{K}^{N,A}$ be the stack of $(N,A)$--polarised K3
surfaces. There is an obvious forgetful morphism
\[
s^{N,A} \co \mathfrak{F}^{N,A} \to \mathfrak{K}^{N,A} .
\]
The following is the key result of this section and lies at the core of the
matching argument in~\cite[Section~6]{chnp2}. 

\begin{theorem}
\label{thm:fanok3}
The morphism $s^{N,A}\co \mathfrak{F}^{N,A}\to
\mathfrak{K}^{N,A}$ is smooth and generically surjective. More
precisely, let $\mathfrak{F}\subset \mathfrak{F}^{N,A}$ be any
connected component, and denote by $s\co \mathfrak{F} \to
\mathfrak{K}^{N,A}$ the restriction of $s^{N,A}$ to
$\mathfrak{F}$. Then $s$ is smooth and generically surjective.
\end{theorem}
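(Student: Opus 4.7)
The plan is twofold: first to deduce smoothness of $s$ from the smoothness of source and target (\fullref{thm:fano_stack_smooth} and the period-domain isomorphism $\mathfrak{K}^{N,A} \cong [D^0/\Gamma]$) by checking pointwise surjectivity of the differential, and then to derive generic surjectivity of the restriction $s|_\mathfrak{F}$ to any connected component $\mathfrak{F}$ from smoothness plus irreducibility of $\mathfrak{K}^{N,A}$ (the period domain $D^0$ being a connected open subset of the irreducible variety $D$).

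I would first identify the tangent spaces. At a point $(Y,S) \in \mathfrak{F}^{N,A}$ the tangent space is $H^1(Y, T_Y(-\log S))$: no extra polarization-preservation condition appears because $H^1(\mathcal{O}_Y)=0$ on a semi-Fano $Y$ means that $\Pic Y \cong H^2(Y;\ZZ)$ is a locally constant invariant of the family. At $S \in \mathfrak{K}^{N,A}$ the tangent space is
$$H^1(S,T_S)_N := \{\xi \in H^1(T_S) : \xi \cdot c_1(L) = 0 \text{ in } H^2(\mathcal{O}_S) \text{ for every } L \in N\}.$$
The differential $ds$ factors as
$$H^1(Y, T_Y(-\log S)) \longrightarrow H^1(S, T_Y(-\log S)|_S) \longrightarrow H^1(S, T_S),$$
the first map coming from restriction to $S$ and the second from the extension $0 \to \mathcal{O}_S \to T_Y(-\log S)|_S \to T_S \to 0$ whose extension class is $c_1(N_{S/Y})$. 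Because an actual deformation of the pair $(Y,S)$ extends each class in $\Pic Y = N$ to the deformed K3, the image of $ds$ already lies in $H^1(T_S)_N$.

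For surjectivity onto $H^1(T_S)_N$ the key inputs are two vanishings: $H^2(Y, T_Y) = 0$, which holds via \fullref{thm:aknava} applied to the $1$-ample bundle $-K_Y$ on the semi-Fano $Y$ (this is precisely where the semi-Fano hypothesis is essential, since for a general weak Fano $H^2(T_Y)$ can be nonzero, cf.\ the remark following \fullref{thm:fano_stack_smooth}); and $H^1(S, N_{S/Y}) = 0$, since $N_{S/Y}=-K_Y|_S$ is big and nef on the K3 surface $S$. From the long exact sequence of $0 \to T_Y(-\log S) \to T_Y \to \iota_* N_{S/Y} \to 0$ these yield $H^2(Y, T_Y(-\log S))=0$ and surjectivity $H^1(Y, T_Y(-\log S)) \twoheadrightarrow H^1(Y, T_Y)$; meanwhile the connecting map $H^1(T_S) \to H^2(\mathcal{O}_S)$ in the long exact sequence of $0 \to \mathcal{O}_S \to T_Y(-\log S)|_S \to T_S \to 0$ is cup product with $c_1(N_{S/Y}) = A \in N$, and hence vanishes on $H^1(T_S)_N$. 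The hard step is assembling these pieces into a diagram chase showing that the composite $H^1(Y, T_Y(-\log S)) \to H^1(T_S)$ has image exactly $H^1(T_S)_N$: the cokernel of the restriction $H^1(Y, T_Y(-\log S)) \to H^1(S, T_Y(-\log S)|_S)$ is controlled by $H^2(Y, T_Y(-\log S)\otimes K_Y)$, which Serre duality identifies with $H^1(Y, \Omega^1_Y(\log S))^\vee$, and the residue sequence presents this as a quotient of $N \otimes \CC$; one must verify that these uncovered directions project trivially into $H^1(T_S)_N$.

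Once smoothness is proven, generic surjectivity on any connected component $\mathfrak{F}$ is immediate: $s(\mathfrak{F})$ is a nonempty open substack of the irreducible stack $\mathfrak{K}^{N,A}$, hence dense. The principal obstacle throughout is the cohomological bookkeeping in the third paragraph, specifically confirming that the cokernel of the global-to-local map $H^1(Y, T_Y(-\log S)) \to H^1(S, T_Y(-\log S)|_S)$ contributes only in polarization directions that are already killed when one passes to $H^1(T_S)_N$.
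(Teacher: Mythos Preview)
Your approach is correct and is essentially the same as the paper's: the paper's entire proof is the single sentence ``Beauville's proof in~\cite{beauville:fano} works word for word,'' and what you have written is a faithful outline of Beauville's deformation-theoretic argument, with the one semi-Fano-specific input $H^2(Y,T_Y)=0$ already supplied in the proof of \fullref{thm:fano_stack_smooth}. Your identification of the tangent spaces, the two vanishings $H^2(Y,T_Y)=0$ and $H^1(S,N_{S/Y})=0$, and the reduction of generic surjectivity to irreducibility of $\mathfrak{K}^{N,A}$ are all as in Beauville; the ``cohomological bookkeeping'' you flag as the principal obstacle is exactly the content of Beauville's Proposition and its proof, so there is no gap here, only a deferral to that reference---which is precisely what the paper does.
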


\begin{proof}
 Beauville's proof in~\cite{beauville:fano} works word for word. 
\end{proof}
\begin{remark*}
As already remarked above, \fullref{lem:lef} can definitely fail for general weak Fano \mbox{3--folds}.
Hence, in the general weak Fano case it is not a priori clear what moduli space (stack) of lattice polarised 
K3 surfaces $\mathfrak{K}$ should appear as the target of the forgetful morphism $s$ above.
An appropriate modification of \fullref{thm:fanok3} may still hold in the general weak Fano 
case; we will not consider this issue further in this paper.
\end{remark*}

In order to show that the set of \hk structures that appear in the limits of
our ACyl Calabi--Yau manifolds is large in the sense we want, it remains to find
a sufficient condition for a class in $L_\bbr$ to correspond to a restriction
of a Kähler class from $Y$.

\begin{prop}
  \label{prop:fanok3} As in the previous theorem, let
  $\mathfrak{F}\subset \mathfrak{F}^{N,A}$ be any connected component
  of the moduli stack $\mathfrak{F}^{N,A}$ of semi-Fano
  \mbox{3--folds}, and $s\co \mathfrak{F} \to \mathfrak{K}^{N,A}$
  the forgetful morphism. Recall that, according to the discussion in
  the previous section, $\mathfrak{K}^{N,A}=[D^0/\Gamma]$ where $D^0\subset
  D$ is an open subset of the appropriate Griffiths domain of oriented
  positive planes $\Pi \subset N^\perp_\RR$. Then there exist:

\begin{enumerate}[label=\textup{(\arabic*)}]
\item a subset $U_{\mathfrak{F}} \subseteq D^0$ with complement a locally finite
union of complex analytic submanifolds of positive codimension, and
\item an open subcone $\Amp_{\mathfrak{F}}\subset N_\bbr$,
\end{enumerate}
with the following property: Fix any pair $(\Pi, k)$ of $\Pi \in
U_{\mathfrak{F}}$ and $k \in \Amp_{\mathfrak{F}}$; denote by $(S, h)$
the \emph{marked} $(N,A,j)$--polarized K3 surface with period point
$\Pi(S,h)=\Pi$ (this means, in particular, that $h\co
H^2(S;\ZZ) \to L$ is an isometry); then there is an embedding $S\subset Y$ in a semi-Fano
\mbox{3--fold}~$Y$, and a K\"ahler class $[\omega]\in (\Pic Y)\otimes
\RR$ such that $\hdg ([\omega_{\mid S}]) = j(k) \in L_{\R}$.
\end{prop}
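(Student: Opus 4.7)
The plan is to extract $U_{\mathfrak{F}}$ from the smoothness and generic surjectivity of $s$ given in \fullref{thm:fanok3}, and to extract $\Amp_{\mathfrak{F}}$ from deformation-invariance of the K\"ahler cone within the connected component $\mathfrak{F}$.

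First I would produce $U_{\mathfrak{F}}$. By \fullref{thm:fanok3} the morphism $s\co \mathfrak{F}\to \mathfrak{K}^{N,A}$ is smooth and generically surjective. Smooth morphisms of algebraic stacks are open, so $s(\mathfrak{F})\subseteq \mathfrak{K}^{N,A}=[D^0/\Gamma]$ is an open substack; by generic surjectivity its complement is a proper closed substack of $\mathfrak{K}^{N,A}$, hence a locally finite union of closed analytic substacks of positive codimension. Pulling back along the quotient $D^0\to[D^0/\Gamma]$ gives the desired $\Gamma$--invariant open subset $U_{\mathfrak{F}}\subseteq D^0$ whose complement is a locally finite union of closed complex analytic submanifolds of positive codimension. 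By the Torelli theorem, each $\Pi\in U_{\mathfrak{F}}$ is the period point of a marked $(N,A,j)$--polarised K3 surface $(S,h)$ that, by construction, arises as a smooth anticanonical divisor in some semi-Fano 3--fold $Y$ in the deformation class $\mathfrak{F}$, with $N\cong \Pic(Y)$ as prescribed by the stack datum.

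Next I would construct $\Amp_{\mathfrak{F}}$. The data of $\mathfrak{F}^{N,A}$ include an isometry $N\cong \Pic(Y)$ for each semi-Fano $Y$ in the family, so the K\"ahler cone $\Amp(Y)\subset N_{\RR}$ is a well-defined open subcone of $N_{\RR}$. Within the connected component $\mathfrak{F}$ the birational model of $Y$ is fixed: any flop to a non-isomorphic semi-Fano would move $Y$ to another component of $\mathfrak{F}^{N,A}$, so by the cone theorem together with the finiteness of crepant resolutions (\fullref{t:crep:finite}) the Mori cone $\nefb(Y)$ of the generic $Y\in\mathfrak{F}$ is generated by a fixed finite set of extremal rays, and $\Amp(Y)$ is its dual. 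I would define $\Amp_{\mathfrak{F}}$ to be this common K\"ahler cone, shrinking $U_{\mathfrak{F}}$ if necessary by removing the positive-codimension analytic locus where the Mori cone jumps (this preserves the qualitative description of the complement).

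Finally, given $(\Pi,k)\in U_{\mathfrak{F}}\times \Amp_{\mathfrak{F}}$, let $(S,h)$ be the associated marked K3 and $Y\in \mathfrak{F}$ a semi-Fano containing $S$ as a smooth anticanonical divisor with the prescribed identification $N\cong \Pic(Y)$. Then $k\in \Amp(Y)$, so is represented by some K\"ahler class $[\omega]\in(\Pic Y)\otimes \RR$. By \fullref{dfn:semifano_moduli_stack}(ii) the composition $N\hookrightarrow \Pic(Y)\xrightarrow{\mathrm{res}} \Pic(S)\xrightarrow{h} L$ equals $j$, and hence $h([\omega_{\mid S}])=j(k)\in L_{\RR}$, as required. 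The main obstacle is the second step: rigorously controlling the way $\Amp(Y)$ varies as $Y$ moves in $\mathfrak{F}$. Here one combines the fact that flops between distinct semi-Fanos separate connected components of $\mathfrak{F}^{N,A}$ with upper semicontinuity of ampleness under deformation and with the finiteness statement of \fullref{t:crep:finite}; the delicate point is to keep careful track of the wall-and-chamber structure on $N_{\RR}$ so as to avoid jumping of the cone along special (positive-codimension) loci in the family.
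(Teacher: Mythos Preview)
Your overall architecture matches the paper's: use \fullref{thm:fanok3} to get a big open set of realisable periods, identify a common ample cone for the generic member of $\mathfrak{F}$, and then read off the K\"ahler class. The order of operations in the paper is slightly different (first restrict to where the ample cone is constant, then take the image under $s$), but that is cosmetic.

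The substantive gap is in your second step. You try to pin down $\Amp_{\mathfrak{F}}$ by arguing that ``flops between distinct semi-Fanos separate connected components of $\mathfrak{F}^{N,A}$'' and then invoking \fullref{t:crep:finite}. This conflates two unrelated mechanisms. Flops are birational modifications at a fixed point of moduli; what you actually need to control is how $\Amp(Y)$ varies under \emph{deformation} of $Y$ within the family. \fullref{t:crep:finite} concerns crepant morphisms to a fixed $X$ and says nothing about deformations. And the ample cone genuinely can jump under deformation of a semi-Fano (see \fullref{ex:ky:nonample}(i), where $\FF_2\times\PP^1$ deforms to $\PP^1\times\PP^1\times\PP^1$), so one cannot simply assert constancy on a connected component.

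The paper handles this via a dedicated lemma (\fullref{lem:paoletti}), which rests on Paoletti's study of the K\"ahler cone of weak Fano 3--folds under deformation: in any algebraic family of semi-Fanos there is a Zariski locally closed stratification of the base on which $\Amp(Y_b)\subset N_\RR$ is constant. One then takes $\mathfrak{F}^0$ to be the open stratum, sets $\Amp_{\mathfrak{F}}$ equal to the common cone there, and applies generic surjectivity of $s|_{\mathfrak{F}^0}$ to produce $U_{\mathfrak{F}}$. Your ``shrinking $U_{\mathfrak{F}}$ by removing the jumping locus'' is exactly this, but you need Paoletti's input to know that the jumping locus is Zariski-closed of positive codimension; upper semicontinuity of ampleness alone does not give it.
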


\begin{proof}
  By the following \fullref{lem:paoletti}, there is a Zariski open subset
  $\mathfrak{F}^0\subset\mathfrak{F}$ (the only open stratum of the
  Zariski locally closed stratification in the statement of that
  lemma) such that the cone $\Amp{Y_b}\subset N_{\R}$ is constant for
  $b\in \mathfrak{F}^0$. Let $\Amp_{\mathfrak{F}}$ denote this
  constant cone: it is an open cone, because ampleness is an open
  property. By \fullref{thm:fanok3} the restriction of $s$ to
  $\mathfrak{F}^0$ is generically surjective. Therefore the image
  $s(\mathfrak{F}^0)$ contains a Zariski open subset $W\subset
  s(\mathfrak{F}^0)\subset \mathfrak{K}^{N,A}$ and, denoting by
  $p\co D^0\to [D^0/\Gamma]=\mathfrak{K}^{N,A}$ the natural
  projection, we take $U_{\mathfrak{F}}=p^{-1}(W)$. Here (1) holds
  because $\Gamma$ is a discrete group and the action on $D^0$ is
  properly discontinuous. We claim that the open $U_{\mathfrak{F}}$
  and the cone $\Amp_{\mathfrak{F}}$ just defined satisfy the
  conclusion.

  Indeed, choose a pair $(\Pi, k)$ with $\Pi \in U_{\mathfrak{F}}$ and
  $k \in \Amp_{\mathfrak{F}}$. By construction, $p(\Pi)\in W\subset
  s(\mathfrak{F}^0)$: that is, $S=p (\Pi)$ is part of a pair
  $(S\subset Y)\in \mathfrak{F}^0$ where $Y$ is a semi-Fano \mbox{3--fold}
  and $k \in \Amp_{\mathfrak{F}} = \Amp Y$, so tautologically $k$
  corresponds to a K\"ahler class $[\omega]$ on $Y$ under the
  identification $N= \Pic (Y)$ that is part of the data of the moduli
  problem. The statement that $h([\omega_{|S}])=j(k)$ is a tautology.
\end{proof}

The proof of  \fullref{lem:paoletti} used above
rests on Paoletti's study~\cite{paoletti} of how
the K\"ahler cone of a weak Fano (quasi-Fano in his terminology)
\mbox{3--fold} changes under deformation. It is well known that the
K\"ahler cone of a non-singular Fano \mbox{$n$--fold} is locally constant under
deformation (see Wi\'sniewski~\cite{wis91}). Paoletti's main
result~\cite[Theorem~1.1]{paoletti} is a characterisation of how the
K\"ahler cone of a weak Fano \mbox{3--fold} can fail to be locally
constant under deformation. We don't actually need the precise
formulation of his result; we only need to know that, in an algebraic family, the
cone is constant on a Zariski open subset. Also note \emph{loc. cit.}~Corollary~1.2
stating that the K\"ahler cone is constant in a family of weak Fano
\mbox{3--folds} whose anticanonical morphism is small and  
in particular for any semi-Fano obtained as a (projective) small resolution of nodal Fano 3--fold;
this is the case for almost all of the examples we consider in detail in this paper.
\begin{lemma}
\label{lem:paoletti}
Let $f \co \df{Y} \to B$ be a flat algebraic family of semi-Fano
\mbox{3--folds} together with an isometry $N\cong \uPic (\df{Y}/B)$.
There is a Zariski locally closed stratification \mbox{$\coprod B_i = B$}
of $B$ such that for all $i$ the ample cone $\Amp Y_b\subset N_\bbr$
is constant in $b\in B_i$.
\end{lemma}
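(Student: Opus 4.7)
The plan is to reduce, via Noetherian induction on $B$, to showing that the function $b \mapsto \Amp Y_b$ is constant on some nonempty Zariski open subset of every closed subvariety of $B$; I then derive this from the rational polyhedrality of $\nefb(Y_b)$ (which follows from the Cone Theorem applied to a weak Fano) together with Chevalley's theorem applied to an appropriate relative Chow scheme. Throughout, the isometry $N \cong \uPic(\df{Y}/B)$ lets me identify all $N_1(Y_b)_\RR$ with the fixed vector space $N^\vee_\RR$.

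First, I would observe that for each $b \in B$ the cone $\nefb(Y_b) \subset N^\vee_\RR$ is rational polyhedral: since $-K_{Y_b}$ is nef and big on the smooth (hence canonical) variety $Y_b$, the Cone Theorem (\cite[Theorem~3.7.3]{KM}) yields that the $K$--negative part of $\nefb(Y_b)$ is generated by finitely many extremal rays, each spanned by an irreducible rational curve of $-K$--degree at most $4$ (Mori's bound); the remaining $K$--trivial face is spanned by classes of curves contracted by the anticanonical morphism $\varphi_{AC}\co Y_b \to X_b$, and by the semi-Fano hypothesis (semi-smallness of $\varphi_{AC}$) this face is also rational polyhedral. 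The ample cone $\Amp Y_b$ is the interior of the dual of $\nefb(Y_b)$, so it is entirely determined by the finite set of extremal ray classes.

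Second, I would use boundedness of the flat family $f \co \df{Y} \to B$ to produce a finite set $\Sigma \subset N^\vee$ with the property that every extremal ray of every $\nefb(Y_b)$ is generated by some $\alpha \in \Sigma$. For $K$--negative extremal rays, Mori's bound uniformly bounds $-K \cdot \alpha$ so that only finitely many Hilbert polynomials arise for the generating rational curves; the corresponding components of the relative Hilbert scheme are projective over $B$, and their (finitely many) numerical classes contribute to $\Sigma$. For $K$--trivial extremal rays, semi-smallness of the AC morphism restricts the contracted loci to bounded families (either rigid $\mathcal{O}(-1)\oplus\mathcal{O}(-1)$ curves over nodes, or curves in divisors contracted to curves), again yielding only finitely many classes.

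Finally, for each $\alpha \in \Sigma$, let
\[
B_\alpha := \{ b \in B \mid \alpha \in \nefb(Y_b)\}.
\]
Then $B_\alpha$ is constructible: it is the image in $B$ of the (finite) union of those connected components of the relative Chow scheme $\operatorname{Chow}_1(\df{Y}/B)$ that parameterise effective $1$--cycles of numerical class $\alpha$, and these are projective over $B$, so constructibility follows from Chevalley's theorem. On each stratum of the finite stratification of $B$ obtained by Boolean combinations of the $B_\alpha$ (suitably refined to a Zariski locally closed stratification), the set $\{\alpha \in \Sigma \mid b \in B_\alpha\}$ is constant, hence so is $\nefb(Y_b)$ and therefore $\Amp Y_b$, as required. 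The main obstacle is the finiteness of $\Sigma$ in the previous paragraph: Mori's bound handles the $K$--negative case routinely, but the $K$--trivial extremal rays require the semi-Fano hypothesis in an essential way, since an unbounded divisorial contraction could a priori produce infinitely many extremal classes across the family.
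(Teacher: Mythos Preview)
Your approach is genuinely different from the paper's, and it has a real gap at exactly the point you flag as the ``main obstacle''.

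\textbf{The gap.} Mori's bound does \emph{not} give you finitely many Hilbert polynomials for the $K$--negative extremal curves. The Hilbert polynomial of a curve $C$ is computed with respect to an \emph{ample} polarisation $H$, and is determined by $H\cdot C$; Mori's bound only controls $-K_{Y_b}\cdot C\le 4$. Since $-K_{Y_b}$ is merely nef (not ample) on a genuine weak Fano, there is no a priori inequality $H\cdot C\le c\,(-K_{Y_b})\cdot C$, and in the fixed lattice $N^\vee$ there are infinitely many integral classes $\alpha$ with $A\cdot\alpha\le 4$. So the step ``only finitely many Hilbert polynomials arise'' is unjustified, and without finiteness of $\Sigma$ your Boolean-combination argument does not yield a \emph{finite} stratification. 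Your treatment of the $K$--trivial rays is also too loose: semi-smallness forbids divisor-to-point contractions, but divisor-to-curve contractions can appear and disappear in the family (this is precisely the phenomenon in \fullref{ex:ky:nonample}(i)), and you have not bounded the classes of their fibres across~$B$; nor are the small-contraction curves always $(-1,-1)$--curves (cf.\ \fullref{R:terminal:small:resn}).

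\textbf{Comparison with the paper.} The paper does not attempt any direct boundedness of extremal-ray classes. Instead it forms the family $\df{X}\to B$ of anticanonical models, takes a Zariski locally closed stratification of $B$ over which the exceptional set $E_b=\ex(Y_b\to X_b)$ is flat, and then invokes Paoletti's theorem \cite[Theorem~1.1]{paoletti}, which says exactly that the K\"ahler cone of a weak Fano 3--fold is locally constant in a family once the contracted divisors (surfaces mapping to curves) are fixed. This packages all the delicate deformation behaviour of the $K$--trivial face into a single citation. Your strategy, if completed, would amount to reproving a version of Paoletti's result from scratch; that is possible (e.g.\ via Noetherian induction, openness of ampleness for classes in $N$, and spreading-out of the generic-fibre extremal curves), but it is substantially more work than your sketch indicates, and the finiteness of $\Sigma$ as you state it is not the right intermediate lemma.
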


\begin{proof}
  The result follows easily from~\cite[Theorem~1.1]{paoletti}. Indeed
  consider the flat family $\df{X} \to B$ of anticanonical models of the
  family $\df{Y}$. For all $b\in B$, let $E_b\subset Y_b$ be the
  exceptional set of the birational morphism $Y_b\to X_b$, with its
  reduced scheme structure. Let
  $\coprod B_i \to B$ be a Zariski locally closed stratification of
  $B$ such that for all~$i$:
\[
E_i=\bigcup_{b\in B_i} E_b \to B_i
\]
  is a flat family. Now~\cite[Theorem~1.1]{paoletti} immediately
  implies that  $\Amp Y_b\subset N_\bbr$ is constant in $b\in B_i$. Indeed, 
  if for some $b_0\in B_i$ $E_{b_0}$ contains a surface $F_b\subset Y_b$
  contracting to a curve $C_b\subset X_b$, then, by flatness, so
  does every $b\in B_i$. 
\end{proof}

\begin{remark*}
  It is important to understand that $\Amp_{\mathfrak{F}}$ is not the whole
  K\"ahler cone of $S$, even generically. If $Y$ is semi-Fano but not
  Fano, and the anticanonical morphism $Y\to X$ is small, 
  then $-K_Y$ is not a K\"ahler class on $Y$ but it is when restricted
  to a generic $S$. 

  There is, however, an issue even in the strict Fano case when rank
  $\geq 2$.  For example consider a tridegree $(2,2,2)$ hypersurface $S$
  in $Y = \PP^1 \times \PP^1 \times \PP^1$. Then
  $\Amp Y_{|S}$ is a fundamental domain for the action of $\Aut S$
  (a free group on the three involutions) on $\Amp S$:
  see Oguiso~\cite{oguiso}.
\end{remark*}

\begin{remark*}
  Different components $\mathfrak{F} \subset \mathfrak{F}^{N,A}$ have
  different $\Amp_{\mathfrak{F}}$, for example, in \fullref{exa:quartic_w_plane} where we consider a generic quartic
  containing a plane the $\Amp_{\mathfrak{F}}$ of the two different
  small resolutions are the two components of $\Amp S \setminus \gen{A}$.
\end{remark*}

\begin{example}
\label{ex:ky:nonample}
  The restriction $-K_{Y|S}$ is ample if and only if the anticanonical
  morphism $Y\to X$ is small. The following examples further
  illustrate the statements of \fullref{thm:fanok3} and \fullref{prop:fanok3}:
  \begin{enumerate}
  \item $Y=\FF_2 \times \PP^1$ where $\FF_2$ is the Segre
    surface. The anticanonical morphism contracts the surface
    $E=e \times \PP^1$ where $e \subset \FF_2$ is the curve
    of self-intersection $e^2=-2$. In particular, if $S\in \acls{Y}$
    is non-singular then $-K_{Y|S}$ is not ample and it always contracts
    \emph{two} curves of self-intersection $-2$. Consider the basis
    of $N = \Pic Y$ consisting of $D=\FF_2\times \{\text{pt}\}$,
    $E$~as above, and $F=f\times \PP^1$ where $f\subset \FF_2$ is the
    class of a fibre. $-K_Y = 2D+2E+4F$, and the matrix of
    the intersection form is:
\[
\begin{pmatrix}
  0 & 0 & 2\\
  0 & -4 & 2\\
  2 & 2& 0
\end{pmatrix}\,.
\]
   Note that there are no $-2$ classes in $N$. In fact, $\Pic S$
   always has rank $4$: as $Y$ deforms to $\PP^1\times \PP^1 \times
   \PP^1$, the surface $E\subset Y$ ``evaporates,'' and $S$ deforms to a rank 3
   K3 surface. 
 \item Let $X$ be a general quartic \mbox{3--fold} containing a
   double line $\ell \subset \PP^4$. It is easy to check that the proper
   transform $Y$ of $X$ in the blowing up of $\ell \subset \PP^4$ is
   non-singular and the exceptional divisor $E\subset Y$ mapping to $\ell$
   is a conic bundle surface with 6 singular fibres. In this case $Y$
   has rank 2 and $E^2\cdot A=-2$, thus $E\in N$ is a $-2$ class.
   Moreover, it is clear that $E$ survives all deformations of~$Y$.
  (See \fullref{ex:doubleline} below.)
  \end{enumerate}
\end{example}

\begin{remark*}
\fullref{ex:ky:nonample}(i) illustrates that the property of being Fano is unstable under 
deformation; see also Paoletti~\cite[Example~1.3]{paoletti} for another such example.
Also a weak Fano 3--fold with small AC morphism may be deformation equivalent 
to a weak Fano 3--fold with AC morphism which is not small: see~\cite[Example~1.6]{paoletti}.
\end{remark*}

\section{Examples: building blocks}
\label{sec:examples}

\newcommand{\amb}{G}

We construct a handful of building blocks and compute the topological
invariants considered in \fullref{sec:blocks} of these building blocks. 
The results are summarized in Tables~\ref{table:c2} and~\ref{table:blocks}.
These building blocks and their topological invariants will be used in~\cite{chnp2} to
construct examples of compact \gtmfd s and to determine their topology.
In this section we make no attempt at being systematic: we
only construct a very small number of typical examples.

In \fullref{S:wk:fano:examples} we discuss many more general classes of 
semi-Fano 3--folds to illustrate the variety of examples available. 
Applying \fullref{prop:block_from_weak} to any of these examples yields 
a building block $(Z,f,S)$ in the sense of \fullref{dfn:BLOCK} and therefore by \fullref{thm:acyl_calabi} an ACyl Calabi--Yau 
structure on the quasiprojective 3--fold $Z\setminus S$.
Similar methods to those utilised in the present section would also allow the computation 
of the basic topological invariants of the corresponding building blocks and ACyl Calabi--Yau \mbox{3--folds}.

To construct the examples in this section typically we start with a singular Fano \mbox{3--fold}
$X$ with only ordinary double points; resolve this to a non-singular
semi-Fano $Y$ and anticanonical morphism $Y\to X$; choose a K3 surface $S$
and pencil in $\acls{Y}$, and resolve the indeterminacies to obtain
$S\subset Z \to \PP^1$ where $S$ is the fibre of $\infty\in \PP^1$.
According to \fullref{prop:block_from_weak}, $Z$ is a
``building block'' in the sense of \fullref{dfn:BLOCK},
and therefore by \fullref{thm:acyl_calabi}, $V = Z \setminus S$ admits ACyl Calabi--Yau metrics.

We compute the following topological invariants of the building blocks:
the degree $-K_Y^3$, the integral cohomology
groups $H^2(Z)$ and $H^3(Z)$, the primitive sublattice $N\subset L$ of the
K3 lattice, the
kernel $K$ of $H^2(V) \to L$, the greatest divisor of $c_2(Z)$, and the number
$e(Z)$ of $(-1,-1)$--curves.
In examples involving small resolutions of 3--folds with ordinary double points,
all these invariants are independent of the choice of small resolution, except
possibly the greatest divisor of $c_2(Z)$ (recall \fullref{rmk:c2:flop}).

In the calculation of $H^m(Z)$ we use \fullref{lem:top_of_blowup}; 
to compute $b^{3}(Y)$ we use \fullref{L:weak:fano:b3}; 
to compute $c_{2}(Z)$ we use \fullref{prop:c2blowup}.
In all cases, except in \fullref{ex:2conics} which requires some extra
work, it is immediate from \fullref{prop:block_from_weak} that the
sublattice $N\subset L$ is primitive.

\begin{example}
\label{ex:species1}
A class of examples, already considered in Kovalev~\cite{kovalev:connectsums},
is to take $Y$ to be a Fano ``of the first species'', that is, a member of one of
the $17$ deformation families of smooth Fano 3--folds with Picard
rank $1$, and let $Z$ be the building block arising from blowing up the
(smooth) base locus of a generic transverse anticanonical pencil.
Let $r$ be the index of $Y$, and $d = (-\frac{1}{r}K_Y)^3$ the degree.
Then by definition $-K_Y = rH$ for $H$ a generator for $\Pic Y$,
and $(-K_Y)H^2= rd$. So the polarising lattice is $N = \gen{rd}$.

For these examples, \fullref{cor:c2blowup} easily gives the
greatest divisor of $c_2(Z)$.
Consulting Iskovskih~\cite[Table~6.5]{isk:fano2} and
Iskovskih--Prokhorov's book \cite[Table 12.2]{fano:varieties} we summarise the values of
$b^3(Z)$ and greatest divisor of $c_2(Z)$ in \fullref{table:c2}.

\begin{table}[ht!]
\[
\begin{array}[t]{l c  c  c  c  c} \toprule
  Y & r &   -K_Y^3  &b^{3}(Y)&b^{3}(Z) & \gdiv{c_2(Z)} \\ \midrule
          \CP^3 &  4 & 4^3         &   0 &  66 &   2 \\ 
             Q_2 \subset \CP^4 &  3 & 3^3 \cdot 2 &   0 &  56 &   2 \\ 
                    V_1 \to W_4 &  2 & 2^3         &  42 &  52 &   8 \\ 
                  V_2 \to \CP^3 &  2 & 2^3 \cdot 2 &  20 &  38 &   4 \\ 
              Q_3 \subset \CP^4 &  2 & 2^3 \cdot 3 &  10 &  36 &  24 \\ 
      V_{2\cdot2} \subset \CP^5 &  2 & 2^3 \cdot 4 &   4 &  38 &   4 \\ 
              V_5 \subset \CP^6 &  2 & 2^3 \cdot 5 &   0 &  42 &   8 \\ 
                  V_2 \to \CP^3 &  1 &           2 & 104 & 108 &   2 \\ 
              Q_4 \subset \CP^4 &  1 &           4 &  60 &  66 &   4 \\ 
      V_{2\cdot3} \subset \CP^5 &  1 &           6 &  40 &  48 &   6 \\ 
V_{2\cdot2\cdot2} \subset \CP^6 &  1 &           8 &  28 &  38 &   8 \\ 
           V_{10} \subset \CP^7 &  1 &          10 &  20 &  32 &   2 \\ 
           V_{12} \subset \CP^8 &  1 &          12 &  14 &  28 &  12 \\ 
           V_{14} \subset \CP^9 &  1 &          14 &  10 &  26 &   2 \\ 
        V_{16} \subset \CP^{10} &  1 &          16 &   6 &  24 &   8 \\ 
        V_{18} \subset \CP^{11} &  1 &          18 &   4 &  24 &   6 \\ 
        V_{22} \subset \CP^{13} &  1 &          22 &   0 &  24 &   2 \\ 
        \bottomrule
\end{array} 
\]
\caption{Building blocks $Z$ from Fanos $Y$ with Picard rank 1}
\label{table:c2}
\end{table}
\end{example}

\begin{example}
\label{exa:mm}
We can also readily construct building blocks from the 36 rank 2 Fanos in the
Mori--Mukai classification, but do not describe the examples in further detail
here.
\end{example}

Examples~\ref{exa:quartic_w_plane} through~\ref{exa:quartic_w_22} arise in a
uniform way. We impose the condition that a quartic in $\CP^{4}$ contain a
special surface $W$: 
a projective plane $\Pi$, a quadric surface $Q_{2}^{2}$, a cubic scroll surface $\FF$ and 
the complete intersection of two quadrics $F_{2,2}$ respectively. 
The generic such quartic $X$ has only ODPs, the number $e$ of which is determined by the 
special surface $W$ imposed and all of which are contained in $W$. 
$W$ gives us a Weil divisor on $X$ which is not \mbox{$\Q$--Cartier}; 
blowing up $W\subset X$ as in \fullref{l:small:morphism} 
yields a smooth projective small resolution $Y$ with anticanonical morphism 
$\varphi\co Y \to X$. $Y$ is a smooth semi-Fano 3--fold with Picard rank $\rho=2$  
(so the defect $\sigma(X)$ is $1$)  whose
integral cohomology group $H^2(Y)$ is spanned by the anticanonical class
$A=-K_{Y}$ and~$\wtilde{W}$, 
the proper transform of the special surface $W \subset X$.
Since the anticanonical morphism $\varphi\co Y \to X$ is small by \fullref{T:flops} we
can flop $\varphi$ to obtain another smooth weak Fano $Y^{+}$ with $\rho(Y^{+})=2$; 
by \fullref{R:flop:rk2} there is a unique such flop of $\varphi$.
In general the flop $Y^{+}$ is not isomorphic to $Y$ but shares the same topological invariants 
except possibly for $c_{2}(Y)$ which we compute.

\begin{example}[Generic AC pencil on a small resolution of a generic
  quartic containing a plane]
\label{exa:quartic_w_plane}
The following semi-Fano also appears in 
work of Cheltsov~\cite[Lemma~25]{cheltsov:nodal:quartics}, Jahnke--Peternell--Radloff~\cite[3.15]{peternell2}
and Takeuchi~\cite[2.9.6 and~6.6.6]{takeuchi}.
  Fix a 2--plane $\Pi\subset \PP^4$ and let $\Pi \subset X\subset
  \PP^4$ be a general quartic 3--fold containing $\Pi$. Choose
  homogeneous coordinates $x_0,\ldots, x_4$ on $\PP^4$ such that
  $\Pi=(x_0=x_1=0)$; then $X=(f_4=0)$ is the zero locus of a
  homogeneous quartic in the ideal $(x_0,x_1)$:
\[
f_4=x_0 a_3+x_1b_3
\]
  where $a_3,b_3$ are degree 3 homogeneous in $x_0,\ldots,x_4$. To
  say that $X$
  is general is to say that the forms $a_3,b_3$ are general; thus, $X$ has
  $9$ ordinary double points $x_0=x_1=a_3=b_3=0$ -- see also the remark at 
  the end of this example.
  Blowing up $\Pi$ yields a non-singular semi-Fano 3--fold $Y\to X$ with $e=9$
  $(-1,-1)$--curves resolving the $9$ ordinary double points of $X$ on $\Pi$.  
  We show that:
  \begin{itemize}
  \item $H^2(Y)=\ZZ^2$ with basis $\tilde \Pi$ (the proper transform of $\Pi$)
   and $A=-K_Y$, and quadratic form in this basis
  \[\begin{pmatrix}
    -2 & 1\\
     1  & 4
  \end{pmatrix};\] 
  \item $H^3(Y)\simeq \ZZ^{44}$.
  \end{itemize}
Below we discuss how to compute $H^2(Y)$ and $H^3(Y)$. The building block $f\co
Z\to \PP^1$ is obtained by blowing up the base locus of a pencil
$|S_0,S_\infty|\subset \acls{Y}$ where $S_0,S_\infty$ are non-singular
and meet transversely. The base locus is a non-singular curve $C$ of
genus $3$ (naturally a plane quartic); hence, $H^3(Z)\simeq
H^3(Y)\oplus H^1(C)\simeq \ZZ^{44}\oplus \ZZ^6$.

To calculate $H^2(Y)$ and $H^3(Y)$, we proceed as follows. 
First, $Y$ is the proper transform of $X$ in the blowup $\amb \to \PP^4$
of the plane $\Pi$; this is the scroll with weight data
\[
\begin{array}[t]{cccccc}
  s_0 & s_1 & x_2 & x_3& x_4 & x\\\hline
  1    & 1    & 0    & 0  &    0 &-1\\
  0    & 0    & 1    & 1  &    1 & 1
\end{array}
\]
with morphism to $\PP^4$ given by $x_0=s_0x, x_1=s_1x$. Denoting by $L$ the
line bundle on $\amb$ with sections $s_0, s_1$ and $M$ the line bundle with
sections $x_2,\ldots$, we see that the equation of $Y \subset \amb$ is:
\[
s_0a_3+s_1b_3
\]
that is $Y\in |L+3M|$. Thus $Y\subset \amb$ is an ample divisor; it then
follows easily from the Lefschetz theorems that the restriction
$H^2(\amb)\to H^2(Y)$ is an isomorphism and $H^3(Y)$ is
torsion-free. To see this consider the long exact cohomology sequence of
  the pair $\amb, Y$:
\[
\cdots \to H^m(\amb,Y)\to H^m(\amb)\to H^m(Y)\to H^{m+1}(\amb,Y)\to \cdots \,,
\]
note that $H^m(\amb,Y) = H^m_{cpt}(\amb\setminus Y)=H_{8-m}(\amb\setminus Y)$,
and recall that the Lefschetz homotopy dimension theorem states that
$\amb\setminus Y$ has the homotopy type of a CW complex of real dimension $4$.
It follows that $H^m(\amb,Y)=(0)$ for $m<4$ and that $H^4(\amb,Y)$ is
torsion-free.

We calculate $b^{3}(Y)$ by applying \fullref{L:weak:fano:b3}. 
In the present case we have $e=9$, $\sigma=b^{2}(Y)-b^{2}(X)=2-1=1$ and $b=60$ 
since the third Betti number of a smooth quartic in $\CP^{4}$ is $60$. 
Hence by \eqref{E:weak:fano:b3} we have
\[
b^{3}(Y) = 60 - 2 \times 9 +2 \times 1 = 44.
\]
It remains to compute $c_{2}(Z)$ for both $Y$ and its (unique) flop $Y^{+}$; 
for this we first need to compute $c_{2}(Y)$ and then apply the blow-up formula 
\fullref{prop:c2blowup} to compute $c_{2}(Z)$. 
$\wtilde \Pi$ is the blow-up of $\Pi$ at 9 points, so $c_2(\wtilde \Pi) = 12$
and $c_1(\wtilde \Pi)^2 = 0$. The second term in \eqref{eq:c2div} we can
compute from the quadratic form on $H^2(Y)$:
$$(-\wtilde \Pi^2 + 2\wtilde\Pi(-K_Y))(-K_Y) = -(-2) + 2 = 4.$$
Hence \fullref{lem:c2div} gives $\smash{(c_2(Y) + c_1(Y)^2)_{|\wtilde
\Pi}} = 16$.
Since $\chi(C) = -4$, it
follows from \fullref{prop:c2blowup} that $\gdiv{c_2(Z)} = 4$.
If we flop $Y$ to the other small resolution $Y^+$ of~$X$, then the proper transform
of $\Pi$ is isomorphic to $\Pi$, $(c_2(Y^+) + c_1(Y^+)^2)_{|\Pi} = -2$
and $\gdiv c_2(Z^+) = 2$. 
\end{example}

\begin{remark*}
We have the following elementary lemma, for example, see Finkelnberg
\mbox{\cite[Proposition 1.1]{finkelnberg1987small}}: 
if $\Pi$ is a plane contained in a hypersurface $X \subset \CP^{4}$ 
of degree $d \ge 2$ then $X$ is singular and $\Pi$ contains at least one singular point of $X$. 
If $X$ contains only finitely many singular points then it contains at most $(d-1)^{2}$ singular points.
If $X$ has only nodes there are exactly $(d-1)^{2}$ singular points on $X$.
\end{remark*}
\begin{example}[Small resolution of a generic quartic containing a quadric surface]
See also Cheltsov~\cite[Example~10]{cheltsov:nodal:quartics}.
  \label{exa:quartic_w_quadric}
  Fix a quadric surface $Q=Q^2_2\subset \PP^4$ and let $Q \subset X\subset
  \PP^4$ be a general quartic 3--fold containing $Q$. Choose
  homogeneous coordinates $x_0,\ldots, x_4$ on $\PP^4$ such that
  $Q=(x_0=x_1x_2+x_3x_4=0)$; then $X=(f_4=0)$ is the zero locus of a
  homogeneous quartic in the ideal of $Q$:
\[
f_4=x_0 a_3+(x_1x_2+x_3x_4)b_2
\]
  where $a_3,b_2$ are general homogeneous forms of degrees $3, 2$ in
  $x_0,\ldots,x_4$. Thus, $X$~has
  $12$ ordinary double points $x_0=x_1x_2+x_3x_4=a_3=b_2=0$.  
  Blowing up $Q$ yields a non-singular semi-Fano 3--fold $Y\to X$ with $e=12$
  $(-1,-1)$--curves resolving the $12$ ordinary double points of $X$ on $Q$.  
  We show that:
  \begin{itemize}
  \item $H^2(Y)=\ZZ^2$ with basis $\tilde Q,A$, and quadratic form in this basis
  \[\begin{pmatrix}
    -2 & 2\\
     2  & 4
  \end{pmatrix};\] 
  \item $H^3(Y)\simeq \ZZ^{38}$.
  \end{itemize}
The building
block $f\co Z \to \PP^1$ is obtained by blowing up the base locus of a
pencil $|S_0,S_\infty|\subset \acls{Y}$ where $S_0,S_\infty$ are
non-singular and meet transversely. The base locus is a non-singular
curve $C$ of genus $3$ (naturally a plane quartic); hence,
$H^3(Z)\simeq H^3(Y)\oplus H^1(C)\simeq \ZZ^{44}$. 

To calculate $H^2(Y)$ and $H^3(Y)$ we proceed as follows. First, $Y$
is the proper transform of $X$ in the blowup $\amb\to \PP^4$ of the
quadric $Q=(x_0=x_1x_2+x_3x_4=0)$. We realize $\amb$ as the
hypersurface with equation
\[
sx_0+t(x_1x_2+x_3x_4)=0
\] 
in the 5--dimensional toric scroll with weight data
\[
\begin{array}[t]{ccccccc}
  x_0&x_1&x_2&x_3&x_4&s&t\\\hline
  1   & 1  & 1  & 1  & 1  &0&-1\\
  0   & 0  & 0  & 0  & 0  &1&1\\ 
\end{array} 
\]
We denote by $L$, respectively\ $M$, the line bundles on this scroll with
global sections $x_i$, respectively\ $s, tx_i$. 
Thus, $Y$ is given in $\amb$ by the two simultaneous equations:
\[
\begin{cases}
  sx_0+t(x_1x_2+x_3x_4)&=0\\
  sb_2-ta_3                    &=0
\end{cases}
\]
Hence, $Y\subset \amb$ is the complete intersection of two ample
hypersurfaces of type $L+M$ and $2L+M$; as in the previous example, it
follows from Lefschetz that the restriction $H^2(\amb)\to H^2(Y)$ is an
isomorphism and $H^3(Y)$ is torsion-free.
We compute $b^{3}(Y)$ as in the previous example using \fullref{L:weak:fano:b3}. Since in this case we have $e=12$, $\sigma=1$
and $b=60$, \eqref{E:weak:fano:b3} yields $b^{3}(Y)=60-24+2=38$.

$c_2(\wtilde Q) - c_1(\wtilde Q)^2 = 20$, and \fullref{lem:c2div} gives
$(c_2(Y) + c_1(Y)^2)_{|\wtilde Q} = 26$. Since $\chi(C) = -4$, \fullref{prop:c2blowup} implies $\gdiv c_2(Z) = 2$. Flopping does not change any
invariants of $Y$, since it corresponds to blowing up a different quadric
surface ($x_0 = b_2 = 0$) contained in $Y$.
\end{example}

\begin{example}[See also Entry~30 in Kaloghiros~{\cite[Table~1]{kaloghiros:thesis}}]
  \label{exa:quartic_w_scroll}
  Fix a cubic scroll surface $\FF\subset \PP^4$ and let $\FF \subset X\subset
  \PP^4$ be a general quartic 3--fold containing $\FF$. One can choose
  homogeneous coordinates $x_0,\ldots, x_4$ on $\PP^4$ such that $\FF$
  is the locus where the matrix
 \[
M=
 \begin{pmatrix}
   x_0 & x_1 & x_2 \\
   x_2 & x_3 & x_4
 \end{pmatrix}
\]
has rank $<2$; then $X=(f_4=0)$ is the zero locus of a homogeneous
quartic in the ideal of the $2\times 2$ minors of $M$:
\[
f_4=
\begin{pmatrix}
x_1x_4-x_2x_3 & -x_0x_4+x_2^2 & x_0x_3-x_1x_2  
\end{pmatrix}
\begin{pmatrix}
  a_2\\
  b_2\\
  c_2
\end{pmatrix}
\]
  where $a_2,b_2,c_2$ are general homogeneous forms of degrees $2$ in
  $x_0,\ldots,x_4$.
  A straightforward calculation with the Porteous
  formula (see Arbarello--Cornalba--Griffiths--Harris
  \cite[Chapter~II, (4.2)]{ACGH}) shows that $X$ has $17$
  ordinary double points; the singularities of $X\subset \PP^4$ are the
  locus in $\PP^4$ where the matrix
\[
A=\begin{pmatrix} x_0 & x_1 & x_2 \\ x_2 & x_3 & x_4 \\ a_2 & b_2 &
  c_2 \end{pmatrix}
\]
 has rank $1$.
  Blowing up $\FF$ yields a non-singular semi-Fano 3--fold $Y\to X$ with $e=17$
  $(-1,-1)$--curves resolving the $17$ ordinary double points of $X$ on $\FF$.  
  We show that:
  \begin{itemize}
  \item $H^2(Y)=\ZZ^2$ with basis $\tilde\FF,A$, and quadratic form in this
    basis
  \[\begin{pmatrix}
    -2 & 3\\
     3  & 4
  \end{pmatrix};\] 
  \item $H^3(Y)\simeq \ZZ^{28}$.
  \end{itemize}
The building block $f\co
Z\to \PP^1$ is obtained by blowing up the base locus of a pencil
$|S_0,S_\infty|\subset \acls{Y}$ where $S_0,S_\infty$ are non-singular
and meet transversely. The base locus is a non-singular curve $C$ of
genus $3$ (naturally a plane quartic); hence, $H^3(Z)\simeq
H^3(Y)\oplus H^1(C)\simeq \ZZ^{28}\oplus \ZZ^6$. 

To calculate $H^2(Y)$
and $H^3(Y)$, the strategy, as usual, is to show that $Y$ is a
complete intersection of ample hypersurfaces in a non-singular toric
variety. Indeed, the blow up $\amb$ of $\FF\subset \PP^4$ is the complete
intersection given by equations:
\[
\begin{pmatrix}
  x_0&x_1&x_2\\x_2&x_3&x_4
\end{pmatrix}
\cdot
\begin{pmatrix}
 y_0\\y_1\\y_2 
\end{pmatrix}
=0
\quad
\text{in}
\quad
\PP^4\times\PP^2_{y_0,y_1,y_2}
\]
and $Y$ is given in $\amb$ by the equation:
\[
a_2y_0+b_2y_1+c_2y_2=0.
\]
Thus, $Y$ is the complete intersection of 3 ample hypersurfaces in
$\PP^4\times \PP^2$. Everything else from now on proceeds as in the
previous examples. Since $e=17$, $\sigma=1$ and $b=60$, \eqref{E:weak:fano:b3} yields 
$b^{3}(Y)=60-34+2=28$.

$\FF$ is $\CP^2$ blown up in 1 point, and $\tilde \FF$ is $\FF$ blown up in 17
points. \fullref{lem:c2div} gives $(c_2(Y) + c_1(Y)^2)_{|\tilde \FF} = 38$.
Hence $\gdiv c_2(Z) = 2$ by \fullref{prop:c2blowup}.
In the other small resolution $Y^+$ of $X$, the proper transform of $\FF$ is isomorphic to $\FF$.
There $(c_2(Y^+) + c_1(Y^+)^2)_{|\FF} = 4$ and $\gdiv c_2(Z^+) = 4$.
\end{example}

\begin{example}[See also Cheltsov~{\cite[Theorem~11,
Lemma~21]{cheltsov:nodal:quartics}},
Jahnke--Peternell\allowbreak--Radloff~{\cite[3.9.II.6.a]{peternell2}},
Kaloghiros~{\cite[Example 3.9]{kaloghiros:classify}},
 and Takeuchi~{\cite[2.11.10]{takeuchi}}.]
\label{exa:quartic_w_22}
  Fix the complete intersection of two quadrics $F=F_{2,2}\subset
  \PP^4$ (that is, a del Pezzo surface of degree $4$) and let $F \subset X\subset \PP^4$ be a general quartic
  3--fold containing $F$. In homogeneous coordinates $x_0,\ldots, x_4$
  on $\PP^4$, $F=(p_2=q_2=0)$ where $p_2,q_2$ are general homogeneous
  quadratic polynomials; then $X=(f_4=0)$ is the zero locus of a
  homogeneous quartic in the ideal of $F$:
\[
f_4=p_2 a_2+q_2b_2
\]
where $a_2,b_2$ are general homogeneous quadratic forms in
$x_0,\ldots,x_4$. Thus, $X$ has $16$ ordinary double points
$p_2=q_2=a_2=b_2=0$.  Blowing up $F$ yields a non-singular semi-Fano
3--fold $Y\to X$ with $e=16$ $(-1,-1)$--curves resolving the $16$
ordinary double points of $X$ on $F$.  By using the methods described
in the previous examples, it is easy enough to show that:
  \begin{itemize}
  \item $H^2(Y)=\ZZ^2$ with basis $\tilde F,A$, and quadratic form in this basis
  \[\begin{pmatrix}
     0  & 4\\
     4  & 4
  \end{pmatrix};\] 
  \item $H^3(Y)\simeq \ZZ^{30}$.
  \end{itemize}
The building block $f\co
Z\to \PP^1$ is obtained by blowing up the base locus of a pencil
$|S_0,S_\infty|\subset \acls{Y}$ where $S_0,S_\infty$ are non-singular
and meet transversely. The base locus is a non-singular curve $C$ of
genus $3$ (naturally a plane quartic); hence, $H^3(Z)\simeq
H^3(Y)\oplus H^1(C)\simeq \ZZ^{30}\oplus \ZZ^6$.

\fullref{prop:c2blowup} implies $\gdiv c_2(Z) = 4$, using
$(c_2(Y) + c_1(Y)^2)_{|\tilde F} = 44$. Flopping does not change any invariants
of $Y$, since it corresponds to blowing up another complete intersection of
quadrics ($p_2 = q_2 = 0$) contained in $X$.
\end{example}

For the next example we again consider a weak Fano 3--fold $Y$ whose AC model 
is a nodal quartic $X \subset \CP^{4}$ but in this case with the maximal number 
of possible nodes (which is $45$); such an $X$ is unique up to projective equivalence. 
It is a classical fact that $X$ admits projective small resolutions $Y$.
Unlike the previous four examples in which $\rho(Y)=2$ in this case we will show that 
$\rho(Y)=16$ and hence $X$ has defect $\sigma=15$, that is, $X$ contains many 
Weil divisors that are not $\Q$--Cartier;  by a result of Kaloghiros~\cite{kaloghiros:defect}, $15$ is also the maximal possible 
defect for any quartic in $\CP^{4}$ with only terminal singularities.
Because of the high Picard rank of $Y$ the computation of the lattice structure on $H^{2}(Y)$ is considerably 
more involved in this case.
As far as we know the number of distinct projective small resolutions of $Y$ has not been computed in this case.
\begin{example}
  \label{exa:burkhardt_quartic}
  The Burkhardt quartic 3--fold is the hypersurface
\[
X= \bigl(x_0^4-x_0(x_1^3+x_2^3+x_3^3+x_4^3)+3x_1x_2x_3x_4=0\bigr)\subset
\PP^4.
\]
It is well known (and one can verify by inspection), that: $X$ contains $40$ planes,
has $45$ ordinary nodes as singularities, defect $\sigma = 15$, and
several projective small resolutions. (See Finkelnberg's thesis~\cite{finkelnberg89:burk} for these and other facts on the Burkhardt
quartic.) Below we take one such projective small resolution $Y\to X$, and make a
building block $f\co Z\to \PP^1$ by blowing up the (non-singular) base
curve of a general pencil $|S_0,S_\infty|\subset \acls{Y}$.  In what
follows, we establish the following facts about $X$,~$Y$, and $Z$:
 \begin{itemize}
 \item Write $N=H_4(X)$ with the integral quadratic form 
\[
D_1,D_2 \mapsto q(D_1,D_2) = (-K_X) \cdot D_1 \cdot D_2.
\]
 Then: $N$ is a hyperbolic lattice of rank $16$; $N$ is 3--elementary; more
 precisely, the discriminant of $N$ is $(\ZZ/3\ZZ)^5$ (thus $\ell
 =5$); and, finally:
\[
N\cong E_6^\ast (-3) \perp E_8(-1)\perp U \, .
\]
(Here $U$ is the rank 2 hyperbolic lattice, while $E_6^\ast$ is the dual
lattice of the lattice $E_6$. In other
words, if $B$ is the intersection matrix for $E_6$, then $B^{-1}$ is
the intersection matrix for $E_6^\ast$. In particular $B^{-1}$ is not
an integer matrix: it has entries in $\frac{1}{3}\ZZ$; however,
$E_6^\ast (-3)$ is an integral lattice. Since $E_6$ has rank 6 and
discriminant $\ZZ/3\ZZ$, it immediately follows that $E_6^\ast(-3)$
has discriminant $(\ZZ/3\ZZ)^5$, which of course can also be checked
by direct computation.)
 \item The embedding $N\subset L$ in the K3 lattice $L$ is unique, and
  $N^\perp = T = A_2(-1) \perp 2U(3)$, where
  $A_2(-1)$ and $U(3)$ denote the rank 2 lattices with intersection forms
\[ \begin{pmatrix} -2 & 1 \\ 1 & -2 \end{pmatrix}   \textrm{ and }
\begin{pmatrix} 0 & 3 \\ 3 & 0 \end{pmatrix}  . \]
 \item All projective small resolutions $Y\to X$ have $45$
   $(-1,-1)$--curves, $H^2(Y) \cong N$, and $H^3(Y) = (0)$.
 \item Let $f\co Z\to Y$ be the blow up of the base locus of a pencil
   $|S_0,S_\infty| \subset \acls{Y}$ where $S_0$, $S_\infty$ are non-singular
   and meet transversely. Then $Z$ is a building block with
   $H^2(Z)=\ZZ^{17}$, $H^3(Z) = \ZZ^6$.
 \end{itemize}
We now prove all of these claims. 
Todd~\cite{todd36} gives an explicit birational map
$\PP^3\dasharrow X$. Resolving this map by explicit blow ups,
Finkelnberg~\cite{finkelnberg89:burk} constructs a small resolution
$Y\to X$ and a basis of $H_4(Y)$ consisting of planes. His
notation for this basis is:
\[
V,\,E^k_1,\,E^k_2,\,E^k_3,\,E^l_1,\,E^l_2,\,E^l_3,\,E^m_1,\,E^m_2,\,E^m_3,\,F^1_1,\,F^1_2,\,F^2_1,\,F^2_2,\,F^3_1,\,F^3_2
\]
Finkelnberg also makes a list of the curves contracted by $Y\to X$;
using this information, it is not difficult (though tedious)
to write down the matrix of the intersection form on $H_4(Y)$ in this
basis:
\[
\begin{pmatrix}
-2&0 &0 &0 &0 &0 &0 &0 &0 &0 &1 &1 &1 &1 &1 &1  \\
  0&-2&0 &0 &1 &0 &0 &0 &0 &1 &0 &1 &0 &0 &0 &0  \\
  {}&  {}&-2&0 &0 &1 &0 &1 &0 &0 &0 &1 &0 &0 &0 &0  \\
  {}&  {}&  {}&-2&0 &0 &1 &0 &1 &0 &0 &1 &0 &0 &0 &0  \\
  {}&  {}&  {}&  {}&-2&0 &0 &1 &0 &0 &0 &0 &0 &1 &0 &0  \\
  {}&  {}&  {}&  {}&  {}&-2&0 &0 &1 &0 &0 &0 &0 &1 &0 &0  \\
  {}&  {}&  {}&  {}&  {}& {}&-2&0 &0 &1 &0 &0 &0 &1 &0 &0  \\
  {}&  {}&  {}&  {}&  {}&  {}&  {}&-2&0 &0 &0 &0 &0 &0 &0 &1  \\
  {}&  {}&  {}&  {}&  {}&  {}&  {}&  {}&-2&0 &0 &0 &0 &0 &0 &1  \\
  {}&  {}&  {}&  {}&  {}&  {}&  {}&  {}&  {}&-2&0 &0 &0 &0 &0 &1  \\
  {}&  {}&  {}&  {}&  {}&  {}&  {}&  {}&  {}&  {}&-2&1 &0 &0 &0 &0  \\
  {}&  {}&  {}&  {}&  {}&  {}&  {}&  {}&  {}&  {}&  {}&-2&0 &0 &0 &0  \\
  {}&  {}&  {}&  {}&  {}&  {}&  {}&  {}&  {}&  {}&  {}&  {}&-2&1 &0 &0  \\
  {}&  {}&  {}&  {}&  {}&  {}&  {}&  {}&  {}&  {}&  {}&  {}&  {}&-2&0 &0  \\
  {}&  {}&  {}&  {}&  {}&  {}&  {}&  {}&  {}&  {}&  {}&  {}&  {}&  {}&-2&1  \\
  {}&  {}&  {}&  {}&  {}&  {}&  {}&  {}&  {}&  {}&  {}&  {}&  {}&  {}&  {}&-2  \\
\end{pmatrix}
\]
From this it is easy to compute (for example, by computer algebra) that the
discriminant $A\cong (\ZZ/3\ZZ)^5$. Recall that, for $p$ prime, a
lattice is said to be $p$--elementary if the discriminant is of the
form $(\ZZ/p\ZZ)^\ell$: we have just shown that $N$ is $3$--elementary
with $\ell = 5$. Rudakov and Shafarevich~\cite[Section~1, Theorem]{rs81}
prove that an even, hyperbolic (meaning it has signature $(1,r-1)$),
$p$--elementary for $p\neq 2$ prime, lattice of rank $\ge 3$ is uniquely
determined by its discriminant (that is, equivalently, the invariant
$\ell$). This implies that
\[
N\cong E_6^\ast(-3)\perp E_8(-1)\perp U \, .
\]
The proof of \cite[Section~1, Theorem]{rs81} goes through, with the appropriate
small modifications, for lattices of any indefinite signature.
This implies that the transcendental lattice
\[
T=N^\perp \cong A_2(-1) \perp 2U(3) \, ,
\]
as this is the unique lattice with signature $(2,4)$ and
discriminant $(\ZZ/3\ZZ)^5$.
By Dolgachev~\cite[Theorem~1.4.8]{dol:quadratic}, the fact that
$\rank T + \ell(T) + 2\leq \rank L$ implies the primitive embedding $T\subset L$
is unique up to automorphisms, and so the same is true for the embedding
$N \subset L$ (note that $\rank N + \ell(N) + 2 > \rank L$, so
\cite[Theorem~1.4.8]{dol:quadratic} does not apply directly to $N$).

All other assertions are straightforward.

We can compute $\gdiv c_2(Z)$ by evaluating it on the basis of planes.
If the proper transform in $Y$ of some plane remains a plane (that is, no points are blown
up on the plane) then that forces $\gdiv c_2(Z) = 2$ like in
\fullref{exa:quartic_w_plane}. This is easy to arrange. For
instance choose a plane $\Pi \subset X$ and let
$Y^\prime=\Proj \bigl(\bigoplus_{n\geq 0}\oo_X (n\Pi )\bigr)\to X$. Then the
proper transform $\Pi^\prime \subset Y$ is relatively ample and isomorphic to
$\Pi$ and therefore $Y^\prime$ is non-singular in a neighbourhood of $\Pi^\prime$. 
Let next $Y\to Y^\prime$ be a small resolution of $Y^\prime$: this does not alter
$Y^\prime$ in a neighbourhood of $\Pi^\prime$, hence the proper transform of $\Pi$
in $Y$ is still isomorphic to $\PP^2$.
\end{example}

The next pair of examples consider building blocks slightly different from
those above. They are obtained by blowing up the base locus of a
\emph{nongeneric} AC pencil on the simplest smooth Fano 3--fold $Y = \CP^{3}$.

\begin{example}[compare with Kovalev--Lee~{\cite[Example~2.7]{kovalev:lee}}]
  \label{exa:P3_deg}
Instead
  of a generic transverse pencil we consider the 
  pencil $|S_0, S_\infty|\subset |\oo(4)|$, where
  $S_0{=}(x_0x_1x_2x_3{=}0)$ is the sum of the four coordinate planes,
  and $S_\infty$ is a non-singular quartic surface meeting all
  coordinate planes transversely. The base curve of the pencil is the
  union $C=\sum_{i=0}^3\Gamma_i$ of the four non-singular curves
  $\Gamma_i =(x_i=0) \cap S_\infty$. Let $Z$ be obtained from
  $Y=\PP^3$ by blowing up the four base curves one at a time; 
  $Z$ is a non-singular building block containing $e=6\times 4=24 $
  $(-1,-1)$--curves; 
  the blow-up resolves the base locus of the pencil,
  which then defines a (projective) morphism $Z\to \PP^1$. It is clear
  that $H^2(Z)\simeq H^2(\PP^3)\oplus\smash{\bigoplus_{i=0}^3}H^0(\Gamma_i)\simeq
  \ZZ^5$ and, since each $\Gamma_i$ is a curve of genus~3,
  $H^3(Z)\simeq \smash{\bigoplus_{i=0}^3} H^1(\Gamma_i)\simeq \ZZ^{24}$.

  The image of $H^2(Z)$ in $H^2(S)$ equals the image of $H^2(\CP^3)$, that
is,
  it is generated by the hyperplane class. This is because the image of each
  exceptional divisor is just the hyperplane class, so they contribute
  only to the kernel $K$ of $H^2(V) \to H^2(S)$, which is~$\ZZ^3$.
  Since $c_2(\CP^3) + c_1(\CP^3)^2 = 22H^2$ while $-K_{\CP^3}^3 = 64$, it
  follows from \fullref{prop:c2blowup} that $\gdiv c_2(Z) = 2$.
\end{example}

\begin{example}
  \label{ex:2conics}
  We can also consider a pencil of anticanonical divisors in $Y = \CP^3$ where
  \emph{each} divisor is non-generic. Fix a pair of generic plane conics
  $C_1, C_2 \subset \CP^3$. It is easy to see that a generic quartic
  surface $S$ containing both $C_1$ and $C_2$ is non-singular. The
  curves $C_1$, $C_2$ and the hyperplane class generate a lattice
 $N\subset H^2(S)$ with intersection form represented by
  \[
  \left(\begin{array}{rrr}
  -2 &  0 &  2 \\ 
   0 & -2 &  2 \\ 
   2 &  2 &  4 
  \end{array}\right) .
  \]
  We next argue that $N\subset H^2(S)$ is a primitive
  sublattice. Indeed consider the blow up $Y'$ of $\mathbb{P}^3$ along
  $C_1\sqcup C_2$. Since the union $C_1\sqcup C_2$ is cut out scheme
  theoretically by quartics, it follows that the anticanonical linear
  system $\acls{Y'}=|I_{C_1\sqcup C_2}(4)|$ is base point free on $Y'$
  and hence $-K_{Y'}$ is nef. A small calculation gives $-K_{Y'}^3=64-36=28$
  so $-K_{Y'}$ is also big and $Y'$ is a weak Fano \mbox{3--fold}. It is
  clear from the construction that $N$ is the image of $H^2(Y')\to
  H^2(S)$, hence, in particular, $H^2(Y')\to H^2(S)$ is injective (the
  matrix above is non-singular). This implies that $Y'$ is a semi-Fano
  \mbox{3--fold} (any contracted divisor would lie in the kernel). The
  lattice $N\subset H^2(S)$ is therefore primitive by
  \fullref{prop:block_from_weak}.

  Now take a generic pencil of quartic K3s containing both $C_1$ and
  $C_2$. The base locus consists of $C_1$, $C_2$ and a degree 12 curve
  $C_3$ (of genus 15) meeting each of $C_1$ and $C_2$ in 10 points.
  Let $Z$ be obtained by blowing up the $C_i$ in any order, and $S$ the proper
  transform of a smooth element of the pencil. Then $(Z,S)$ is
  a building block. $H^2(Z) \to L$ maps onto $N$, and $H^2(V) \to L$ is
  injective. Regardless of the order of the blow-ups, $\gdiv c_2(Z) = 2$ like
  in the previous example.

  By varying $C_1$, $C_2$ and the pencils, we get three different families of
  blocks, depending on whether we blow up $C_3$ first, second or last.
  By \fullref{thm:fanok3}, a generic $N$--polarised K3 surface $S$ can be
  embedded as an anticanonical divisor in a deformation of $Y'$, and hence as a
  quartic K3 in $\CP^3$ containing a pair of conics. It will therefore occur as
  the K3 fibre of a building block in each of the three families.
\end{example}

The next pair of examples arise from blowing up AC pencils on a
projective small resolution $Y$ of a very particular terminal
Gorenstein \emph{toric} Fano 3--fold $X_{22}$.  $X_{22}$ is a
singular Fano 3--fold with Picard rank $1$, AC degree $22$ and $9$
ODPs.  Every Gorenstein toric Fano variety $X$ has an associated
combinatorial object called a reflexive polytope which determines $X$;
see Chapters 1 and 2 in the thesis of Nill~\cite{nill:thesis} for a
review of basic definitions and facts in toric Fano geometry.  See
also \fullref{S:wk:fano:examples} for a brief overview of basic
properties of toric weak Fano 3--folds in general.

All such $3$d reflexive polytopes and hence all Gorenstein toric Fano
3--folds were classified by Kreuzer--Skarke~\cite{Skarke}.  The terminal toric Fano
3--folds are precisely those reflexive polytopes whose facets are
either standard triangles or standard parallelograms (see \fullref{L:terminal:toric}).  Small resolutions of $X$ are also toric and
their projectivity can be seen in terms of the combinatorics of the
associated reflexive polytope; in particular one can prove that any
Gorenstein toric Fano 3--fold admits at least one projective small
resolution (see \fullref{L:mpcp}).  For the toric Fano
3--fold $X_{22}$ chosen in Examples~\ref{exa:toricV22_a} and~\ref{exa:toricV22_b} one can prove that \emph{all} $512=2^{9}$
possible small resolutions of $X$ are projective (this follows from
the fact, shown below, that the defect $\sigma (X)=9$ is equal to the number of nodes of
$X$); using computer algebra one can show that these $512$ projective small resolutions 
consist of $84$ distinct isomorphism classes of weak Fano
3--folds; see also \fullref{R:picard:rank:toric:small}.

For any toric weak Fano 3--fold $Y$ all odd cohomology groups vanish; 
in particular we never have to worry about the possibility of torsion in  $H^{3}(Y)$ for toric weak Fano 3--folds.
Toric  semi-Fano 3--folds therefore give rise to a very large number of building blocks.

\newcommand{
\begin{figure}[ht!]
  \centering
\input{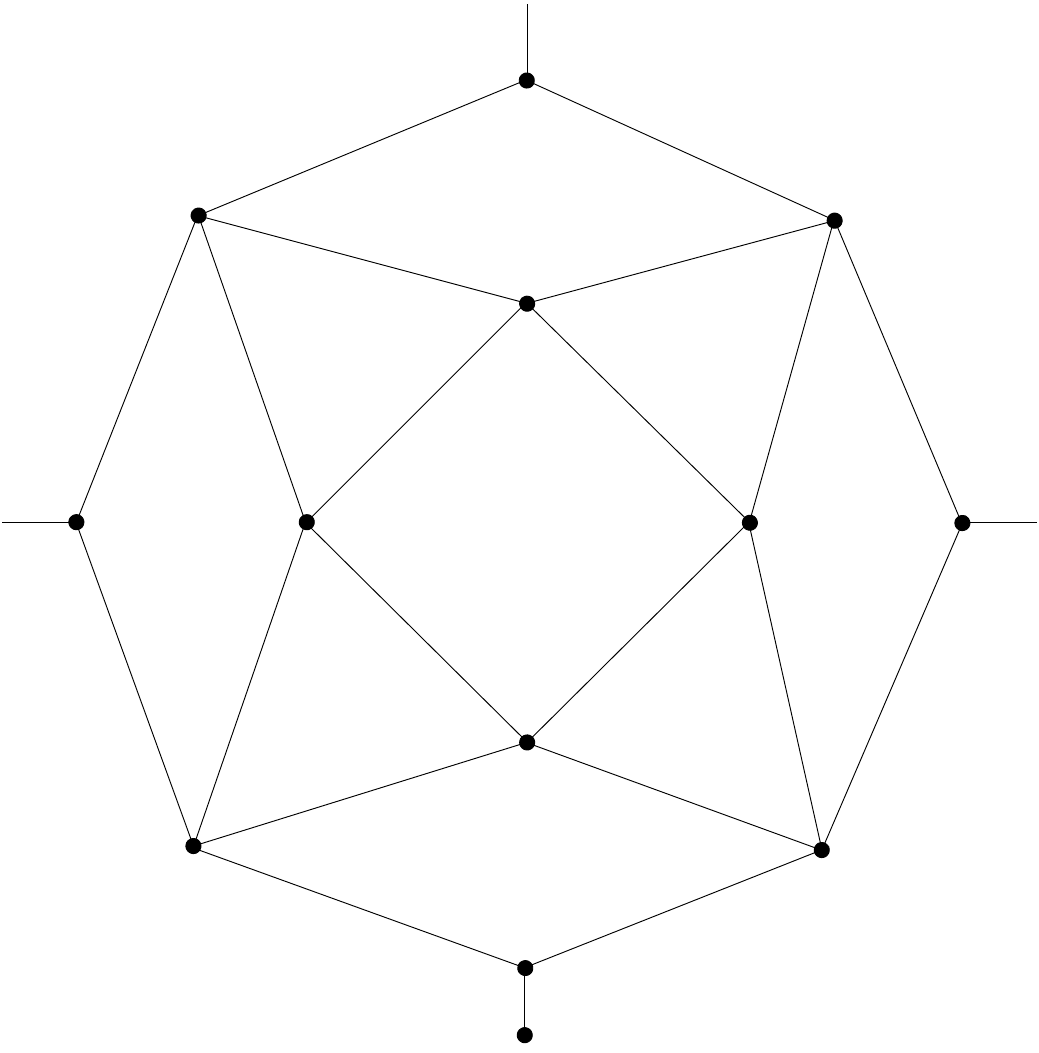_t}
  \caption{The dual graph}
  \label{fig:1}
\end{figure}}{
\begin{figure}[ht!]
  \centering
\input{graph.pdf_t}
  \caption{The dual graph}
  \label{fig:1}
\end{figure}}

\begin{example}
   \label{exa:toricV22_a} 
   Let $X$ be the terminal Gorenstein toric Fano 3--fold with Fano polytope
   the reflexive polytope in $N=\Hom (\CC^\times, \TT)$ with vertices
\[
\left(\begin{array}{rrrrrrrrrrrrr}
1 & 0 & 0 & -1 & 1 & 1 & -1 & -1 & -1 & 1 & 0 & 0 & 0 \\
0 & 1 & 0 & 1 & 0 & -1 & 1 & 0 & 0 & -1 & 0 & -1 & -1 \\
0 & 0 & 1 & 1 & -1 & 0 & 0 & 1 & 0 & -1 & -1 & 0 & -1
\end{array}\right);
\]
this is polytope 1942
   in the Sage implementation of Kreuze--Skarke's database of 4319 reflexive polytopes in 3 dimensions.
$X$ has Picard rank $1 = \rank H^2(X)$ and $10 = \rank H_4(X)$.  The
polytope can be viewed in Sage (see also below). (Note, incidentally,
that the polytope is self-polar: thus, there is no point in wasting
your efforts trying to determine whether you are working in the fan
picture or its dual: your conclusions will be correct in either case.)
Direct inspection shows that $X$ has a toric projective small
semi-Fano resolution $Y\to X$ with $e=9$ $(-1,-1)$--curves resolving
the ordinary nodes of $X$. Note that the defect $\sigma=e=9$. 
Below we show that $H^2(Y)=\ZZ^{10}$ and
\[
    N=E_8(-1)\perp \langle 8\rangle \perp \langle -16\rangle
\]
and $H^3(Y)=(0)$ since $Y$ is a toric variety. %
The building block $f\co Z\to \PP^1$ is obtained blowing up the base locus
of a pencil $|S_0,S_\infty|\subset \acls{Y}$ where $S_0,S_\infty$ are non-singular
and meet transversely. Below we also denote by $S$ a general member of the
pencil $|S_0,S_\infty|$. The base locus is a non-singular curve $C$ of
genus $12$; hence, $H^3(Z)\simeq H^3(Y)\oplus H^1(C)\simeq \ZZ^{24}$.

Since $\chi(C) = -22$, \fullref{prop:c2blowup} implies that
$\gdiv c_2(Z) = 2$.

\begin{figure}[ht!]
  \centering
\input{graph.pdf_t}
  \caption{The dual graph}
  \label{fig:1}
\end{figure}

Now we calculate the lattice $N$. Inspecting the polytope -- see also
\fullref{fig:1} -- and in particular the boundary surface, shows that,
in $\Pic (Y)$:
\[
S=-K_Y=\sum_{i=1}^9 Q_i + \sum_{j=1}^4\Pi_j
\]
is the union of 9 copies $Q_i$ of $\PP^1\times \PP^1$ and 4 copies
$\Pi_i$ of $\PP^2$, and $-K_{Y|Q_i}\simeq \oo (1,1)$ and
$K_{Y|\Pi_j}\simeq \oo(1)$ so the total degree of the surface is
$2\times 9 + 4=22$, as it should be.
From this it follows that the curves $S\cap Q_i$ and $S\cap \Pi_j$ are
all rational curves, hence they are all curves of self-intersection
$-2$ on $S$ ($S$ is a K3).  These curves meet in a configuration with
a dual graph that looks like \fullref{fig:1}. (The vertices of the
graph correspond to $-2$--curves on $S$/components of the ``boundary''
surface of~$X$/\allowbreak{}vertices of the polytope. Two vertices are connected by
an edge if and only if the corresponding $-2$--curves intersect. The
figure signifies that the vertex $G$ is joined to the vertices $E_1$,
$E_3$, $E_5$, $E_7$.)

Note that the curves $E_1$, $E_2$, $E_3$, $E_4$, $F_1$, $F_4$, $E_6$, $E_7$
generate a sublattice of type $E_8(-1)$.
Since $E_8(-1)$ is unimodular, it follows that $N=E_8(-1)\perp
(E_8(-1)^\perp)$, where $E_8(-1)^\perp$ is a lattice of rank 2. Our next
task is to compute $E_8(-1)^\perp$. Looking at elliptic fibrations on $S$ we
discover the following relations in $N=\Pic (S)$:
\begin{align*}
  2G+E_1+E_3+E_5+E_7&=F_1+F_2+ F_3+F_4\\
  E_1+E_2+F_1+E_8      &=F_3+E_4+E_5+E_6\\
  F_1+F_2+E_2             &=G+E_5+E_6+E_7
\end{align*}
The first of these, for instance, is obtained from an elliptic
fibration with fibres ${2G+E_1+E_3+E_5+E_7}$ (a fibre of type
$\wtilde{D}_4$) and $F_1+F_2+F_3+F_4$ (a fibre of type~$\wtilde{A}_3$).
The other two relations are obtained similarly. To
find a basis for $E_8(-1)^\perp$, we look at these relations
modulo~$E_8(-1)$:
\[
\begin{array}{ccccccc}
  E_5 &       &-F_2 & -F_3 & +2G & \equiv &  0 \\
  -E_5&+E_8&       &-F_3 &         & \equiv &  0\\
  -E_5&       &+F_2&       & -G     & \equiv & 0
\end{array}
\quad \mod E_8(-1)
\]
It is immediate from these relations that $E_5$, $F_2$ is a basis of
$N\;\mod E_8(-1)$. It is easy to check that the vectors
$$
\begin{array}{rrrrrrrrrr}
& E_5 & +8E_4 & +15E_3 & +22E_2 & +18F_1 & +14F_4 & +10E_6 & +5E_7 & +11E_1, \\
F_2 & & +22E_4 & +43E_3 & +64E_2 & +52F_1 & +39F_4 & +26E_6 & +13E_7 & +32E_1
\end{array}
$$
are perpendicular to $E_8(-1)$ (for instance by computing 16 inner
products); thus, by what has been said, they form a basis of
$E_8(-1)^\perp$. In this basis the intersection matrix is computed to be
\[
\begin{pmatrix}
  16&48\\48&136
\end{pmatrix}.
\]
and a small change of coordinates then puts this in the form $\langle 8
\rangle \perp \langle -16\rangle$.
\end{example}

\newcommand{\blocktable}{
\begin{table}[t]
\centering
\addtolength{\tabcolsep}{-2pt}
\begin{tabular}{lccccccc} \toprule
What & $-K_Y^3$ & $H^2(Z)$ & $N$ & $K$ & $H^3(Z)$ & $\gdiv c_2(Z)$ &
$e$ \\ \midrule
\fullref{exa:quartic_w_plane} & $4$ & $\ZZ^3$
& $\begin{pmatrix}-2&1\\1&4\end{pmatrix}$ & $(0)$ & $\ZZ^{50}$
& $2,4$ & $9$ \\[0.5em]
\fullref{exa:quartic_w_quadric} & $4$ & $\ZZ^3$
& $\begin{pmatrix}-2&2\\2&4\end{pmatrix}$ & $(0)$ & $\ZZ^{44}$
& $2$ & $12$ \\[0.5em]
\fullref{exa:quartic_w_scroll}     & $4$ & $\ZZ^3$
& $\begin{pmatrix}-2&3\\3&4\end{pmatrix}$ & $(0)$ & $\ZZ^{34}$
& $2,4$ & $17$ \\[0.5em]
\fullref{exa:quartic_w_22}          & $4$ & $\ZZ^3$
& $\begin{pmatrix}0&4\\4&4\end{pmatrix}$ & $(0)$ & $\ZZ^{36}$
& $4$ & $16$ \\[0.5em]
\fullref{exa:burkhardt_quartic}   & $4$ & $\ZZ^{17}$
& $E_6^\ast (-3){\perp} E_8(-1){\perp} U$ & $(0)$ & $\ZZ^6$
& $2$ & $45$ \\[0.5em]
\fullref{exa:P3_deg}                  & $64$ & $\ZZ^{5}$
& $\langle 4\rangle$ & $\ZZ^3$ & $\ZZ^{24}$ & $2$ & $24$ \\[0.5em]
\fullref{ex:2conics} & $64$ & $\ZZ^4$ &
$\begin{pmatrix}-2 & 0 & 2 \\ 0 & -2 & 2 \\ 2 & 2 & 4\end{pmatrix}$ &
$(0)$ & $\ZZ^{30}$ & $2$ & $20$ \\[0.5em]
\fullref{exa:toricV22_a}           & $22$ & $\ZZ^{11}$
& $E_8(-1){\perp} \langle 8\rangle {\perp} \langle -16 \rangle$ & $(0)$ &
$\ZZ^{24}$
& $2$ & $9$ \\[0.5em]
\fullref{exa:toricV22_b}          & $22$ & $\ZZ^{23}$
& $E_8(-1){\perp} \langle 8\rangle {\perp} \langle -16 \rangle$ &
$\ZZ^{12}$ & $(0)$
& $2$ & $33$ \\[0.5em]
\fullref{ex:doubleline}          & $4$ & $\ZZ^3$
& $\gen{4} {\perp} \gen{-2}$ & $(0)$ & $\ZZ^{46}$
& $2$ & $12$ \\ \bottomrule
\end{tabular}
\addtolength{\tabcolsep}{2pt}
\medskip
\caption{A small number of examples of building blocks}
\label{table:blocks}
\end{table}}

\begin{example}
  \label{exa:toricV22_b}
  In this example, $Y\to X$ is the same as in the previous
  \fullref{exa:toricV22_a}, but we construct the building block
  $Z$ by blowing up a different pencil.  Indeed, let us choose the
  more interesting pencil $|S_0, S_\infty|\subset |\oo(4)|$, where
  $S_0=\sum_{i=1}^9Q_i+\sum_{j=1}^{4}\Pi_j$ is the toric
  boundary surface of $Y$ and $S_\infty$ is a non-singular element of
  $\acls{Y}$ meeting all the components of $S_0$ transversely. The base
  curve of the pencil is the union
  $C=\sum_{i=1}^9\Gamma_i+\sum_{j=1}^4 G_j$ of 13 non-singular rational
  curves. Let $Z$ be obtained from $Y$ by blowing up the 13 curves one
  at a time; $Z$ is a non-singular building  block containing $e=9+
  24=33$ $(-1,-1)$--curves: 9~that were present in~$Y$, and $24$ from the
  intersection points of $C$ (corresponding to edges in \fullref{fig:1}).
  The blow-up resolves the base locus of the pencil, which then defines a
  (projective) morphism $Z\to \PP^1$.
  It is clear that $H^2(Z)\simeq H^2(Y)\oplus \ZZ^{13}\simeq \ZZ^{23}$
  and $H^3(Z)=(0)$.

  From \fullref{prop:c2blowup}, $\gdiv c_2(Z) \mid (-K_Y^3) = 22$.
  Since also $\gdiv c_2(Z) \mid 24$, it must be~2.
\end{example}

\begin{example}
\label{ex:doubleline}
Now we give an example using a semi-Fano 3--fold whose anti\-canonical morphism
is not small, but contracts a divisor to a curve. Let $X \subset \CP^4$ be
defined by
\[ \sum_{0 \leq i \leq j \leq 2} X_i X_j Q_{ij} , \]
where $Q_{ij}$ are homogeneous quadrics. This is the general form of a
quartic ``containing a double line'' $\ell = \{X_0 = X_1 = X_2 = 0\}$.
For generic $Q_{ij}$, the sextic polynomial $\det (Q_{ij})$ on $\ell$ has
simple zeros, and the blow-up $Y$ of $X$ at $\ell$ is smooth,
compare with Conte--Murre~\cite[Lemma~1.15]{conte77}. Then $Y$ is a crepant resolution of $X$, and
$X$ is the anticanonical model of $Y$. In particular, $Y$ is semi-Fano.

A generic hyperplane section $S$ of $Y$ is the resolution of a quartic K3 with
a single node, so $N = \Pic S = \gen{4} \perp \gen{-2}$.

\blocktable

To understand more about the topology of $Y$, consider it as the proper
transform of $X$ in~$\amb$, the blow-up of $\CP^4$ in $\ell$. Thinking of
$\amb$ as the union of all planes containing $\ell$ identifies it with the
total space of a $\CP^2$--bundle over $\CP^2$. To be precise,
$\pi \co  \amb \to \CP^2$ is the projectivisation of
$V = 2\oo \oplus \oo(-1)$ on $\CP^2$.
Let $T$ be the associated tautological bundle on $\amb$, and $F = \pi^*\oo(1)$.
The exceptional divisor of $G \to \CP^4$ is $E = -T-F$, while the tautological
bundle on $\CP^4$ pulls back to $T$. Therefore $Y$ is a section of
$-4T - 2E = -2T + 2F$ (so $Y$ is a conic bundle over~$\CP^2$).
This is an ample class ($-T$~and $F$ span the nef cone of $\amb$), so
$H^3(Y)$ is torsion-free by the Lefschetz theorem, and we can apply
\fullref{prop:block_from_weak} to get a building block $Z$.

To compute characteristic classes of $Y$, first note that as a complex
vector bundle $T\amb = T_{vert}\amb \oplus \pi^*T\CP^2$, which is stably
isomorphic to $(T^{-1} \otimes \pi^*V) \oplus \pi^*(3\oo(1)) =
2T^{-1} \oplus T^{-1}F^{-1} \oplus 3F$. Therefore the total Chern class of
$Y$ is
\begin{align*}
c(Y) &= (1-T)^2(1-T-F)(1+F)^3\frac{1}{1 - 2T + 2F} \\
&= 1 - T + T^2 - 5FT + T^3 - 4FT^2 + 7F^2T,
\end{align*}
where the addition and multiplication are now in the cohomology ring, and
we use that $F^3 = 0$ in~$H^6(\amb;\ZZ)$.
By interpreting $T^2$ as the class of the section $\bbp(\oo(-1))$ of
$G = \bbp(2\oo \oplus \oo(-1))$, we see that $T^4 = -FT^3 = F^2T^2 = [\amb]$.
Hence
\[ \chi(Y) = \int_Y c_3(Y) = (T^3 - 4FT^2 + 7F^2T)(-2T + 2F) = -34, \]
so $b^3(Y) = 40$. Similarly we find that $c_2(Y) + c_1(Y)^2 = 2T^2 - 5FT$
evaluates to $-28$ on $T$ (as it should, since $T = K_Y$) and 18 on $F$. Hence
$b^3(Z) = 46$, and $\gdiv c_2(Z) = 2$.

The exceptional set of $Y \to X$ is a conic bundle over $\ell$ with 6
degenerate fibres.
Each degenerate fibre consists of two $\CP^1$s intersecting in a single point.
These 12 $\CP^1$s have normal bundle $\oo(-1) \oplus \oo(-1)$.
\end{example}

\section{Weak Fano 3--folds: further examples and partial classification results}
\label{S:wk:fano:examples}

In this section we give some further examples of weak Fano 3--folds. Our aim
is to back up our statement that there are \emph{many} more non-singular weak Fano
3--folds than non-singular Fano 3--folds. We will not be too systematic since weak Fano
3--folds are far from being classified.

Any weak Fano $Y$ for which $-K_{Y}$ is big and nef but not ample has
$$\rho(Y) = \rank{\Pic(Y)} \ge 2;$$
thus weak Fano 3--folds with
$\rho=2$ are the simplest class of weak Fano 3--folds that are not actually Fano 3--folds. 

Examples~\ref{exa:quartic_w_plane} to~\ref{exa:quartic_w_22} already
gave a small number of semi-Fano 3--folds with Picard rank $\rho=2$:
all of anticanonical degree $4$ obtained by a small resolution of a
(sufficiently generic) quartic containing a special surface; Examples~\ref{E:nodal:quadric} and~\ref{E:wk:fano:ruled} are \emph{toric} weak
Fano 3--folds with $\rho=2$.  As we will discuss below there are many
other weak Fano 3--folds that generalise both classes of examples:
toric or $\rho=2$.

For the purposes of the differential geometry of ACyl Calabi--Yau 3--folds,
we are interested in the classification of weak Fano 3--folds up to
\emph{deformation}. In \fullref{ex:ky:nonample}(i) we considered how
the semi-Fano $\FF_2 \times \PP^1$ can deform to the rigid Fano
$\PP^1 \times \PP^1 \times \PP^1$; these are different varieties from the
algebraic point of view as one is Fano and the other is not, but the ACyl
Calabi--Yau 3--folds we construct from them using \fullref{prop:onestage}
are deformation-equivalent.
We will not discuss the problem of classifying weak Fano 3--folds up to
deformation in depth. We use the deformation properties of extremal
contractions to distinguish between many of the rank two examples we describe.
For the toric examples, we determine whether they are rigid (as complex
manifolds) in order to get a crude lower bound on the number of deformation
classes.

\subsection*{Weak Fano 3--folds with Picard rank $\rho=2$: classical examples}

We now exhibit some further concrete examples of weak Fano 3--folds with Picard rank $\rho=2$.
These weak Fanos were studied initially because of their connection to so-called \emph{elementary rational maps} 
between rank one Fano 3--folds, for example, see Iskovskih--Prokhorov's book~\cite[Section~4.1]{fano:varieties}
to which we refer the reader for further details and references. 
All these rank two weak Fano 3--folds 
arise as blowups in points or in low degree curves in rank one Fano 3--folds.

\subsubsection*{Rank two semi-Fano 3--folds from smooth blowups of Fano 3--folds}
We have seen that one way to obtain smooth weak Fano 3--folds is to look for 
projective small (respectively crepant) resolutions of Gorenstein terminal (respectively canonical) Fano 3--folds, 
but often it is difficult to determine if a projective small (respectively crepant) resolution exists.
Another potential way to obtain weak Fano 3--folds is to realise them as smooth blowups 
of other simpler 3--folds. 

In this direction we have the following result of Fujino--Gongyo~\cite[Theorem~4.5]{fujino}
generalising the analogous result by Koll\'ar--Mori~\cite[Corollary~2.9]{MR1158625} in the Fano setting.
\begin{theorem}
\label{t:wk:fano:morphisms}
Let $f\co Y \to W$ be a smooth projective morphism between smooth projective varieties. 
If $Y$ is weak Fano (respectively Fano) then $W$ is also weak Fano (respectively Fano).
\end{theorem}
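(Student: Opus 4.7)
The plan is to descend the positivity of $-K_Y$ to $-K_W$ via the relative canonical bundle formula
\[ -K_Y = -K_{Y/W} + f^{*}(-K_W), \]
valid because $f$ is smooth. First I would observe that each fibre $F$ of $f$ is itself a (weak) Fano variety: by adjunction $K_F = K_Y|_F$, so $-K_F$ inherits nefness (resp.\ ampleness) from $-K_Y$, and in the weak Fano case bigness of $-K_F$ is established by a flat family argument -- the number $(-K_F)^{\dim F}$ is constant across the family and matches the coefficient of $(-K_W)^{\dim W}$ in the projection-formula expansion of $(-K_Y)^{\dim Y}$. By the theorem of Koll\'ar--Miyaoka--Mori cited earlier in the excerpt, the fibres are therefore rationally connected, so $f_{*}\oo_Y = \oo_W$ and $R^{i}f_{*}\oo_Y = 0$ for $i>0$. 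Moreover $-K_{Y/W}$ is $f$-semiample (resp.\ $f$-ample), since its fibrewise restriction is $-K_F$.

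For nefness of $-K_W$: given an irreducible curve $C \subset W$, the rational connectedness of the fibres (together with standard multisection constructions, after a possible \'etale base change) produces a multisection $C' \subset Y$ finite over $C$. The relative canonical formula yields
\[ (\deg f|_{C'})\,(-K_W)\cdot C \;=\; (-K_Y)\cdot C' \;-\; (-K_{Y/W})\cdot C'. \]
A weak positivity theorem for direct images $f_{*}\oo_Y(-mK_{Y/W})$ (of Viehweg type, the weak Fano variant being due to Fujino) allows one to select $C'$ so that the right-hand side is non-negative, proving $(-K_W)\cdot C \geq 0$.

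For bigness of $-K_W$, expand
\[ (-K_Y)^{\dim Y} \;=\; \sum_{k=0}^{\dim W} \binom{\dim Y}{k}\,(-K_{Y/W})^{\dim Y - k}\cdot f^{*}(-K_W)^{k}. \]
The top term ($k = \dim W$) integrates by the projection formula to $\binom{\dim Y}{\dim W}(-K_F)^{\dim F}(-K_W)^{\dim W}$, with $(-K_F)^{\dim F}>0$ from the fibre analysis. Nefness just established controls the intermediate terms from below, and bigness $(-K_Y)^{\dim Y}>0$ then forces $(-K_W)^{\dim W}>0$. For the Fano refinement, any curve $C\subset W$ with $(-K_W)\cdot C=0$ would lift to a multisection in $Y$ violating ampleness of $-K_Y$. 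The main obstacle is the nefness step: generalising the classical Koll\'ar--Miyaoka--Mori argument for Fano fibres (which uses bend-and-break on deformations of rational curves sweeping out $Y$) to weak Fano fibres requires the more delicate semipositivity theorems for direct images of Viehweg and Fujino, which is precisely the chief technical novelty of Fujino--Gongyo's proof.
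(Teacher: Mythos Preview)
The paper does not prove this statement; it is quoted verbatim as a result of Fujino--Gongyo (with the Fano case attributed to Koll\'ar--Miyaoka--Mori) and then only applied. So there is no proof in the paper to compare against, and your task was really to reconstruct or sketch Fujino--Gongyo's argument.

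Your sketch identifies the right ingredients --- the relative canonical formula, rational connectedness of fibres, and Viehweg/Fujino-type weak positivity of $f_*\mathcal{O}_Y(-mK_{Y/W})$ --- but two steps do not close as written.

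\emph{Nefness.} The multisection identity
\[
(\deg f|_{C'})(-K_W)\cdot C \;=\; (-K_Y)\cdot C' - (-K_{Y/W})\cdot C'
\]
is correct, and $(-K_Y)\cdot C'\ge 0$; the problem is the last term. The class $-K_{Y/W}$ is only $f$-nef, not nef on $Y$, so $(-K_{Y/W})\cdot C'$ can be large and positive for any particular multisection. Saying ``weak positivity allows one to select $C'$ so that the right-hand side is non-negative'' hides the entire content of the argument: weak positivity of $f_*\mathcal{O}_Y(-mK_{Y/W})$ is a statement about a sheaf on $W$, and one has to translate it into an upper bound for $(-K_{Y/W})\cdot C'$ for some (covering family of) multisections. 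Fujino--Gongyo do this, but not by an ad hoc choice of $C'$; rather they use weak positivity to produce, after a generically finite base change, a global section of (a twist of) $f_*\mathcal{O}_Y(-mK_{Y/W})$ and then compare degrees. Your sentence does not indicate how this translation is made.

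\emph{Bigness.} In the binomial expansion of $(-K_Y)^{\dim Y}$, the intermediate terms $(-K_{Y/W})^{\dim Y-k}\cdot f^*(-K_W)^k$ for $0\le k<\dim W$ are \emph{not} controlled by nefness of $-K_W$ alone, because $-K_{Y/W}$ is not nef on $Y$; these powers need not be pseudo-effective, and indeed $(-K_{Y/W})^{\dim F+1}$ is typically nonzero for non-product fibrations. So ``nefness just established controls the intermediate terms from below'' is not justified. The actual argument compares volumes (or uses semi-ampleness of $-K_Y$ from the base-point-free theorem together with Iitaka-type inequalities), rather than a naive binomial expansion.

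There is also a circularity in your opening paragraph: you argue that $(-K_F)^{\dim F}>0$ by matching it to a coefficient in the expansion of $(-K_Y)^{\dim Y}$, but that coefficient is $(-K_F)^{\dim F}\cdot(-K_W)^{\dim W}$, and you have not yet shown $(-K_W)^{\dim W}>0$.
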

In particular if $Y$ is weak Fano 3--fold with Picard rank $\rho=2$ and $f \co Y \to W$ 
is the inverse of the blowup of a smooth point or curve, then $W$ is a weak Fano 3--fold with 
$\rho(W)=1$; but since $\rho(W)=1$ this forces $W$ to be Fano not just weak Fano.
In other words, to find rank two weak Fano 3--folds we ought to consider smooth blowups of smooth
rank one Fano 3--folds; in fact we will see below -- in our discussion of the classification scheme 
for rank two weak Fano 3--folds -- that the majority of all rank two weak Fano 3--folds 
arise this way. The particular rank two weak Fano 3--folds that arise 
as blowups of smooth rank one Fano 3--folds in low degree 
curves $C$ -- that is, lines, conics, and rational normal cubics -- have been known at least since the late 1980s 
and in some special cases since the late 1970s: see 
Iskovskih--Prokhorov \mbox{\cite[Sections~4.3--4.6]{fano:varieties}}
for further details and references.

We will use several times the following well-known result on the behaviour of the canonical class of a smooth threefold
under blow-up of a smooth curve or a point.
\begin{lemma}
\label{l:blowup:can}
Let $C \subset W$ be a smooth curve of genus $g(C)$ in a smooth threefold $W$, let $\pi \co Y \to W$
be the blowup of $C$ and let $E$ denote the exceptional divisor of $\pi$. Then 
\begin{align*}
(-K_Y)^3 &= (-K_W)^3 + 2K_W \cdot C - 2 + 2g(C);\\
(-K_Y)^2 \cdot E &= -K_W \cdot C + 2 -2g(C);\\
-K_Y \cdot E^2 &= 2g(C)-2;\\
E^3 &= K_W \cdot C + 2 -2g(C).
\end{align*}
Let $Y$ be the blowup of a smooth threefold $W$ in a point. Then
\begin{align*}
(-K_Y)^3 &= (-K_W)^3 -8; \\
(-K_Y)^2 \cdot E &=  4;\\
-K_Y \cdot E^2 &= -2;\\
E^3 &=1.
\end{align*}
\end{lemma}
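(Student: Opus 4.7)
Both parts are standard consequences of the intersection theory of blow-ups along a smooth center, together with the adjunction formula in the curve case. The plan is: fix the two key inputs, namely the canonical-class formula and the restriction of the exceptional divisor to itself, then reduce every intersection number to a computation on the exceptional divisor via the projection formula.

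First, recall that if $\pi\co Y\to W$ is the blow-up of a smooth subvariety $Z\subset W$ of codimension $c$, then $K_Y=\pi^*K_W+(c-1)E$; thus in the curve case $K_Y=\pi^*K_W+E$ and in the point case $K_Y=\pi^*K_W+2E$. Also $E\cong \mathbb{P}(N_{Z/W}^\vee)$, with $E|_E=-\zeta$ where $\zeta=c_1(\mathcal{O}_{\mathbb{P}(N^\vee)}(1))$. I would then note the two basic projection-formula consequences used throughout: for any divisors $D_1,D_2$ on $W$,
\[
\pi^*D_1\cdot\pi^*D_2\cdot E=0,\qquad \pi^*D\cdot E^2=-D\cdot\pi_*(\zeta\cap[E]),
\]
the first because $\pi_*E=0$ (the image of $E$ has codimension $\ge 2$), and the second by $E^2=i_*(E|_E)=-i_*\zeta$.

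For the curve case, the main computation is $E^3$. From the Grothendieck relation on a rank--$2$ projective bundle over a curve, $\zeta^2=c_1(N_{C/W})\zeta$, so
\[
E^3=\int_E(E|_E)^2=\int_E\zeta^2=\deg N_{C/W}\cdot(-1)\cdot\text{(sign)}.
\]
Adjunction gives $\deg N_{C/W}=2g(C)-2-K_W\cdot C$, and tracking signs carefully yields $E^3=K_W\cdot C+2-2g(C)$. The same push-forward identifies $\pi_*(E^2)=-[C]$, whence $\pi^*K_W\cdot E^2=-K_W\cdot C$. Substituting everything into the trinomial expansions of $(\pi^*K_W+E)^k\cdot E^{3-k}$ for $k=0,1,2,3$ and flipping signs gives the four displayed identities.

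For the point case, $E\cong\mathbb{P}^2$ with $E|_E=\mathcal{O}(-1)$, so $E^3=\int_{\mathbb{P}^2}(-1)^2=1$ immediately; moreover $\pi^*D\cdot E^2=0$ and $\pi^*D_1\cdot\pi^*D_2\cdot E=0$ for all divisors on $W$, because $E^2$ and $E$ push forward to cycles supported on a point. Expanding $(\pi^*K_W+2E)^k\cdot E^{3-k}$ then directly yields the four identities with the announced numerical values.

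The only non-routine step is bookkeeping the sign conventions in $E|_E=-\zeta$ and in the Grothendieck relation; I expect this to be the main (mild) obstacle, but it is resolved once and for all by the adjunction computation $\deg N_{C/W}=2g(C)-2-K_W\cdot C$.
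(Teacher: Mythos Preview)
Your approach is correct and matches the paper's: the paper simply states that the result follows from $K_Y=\pi^*K_W+E$ and refers to Blanc--Lamy for details, so you are filling in precisely the intersection-theory computation that the paper omits. One small remark: as written, your Grothendieck relation $\zeta^2=c_1(N_{C/W})\zeta$ has the wrong sign for the $\mathbb{P}(N^\vee)$ convention (it should be $\zeta^2=-c_1(N_{C/W})\zeta$), but you already flag the sign bookkeeping as the one delicate point and anchor it correctly via adjunction, so the final answers are right.
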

\begin{proof}
The result follows from the fact that $K_Y = \pi^*K_W +E$: see Blanc--Lamy
\mbox{\cite[Lemma~2.4]{blanc:lamy}} for details.
\end{proof}
\begin{remark}
\label{r:fano:blowup}
If a weak Fano 3--fold $Y$ arises as the blowup of a smooth curve in a smooth rank one Fano 3--fold $W$ 
then  by  \fullref{lem:top_of_blowup}
$H^*(Y)$ is torsion-free since the cohomology of $W$ is torsion-free. 
Therefore whenever $Y$ is semi-Fano (recall
\fullref{prop:block_from_weak}(iv))
we can obtain building blocks $Z$ satisfying \fullref{dfn:BLOCK}
from $Y$ by blowing up the base locus of a generic AC pencil. 

\fullref{l:blowup:can} allows us to compute the lattice structure on $\Pic(Y)$ 
and \fullref{lem:top_of_blowup} the Betti numbers of $Y$ from those of $W$ and the genus of the curve $g(C)$.
We can also understand $c_{2}(Y)$ and therefore $c_{2}(Z)$ for the associated building block $Z$
by using the behaviour of $c_{2}$ under smooth blowups. Therefore we can obtain all the topological 
information we need about building blocks that arise this way with relatively little work.
\end{remark}

Recall from the Iskovskih classification of smooth rank 1 Fano 3--folds that there are $17$ families of examples: 
$\CP^{3}$, the quadric $Q \subset \CP^{4}$, the del Pezzo (that is, Fano with index 2) 3--folds $V_{1}, \ldots ,V_{5}$ 
and $10$ index one Fanos 3--folds $V_{2g-2}$ with genus $g \in \{ 2, \ldots ,10, \,12\}$. 
We shall concentrate on weak Fano 3--folds obtained by blowing up curves in index one rank one Fano 3--folds.

If $W$ is a rank $1$ Fano 3--fold of index $1$ and genus $g$
and $C \subset W$ is a smooth curve of degree $\deg{C}:= -K_{W}\cdot C$ and genus $g(C)$ 
and $Y=Bl_{C}(W)$ then \fullref{l:blowup:can} specialises to yield
\begin{equation}
\label{E:ac:degree:blowup}
-K_{Y}^{3} =2g'-2, \quad \text{where}\quad g'= g + g(C) - \deg{C} -1.
\end{equation}
In particular if $C$ is a line, quadric or rational normal cubic then $g'=g-2$, $g'=g-3$ or $g'=g-4$ respectively.
If  $E \subset Y$ denotes the exceptional divisor of the blowup then 
the Picard lattice of $Y$ is generated by $-K_{Y}$ and $E$.
The lattice structure induced on $Y$ is also determined by the information in 
\fullref{l:blowup:can}.
In particular, if $Y=Bl_{C}(W)$ is the blowup of a line, conic or rational normal cubic then with respect to the 
basis $E$, $A=-K_{Y}$ of $\Pic(Y)$ the quadratic form $-K_{Y} \cdot D_{1} \cdot D_{2}$  is 
\[\begin{pmatrix}
    -2 & 3    \\
     3 & 2(g-3) 
  \end{pmatrix},\qquad
\begin{pmatrix}
    -2 & 4   \\
     4 & 2(g-4) 
  \end{pmatrix},\qquad
\begin{pmatrix}
    -2 & 5   \\
     5 & 2(g-5) 
  \end{pmatrix},\] 
respectively. 

To ensure that $Y$ is a rank $2$  weak Fano 3--fold one needs to ensure that
$-K_{Y}$ is big and nef. 
As soon as one shows that $-K_{Y}$ is nef then for bigness we need only show $-K_{Y}^{3}>0$ and this can be checked 
immediately from \eqref{E:ac:degree:blowup}.
One also needs  to ensure the existence of lines, conics and rational normal cubics on the appropriate rank one Fano 3--folds.
To show that the AC morphism is small one also needs to know that it contracts only a finite number of curves. 

\subsubsection*{Blowups of lines}
Iskovskih--Prokhorov~\cite[Proposition~4.3.1]{fano:varieties} shows that for every line $C$ on an anticanonically embedded 
rank one Fano 3--fold $W$ of genus $g \ge5$ the blowup $Y=Bl_{C}(W)$ is a rank two semi-Fano  
of genus $g'=g-2$ with small AC morphism; moreover the fibres of the AC morphism are all $\CP^{1}$s and they can be understood 
in terms of the geometry of $W$, for example, the generic fibre type is any curve $F \subset Y$ whose 
proper transform in $W$ intersects the chosen line $C \subset W$.
In particular by blowing up any line on a rank one Fano 3--fold $W$ of genus $g=5$ we get a rank two 
semi-Fano 3--fold $Y$ of genus $g'=3$ with small AC morphism and quadratic form given in the basis $E$ and $-K_{Y}$ by 
\[\begin{pmatrix}
    -2 & 3\\
     3 & 4
  \end{pmatrix}.
  \]
This is the same quadratic form that appeared in \fullref{exa:quartic_w_scroll}: 
the small resolution of a general quartic containing a cubic scroll surface. Indeed the rank 
two semi-Fano $Y$ we have constructed is a projective small resolution of such a nodal quartic 
3--fold; see also Entry~30 in Kaloghiros~\cite[Table~1]{kaloghiros:thesis}. 
We have similar rank two semi-Fano 3--folds of genus $4, 5,6,7,8, 10$ 
by blowing up lines on rank one Fano 3--folds of genus $6, 7,8,9, 10, 12$ respectively.

\subsubsection*{Blowups of conics}
If $Y=Bl_{C}(W)$ is the blowup of any smooth conic $C$ on an anticanonically embedded rank one 
Fano 3--fold of genus $g$ then by Iskovskih--Prokhorov~\cite[4.4.3]{fano:varieties} $Y$ is a weak Fano 3--fold of genus $g'=g-3$ for 
$g \ge 5$. Furthermore, if $g \ge 7$ then $Y$ is a semi-Fano 3--fold with small AC morphism 
for any sufficiently generic conic in $W$  and if $g \ge 9$ the same holds for all conics. 
If we take $g=6$ then $Y$ is a weak Fano 3--fold of genus $3$, that is, its AC model is a terminal quartic 3--fold:
see also Kaloghiros \mbox{\cite[Table~1, number~25]{kaloghiros:thesis}}.
Its quadratic form in the basis $E$, $-K_{Y}$ is 
\[\begin{pmatrix}
    -2 & 4\\
     4 & 4
  \end{pmatrix}.
  \]
  We have similar rank two semi-Fano 3--folds of genus $4, 5, 6, 7, 9$
  by blowing up sufficiently generic conics on rank one Fano 3--folds of genus $7, 8, 9, 10, 12$ respectively.

\subsubsection*{Blowups of points}
If $Y=Bl_{P}(W)$ is the blowup of a point $P$ not lying on a line in
$W$ (such points exist: see~\cite[4.2.2]{fano:varieties}) on an
anticanonically embedded rank one Fano 3--fold of genus $g \ge 6$,
then $Y$ is a rank two weak Fano 3--fold of genus $g'=g-4$, moreover
for a sufficiently general point $P$ the AC morphism of $Y$ is small~\cite[4.5.1]{fano:varieties}.  If we take $g=7$ then $Y$ is a weak
Fano 3--fold of genus $3$, that is, its AC model is a terminal quartic
3--fold: see also~\cite[Table~1, number~24]{kaloghiros:thesis}. Its
quadratic form in the basis $E$, $-K_Y$ is
\[\begin{pmatrix}
    -2 & 4\\
     4 & 4
  \end{pmatrix}
  \]
  which is the same lattice which arose above by considering the
  blowup of a genus $6$ Fano 3--fold in a sufficiently generic
  conic. This pair of rank two weak Fano 3--folds with the same
  Picard lattice structure are not deformation-equivalent -- for
  instance because they have extremal contractions of different types,
  compare with Mori \mbox{\cite[Theorem~3.47]{mori:1982}}.
  We have similar rank two semi-Fano
  3--folds of genus $4, 5, 6, 10$ by blowing up sufficiently generic
  conics on rank one Fano 3--folds of genus $8, 9, 10, 12$
  respectively.

\subsubsection*{Blowups of rational normal cubics}
A rank one Fano 3--fold $V_{2g-2}$ of $g \ge 5$ which contains a line
and a conic also contains a rational normal cubic~\cite[4.6.1]{fano:varieties}. So we can also consider blowups along
rational normal cubics.  If $Y=Bl_{C}(W)$ is the blowup of any
(respectively a sufficiently general) rational normal cubic on a rank
one Fano 3--fold of genus $g \ge 7$ (respectively $g\ge 6)$, then $Y$
is a rank two weak Fano 3--fold of genus $g'=g-4$~\cite[4.6.2]{fano:varieties}. If we take $g=7$ then $Y$ is a weak Fano
3--fold of genus $3$, that is, its AC model is a Gorenstein at worst
canonical quartic 3--fold.  Its quadratic form in the basis
$E$, $-K_{Y}$ is
\[\begin{pmatrix}
    -2 & 5\\
     5 & 4
  \end{pmatrix}.
  \]

\subsection*{The classification scheme for rank two weak Fano 3--folds}
In the Mori--Mukai classification there are 36 families of non-singular Fano 3--folds with
$\rho=2$: see Iskovskih--Prokhorov~\cite[Table~12.3]{fano:varieties} for the list.
The classification of  non-singular weak Fano 3--folds with $\rho=2$ was initiated recently by 
Jahnke--Peternell--Radloff~\cite{peternell1,peternell2}
with subsequent contributions by Takeuchi~\cite{takeuchi},  
Cutrone--Marshburn~\cite{cutrone:marshburn}, Arap--Cutrone--Marshburn~\cite{arap:c:m} 
and Blanc--Lamy~\cite{blanc:lamy}; 
see also related work by Kaloghiros~\cite{kaloghiros:thesis,kaloghiros:defect}.
The classification is not yet complete, but already more than $200$
families of rank two weak Fano 3--folds are known (with around 
$50$ further cases still to be settled).
Below we summarise the basic strategy of this classification scheme and some of the main results 
obtained; we refer the reader to the references above for further details.

Throughout the rest of this section $Y$ will denote a non-singular weak Fano
3--fold of rank $2$ and $\varphi\co Y \ra X$ its anticanonical
morphism.  In general the anticanonical model $X$ of a rank two weak Fano 3--fold $Y$ is a
Gorenstein canonical Fano 3--fold with $\rho(X)=1$ 
whose anticanonical degree is the same as that of  $Y$. 
There are two main classes:
\begin{enumerate}
\item the anticanonical morphism $\varphi\co Y \ra X$ is divisorial,
  that is, it contracts a divisor. In this case, $X$ is a Gorenstein Fano
  3--fold with canonical non-terminal singularities, $\rho(X) =1$ and
  $\sigma(X) = 0$. (In the vast majority of cases we will see that $\varphi$ 
  is semi-small, so that $Y$ is a semi-Fano 3--fold in the sense of \fullref{d:semifano}).
\item the anticanonical morphism $\varphi\co Y \ra X$ is small. 
$X$ is a \mbox{non--$\Q$--factorial} Gorenstein Fano 3--fold with
  terminal singularities, $\rho (X) =1$ and $\sigma (X)=1$. 
  (In many of these cases $X$ has only ordinary double points).
\end{enumerate}

Recall that in the classification of non-singular rank $2$ Fano 3--folds a
fundamental role is played by the two different Mori contractions that
any such 3--fold admits.  By Mori's classification of non-singular 3--fold
extremal rays (\fullref{T:Mori:3:contract}) the possible
contractions are completely understood and fall into three basic
classes: type C (conic bundle type), D (del Pezzo fibre type) and E
(exceptional/divisorial) type.  
For example if a Fano 3--fold with $\rho=2$ admits an extremal 
contraction of type E1, that is, the inverse of the blowup of a non-singular curve 
in a smooth 3--fold $W$,  then by \fullref{t:wk:fano:morphisms} $W$ itself 
must be a smooth Fano 3--fold with $\rho(W)=1$. In general the existence of two extremal rays of
known type together with the condition of being Fano put severe
constraints on the 3--fold, enough to allow a complete
classification.

For non-singular rank $2$ \emph{weak} Fano 3--folds there is only a single
Mori contraction $\psi\co Y \ra W$.  A substitute for the missing second
extremal ray is provided by the AC morphism $\varphi\co Y \ra X$.  When
the anticanonical morphism contracts a divisor, an almost complete
classification was given recently by Jahnke--Peternell--Radloff~\cite{peternell1}. When the 
anticanonical morphism is small the analysis is more involved and
the classification is not yet close to complete. Nevertheless, as we will describe below, 
many examples are known and there are classification results under additional assumptions.

\subsubsection*{Rank $2$ weak Fanos with divisorial AC morphism}
  \label{sec:rank-2-weak}

  In~\cite{peternell1}, Jahnke, Peternell and
Radloff classify rank two weak Fano 3--folds
  of type~(i) -- where the AC morphism $\varphi$ is a divisorial
  contraction -- according to the type of the Mori contraction
  $\psi\co Y \ra W$; see~\cite[Tables A.2--A.5, pages 627--630]{peternell1}.  There are at most $59$ deformation
  families (the existence of two possible families A.2.7, A.2.8
  remains to be shown) with (even) anticanonical degrees $-K_{Y}^{3}$
  between $2$ and $72$; because of the length and complexity of the
  classification we do not reproduce it here. A key technical role is
  played by Mukai's classification~\cite{mukai:95} of all Gorenstein
  Fano threefolds with canonical singularities such that the
  anticanonical divisor does not admit a moving decomposition: see~\cite[4.7]{peternell1}.

  One important fact to note from the classification is that rank two
  weak Fano 3--folds that are \emph{not} semi-Fano are extremely rare; when
  the extremal ray is of type D or E2--5 one can show that $Y$ is
  always semi-Fano~\cite[2.3 and~5.2]{peternell1} and there is a single
  exception out of 25 cases with an extremal ray of type E1~\cite[Section~4]{peternell1}.  
  Altogether only in  four (A.3.1, 3.9, 3.12 and 4.25 in~\cite{peternell1}) out of the $59$ families does the AC morphism
  contract a divisor to a point. Hence we have $53 (+2?)$ non-singular rank
  $2$ semi-Fano 3--folds for which the anticanonical morphism
  $\varphi\co Y \ra X$ contracts a divisor $D$ to a curve $B$. In all
  such cases $B\subset X$ is a non-singular curve of cDV singularities, $Y$
  is the blowup of $X$ in the curve $B$ and $D$ is a conic bundle over
  $B$ \mbox{\cite[1.8]{peternell1}}.

\subsubsection*{Rank $2$ weak Fanos with small AC morphism}
\label{sec:rank-2-weak-1}
As mentioned above, the classification of rank two semi-Fano 3--folds whose AC
morphism $\varphi$ is small is more involved and not yet complete,
despite recent activity in this direction by several authors.
In
this case the anticanonical model $X$ is a non--$\Q$--factorial
Gorenstein Fano 3--fold with terminal singularities, which by
\fullref{P:cdv:terminal} are isolated cDV singularities; 
in many cases $X$ has only ordinary double points.  

By Namikawa's smoothing result (\fullref{T:fano:smooth}) $X$ admits a
smoothing \mbox{$\mathcal{X} \ra \Delta\subset \C $} such that $\mathcal{X}_{0}
\simeq X$ and $\mathcal{X}_{t}$ for $ t\neq 0$ is a non-singular Fano 3--fold (this is not
always true in the case of Gorenstein canonical singularities);
moreover, the Picard groups (over $\Z$) of $X$ and the general
$\mathcal{X}_{t}$ are isomorphic.  Hence $X$ and $\mathcal{X}_{t}$
have the same Fano index and $\mathcal{X}_{t}$ is a non-singular Fano
3--fold of Picard rank~$1$.  The cases where $X$ has index $>1$ are
relatively straightforward -- see Jahnke--Peternell--Radloff~\cite[2.12--3]{peternell2} -- and the
main case is when $X$ has index $1$. In this case the Iskovskih
classification of rank $1$ non-singular Fano 3--folds
(see~\cite[Table~12.2]{fano:varieties}
for a convenient list or see our \fullref{table:c2} in 
\fullref{sec:examples}) implies $2 \le -K_{Y}^{3} \le 22$ with $-K_{Y}^{3}
\neq 20$ and, in fact, all such possible anticanonical degrees
actually occur.

The anticanonical morphism $\varphi \co Y\to X$ is a flopping
contraction (recall \fullref{D:flopping}) and thus it can be flopped (recall \fullref{T:flops}); 
that is, there is another non-singular rank 2 weak Fano 3--fold $Y^{+}$, whose (small) anticanonical
morphism we denote \mbox{$\varphi^{+}\co Y^{+} \ra X$}.  $Y^{+}$
has the same anticanonical degree as $Y$.  For any divisor $D$ on $Y$
let $D^{+}$ denote the strict transform of $D$ under the flop $\chi$;
the map $D \ra D^{+}$ induces an isomorphism between the Picard groups
of $Y$ and $Y^{+}$.  Moreover, the lattice structures induced on the
Picard lattices of $Y$ and $Y^{+}$ are isomorphic, that is,  for any
divisors $D_{1}$ and $D_{2}$ on $Y$ we have
\begin{equation}
\label{E:pic:lattice}
-K_{Y}\cdot D_{1} \cdot D_{2} = -K_{Y^{+}} \cdot D_{1}^{+} \cdot D_{2}^{+}.
\end{equation}
$Y^{+}$ also admits a ($K_{Y^{+}}$--negative) extremal contraction
$\psi^{+}\co Y^{+} \ra W^{+}$.  Everything fits into the following
diagram:
\begin{equation}
\label{F:link}
\xymatrix{Y \ar@{-->}[rr]^{\chi} \ar[d]^{\psi} \ar[dr]^{\varphi}& & Y^{+} \ar[d]^{\psi^{+}} \ar[dl]_{\varphi^{+}} \\
          W & X & W^{+}}
\end{equation}

The classification programme has two steps: a \emph{numerical classification}
stage and the more delicate \emph{geometric realisability} question.  In the
numerical classification stage one first writes down a system of
Diophantine equations determined by the relations among various
intersection numbers that any non-singular weak Fano 3--fold of rank $2$
with small AC morphism would have to satisfy.
The precise form of these Diophantine equations depends on the pair of Mori contractions
$\psi$ and $\psi^{+}$; as a result there are various subcases
depending on the type of the pair of Mori contractions.  
See Cutrone--Marshburn~\cite[Section~2.1]{cutrone:marshburn} -- particularly equations (2.6) and (2.7) therein -- for
the Diophantine equations in the case where both Mori contractions 
$\psi$ and $\psi^{+}$ are of type~E1; 
in this latter case both $Y$ and its flop $Y^{+}$ 
arise as the blowups of smooth curves $C$ and $C^{+}$ in 
rank one Fano 3--folds $W$ and $W^{+}$.
So rank two weak Fanos with link type E1--E1 constitute a direct 
generalisation of the concrete rank two weak Fanos constructed 
in the previous subsection as blowups of low degree curves in 
non-singular index one rank one Fano 3--folds.

A solution of
the numerical classification problem means a finite list of all possible
solutions to these Diophantine equations. Each such solution is
referred to as a \emph{numerical link}.  For some pairs of Mori
contractions there are many numerical links while for others there are
relatively few.  However, not every numerical link is realisable by a
weak Fano 3--fold. For each numerical link further (often more delicate)
argument is required either to find a weak Fano realising that
numerical link or to prove that no such weak Fano exists.  This is the
geometric realisability question.

Jahnke--Peternell--Radloff~\cite{peternell2} give a complete list of numerical links
in the case that at \emph{most} one of the Mori contractions $\psi$ or
$\psi^{+}$ from \eqref{F:link} is of type E. Cutrone--Marshburn~\cite{cutrone:marshburn}
completed the numerical classification when both Mori contractions are
of type E. Takeuchi~\cite{takeuchi} considers the case where $\psi$
is of type $D$ and gives a complete classification including the
geometric realisability question when this del Pezzo fibration has
degree different from $6$; this augments (but also overlaps
considerably with) the geometric realisability studies contained in~\cite[Section~3]{peternell2}.  Thus the classification of rank two weak
Fano 3--folds where at least one Mori contraction is not of type E
is now close to complete; the 6? entries in~\cite[Table~7.7]{peternell2}
and the del Pezzo fibrations of degree $6$ are still outstanding.

\enlargethispage{\baselineskip}
The classification of rank two weak Fano 3--folds where both Mori
contractions are of type E is substantially less complete.
Cutrone--Marshburn~\cite{cutrone:marshburn} gives a list of 111 numerical links of type
E1--E1: meaning both Mori contractions are of type E1, that is, 
both $Y$ and $Y^{+}$ arise as the blowup of smooth curves in rank one Fano 3--folds $W$ and $W^{+}$. 
Of these 111 numerical links, they prove 11 to be geometrically realisable, 13 not to be
geometrically realisable, and leave 87 numerical links unsettled: see~\cite[5.1]{cutrone:marshburn}. 
Recall from \fullref{r:fano:blowup} that if $Y$ is a rank two weak Fano 3--fold with link type E1--E1 
(or more generally E1--**) then $H^{3}Y$ is torsion-free.
Hence we can always construct building blocks in the sense of \fullref{dfn:BLOCK} 
from any such weak Fano 3--fold.

More recently Blanc--Lamy~\cite{blanc:lamy} settled the geometric realisability question 
when the weak Fano $Y$ arises as the blowup of a space curve in $\CP^{3}$; 
this gives the existence of $13$ further pairs ($Y$ and its unique flop $Y^{+}$) 
of rank two semi-Fano 3--folds with small AC morphism. Very recently
Arap--Cutrone--Marshburn~\cite{arap:c:m} settled most of the geometric realisability questions in the cases 
when the weak Fano $Y$ arises as the blowup of a curve in: 
a smooth quadric in $\CP^{4}$, a pair of quadrics in $\CP^{5}$ or a del Pezzo 3--fold of degree $5$. 
These give another $8$, $6$ and $13$ pairs of examples of rank two semi-Fano 3--folds with small AC morphism respectively. 

So geometric realisability currently remains open for approximately $50$ of the $111$ numerical links of type E1--E1 
listed in~\cite{cutrone:marshburn}. 
The situation for other numerical links of type E--E is far more heavily constrained
with only relatively few numerical link types; for most of these
numerical links the geometric realisability question is already solved~\cite[5.2--5.7]{cutrone:marshburn}.

\subsubsection*{Rough enumeration of rank two weak Fano 3--folds with small AC morphism}
Let us give a rough enumeration of the number of deformation types generated by
the rank two weak Fano 3--folds with small AC morphism currently known
to exist.

If $Y$ and $Y'$ are smooth rank two weak Fano 3--folds belonging to the same 
deformation type then by Mori's deformation theory for extremal rays~\cite[Theorem~3.47]{mori:1982} 
both $Y$ and $Y'$ admit extremal rays of the same type except possibly in the cases E3/E4 
(the point being that E3 can degenerate to E4).
In particular, if both $Y$ and $Y'$ have small AC morphisms and different numerical links 
then $Y$ and $Y'$ do not belong to the same deformation type. 

Takeuchi~\cite[2.2--2.13]{takeuchi} gives a list of $33$ families of del Pezzo
fibred non-singular rank two Fano 3--folds with small AC morphism and shows
that none of them are deformation equivalent~\cite[Theorem~2.15]{takeuchi}.
He also lists their anticanonical models
$X$ and their flops $Y^{+}$; in almost all cases the anticanonical
model $X$ has only ordinary double points and therefore both $Y$ and
$Y^{+}$ have nodal AC model.  The number of curves contracted by the
anticanonical morphism $\varphi$ varies between $1$ and $46$.  In $19$
cases $Y^{+}$ is not itself del Pezzo fibred and is therefore not
deformation equivalent to any of the rank two weak Fanos in Takeuchi's list of
$33$. Hence we obtain $52$ distinct families of rank $2$ Fano
3--folds with small AC morphism from Takeuchi's work, almost all of
which have nodal AC model.  When $Y$ is a del Pezzo fibration of
degree $6$~\cite[A.2--A.4]{peternell2} provides 5 additional examples
(plus their flops which are different) and leaves open a further $8$
possibilities for del Pezzo fibrations of degree $6$.  Finally~\cite[Tables~7.5--7.7]{peternell2} yields $12=2+3+7$ cases (plus their flops) where
$\psi$ is a conic bundle  (with the geometric
realisability of $6$ further numerical links left open).

In total this gives us $84$ (that is, $52+ 2\times5 + 1\times2 + 2\times3 +2\times7$)
currently known deformation types generated by rank $2$ Fano 3--folds with small AC
morphism for which at least one of the Mori contractions $\psi$ and
$\psi^{+}$ is \emph{not} birational, and the majority of these have nodal AC
model.  In addition we have $26$ cases from~\cite{cutrone:marshburn}
where both Mori contractions $\psi$ and $\psi^{+}$ are birational and 
over $40$ further examples of link type E1--E1 from 
Arap--Cutrone--Marshburn~\cite{arap:c:m} and Blanc--Lamy~\cite{blanc:lamy}.

To summarise: 
we have at least $150$ deformation types arising from known families of rank two semi-Fano
3--folds for which the AC morphism is small (many of which have nodal
AC model) in addition to the $36$ deformation types of rank two genuine Fano 3--folds.
(There are additional deformation types which arise from the known rank two 
semi-Fano 3--folds for which the AC morphism is only semi-small,  
but we must take some care enumerating these because these may 
belong to the $150+$ deformation types above or may be 
deformation equivalent to a rank two genuine Fano 3--fold.)

This abundance of rank two semi-Fanos will allow us to
construct a large number of new compact \gtwo--holonomy manifolds in~\cite{chnp2}.  
If we use at least one building block built from one of the many 
semi-Fano 3--folds with nodal AC model, then we will be able to construct
\gtwo--holonomy manifolds containing a variety of different numbers of
rigid associative 3--folds.

\subsection*{Toric weak Fano 3--folds}

In this section we give an overview of the results one can obtain for
projective small (respectively crepant) resolutions of
toric Gorenstein terminal (respectively canonical) Fano 3--folds.
This will prove the existence of very many (hundreds of thousands of) non-singular toric weak Fano
3--folds.  We will also see that the set of non-singular toric weak Fano
3--folds with nodal AC model is essentially disjoint from the many Picard
rank $2$ semi-Fanos discussed above. The building blocks in 
Examples~\ref{exa:toricV22_a} and~\ref{exa:toricV22_b} 
both use one very particular toric  semi-Fano with nodal AC model.

Although by Batyrev's work~\cite{Batyrev:toric} there are only 18 deformation classes of non-singular toric Fano
3--fold  (see also the table in~\cite[Appendix~12.8]{fano:varieties}), 
there are many deformation classes
of singular toric Fano 3--fold as soon as one allows even relatively
mild singularities.  In the following whenever we refer to a toric
Fano 3--fold we shall mean a Gorenstein toric Fano 3--fold; these
automatically have at worst canonical singularities~\cite[2.2.5]{Batyrev:toric}.

Toric Fano 3--folds correspond (uniquely up to isomorphism) to
so-called \emph{reflexive polytopes}, see for example,
Nill's thesis~\cite[Chapters~1 and~2]{nill:thesis} for basic definitions in toric Fano
geometry.  Kreuzer--Skarke~\cite{Skarke} developed an algorithm to
classify reflexive polyhedra in arbitrary dimensions; as an
application of this algorithm they showed that there are 4319
$3$--dimensional reflexive polytopes, including the 18 that correspond
to non-singular toric Fano 3--folds.

A big advantage of Gorenstein toric Fano 3--folds compared to more
general Gorenstein canonical Fano 3--folds (where often no projective
crepant resolution exists, for example, any nodal quartic in $\CP^{4}$ with fewer than $9$ nodes) 
is that one can use toric geometry to prove
that \emph{any} toric Fano 3--fold admits a projective crepant
resolution.  Since every such crepant resolution is a non-singular toric
weak Fano 3--fold this proves the existence of at least $4301$
deformation families of toric weak Fano 3--fold.  In fact there are
many more such families because many singular toric Fano 3--folds
admit numerous non-isomorphic projective crepant resolutions;
moreover, all the projective crepant resolutions are toric and can be
enumerated purely combinatorially (see below).  
The topology of toric weak Fano 3--folds is also relatively straightforward:
as smooth toric varieties they have no cohomology in odd degree 
and their even cohomology is torsion-free. In particular 
we never have to worry about the condition $H^{3}(Y)$ being torsion-free.
These features make toric weak Fano 3--folds a very rich class of examples which 
nonetheless can be studied relatively easily.

\begin{prop}
\label{L:mpcp}
Any $3$--dimensional Gorenstein toric Fano variety $X$ admits at least
one projective crepant resolution $Y$; $Y$ is a non-singular toric weak
Fano 3--fold whose anticanonical model is $X$.
\end{prop}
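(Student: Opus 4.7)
The plan is to work combinatorially with the reflexive polytope $P \subset N_\RR$ associated to the Gorenstein toric Fano $3$--fold $X$: the vertices of $P$ are the primitive generators of the rays of the fan $\Sigma$ of $X$, the maximal cones of $\Sigma$ are the cones over the facets of $P$, and the Gorenstein condition asserts that the vertices of each facet $F$ lie on a hyperplane $\{m_F=1\}$ for some $m_F\in M$. Toric resolutions of $X$ correspond to refinements $\Sigma'$ of $\Sigma$ obtained by introducing new rays generated by lattice points of $N$; such a refinement is crepant iff every new ray passes through $\partial P \cap N$, and the resulting variety $Y=Y(\Sigma')$ is smooth iff every maximal cone of $\Sigma'$ is generated by a $\ZZ$--basis of $N$.

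The first step is to produce a triangulation $\mathcal{T}$ of $\partial P$ whose vertex set is exactly $\partial P \cap N$, by triangulating each facet $F$ using \emph{all} of its lattice points. The key observation is that in dimension two every such triangulation automatically consists of unimodular triangles: by Pick's theorem a lattice triangle with no lattice points other than its three vertices has area $\tfrac{1}{2}$, and this is precisely the condition for vertices $v_1,v_2,v_3 \in \{m_F=1\}\cap N$ to satisfy $\det_N(v_1,v_2,v_3)=\pm 1$, i.e.\ for the $3$--dimensional cone they span to be unimodular in $N$. The refined fan $\Sigma'$ obtained by coning off $\mathcal{T}$ from the origin is therefore smooth, and as all its rays pass through $\partial P \cap N$ the induced birational morphism $\varphi\co Y \to X$ satisfies $K_Y = \varphi^\ast K_X$, i.e.\ it is crepant.

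The main point needing care is projectivity of $\varphi$, since not every fan refinement comes from a projective morphism. The plan is to construct $\mathcal{T}$ as a \emph{regular} (\emph{coherent}) triangulation: choose a generic positive height function $h\co \partial P \cap N \to \RR$, lift each boundary lattice point $v$ to $(v,h(v))\in N_\RR\oplus \RR$, and let $\mathcal{T}$ be the triangulation of $\partial P$ read off from the projection of the lower faces of the convex hull of the lifted points. Genericity of $h$ ensures that $\mathcal{T}$ is a genuine triangulation with vertex set $\partial P \cap N$, hence by the previous step it is unimodular and yields a smooth $Y$, while coherence of $\mathcal{T}$ provides a $\varphi$--ample piecewise linear function on $|\Sigma'|$, making $\varphi$ a projective morphism (so that $Y$ is itself projective, since $X$ is).

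Finally, because $-K_X$ is ample and $\varphi$ is birational, $-K_Y=\varphi^\ast(-K_X)$ is big and nef, so $Y$ is a non-singular projective toric weak Fano $3$--fold in the sense of \fullref{dfn:weak_fano}. The equality $R(Y,-K_Y)=R(X,-K_X)$ (see \fullref{R:weak:fano:ac:model}(i)) then identifies $X$ as the anticanonical model of $Y$.
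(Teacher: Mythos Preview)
Your argument is correct and follows essentially the same route the paper indicates (in \fullref{R:fan:refinement} immediately after the proposition, since no formal proof is given): projective crepant resolutions correspond to regular (= coherent) maximal triangulations of $\partial P$ using all lattice points, whose existence the paper attributes to Batyrev and Gelfand--Kapranov--Zelevinsky, and in dimension three such triangulations are automatically unimodular. You have simply unpacked those citations---giving the Pick-theorem reason for unimodularity and the height-function construction of a regular triangulation---so your proposal is a more self-contained version of the paper's sketch rather than a genuinely different approach.
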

\begin{remark}\hfill{}
\label{R:fan:refinement}
\begin{enumerate}
\item This result is not true for higher-dimensional Gorenstein toric
  varieties $X$; what is true is that there is a projective birational
  morphism $f\co X' \ra X$, such that $f$ is crepant and $X'$ is
  toric with only $\Q$--factorial terminal singularities
  \cite[Theorem~2.2.24]{Batyrev:toric}. Batyrev calls $f\co X' \ra X$ a
  \emph{maximal projective crepant partial desingularisation} of $X$
  or \emph{MPCP-desingularisation} for short.  \fullref{L:mpcp} is a special case of the existence of
  MPCP-desingularisations; since any $3$--dimensional Gorenstein toric
  variety with $\Q$--factorial terminal singularities must in fact be
  non-singular, any $3$--dimensional MPCP-desingularisation is non-singular.
\item Crepant resolutions of a toric variety $X$ correspond to fans
  $\Delta'$ refining the original fan $\Delta$ defining $X$.  The
  toric variety $X'$ associated to the fan $\Delta'$ is in general not
  projective; when the toric variety associated to the fan $\Delta'$
  is again projective, the fan $\Delta'$ is called a \emph{coherent
    crepant refinement} of $\Delta$.
\item Batyrev shows that any MPCP-desingularisation $f\co X' \ra X$
  defines a ``maximal projective triangulation'' of the reflexive
  polytope $P$ associated to $X$ and conversely that any maximal
  projective triangulation of the reflexive polytope $P$ determines a
  MPCP-desingularisation of $X$.  Since Gelfand, Kapranov and
  Zelevinsky~\cite{gelfand} already proved the existence of maximal
  projective triangulations (regular triangulations in their
  terminology) of any integral polyhedron $P$, the existence of
  MPCP-desingularisations (and hence projective crepant resolutions in
  the $3$--dimensional case) then follows immediately.
\item One can use the correspondence between projective crepant
  resolutions of a toric Fano 3--fold and maximal projective
  triangulations of the corresponding reflexive polytope to enumerate
  \emph{all} projective crepant resolutions of a given toric
  Gorenstein Fano 3--fold. Together with Tom Coates and Al Kasprzyk 
  we have used TOPCOM~\cite{topcom} in combination with
  PALP and Sage to find \emph{all} toric semi-Fano 3--folds up to
  isomorphism. 
  A more detailed description of this computation, the full data and a systematic treatment of 
  \gtwo--manifolds arising from them  will appear elsewhere~\cite{toric:g2}.
\end{enumerate}
\end{remark}

Many features of any crepant projective resolution of a toric Fano
3--fold can be read immediately from the associated reflexive
polytope. For example we have the following:
\begin{remark}
\label{r:picard:rk}
The Picard rank $\rho$ of \emph{any} crepant resolution of a
  toric Fano 3--fold equals the number of lattice points (including
  the origin) of the corresponding reflexive polytope minus~4.
  Hence from Kreuzer--Skarke~\cite[Table~2]{Skarke} we have that for a non-singular toric
  weak Fano 3--fold $Y$, $\rho=b^{2}(Y)$ can attain any value between
  $2$ and $35$ except $32$ and $33$.  
\end{remark}
We can also recognise the various flavours of
non-singular toric weak Fano $Y$ from the geometry of the reflexive polytope
associated with its (Gorenstein toric Fano) anticanonical model $X$.

For toric Fano 3--folds with small AC morphism we have:

\begin{lemma}[Terminal toric Fano 3--folds]\hfill{}
\label{L:terminal:toric}
\begin{enumerate}
\item A toric Fano 3--fold $X$ is terminal if and only if all facets
  of its reflexive polytope are either standard triangles or standard
  parallelograms.
\item The only singularities of a terminal toric Fano 3--fold are
  ordinary double points and the number of ODPs of $X$ is equal to the
  number of parallelograms in its reflexive polytope. In particular,
  every toric weak Fano 3--fold with small AC morphism has nodal AC model.
\item Every terminal toric Fano 3--fold $X$ admits at least one small
  projective resolution $Y$; $Y$~is a non-singular toric semi-Fano
  3--fold. Conversely every non-singular toric semi-Fano 3--fold $Y$ with nodal
  AC model arises as a small projective resolution of a terminal (nodal) toric
  Fano 3--fold $X$.
\end{enumerate}
\end{lemma}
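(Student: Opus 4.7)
The plan is to translate each statement into the combinatorics of the reflexive polytope $P\subset N_\RR$ associated to $X$. Recall that the fan $\Sigma_P$ of $X$ is the fan of cones over the facets of $P$, and Gorensteinness is equivalent to every facet $F$ of $P$ lying on an affine hyperplane of lattice height $1$ from the origin.

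For (i), the approach is to use the standard toric criterion for terminal singularities (see \cite[YPG]{YPG} or \cite{matsuki}): the affine toric variety $U_\sigma$ associated to a Gorenstein cone $\sigma$ is terminal if and only if the only lattice points of $\sigma$ strictly below or on the height-$1$ hyperplane, apart from the origin, are the primitive ray generators of $\sigma$. Applied to a $3$--dimensional cone over a height-$1$ lattice polygon $F$, this reduces to: $F\cap N = \text{vertices of }F$. Thus the problem becomes: classify the \emph{empty} lattice polygons in a $2$--dimensional lattice. I would then invoke Pick's theorem $A(F)=I+\tfrac{B}{2}-1$; an empty polygon has $I=0$ and $B$ equal to the number of vertices, so $A(F)=\tfrac{B}{2}-1$. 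Ruling out $B\ge 5$ is the \emph{main combinatorial obstacle}: this uses the classical fact that a convex lattice polygon with $\ge 5$ vertices and no non-vertex lattice points cannot exist (one argues that among five primitive edge vectors in $\ZZ^2$ two must have determinant $\pm 1$ and then produces an interior lattice point). For $B=3$ one has $A=\tfrac12$, so $F$ is unimodular, hence $GL_2(\ZZ)$--equivalent to the standard triangle; for $B=4$ one has $A=1$, and a direct argument (using primitivity of each edge and the fact that opposite edges in a convex quadrilateral of area $1$ with no interior lattice points must be parallel) identifies $F$ up to $GL_2(\ZZ)$ with the standard parallelogram.

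For (ii), I would directly compute the affine toric chart associated to each kind of facet. The cone over a standard triangle is generated by a $\ZZ$--basis of $N$, so $U_\sigma\cong\CC^3$, which is smooth. The cone over a standard parallelogram has four ray generators $v_1,v_2,v_3,v_4$ satisfying a single relation $v_1+v_3=v_2+v_4$; the associated affine toric variety is $\Spec\CC[S^\vee\cap M]$, which by a direct monomial calculation is isomorphic to the hypersurface $\{xy=zw\}\subset\CC^4$, that is, the ordinary double point of \fullref{E:ODP}. The count of nodes follows, and each singular point is an ODP. The ``in particular'' clause then follows: if $Y$ is a toric weak Fano with small anticanonical morphism $\varphi\co Y\to X$, then $X$ is a toric Gorenstein canonical Fano whose crepant resolution $\varphi$ is small; by \fullref{R:terminal:crepant}(ii) $X$ must then be terminal, so by (ii) $X$ is nodal.

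For (iii), the first assertion combines two earlier results. By \fullref{L:mpcp} any Gorenstein toric Fano $3$--fold $X$ admits a projective crepant toric resolution $Y$; when $X$ is terminal, \fullref{R:terminal:crepant}(ii) guarantees that any such crepant resolution is automatically small, so $Y$ is a non-singular toric semi-Fano with anticanonical model $X$. For the converse, let $Y$ be a non-singular toric semi-Fano with nodal anticanonical model $X$. The anticanonical morphism $\varphi\co Y\to X$ is crepant and $X$ has only ODPs, which are terminal; hence no divisor can be contracted (else the discrepancy would be strictly positive at the generic point of the exceptional divisor, contradicting crepancy onto a terminal base). Therefore $\varphi$ is small, so $X$ is a terminal toric Fano $3$--fold, which by (ii) is nodal. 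This closes the loop and concludes the proof.
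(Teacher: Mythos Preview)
Your plan is correct and compatible with the paper's treatment. The paper's own proof simply cites Nill's thesis for parts~(i) and~(ii) and invokes \fullref{L:mpcp} (together with \fullref{R:terminal:crepant}(ii)) for part~(iii), exactly as you do; your contribution is to spell out the combinatorics that the paper outsources. One small suggestion on~(i): the cleanest way to exclude $B\ge 5$ is pigeonhole on the \emph{vertices} modulo~$2$ rather than on edge vectors --- among five lattice points two are congruent $\bmod\,2$, and their midpoint is then a non-vertex lattice point of the closed polygon. The same residue argument shows that when $B=4$ the four vertices represent all four classes of $(\ZZ/2\ZZ)^2$; combined with the fact that either diagonal splits the quadrilateral into two unimodular triangles, this pins down the standard parallelogram without needing to argue separately that opposite edges are parallel.
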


\begin{proof}
  For (i) and (ii) see the thesis of Nill~\cite[4.2.4 and~4.3.1--4.3.2]{nill:thesis}.  (iii) is a special case of
  \fullref{L:mpcp}; see also the remark below.
\end{proof}

\begin{remark}
  In the special case of a terminal toric (and therefore nodal) Fano 3--fold $X$ any ``crepant
  refinement'' of the reflexive polytope of $X$ as in \fullref{R:fan:refinement} arises as follows: for each parallelogram
  facet in the reflexive polytope pick one of its two diagonals and
  make a new polytope by adding the chosen diagonals as additional edges to
  the reflexive polytope.  Clearly there are $2^{e}$ such refinements
  where $e$ is the number of parallelograms (by %
  \fullref{L:terminal:toric}(ii) parallelograms correspond to the nodes
  of~$X$); each such refinement gives a (toric) small but not
  necessarily projective resolution of~$X$. By %
  \fullref{L:terminal:toric}(iii)
  at least one of these small resolutions is projective.
\end{remark}

\begin{corollary}\hfill{}
\label{C:small:weak:toric:fano}
\begin{enumerate}
\item There are precisely 82 singular toric Fano 3--folds with
  terminal singularities.
\item The Picard rank $\rho$ of a terminal toric Fano 3--fold $X$ can
  be $1$, $2$, $3$ or $4$.
\item The Picard rank $\rho$ of a toric semi-Fano 3--fold with nodal AC
  model takes all values between $2$ and $11$.
\item The genus $g$ of a toric semi-Fano 3--fold with nodal AC model takes all
  values in $\{11, \ldots ,25\} \cup \{28\}$.
\item The defect $\sigma$ of a terminal toric Fano  3--fold takes all
  values in $\{1, \ldots ,7\} \cup \{9\}$.
\item The number $e$ of exceptional $(-1,-1)$ curves of a toric
  semi-Fano 3--fold with nodal AC model takes all values in $\{1, \ldots, 9\}
  \cup\{12\}$.
  \item
  Every toric semi-Fano 3--fold $Y$ with nodal AC model is rigid, that is,
$$H^{1}(Y,\mathcal{T}_{Y})=(0).$$
  \item
  There are precisely 1009 deformation types of toric semi-Fano 3--fold with nodal AC model.
\end{enumerate}
\end{corollary}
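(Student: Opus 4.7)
The plan is to reduce (i)--(vi) and (viii) to a finite computer-algebra enumeration over the Kreuzer--Skarke database via the combinatorial dictionary provided by \fullref{L:terminal:toric}, and to establish (vii) by a separate toric cohomology computation.

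For (i), (ii), (v), I would filter the $4319$ three-dimensional reflexive polytopes of Kreuzer--Skarke through the facet characterisation of \fullref{L:terminal:toric}: keep only those $P$ all of whose facets are standard triangles or standard parallelograms, which picks out all Gorenstein terminal toric Fano $3$--folds. Of these $18$ are smooth (Batyrev), leaving the $82$ of~(i). For each singular~$X$ the Picard rank equals (number of lattice points of $P$)$\,-4$ by \fullref{r:picard:rk}, and the defect $\sigma(X) = \rank\Cl(X) - \rank\Pic(X)$ is read off directly from $P$ as the corank of the map from torus-invariant Cartier to Weil divisors. Tabulating these values across the $82$ polytopes produces the ranges in (ii) and~(v).

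For (iii), (iv), (vi), (viii), I would enumerate projective small resolutions of each of the $82$ nodal terminal toric Fanos. By \fullref{L:terminal:toric} and the discussion immediately following it, each projective small resolution is toric and corresponds to a coherent refinement of the fan of~$X$ obtained by choosing a diagonal in each of the $e$ parallelogram facets of~$P$; there are $2^e$ such candidate refinements, at least one is coherent by \fullref{L:mpcp}, and coherence is decided case-by-case by a secondary-fan computation with \textsc{topcom}~\cite{topcom}. Factoring by the polytope automorphism group $\Aut(P)$ (computed in \textsc{palp}) gives the isomorphism classes of smooth toric semi-Fano $3$--folds $Y$ with anticanonical model~$X$. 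Performing this enumeration for all $82$ base polytopes (as done with Coates and Kasprzyk in~\cite{toric:g2}) yields $1009$ isomorphism classes, and reading off for each representative $Y$ the Picard rank $\rho(Y)$, the anticanonical degree $(-K_Y)^3 = 2g-2$ (a toric intersection number), and the number of parallelogram facets $e$ gives the ranges in (iii), (iv), (vi).

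The non-enumerative step is~(vii). For any smooth complete toric $3$--fold $Y$ the tangent sheaf sits in the Euler sequence
\[
0 \to \oo_Y^{\oplus \rho(Y)} \to \bigoplus_{\tau \in \Delta(1)} \oo_Y(D_\tau) \to T_Y \to 0,
\]
and since $H^i(Y, \oo_Y) = 0$ for $i > 0$ on a smooth projective toric variety, the long exact sequence reduces the desired vanishing $H^1(Y, T_Y) = 0$ to the $H^1$--vanishing of each summand $\oo_Y(D_\tau)$. I would attempt to deduce this from Demazure's toric vanishing theorem, using that on a smooth toric semi-Fano $3$--fold with nodal anticanonical model each individual $D_\tau$ turns out to be nef: the small anticanonical contraction contracts no toric divisor, and the facet structure of $P$ forces non-negative pairing of every $D_\tau$ with every torus-invariant curve. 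The main obstacle I anticipate is this uniform nefness verification, which involves detailed case analysis across the $82$ polytopes; an alternative is to invoke Ilten's formula expressing $h^1(T_Y)$ as an explicit combinatorial sum over the fan and check, either conceptually or by computer in each of the $1009$ cases, that the formula evaluates to zero. With~(vii) in hand, deformation equivalence of smooth toric $3$--folds with nodal AC model reduces to toric isomorphism, so the $1009$ orbits above give $1009$ distinct deformation classes, proving~(viii). Full details of both the enumeration and the rigidity verification are deferred to~\cite{toric:g2}.
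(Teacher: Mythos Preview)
Your enumerative strategy for (i)--(vi) and (viii) is essentially the paper's own, with one slip: in (ii) you invoke \fullref{r:picard:rk} for~$X$, but that remark computes the Picard rank of a \emph{crepant resolution}~$Y$, not of the singular~$X$ itself. For a terminal reflexive polytope the quantity (lattice points)$\,-4$ equals $\rank\Cl(X)=\rho(X)+\sigma(X)=\rho(Y)$, not~$\rho(X)$. You already know how to read off $\sigma(X)$, so the correction is immediate; but as written the argument for (ii) is wrong.

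The real gap is in~(vii). Your primary route---show every torus-invariant prime divisor $D_\tau$ on~$Y$ is nef and then apply Demazure vanishing to each $\oo_Y(D_\tau)$---fails already on the simplest example. Take $Y=\PP(\oo\oplus\oo(-1)\oplus\oo(-1))$ from \fullref{E:nodal:quadric}, the unique toric semi-Fano with nodal AC model and $\rho=2$. The exceptional $(-1,-1)$--curve $C_0$ is torus-invariant and satisfies $-K_Y\cdot C_0=\sum_\tau D_\tau\cdot C_0=0$; since two of the $D_\tau$ meet $C_0$ transversally with intersection~$+1$, at least one other $D_\tau$ must have $D_\tau\cdot C_0<0$ and hence is not nef. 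The same obstruction occurs at every flopping curve on every one of the $1009$ varieties, so the uniform nefness claim is false in every single case, not merely in isolated ones. Your fallback---Ilten's combinatorial formula for $h^1(T_Y)$---is the right idea; the paper in fact dispenses with any case analysis and simply cites Ilten's thesis \cite[Corollary~4.2.6]{ilten:thesis}, which gives the vanishing directly for this class. With (vii) in hand via Ilten, your deduction of (viii) from the orbit count is exactly the paper's.
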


\begin{proof}
  (i) follows either from Nill's thesis~\cite{nill:thesis} or from the
  Kreuzer--Skarke classification of reflexive polytopes in three
  dimensions and the characterisation of the terminal ones from
  \fullref{L:terminal:toric}(i).  (ii--vi) now follow from an examination
  of the $82$ possible terminal reflexive polytopes, for example, see the list
  of terminal toric Fano 3--folds on the Graded Rings database~\cite{grdb}.
  (vii) follows immediately from Ilten's thesis \mbox{\cite[Corollary~4.2.6]{ilten:thesis}}.
  For (viii) we first enumerate all projective small resolutions of the 82 terminal reflexive polytopes. 
  Next we identify projective small resolutions of  a given terminal polytope which 
  differ by a lattice automorphism. This yields the number of non-isomorphic 
  projective small resolutions for each polytope. The details of these calculations will appear in~\cite{toric:g2}.
  The total number of non-isomorphic projective small resolutions  turns out to be 1009:   
  since by (vii) all these varieties are rigid the number of deformation types is also equal to 1009.
\end{proof}

\begin{remark}[Toric semi-Fano 3--folds with nodal AC model and near-extremal Picard rank]
\label{R:picard:rank:toric:small}\hfill{}
\begin{enumerate}
\item
Let us a consider toric weak Fano 3--fold $Y$ with the minimal possible Picard rank $\rho=2$ (we assume $Y$ 
is not already Fano so $\rho\ge 2$).
By \fullref{r:picard:rk} the reflexive polytope corresponding to its 
AC model $X$ has exactly 6 lattice points. 
Consulting the classification we find that among the 82 terminal reflexive polytopes there is precisely one such polytope.
The corresponding terminal toric Fano
  3--fold $X \subset \CP^{4}$ is the projective cone over a non-singular
  quadric $Q \simeq \CP^{1}\times \CP^{1} \subset \CP^{3}$  and has two (isomorphic) projective
  small resolutions as described in  \ref{E:nodal:quadric}.
  Therefore up to isomorphism there is precisely one toric 
  semi-Fano 3--fold with $\rho=2$ and nodal AC model; as remarked previously it has 
  index~$3$. In particular, only this toric semi-Fano 3--fold appears
  in our earlier count of over $150$ semi-Fano 3--folds with $\rho=2$ and small AC morphism.

\item 
By \fullref{C:small:weak:toric:fano}(iii) any toric semi-Fano 3--fold $Y$
with nodal AC model has Picard rank at most $11$. By counting lattice points again
  the classification of terminal reflexive polytopes shows that if the
  Picard rank of $Y$ equals $11$ then its anticanonical model $X$ is the
  unique terminal toric Fano 3--fold $X_{20}$ of degree~$20$; $X_{20}$ has Picard
  rank $2$ and defect~$9$.  (We know $X_{20}$ could not have Picard rank $1$ because 
  there is no smooth rank $1$ Fano of degree $20$ which could degenerate to $X_{20}$.)
  $X_{20}$ corresponds to polytope $2355$ in the
  Sage list of $3$--dimensional reflexive polytopes; this polytope
  contains $12$ parallelograms and hence $X$ contains $12$ nodes.
  Using TOPCOM to count regular triangulations of the polytope we find
  that this polytope admits 3608 (out of all $2^{12}=4096$ possible small resolutions) 
  projective small resolutions and that these lie in  $125$ distinct isomorphism classes.
  Apart from these $125$ isomorphism classes of  toric semi-Fano
  3--folds with nodal AC model and Picard rank $11$, all other toric
  semi-Fano 3--folds with nodal AC model have Picard rank between $2$
  and $10$.  
\item Similarly, any toric semi-Fano 3--fold with nodal AC model and
  Picard rank equal to $10$ has anticanonical model the unique
  terminal toric Fano 3--fold $X_{22}$ of degree $22$; $X_{22}$~has Picard
  rank $1$ and defect $9$. It corresponds to polytope $1942$ in the
  Sage list of $3$--dimensional reflexive polytopes; this polytope
  contains $9$ parallelograms and hence $X$ contains $9$ nodes.
  Note that the number of nodes of $X$ is equal to its defect. 
  Using TOPCOM to count regular triangulations of the polytope we find that
  all $512$ small resolutions of this polytope are projective and 
  these consist of $84$ distinct isomorphism classes.  Building blocks
  constructed from this particular polytope were discussed in
  detail in Examples~\ref{exa:toricV22_a} and~\ref{exa:toricV22_b}. 
  We selected this particular 
  polytope because it has maximal defect $\sigma=9$ 
  and because the polytope is self-dual.
\end{enumerate}
\end{remark}

Similarly we can recognise more general toric semi-Fano 3--folds from
the geometry of the associated reflexive polytope.

\begin{lemma}[Toric semi-Fano 3--folds]\hfill{}
\label{L:toric:semi-small}
\begin{enumerate}
\item A toric weak Fano 3--fold $Y$ is semi-Fano if and only if the
  reflexive polytope corresponding to the (toric Fano) anticanonical
  model $X$ of $Y$ has no facets that contain lattice points strictly
  in their interior, that is, every lattice point on any facet lies on an
  edge of the facet. In this case by a slight abuse of terminology we
  will say that the reflexive polytope (or the singular toric Fano
  3--fold $X$) is semi-small.
\item Every semi-small toric Fano 3--fold $X$ admits at least one
  projective crepant resolution $Y$; $Y$ is a non-singular toric
  semi-Fano 3--fold.
\end{enumerate}
\end{lemma}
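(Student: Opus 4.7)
The plan is to translate everything through the standard toric dictionary: the reflexive polytope $P$ attached to the Gorenstein toric Fano $X$ is such that the fan $\Sigma$ of $X$ has rays generated by the vertices of $P$ and maximal cones obtained by coning over the facets of $P$. A projective crepant resolution $Y \to X$ corresponds to a fan refinement $\Sigma' \to \Sigma$ in which the ray generators are \emph{all} lattice points of $\partial P$, the maximal cones being obtained from a maximal projective triangulation of $\partial P$. The morphism $Y \to X$ realised by this refinement is exactly the anticanonical morphism $\varphi$ of the semi-Fano candidate $Y$, by \fullref{R:weak:fano:ac:model}.

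For (i), the plan is to classify the torus-invariant divisors of $Y$ that are $\varphi$--exceptional, according to where their corresponding lattice point $v \in \partial P$ lies. If $v$ is a vertex of $P$, the ray $\mathbb{R}_{\ge 0}v$ belongs to $\Sigma$ already, so $D_v$ maps birationally onto the toric divisor of $X$ it defines. If $v$ is not a vertex, let $F$ be the unique face of $P$ whose relative interior contains $v$; then the smallest cone of $\Sigma$ containing $v$ is the cone $\sigma_F$ over $F$, and the orbit-cone dictionary identifies $\varphi(D_v)$ with the closure $V(\sigma_F) \subset X$, of dimension $3 - \dim \sigma_F = 2 - \dim F$. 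Hence $D_v$ is contracted to a curve when $v$ lies in the relative interior of an edge, and to a point precisely when $v$ lies in the relative interior of a 2--face of $P$. Recalling that $\varphi$ is semi-small exactly when no divisor is contracted to a point (\fullref{D:semismall:small}), this gives the equivalence claimed in~(i).

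For (ii), the plan is to apply \fullref{L:mpcp} to produce at least one projective crepant resolution $Y \to X$; since $Y$ is automatically a non-singular toric weak Fano 3--fold with anticanonical model $X$, part~(i) immediately upgrades ``weak Fano'' to ``semi-Fano'' under the semi-smallness hypothesis on $P$.

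The main work is purely in the proof of (i) and is entirely a dictionary exercise; the only mildly subtle point is the correct identification of the image $\varphi(D_v)$ via the orbit-cone correspondence, which is what pins down the exact dimension of the contraction in terms of the dimension of the face of $P$ containing $v$. No deep input beyond the previously recalled toric machinery and \fullref{L:mpcp} is required.
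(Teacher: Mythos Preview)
Your proposal is correct and matches the paper's approach: the paper's proof reads simply ``(i) is obvious; (ii) is a special case of \fullref{L:mpcp}.'' You have merely unpacked the word ``obvious'' via the orbit--cone correspondence, and your identification of $\varphi(D_v)$ with $V(\sigma_F)$ of dimension $2-\dim F$ is exactly the right computation.
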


\begin{proof}
(i) is obvious; (ii) is a special case of \fullref{L:mpcp}.
\end{proof}

\begin{corollary}\quad
\begin{enumerate}
\item There are $799$ semi-small \,$3$--dimensional reflexive polytopes
  (excluding the $100=18+82$ corresponding to non-singular or terminal toric
  Fanos). These are precisely the polytopes for which every facet
  contains no interior lattice points, but that contain at least one
  boundary lattice point that is not a vertex of the polytope.
  $435$ of these polytopes contain at least one standard parallelogram.
\item The Picard rank $\rho$ of a semi-small toric Fano 3--fold $X$
  can be $1$, $2$, $3$ or $4$.
\item The Picard rank $\rho$ of a non-singular toric semi-Fano
  3--fold $Y$ can be any integer between $2$ and $15$.
\item The genus $g$ of a toric semi-Fano 3--fold $Y$ can be any
  integer between $7$ and~$29$.
\item The defect $\sigma$ of a semi-small toric Fano 3--fold $X$ is at
  least $1$ and at most $13$.
\item There are $526\,130$ isomorphism classes of non-singular toric semi-Fano 3--fold 
(including the $18+ 1009$ corresponding to smooth toric Fanos and toric Fanos with terminal AC model); 
$435\,459$ of these are rigid. 
\end{enumerate}
\end{corollary}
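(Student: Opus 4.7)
The strategy is essentially computational and combinatorial. By \fullref{L:toric:semi-small}, a toric weak Fano $3$--fold is semi-Fano precisely when the associated reflexive polytope $P$ has no facet containing an interior lattice point, and every such polytope admits at least one projective crepant resolution. Together with the Kreuzer--Skarke classification of the $4319$ reflexive polytopes in dimension three, this reduces the proof to a finite inspection, with the nontrivial enumerations carried out using PALP, Sage and TOPCOM as in \fullref{R:fan:refinement}(iv) and \fullref{R:picard:rank:toric:small}.

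For part (i) I would loop over the $4319$ polytopes, exclude the $18+82$ already enumerated in \fullref{L:mpcp} and \fullref{C:small:weak:toric:fano}, and test the criterion of \fullref{L:toric:semi-small}(i) facet by facet, then intersect with the set of polytopes that contain a standard parallelogram among their facets. Parts (ii)--(v) are then read off from the semi-small list: the Picard rank of $X$ equals the number of vertices of $P$ minus $3$, and by \fullref{r:picard:rk} the Picard rank of any projective crepant resolution $Y$ equals the number of lattice points of $P$ minus $4$; the defect is the difference. The genus $g$ is determined by $2g-2 = -K_Y^3 = -K_X^3$, which for a reflexive polytope equals $2\,\mathrm{vol}(P^\circ)$ computed as a normalized lattice volume of the polar polytope. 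Producing the actual ranges $\{2,\dots,15\}$, $\{7,\dots,29\}$ and $\{1,\dots,13\}$ is then just tabulation.

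For part (vi) the plan is, for each of the $100+799=899$ semi-small reflexive polytopes, to use TOPCOM to enumerate the maximal regular triangulations of the boundary $\partial P$ (equivalently, the coherent crepant refinements of the face fan, as discussed in \fullref{R:fan:refinement}(iii--iv)); quotient the set of such triangulations by the group of lattice automorphisms of $P$ to obtain the number of isomorphism classes of projective crepant toric resolutions. Summing over all polytopes gives the $526\,130$ figure (the terminal subcount of $1009$ from \fullref{C:small:weak:toric:fano}(viii) and the $18$ smooth Fanos should be included consistently with the statement). Rigidity of each resulting $Y$ follows from the vanishing $H^1(Y,\mathcal{T}_Y)=0$, which in the nodal case was invoked via Ilten~\cite[Corollary~4.2.6]{ilten:thesis}; since $Y$ is a smooth complete toric variety, one has a combinatorial formula for $H^1(Y,\mathcal{T}_Y)$ in terms of the primitive collections and deformation vectors of the fan (Ilten's result in the form valid for arbitrary smooth complete toric varieties), which can be checked polytope-by-polytope.

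The main obstacle is part (vi): enumerating maximal regular triangulations is potentially explosive in polytopes with many boundary lattice points (as already foreshadowed by the $3608$ triangulations producing $125$ classes for the polytope of \fullref{R:picard:rank:toric:small}(ii)); identifying triangulations under the lattice automorphism group is the delicate normal-form step, and verifying the $435\,459$ rigidity count requires running Ilten's criterion on every one of the $526\,130$ smooth toric semi-Fano $3$--folds produced. As noted in \fullref{R:fan:refinement}(iv), the full data together with the detailed description of these computations will appear in~\cite{toric:g2}.
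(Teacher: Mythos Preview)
Your overall plan matches the paper's: parts (i)--(v) are tabulated from the Kreuzer--Skarke list via the criterion of \fullref{L:toric:semi-small}, and (vi) is obtained by enumerating regular triangulations with TOPCOM and quotienting by lattice automorphisms, with details deferred to~\cite{toric:g2}. Two points deserve comment.

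First, your formula ``$\rho(X)$ equals the number of vertices of $P$ minus $3$'' is wrong for singular non--$\Q$--factorial toric Fanos: that quantity is $\rank\Cl(X)$, not $\rank\Pic(X)$. For instance, the polytope $1942$ of \fullref{exa:toricV22_a} has $13$ vertices, so your formula gives $10$, whereas $\rho(X_{22})=1$ and $\sigma(X_{22})=9$. With your formula the defect would collapse to the number of non-vertex boundary lattice points minus one, which is false. Computing $\rho(X)$ correctly requires identifying which torus-invariant Weil divisors are Cartier, and with your formulae the tabulations for (ii) and (v) would come out wrong.

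Second, for rigidity the paper takes a different and more economical route than the direct Ilten-style computation of $H^1(Y,\mathcal{T}_Y)$ you propose. They compute $h^0=\dim H^0(Y,\mathcal{T}_Y)$ combinatorially via the Demazure roots of the fan (Oda~\cite[Corollary~3.13]{oda}), and then obtain $h^1$ from the Euler-characteristic identity $h^1-h^0=-\chi(Y,\mathcal{T}_Y)=19-\rho-g+h^{2,1}(Y)$ of Mukai~\cite[Section~4]{mukai:quartics}, using $h^{2,1}=0$ for toric varieties and the vanishing $h^2=0$ established for semi-Fanos in the proof of \fullref{thm:fano_stack_smooth}. This reduces the rigidity test to the already-tabulated invariants $\rho,g$ plus a count of Demazure roots, rather than running a separate cohomology computation on each of the $526\,130$ varieties. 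Your approach would also work in principle, but the paper's is both faster and more transparent.
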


\begin{proof}
  (i) follows from the Kreuzer--Skarke explicit list of all $4319$
  reflexive polytopes and the characterisation given in
  \fullref{L:toric:semi-small}(i).  
  (ii--v) follow from computations based on
  the explicit list of semi-small reflexive polytopes.
The first part of (vi) follows by enumerating all projective crepant resolutions of the semi-small reflexive polytopes
as described in the proof of \fullref{C:small:weak:toric:fano}(viii).
It is not the case that every toric semi-Fano 3--fold is rigid; \fullref{ex:ky:nonample}(i) 
exhibits a toric semi-Fano 3--fold that is not rigid.
To determine which toric semi-Fano 3--folds are rigid first we compute 
$h^{0} = \dim{H^{0}(Y,\mathcal{T}_{Y})}$ by finding the number of Demazure roots 
of the associated fan, compare with Oda's book~\cite[Corollary~3.13]{oda}.
To compute $h^{1} = \dim{H^{1}(Y,\mathcal{T}_{Y})}$ we use the fact 
that 
\[
h^{1}-h^{0} = - \chi (Y,\mathcal{T}_{Y}) = 19 - \rho - g +h^{2,1}(Y)
\]
where $\rho$ and $g$ are the Picard rank and genus respectively of the semi-Fano~$Y$, 
compare with Mukai~\cite[Section~4]{mukai:quartics}. (In the toric case we always have $h^{2,1}=0$).
The detailed calculations will appear in~\cite{toric:g2}.
\end{proof}

In particular, there are at least $435\,459$ deformation types of toric
semi-Fano 3--folds (including the $1027$ corresponding to smooth toric Fanos
and toric semi-Fanos with terminal AC model). More effort would be needed to
determine how many deformation types are realised by the remaining non-rigid
toric semi-small Fano 3--folds.

\subsection*{Terminal Fano 3--folds via degenerations of non-singular Fano 3--folds}

Given any non-singular semi-Fano 3--fold $Y$ with small AC morphism, we can
associate a deformation class of non-singular Fano 3--folds as follows.  By
\fullref{R:ac:model}(ii), the anticanonical model $X$ of $Y$ is a
terminal Gorenstein Fano 3--fold which thanks to Namikawa's smoothing
result (\fullref{T:fano:smooth}) is smoothable by a flat
deformation to a family of non-singular Fano 3--folds $X_{t}$. The
anticanonical degrees and indexes of $Y$, $X$ and $X_{t}$ are
all the same and $\rho(X)=\rho(X_{t})$ but $\rho(Y) = \rho(X) +
\sigma$ where $\sigma$ is the defect of $X$.  For instance in the case
where $Y$ is a rank $2$ semi-Fano 3--fold with small AC morphism (as
considered earlier) this associates to $Y$ one of the $17$ deformation
classes of non-singular rank $1$ Fano 3--folds from the Iskovskih
classification. 

Semi-Fano 3--folds associated with degenerations of 
a cubic in $\CP^{4}$ to a nodal cubic are completely understood; see below for a 
summary of these results. 
On the other hand weak Fano 3--folds associated with say degenerations of a quartic 
in $\CP^{4}$ are still very far from understood in general; see below for some further 
discussion of nodal quartics and their small resolutions.

\subsubsection*{Weak del Pezzo 3--folds from nodal cubics}
\label{sec:nodal-cubics}
From our earlier remarks 
about the behaviour of the index and degree under smoothing and small resolution,  we see 
immediately that because a smooth cubic has index $2$ and degree $24$, any 
semi-Fano 3--fold arising as the small resolution of a nodal cubic also has index $2$ and degree $24$.
In particular, they are all weak del Pezzo 3--folds.
Finkelnberg--Werner~\cite{finkelnberg1989small}
understood how many nodes can occur on a degeneration of a smooth cubic, what defects occur and in each
case how many of the small resolutions are projective.  
Their results demonstrate clearly how one single deformation class of smooth del Pezzo 3--folds 
can give rise to a much larger number of deformation classes of weak del Pezzo 3--folds.

Finkelnberg--Werner show that the number of
nodes $k$ can take any value up to $10$ and the defect any value up to
$5$. \fullref{table:cubics} lists the possible number of nodes $e$,
the defect~$\sigma$, the number of projective small resolutions $s$,
the number of planes $P$ contained in the nodal cubic and $b^{3}(Y)$
denotes the third Betti number of any projective small resolution of
the nodal cubic (if any exists); the latter is computed using
\eqref{E:weak:fano:b3} and the fact that $b^{3}=10$ for a non-singular cubic
3--fold.

\newlength{\extrasep}
\setlength{\extrasep}{2mm}

\begin{table}[ht!]
\[
\begin{array}[t]{cccccc}\toprule
e & \sigma & s & P &\rho(Y) & b^{3}(Y)\\ \midrule
0 & 0 & 0 & 0 & 1& 10\\ \addlinespace[\extrasep]
1& 0 & 0 & 0 & - & -\\ \addlinespace[\extrasep]
2& 0 & 0 & 0 & -& - \\ \addlinespace[\extrasep]
3& 0 & 0 & 0 & -&- \\ \addlinespace[\extrasep]
\multirow{2}{*}{4} &0 & 0 & 0 & -& -\\ 
& 1 & 2 & 1 & 2 & 4 \\ \addlinespace[\extrasep]
5& 1 & 0 & 1 & -& -\\ \addlinespace[\extrasep]
\multirow{3}{*}{6} 
& 1 & 0 & 1 & - & -\\ 
& 1 & 2 & 0 & 2 & 0\\ 
& 2 & 6 & 2 & 3& 2\\ \addlinespace[\extrasep]
\multirow{2}{*}{7}
& 2 & 6 & 2 & 3& 0 \\ 
& 2 & 0 & 3 & - & -\\ \addlinespace[\extrasep]
8& 3 & 24 & 5 &  4 & 0\\ \addlinespace[\extrasep]
9& 4 & 102 & 9 & 5 & 0 \\ \addlinespace[\extrasep]
10& 5 & 332 & 15 & 6 & 0\\ \bottomrule
\end{array} \]
\caption{The possible nodal degenerations of cubic 3--folds; $e$ denotes the number of ODPs, 
$\sigma$ the defect, $s$ the number of projective small resolutions and $P$ the number of planes contained in the 
nodal cubic. $\rho(Y)$ and $b^{3}(Y)$ denote the Picard rank and third Betti number of any 
projective small resolution $Y$ of the nodal cubic (when one exists).}
\label{table:cubics}
\end{table}

\begin{remark}\hfill{}
\begin{enumerate}
\item
A nodal cubic 3--fold $X \subset \CP^{4}$ is nonrational if and only if it is smooth 
by Clemens--Griffiths~\cite[Theorem~13.12]{clemens:griffiths}.
Any projective small resolution $Y$ of a nodal cubic $X$ therefore has no torsion in $H^{3}(Y)$ 
(recall \fullref{R:torsion:weak:fano}) and hence gives rise to a building block $Z$ in the sense of \fullref{dfn:BLOCK} via the construction of \fullref{prop:block_from_weak}.
\item All the examples in \fullref{table:cubics} with $\rho(Y) >2$
  have nodal AC model and are not already included in any of the
  classes described earlier in the paper; to see this we only need
  note the following: a non-singular cubic and hence any nodal degeneration
  has Picard rank $\rho=1$ and anticanonical degree $24$, whereas the
  classification of toric terminal Fano 3--folds shows none of the
  three degree $24$ examples has Picard rank $1$.
\item \eqref{E:bound:nodes} applied to a degeneration of a non-singular
  cubic yields $e \le 5 + 20 - 1 = 24$, whereas in fact we have $e \le
  10$. So in this case the bound from \eqref{E:bound:nodes} is quite
  far from being sharp.
\item \eqref{E:weak:fano:b3} and non-negativity of $b^{3}(Y)$
  immediately implies $e-\sigma \le 5$; \fullref{table:cubics} shows
  that there are $5$ different possible combinations of $e$ and
  $\sigma$ realising $e-\sigma =5$ (which forces $b^{3}(Y)=0$).
\item From \fullref{table:cubics} we see that $3 \le e - \sigma \le
  5$ for any nodal cubic that admits a projective small resolution.
\item In \fullref{table:cubics} some (sometimes many) of the $s$
  small projective resolutions of a given nodal cubic may give rise to
  projective varieties that are abstractly isomorphic; if the nodal
  cubic $X$ admits a nontrivial discrete group of automorphisms then
  this group acts on the set of all small resolutions and different
  small resolutions in the same orbit are isomorphic. For example, the
  unique (up to projective equivalence) nodal cubic with $10$ nodes 
  called the Segre cubic has automorphism group the symmetric group
  $S_{6}$. In~\cite{finkelnberg1987small} Finkelnberg  showed that there are $13$ different orbits of
  $S_{6}$ acting on the set of all $2^{10}=1024$ small resolutions of
  the Segre cubic; $6$ of these orbits consist of projective small
  resolutions while $7$ contain only non-projective small resolutions.
  In particular,  we obtain $6$ different isomorphism classes of semi-Fano 3--fold with 
  index $2$ (weak del Pezzo 3--fold), degree $24$, Picard rank $6$ and nodal AC morphism.
  For other nodal cubics with
  close to the maximal number of nodes the number of non-isomorphic
  projective small resolutions does not seem to have been determined.
\end{enumerate}
\end{remark}

\subsubsection*{Semi-Fano 3--folds from nodal quartics}
Examples~\ref{exa:quartic_w_plane} to~\ref{exa:quartic_w_22} all give examples of defect $1$ semi-Fano 
3--folds arising from projective small resolutions of nodal quartics in $\CP^{4}$. 
\fullref{exa:burkhardt_quartic} is a defect $15$ weak Fano 3--fold associated 
with a nodal quartic in $\CP^{4}$  (with the maximal number of nodes $e=45$; 
moreover, $15$ is the maximal possible defect for a terminal quartic 3--fold: see below).
There currently does not seem to be a good understanding of semi-Fano 3--folds 
associated with nodal quartics when the defect is not either $1$ or close to the maximum $15$.
Even for the maximal defect $\sigma =15$ it does not seem that the number of projective small resolutions 
of the Burkhardt quartic has been determined. (Recall it has at least one projective small resolution 
and exactly $2^{45} \simeq 3.5 \times 10^{13}$ Moishezon but not necessarily projective small resolutions).

The following statement summarises some of the main known results about nodal quartics
and their defects and projective small resolutions.
\begin{theorem}
\label{t:nodal:quartics}
Let $X$ be a nodal quartic in $\CP^{4}$, let $e$ denote the number of nodes of $X$ and $\sigma(X)$ its defect.
\begin{enumerate}
\item
If $e<9$ then $\sigma=0$ and hence $X$ admits no projective small resolutions.
\item
If $e=9$ then $\sigma=0$ if and only if $X$ contains no planes $\Pi$. In particular, 
a general quartic with $e=9$ admits no projective small resolutions.
If $e=9$ and $X$ contains a plane $\Pi$ then $\sigma=1$ and blowing up $\Pi$ 
in $X$ yields a projective small resolution as in \fullref{exa:quartic_w_plane}.
\item
If $e<12$ and $X$ contains no planes then $\sigma=0$ and hence $X$ admits no projective small resolutions.
\item
If $e=12$ then $\sigma=0$ unless $X$ contains a quadric surface. 
A sufficiently general quartic containing an irreducible quadric $Q_{2}^{2}$ 
has precisely $12$ nodes all contained in $Q_{2}^{2}$ and has $\sigma=1$. 
Blowing up $Q_{2}^{2}$ yields a projective small resolution as in \fullref{exa:quartic_w_quadric}.
\item
$e\le 45$ with equality if and only if $X$ is projectively equivalent to the Burkhardt quartic
as in \fullref{exa:burkhardt_quartic}.
\item
$\sigma \le 15$ with equality if and only $X$ is projectively equivalent to the Burkhardt quartic.
Moreover, $\sigma \le 10$ if $X$ contains no planes.\end{enumerate}
\end{theorem}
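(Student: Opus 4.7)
My plan is to reduce each claim to a statement about the linear system of cubics through the nodes, using the classical Clemens formula for the defect of a nodal hypersurface. For a nodal quartic $X\subset \CP^4$ with node set $\Sigma$, Clemens' theorem identifies
\[
\sigma(X) = e - \mathrm{codim}\bigl(H^0(\CP^4,\mathcal{I}_\Sigma(3))\subset H^0(\CP^4,\mathcal{O}(3))\bigr),
\]
so $\sigma(X)=0$ is equivalent to the nodes imposing independent conditions on cubics, and a non--$\Q$--Cartier Weil divisor on $X$ corresponds precisely to a non-trivial linear relation among the ``node conditions''. Combined with the elementary lemma recalled after \fullref{exa:quartic_w_plane} --- a plane $\Pi \subset X$ must contain exactly $(4-1)^2=9$ nodes of $X$, and analogously a smooth quadric $Q_2^2 \subset X$ must contain exactly $12$ nodes (by an intersection-theoretic count on the quadric using the conormal sheaf of $X$) --- this framework handles (i)--(iv) uniformly.

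For (i) and (iii) the plan is to show directly that $e<9$ nodes (respectively $e<12$ nodes not lying on a plane) always impose independent conditions on cubics. For (i) one argues that if a linear relation existed, the base scheme of the corresponding cubic pencil would cut out the nodes on $X$ and force a non-trivial special divisor on $X$; by an analysis of residual intersections (following Cheltsov \cite{cheltsov:nodal:quartics}) this special divisor must be a plane carrying at least $9$ nodes, contradicting $e<9$. For (ii) the ``if'' direction uses the same argument, while for the ``only if'' one verifies that blowing up $\Pi$ in $X$ produces a small resolution as in \fullref{exa:quartic_w_plane} --- here the smoothness and projectivity of the resolution imply $\sigma\geq 1$, and one shows $\sigma\leq 1$ by a direct count of linear relations among 9 coplanar nodes on cubics. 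Part (iii) is an analogous (but slightly harder) residual argument excluding all non-plane sources of defect when $e<12$. Part (iv) then follows from (iii) together with the observation that a $12$-nodal quartic containing no plane has $\sigma = 0$, and the explicit construction of \fullref{exa:quartic_w_quadric} when $X \supset Q_2^2$.

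The bound $e\leq 45$ in (v) is the theorem of de~Jong--Shepherd-Barron--Van~de~Ven cited in \fullref{R:b3:bound}(iii), proved via a monodromy/Hodge-theoretic argument on the Milnor lattice of a degeneration of a smooth quartic to $X$. For the uniqueness claim, the plan is to combine two classical facts: first, a nodal quartic with $45$ nodes must have its node set contained in the intersection of $40$ planes (via the equality case of the linear-algebraic arguments underlying the $\leq 45$ bound); second, this incidence structure of $45$ points and $40$ planes is, up to projectivities, the one realised by the Burkhardt quartic, as shown by Finkelnberg~\cite{finkelnberg89:burk}. Having identified $X$ with the Burkhardt quartic, $\sigma = 15$ follows from the computation in \fullref{exa:burkhardt_quartic}. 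The upper bound $\sigma\leq 15$ in (vi) is Kaloghiros's bound \cite{kaloghiros:defect} on the defect of a terminal quartic 3-fold; the equality case again forces $X$ to be Burkhardt. The refined bound $\sigma\leq 10$ for plane-free $X$ is a strengthening by the same method: one observes that each plane in $X$ contributes (at least) $1$ to $\sigma$ via its class as a Weil divisor, and then bounds the ``residual'' defect by counting independent node-configurations not supported on planes, yielding at most $10$ when no planes are present.

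The main obstacle will be part (iii) and the plane-free refinement in (vi): these require ruling out non-obvious sources of defect (that is, Weil divisors on $X$ not arising from linear subspaces or quadrics), which forces a careful case analysis of the residual scheme of the linear system $H^0(\CP^4,\mathcal{I}_\Sigma(3))$ restricted to $X$. The rest of the argument is essentially bookkeeping on top of Clemens' formula, the classifications of \cite{cheltsov:nodal:quartics,kaloghiros:defect} and the structural results of \cite{finkelnberg89:burk}.
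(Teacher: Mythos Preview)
The paper does not actually prove this theorem: its ``proof'' is a list of citations. Parts (i) and (ii) are attributed to Cheltsov~\cite{cheltsov:nodal:quartics}, parts (iii) and (iv) to Shramov~\cite{shramov}, the bound $e\le 45$ in (v) to Varchenko~\cite{varchenko} with the equality case to de~Jong--Shepherd-Barron--Van~de~Ven~\cite{dejong}, and (vi) to Kaloghiros~\cite{kaloghiros:defect} (with its erratum). The theorem is a survey statement collecting known results; no argument is given in the paper itself.

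Your proposal is therefore not so much a different route as an attempt to supply content where the paper gives only references. The Clemens defect formula is indeed the standard tool underlying several of the cited works, so your framework is reasonable, but a few attributions and one logical step are off. First, the bound $e\le 45$ is due to Varchenko, not de~Jong--Shepherd-Barron--Van~de~Ven; the latter proved only the equality characterisation. Second, parts (iii) and (iv) are not in Cheltsov but in Shramov's paper, which you do not mention. Third, your reduction of (iv) to ``(iii) together with the observation that a $12$--nodal quartic containing no plane has $\sigma=0$'' is not what the statement says: for $e=12$ the correct dichotomy is $\sigma=0$ unless $X$ contains a \emph{quadric surface}, and (iii) only covers $e<12$, so you cannot invoke it directly at $e=12$. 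You would need a separate argument excluding defect for $12$ nodes when there is neither a plane nor a quadric, which is precisely what Shramov does.
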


\begin{proof}
(i) and (ii) are proved in Cheltsov~\cite[Theorems~2 and~5]{cheltsov:nodal:quartics}.
(iii) and (iv) are proved in Shramov~\cite[Theorem~1.3]{shramov}. (v): $e \le 45$ was proved in  
Varchenko~\cite{varchenko}; the case of equality was treated in de~Jong--Shepherd-Barron--Van de Ven~\cite{dejong}.
(vi) is proved in Kaloghiros~\cite[Theorem~1.1]{kaloghiros:defect}; see the erratum for a correction to the original 
claim of Theorem 1.1.(ii).
\end{proof}

\begin{remark}
The special class of nodal \emph{determinantal quartics} has been studied in some detail. 
A determinantal quartic is a hypersurface 
in $\CP^{4}$ given as the zero-locus of the determinant of a $4 \times 4$ matrix of linear forms in $[z_{0},\ldots ,z_{4}]$.
A determinantal quartic is never smooth but generically has only nodes; this makes determinantal quartics 
a good source of nodal quartics. A nodal determinantal quartic has $20 \le e \le 45$ and the generic 
one has $e=20$.  The Burkhardt quartic is determinantal. 
Every determinantal quartic is rational and hence any projective small resolution $Y$ has no torsion in $H^{3}(Y)$.
Pettersen's thesis~\cite{pettersen1998nodal} used a particular rationalisation to study nodal determinantal quartics. 
He gave classification results for nodal determinantal quartics with $e \ge 42$ and showed any such quartic admits 
at least one projective small resolution. Such resolutions are semi-Fano 3--folds with $H^{3}(Y)$ torsion-free 
and thus give rise to building blocks in the sense of \fullref{dfn:BLOCK}.
Pettersen~\cite[Section~6.2]{pettersen1998nodal} 
constructed determinantal quartics with $e=40$ and $\sigma=10$ which contain no plane; 
thus the defect bound from \fullref{t:nodal:quartics}(vi) is sharp.
\end{remark}

\bibliographystyle{gtart}
\bibliography{link}

\end{document}